\newtheorem{theorem}{Theorem}[section]
\newtheorem{proposition}[theorem]{Proposition}
\newtheorem{lemma}[theorem]{Lemma}
\newtheorem{corollary}[theorem]{Corollary}
\newtheorem{definition}[theorem]{Definition}
\theoremstyle{definition}
\newtheorem{example}[theorem]{Example}
\newtheorem{continuation}[theorem]{Continuation of Example}
\newcommand{\hz}{\hat{0}}
\newcommand{\ho}{\hat{1}}
\newcommand{\tensor}{\otimes}
\newcommand{\define}[1]{\emph{#1}}
\newcommand{\G}{\mathcal{G}}
\DeclareMathOperator{\wt}{wt}
\DeclareMathOperator{\Hom}{Hom}
\DeclareMathOperator{\id}{id}
\DeclareMathOperator{\im}{im}
\newcommand{\free}[1]{\langle #1\rangle}
\renewcommand{\int}[1]{{#1}^{\circ}}
\newcommand{\Dv}{\mathbf{D}}
\newcommand{\rv}{\mathbf{R}}
\newcommand{\uv}{\mathbf{U}}
\newcommand{\ur}{\mathbf{UR}}
\newcommand{\sv}{\mathbf{S}}
\newcommand{\av}{\mathbf{a}}
\newcommand{\bv}{\mathbf{b}}
\newcommand{\cv}{\mathbf{c}}
\newcommand{\dv}{\mathbf{d}}
\newcommand{\ab}{\av\bv}
\newcommand{\ba}{\bv\av}
\newcommand{\cd}{\cv\dv}
\newcommand{\ctd}{\cv\mbox{-}2\dv}
\newcommand{\fab}{\free{\av, \bv}}
\newcommand{\fcd}{\free{\cv, \dv}}
\newcommand{\Zab}{\mathbb{Z}\fab}
\newcommand{\Zcd}{\mathbb{Z}\fcd}
\newcommand{\vanish}[1]{}
\newcommand{\fp}{\Box}
\renewcommand{\int}[1]{{#1}^{\circ}}
\newcommand{\journal}[6]{\textsc{#1,} #2, \textit{#3} \textbf{#4} (#5), #6.}
\newcommand{\collection}[6]{\textsc{#1,} #2, \textit{#3} \textbf{#4} (#5), #6.} % FIXME
\renewcommand{\book}[4]{\textsc{#1,} ``#2,'' #3, #4.}
\newcommand{\thesis}[4]{\textsc{#1,} ``#2,'' Doctoral dissertation, #3, #4.}
\newcommand{\springer}[4]{\textsc{#1,} ``#2,'' Lecture Notes in Math.,
                          Vol.\ #3, Springer-Verlag, Berlin, #4.}
\newcommand{\preprint}[3]{\textsc{#1,} #2, preprint #3.}
\newcommand{\preparation}[2]{\textsc{#1,} #2, in preparation.}
\newcommand{\appear}[3]{\textsc{#1,} #2, to appear in \textit{#3}}
\newcommand{\submitted}[4]{\textsc{#1,} #2, submitted to \textit{#3}, #4.}
\newcommand{\JCTA}{J.\ Combin.\ Theory Ser.\ A}
\newcommand{\AdvancesinMathematics}{Adv.\ Math.}
\newcommand{\JournalofAlgebraicCombinatorics}{J.\ Algebraic Combin.}
\newcommand{\communication}[1]{\textsc{#1,} personal communication.}
\newcommand{\NC}{\mathcal{N}}
\DeclareMathOperator{\Pyr}{Pyr}
\DeclareMathOperator{\Pri}{Pri}
\newenvironment{proof_special}{\noindent \textbf{Proof:}}{}
\newcommand{\onethingatopanother}[2]
{\genfrac{}{}{0pt}{}{#1}{#2}}
\newcommand{\Lzero}{\mathcal{L} \cup \{\hz\}}
\newcommand{\Pzero}{\mathcal{P} \cup \{\hz\}}
\newcommand{\tensordots}{\tensor \cdots \tensor}
\newcommand{\pair}[2]{\left\langle#1 \: \vrule \: #2\right\rangle}
\newcommand{\zub}{z_{ub}}
\newcommand{\Zub}{Z_{ub}}
\newcommand{\zt}{z_{t}}
\newcommand{\Zt}{Z_{t}}
\newcommand{\ma}{\av}
\newcommand{\mb}{\bv}
\newcommand{\maa}{\av^{2}}
\newcommand{\mab}{\av\bv}
\newcommand{\mba}{\bv\av}
\newcommand{\mbb}{\bv^{2}}
\newcommand{\maaa}{\av^{3}}
\newcommand{\maab}{\av^{2}\bv}
\newcommand{\maba}{\av\bv\av}
\newcommand{\mabb}{\av\bv^{2}}
\newcommand{\mbab}{\bv\av\bv}
\newcommand{\nabb}{\av\bv\bv}
\newcommand{\nbaa}{\bv\av\av}
\newcommand{\naab}{\av\av\bv}
\newcommand{\nbba}{\bv\bv\av}
\newcommand{\capdots}{\cap \cdots \cap}
\newcommand{\cupdots}{\cup \cdots \cup}
\newcommand{\mccc}{\cv^3}
\newcommand{\mcd}{\cv \dv}
\newcommand{\mdc}{\dv \cv}
\newcommand{\meet}{\wedge}
\newcommand{\join}{\vee}
\newcommand{\fracp}[1]{\{ #1 \}}
\newcommand{\lat}[1]{\mathcal{L}({#1})}
\newcommand{\swapm}[1]{\overline{#1}}
\newcommand{\sweedle}[2]{\sideset{}{^{#1}}\sum{#2}}
\renewcommand{\NG}{\mathbf{G}}
\newcommand{\NA}{\mathbf{A}}
\renewcommand{\NC}{\mathbf{C}}
\newcommand{\HG}{\mathbf{G}^{\bullet}}
\newcommand{\HA}{\mathbf{A}^{\bullet}}
\newcommand{\HC}{\mathbf{C}^{\bullet}}
\newcommand{\freejoin}{\: \mbox{$\vee \hspace{-9.5pt} \bigcirc$} \:}
\DeclareMathOperator{\coker}{coker}
\DeclareMathOperator{\diag}{diag}
\newcommand{\Chain}[2]{C_{#2}({#1})}
\newcommand{\bd}{\partial}
\newcommand{\tree}[1]{\kappa({#1})}
\newcommand{\trans}[1]{{#1}^{\mathrm{T}}}
\newcommand{\MLCM}{N}
\newcommand{\btc}{\noindent\begin{tabular}{p{0.75in}p{5.25in}}}
\newcommand{\etc}{\end{tabular}}
\newcommand{\btcc}{\noindent\begin{tabular}{p{1in}p{5in}}}
\newcommand{\etcc}{\end{tabular}}
\begin{document}
\author{Michael Slone}
\year=2008
%\title{Homological combinatorics and extensions of the $\mathbf{cd}$-index}
\title{HOMOLOGICAL COMBINATORICS AND EXTENSIONS OF THE CD-INDEX}
\abstract{
Many combinatorial proofs rely on induction.  When these proofs are 
formulated in traditional language, they can be bulky and unmanageable.
Coalgebras provide a language which can reduce
reduce many inductive proofs
in graded poset theory to comprehensible size.  As a bonus, the visual
form of the resulting recursive proofs suggests combinatorial interpretations
for constants appearing in the longer arguments.  We use the techniques
of coalgebras to compute invariants of toric and affine arrangements
as well as of poset products.  In additional chapters we prove structure
theorems for acyclic orientations and critical groups of graphs.
}
\keywords{cd-index, polytopes, coalgebras, posets, spanning trees}
\advisor{Richard Ehrenborg}

\frontmatter

\maketitle

\begin{acknowledgments}
\noindent
I have many people to thank.
I would like to single out
Dora~Ahmadi,
Jimmy~Booth,
Tom~Chapman,
Vivian~Cyrus,
Scott~Davison,
Richard~Ehrenborg,
%George~Eklund,
Jennifer~Eli,\newline
Edgar~Enochs, 
Charles~P.~Fairchild,
Claire~A.~Foley,
Brauch~Fugate,
Scott~Godefroy,
Trish~Hall,
Brad~Hamlin,
Mike~Hammond,
David~Johnson,
Eric~Kahn,
Daniel~Kiteck,
Carl~Lee,
David~Little,
Kathryn~Lybarger,
%Penelope~Monte~Pajel~McCollum,
Penny~Pajel~McCollum,
Neil~Moore,
Mark~Motley,
Mary~Motley,
Tricia~Muldoon,
Sunil~Nanwani,
Carlos~M.~Nicol\'as,
Rebecca~Novak,
Wendell~O'Brien,
Sonja~Petrovi\'c,
Pat~Quillen,
Margaret~Readdy,
Josh~Roberts,
Robert~D.~Royar, 
Jack~Schmidt, 
Yuho~Shin,
Aekyoung~Shin~Kim,
R.~Duane~Skaggs, 
Bethany~Slone,
Cephas~Slone,
%Charles~Slone,
Donald~J.~Spickler, 
Erik~Stokes,
Brett~Strassner,
Jack~Weir,
and
Yu~Xiang.

The chapter ``Affine and toric arrangements'' is based on joint work
with\newline Richard~Ehrenborg and Margaret~Readdy.  Ehrenborg and Slone
were partially supported by 
National Security Agency grant H98230-06-1-0072.

The chapter ``A geometric approach to acyclic orientations'' is based
on joint work with Richard~Ehrenborg.
\end{acknowledgments}
%\begin{dedication}
%%Dedicated to things
%%\end{dedication}
\clearpage
\tableofcontents*\clearpage
%%\listoftables\clearpage
\listoffigures\clearpage
\mainmatter
%%\pagestyle{ruled}
%%\pagestyle{plain}
%
%%\setstretch{2}
%%\doublespacing
%%\onehalfspacing
%% 0
\setcounter{chapter}{-1}
\chapter{Introduction}

For any collection of mathematical objects, two questions have
fundamental importance.
\begin{enumerate}
\item
Can we \emph{enumerate} the objects in the collection?

\item
Can we \emph{classify} the objects in the collection?
\end{enumerate}
This dissertation deals primarily with the question of enumeration 
in the field of algebraic combinatorics.

Here ``enumerate'' is intended in both its common senses: counting objects
and listing objects.
We should be able to count objects so 
we have a rough idea of the complexity of the task of organizing
them.  But we should also be able to give representative examples
of the objects.  In particular, if we can construct representative
examples in a recursive way, then we can teach a computer to perform
operations on the objects.  Moreover, in spending the time to find
appropriate recursively-defined representations of objects, we 
generally discover properties of the objects which will be
useful when we turn to the question of classification.

The chapters of this dissertation can be read independently.  
However, there are strong connections between some of the
chapters.  Here we indicate some of the connections and briefly explain
the topics to be discussed.

Chapters~\ref{chap:affinetoric} and~\ref{chap:mixing} deal with 
the~$\cd$-index, which is a polynomial invariant encoding the 
flag structure of polytopes and similar objects.  With
the $\cd$-index of a polytope
available, one can quickly answer questions such as:
\begin{itemize}
\item
How many vertices does this polytope have? or

\item
How many ways can one select a connected chain of a vertex, an edge, and a face 
in this polytope?
\end{itemize}
The $\cd$-index is not fully understood.  In particular,
even in cases where the coefficients are known to be nonnegative it is
not always known what they count.  

In 
Chapter~\ref{chap:affinetoric} we examine the behavior of the $\cd$-index
(and more generally, the $\ab$-index) on non-spherical manifolds.
This viewpoint allows
combinatorial questions for polytopes, which are spheres, to be
transported to other manifolds.  We start this 
by handling the simplest possible case, that of the $n$-dimensional torus, via 
the notion of
toric hyperplane arrangement.

In 
Chapter~\ref{chap:mixing} we 
streamline computation of and proofs regarding the $\cd$-index.
Recursive formulas are already known for the effects of some natural
geometric operations on the $\cd$-index.  However, some of these 
rely on delicate chain-counting arguments, since their proofs are
expressed in poset-theoretic rather than $\cd$-theoretic terms.  
By importing the
arguments into the $\cd$-language, we are able to simplify many arguments.
We are also able to interpret the coefficients
of the $\cd$-index in a special case as counting lattice paths.
Several results in this chapter were discovered with the assistance
of GAP~\cite{GAP4}.

Chapters~\ref{chap:acyclic} and~\ref{chap:critical} deal, in one way
or another, with chip-firing games on graphs.  Chip-firing games 
arise out of statistical mechanics, where they are called
abelian sandpile models.  There are also connections to Kirchhoff's
fundamental work in circuit theory.

In Chapter~\ref{chap:acyclic} we use chip-firing games as a tool
to give a geometric proof of the result of Propp that acyclic orientations
of a graph with a fixed sink have the structure of a distributive
lattice.

Finally, in Chapter~\ref{chap:critical} we study the critical group,
which is the group of configurations of a chip-firing game.  It is
known that the order of this group is equal to the number of spanning
trees of the graph.  However, the structure of the critical group is
only known for a few classes of graphs.  We can shed a little light
on the structure of the critical group of uniformly cleft graphs,
which are introduced in this dissertation.  We can also count the
spanning trees of non-uniformly cleft trees.

Some work in this dissertation is jointly authored.  In particular,
Chapter~\ref{chap:affinetoric} is joint work with Richard~Ehrenborg and
Margaret~Readdy, while Chapter~\ref{chap:acyclic} is joint work with 
Richard~Ehrenborg.  We have submitted Chapter~\ref{chap:affinetoric}
to the journal \textit{Discrete and Computational Geometry}.  It has
been refereed, and we are preparing a new version for resubmission.
The chapter is based on a snapshot of that new version.  None of the
other chapters have yet been submitted for publication.

\vfill
\begin{center}
Copyright \copyright\ Michael Slone 2008
\end{center}
%
%% 1
\setcounter{chapter}{0}
\chapter{Affine and toric arrangements}\label{chap:affinetoric}

\section{Introduction}
\label{section_introduction}

Traditionally combinatorialists have studied topological objects
that are spherical, such as polytopes, or which are homeomorphic to
a wedge of spheres, such as those obtained from shellable complexes.
In this chapter we break from this practice and study hyperplane 
arrangements on the $n$-dimensional torus.

It is classical that the convex hull
of a finite collection of points in Euclidean space
is a polytope
and its boundary is a sphere.
The key ingredient in this construction is convexity.
At the moment there is no natural analogue of this process
to obtain a complex whose geometric realization is
a torus.

In this chapter we are taking a zonotopal approach to 
working with arrangements on the torus.
Recall that a zonotope can be defined without the notion
of convexity, that is, it is a Minkowski sum of line segments.
Dually, a central hyperplane arrangement gives rise to
a spherical cell complex.
By considering an arrangement on the torus, we are able to
obtain a subdivision whose geometric realization is indeed the torus.
We will see later in
Section~\ref{section_toric}
that
this amounts to restricting ourselves to arrangements
whose subspaces
in the Euclidean space $\mathbb{R}^{n}$
have coefficient matrices with rational entries.
Under the quotient map
$\mathbb{R}^{n} \longrightarrow \mathbb{R}^{n}/\mathbb{Z}^{n} = T^{n}$
these subspaces 
are sent to subtori of the $n$-dimensional torus $T^{n}$.

Zaslavsky
initiated the modern study of hyperplane arrangements in his
fundamental treatise~\cite{Zaslavsky}.
For early work in the field,
see the references given
in Gr\"unbaum's text~\cite[Chapter 18]{Grunbaum}.
Zaslavsky showed that evaluating the characteristic
polynomial of a central hyperplane arrangement at
$-1$ gives the number of regions
in the complement of the arrangement.
For central hyperplane arrangements,
Bayer and Sturmfels~\cite{Bayer_Sturmfels} proved the flag $f$-vector
of the arrangement can be determined from the intersection
lattice;
see Theorem~\ref{theorem_Bayer_Sturmfels}.
However, their result is stated as a sum
of chains in the intersection lattice and hence
it is hard to apply.
Billera, Ehrenborg, and Readdy
improved the 
Bayer--Sturmfels result by showing that it is enough
to know the flag $f$-vector of the intersection lattice
to compute the flag~$f$-vector of a central arrangement.
Recall that the $\cd$-index of a regular
cell complex is an efficient tool to encode its
flag $f$-vector without linear redundancies~\cite{Bayer_Klapper}.
The Billera--Ehrenborg--Readdy theorem gives an explicit way to compute
the $\cd$-index of the arrangement,
and hence its flag $f$-vector~\cite{Billera_Ehrenborg_Readdy_om}.

We generalize Zaslavsky's theorem on the number
of regions of a hyperplane arrangement to the toric case.
Although there is no intersection lattice per se,
one works with the intersection poset.
 From the Zaslavsky result we obtain a toric
version of the Bayer--Sturmfels result for
hyperplane arrangements,
that is, there is a natural poset map from
the face poset to the intersection poset,
and furthermore, the cardinality
of the inverse image of a chain under this map
is described.

As in the case of a central hyperplane 
arrangement, our toric version of the Bayer--Sturmfels result determines
the flag $f$-vector of the face poset of
a toric arrangement in terms of its intersection poset.
However, this is far from being explicit.
Using the coalgebraic techniques
from~\cite{Ehrenborg_Readdy_c},
we are able to determine the flag $f$-vector
explicitly in terms of the flag $f$-vector
of the intersection poset. 
Moreover, the 
answer is given by a $\cd$ type of polynomial.
The flag $f$-vector of a regular spherical complex
is encoded by the $\cd$-index, a non-commutative polynomial
in the variables~$\cv$ and~$\dv$,
whereas
the $n$-dimensional
toric analogue
is a $\cd$-polynomial plus the $\ab$-polynomial~$(\av-\bv)^{n+1}$.

Zaslavsky also showed 
that evaluating the characteristic
polynomial of an affine arrangement at
$1$ gives the number of bounded regions
in the complement of the arrangement.
Thus we return to affine arrangements in
Euclidean space with the twist that
we study the {\em unbounded} regions.
The unbounded regions form a spherical
complex.
In the case of central arrangements,
this complex is exactly what was studied
previously  
by Billera, Ehrenborg, and Readdy~\cite{Billera_Ehrenborg_Readdy_om}.
For non-central arrangements,
we determine the $\cd$-index of
this complex in terms of the lattice
of unbounded intersections of the arrangement.

Interestingly, the techniques for studying toric arrangements and
the unbounded complex of 
non-central arrangements are similar.
Hence, we present these
results
in the same chapter.
For example, 
the toric and non-central
analogues of the Bayer--Sturmfels theorem
only differ
by which Zaslavsky invariant is used.
The coalgebraic translations of the
two analogues
involve exactly the same argument,
and
the resulting underlying maps
$\varphi_{t}$ (in the toric case) and 
$\varphi_{ub}$ (in the non-central case)
differ only slightly
in their definitions.

We end with many open questions about subdivisions of manifolds.

\section{Preliminaries}
\label{section_preliminaries}

All the posets we will work with are
graded,
that is,
posets having a unique minimal element
$\hz$, a
unique maximal element
$\ho$,
and rank function $\rho$.
For two elements $x$ and $z$ in a graded poset $P$ such that $x \leq z$,
let $[x,z]$ denote the interval
$\{y \in P \: : \: x \leq y \leq z\}$.
Observe that the interval $[x,z]$ is itself a graded poset.
Given a graded poset $P$ of rank $n+1$
and
$S \subseteq \{1, \ldots, n\}$,
the
{\em $S$-rank-selected poset $P(S)$}
is
the poset
consisting of the elements
$P(S) = \{x \in P \: : \: \rho(x) \in S\} \cup \{\hz, \ho\}$.
The partial orders of~$[x,y]$ and $P(S)$ are each inherited from that
of $P$.
The \define{dual poset} of $P$, written $P^*$, is the poset having the
same underlying set as $P$ but with the order relation reversed:
$x <_{P^*} y$ if and only if $y <_P x$.
For standard poset terminology,
we refer the reader
to Stanley's work~\cite{Stanley_EC_I}.

The M\"obius function $\mu(x,y)$ on a poset $P$ is
defined recursively by
$\mu(x,x) = 1$
and for elements $x, y \in P$ with $x < y$ by 
$\mu(x,y) = - \sum_{x \leq z < y} \mu(x,z)$;
see Section~3.7 in~\cite{Stanley_EC_I}.
For a graded poset~$P$ 
with minimal element $\hz$ and maximal element $\ho$
we write $\mu(P) = \mu_{P}(\hz,\ho)$.

We now review important  results about
hyperplane arrangements, the $\cd$-index,
and coalgebraic techniques.
All are essential
for proving the main results of this chapter.

\subsection{Hyperplane arrangements}

Let $\mathcal{H} = \{H_{1}, \ldots, H_{m}\}$
be a hyperplane arrangement in $\mathbb{R}^{n}$,
that is,
a finite collection of affine hyperplanes in 
$n$-dimensional Euclidean space. % $\mathbb{R}^{n}$.
For brevity, throughout this chapter we will 
often refer to a
hyperplane arrangement as an arrangement.
We call an arrangement {\em essential} if
the normal vectors to the hyperplanes in $\mathcal{H}$ span $\mathbb{R}^n$.
In this chapter we are only interested
in essential arrangements.

Observe that the intersection 
$\bigcap_{i=1}^{m} H_{i}$
of all of the hyperplanes in an essential arrangement is either
the empty set $\emptyset$ or a singleton point.
We call an arrangement {\em central} if the
intersection of all the hyperplanes is one point.
We may assume that this point is the origin ${\mathbf 0}$
and hence all of the hyperplanes are 
subspaces of codimension $1$.
If the intersection is the empty set,
we call the arrangement {\em non-central}.

The {\em intersection lattice} $\mathcal{L}$
is the lattice formed by ordering all the intersections
of hyperplanes in~$\mathcal{H}$ by reverse inclusion.
If the intersection
of all the hyperplanes
in a given arrangement is empty,
then we include the empty set
$\emptyset$ as the 
the maximal element in the intersection lattice.
If the arrangement is central, the
maximal element
is  $\{{\mathbf 0}\}$.
In all cases, the minimal element
of $\mathcal{L}$ will be all of
$\mathbb{R}^{n}$.

For a hyperplane arrangement $\mathcal{H}$
with intersection lattice $\mathcal{L}$, the
{\em characteristic polynomial} is defined by
$$   \chi(\mathcal{H}; t)
   =
     \sum_{\onethingatopanother{x \in \mathcal{L}}{x \neq \emptyset}}
             \mu(\hz,x) \cdot t^{\dim(x)}  ,   $$
where $\mu$ denotes the
M\"obius function.
The characteristic polynomial is a combinatorial
invariant of the arrangement. 
The fundamental result of Zaslavsky~\cite{Zaslavsky}
is that this
invariant determines the number and type of
regions in the complement of the arrangement.
\begin{theorem}[Zaslavsky]
For a hyperplane arrangement $\mathcal{H}$ in $\mathbb{R}^{n}$
the number of
regions in the complement of the arrangement 
is given by $(-1)^{n} \cdot \chi(\mathcal{H}; -1)$.
Furthermore,
the number of
bounded regions is given by $(-1)^{n} \cdot \chi(\mathcal{H}; 1)$.
\label{theorem_Zaslavsky}
\end{theorem}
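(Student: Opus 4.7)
The plan is to prove both formulas by induction on the number of hyperplanes $m = |\mathcal{H}|$ via the standard deletion-restriction machinery. For a fixed $H \in \mathcal{H}$, let $\mathcal{H}' = \mathcal{H} \setminus \{H\}$ be the \emph{deletion} and let $\mathcal{H}'' = \{K \cap H : K \in \mathcal{H}',\, K \cap H \neq \emptyset,\, K \cap H \neq H\}$ be the \emph{restriction}, regarded as an arrangement inside $H \cong \mathbb{R}^{n-1}$. The proof rests on three parallel recurrences to be established in turn.

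First, a M\"obius-function computation on the intersection lattice yields $\chi(\mathcal{H}; t) = \chi(\mathcal{H}'; t) - \chi(\mathcal{H}''; t)$; this uses the identification of the flats of $\mathcal{H}$ contained in $H$ with the flats of $\mathcal{H}''$, together with the defining recursion for $\mu$. Second, a geometric argument shows $r(\mathcal{H}) = r(\mathcal{H}') + r(\mathcal{H}'')$, where $r$ denotes the number of regions: each region of $\mathcal{H}'$ either meets $H$ in its interior and is thereby sliced into two regions of $\mathcal{H}$, or it avoids $H$ and survives intact; the sliced regions biject with the regions of $\mathcal{H}''$ through $R \mapsto R \cap H$. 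Combined with the base case of an empty arrangement ($\chi = t^n$, a single region), these two recurrences yield the first formula by induction on $m$.

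For the bounded regions the outline is the same, but the geometry is more delicate: when $H$ slices an unbounded region $R$ of $\mathcal{H}'$, one of the two halves $R \cap H^{\pm}$ may become bounded even though $R$ is not, and this effect must be balanced against the bounded regions of $\mathcal{H}''$ to yield $b(\mathcal{H}) = b(\mathcal{H}') + b(\mathcal{H}'')$. The main obstacle is precisely this step, because deletion can destroy essentiality: for instance, deleting $\{x=0\}$ from $\mathcal{H} = \{y=0,\, y=1,\, x=0\}$ leaves two parallel lines, for which $(-1)^n \chi(\mathcal{H}'; 1)$ is negative and cannot literally count bounded regions. Handling this forces one either to choose $H$ so that $\mathcal{H}'$ remains essential, or to strengthen the inductive statement to cover non-essential sub-arrangements by explicitly tracking the lineality subspace (whose dimension controls a factor of $(t-1)$ in $\chi$). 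Once this bookkeeping is settled, the induction closes and both formulas drop out together.
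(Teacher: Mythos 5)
The paper does not prove this theorem: it is stated as classical background and cited to Zaslavsky's memoir, and the subsequent Theorem~\ref{theorem_Zaslavsky_poset} is only a poset-theoretic reformulation, also without proof. So there is no internal argument to compare your proposal against; I can only assess it on its own terms.

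Your deletion--restriction plan is the standard modern route to Zaslavsky's theorem, and the first half is sound and essentially complete: the recurrences $\chi(\mathcal{H};t) = \chi(\mathcal{H}';t) - \chi(\mathcal{H}'';t)$ and $r(\mathcal{H}) = r(\mathcal{H}') + r(\mathcal{H}'')$ need no essentiality hypothesis, the base case $\chi = t^n$, $r = 1$ matches, and induction on $|\mathcal{H}|$ gives $r(\mathcal{H}) = (-1)^n\chi(\mathcal{H};-1)$. For bounded regions you have correctly located the difficulty, but your first proposed fix does not work: if $\mathcal{H}$ consists of exactly $n$ hyperplanes in general position, deleting \emph{any} of them destroys essentiality, so one cannot always choose $H$ to preserve it. The second fix is the right one, but you leave it as a remark rather than carrying it out, and this is where the real content of the second formula lives. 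Concretely, one must prove the strengthened statement that the number of \emph{relatively} bounded regions equals $(-1)^{\mathrm{rank}(\mathcal{H})}\chi(\mathcal{H};1)$ for all arrangements. Then the base case is the empty arrangement (rank $0$, $\chi(1)=1$, one relatively bounded region, namely all of $\mathbb{R}^n$), and the inductive step requires a case split: when $\mathrm{rank}(\mathcal{H}') = \mathrm{rank}(\mathcal{H})$ one proves $b(\mathcal{H}) = b(\mathcal{H}') + b(\mathcal{H}'')$ geometrically, while when $\mathrm{rank}(\mathcal{H}') = \mathrm{rank}(\mathcal{H}) - 1$ one must show both that $\mathcal{H}$ has no relatively bounded regions (a single hyperplane cannot close off the free direction of $\mathcal{H}'$ on both sides) and that the intersection lattices of $\mathcal{H}'$ and $\mathcal{H}''$ coincide, so the algebra $-b(\mathcal{H}') + b(\mathcal{H}'') = 0$ checks out. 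None of this is in the proposal, so the second half remains a genuine gap, though your diagnosis of where the gap lies is accurate.
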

For a graded poset $P$,
define the two Zaslavsky invariants $Z$ and $Z_{b}$
by
\begin{eqnarray*}
      Z(P) 
  & = &
      \sum_{\hz \leq x \leq \ho} (-1)^{\rho(x)} \cdot \mu(\hz,x)  ,\\
      Z_{b}(P) 
  & = &
      (-1)^{\rho(P)} \cdot \mu(P)    .
\end{eqnarray*}
In order to work with Zaslavsky's result, we need the following
reformulation of Theorem~\ref{theorem_Zaslavsky}.
\begin{theorem}
\begin{enumerate}
\item[(i)]
For a central hyperplane arrangement the number of
regions is given by $Z(\mathcal{L})$, where $\mathcal{L}$
is the intersection lattice of the arrangement.

\item[(ii)]
For a non-central hyperplane arrangement the number of
regions is given by $Z(\mathcal{L}) - Z_{b}(\mathcal{L})$,
where $\mathcal{L}$
is the intersection lattice of the arrangement.
The number of bounded regions is given by~$Z_{b}(\mathcal{L})$.
\end{enumerate}
\label{theorem_Zaslavsky_poset}
\end{theorem}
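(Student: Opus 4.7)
The plan is to derive Theorem~\ref{theorem_Zaslavsky_poset} directly from Theorem~\ref{theorem_Zaslavsky} by rewriting the characteristic polynomial in the rank grading of $\mathcal{L}$. The key preliminary observation is that for an essential arrangement in $\mathbb{R}^{n}$, every nonempty $x \in \mathcal{L}$ is an affine subspace satisfying $\rho(x) = n - \dim(x)$, since rank on $\mathcal{L} \setminus \{\emptyset\}$ records codimension. In the non-central case, adjoining $\ho = \emptyset$ on top then gives $\rho(\mathcal{L}) = n+1$.

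For part (i), I would substitute $t = -1$ into $\chi(\mathcal{H}; t)$ and multiply by $(-1)^{n}$. The factor $(-1)^{n} \cdot (-1)^{\dim(x)}$ collapses to $(-1)^{\rho(x)}$, and since $\ho = \{\mathbf{0}\}$ is already among the nonempty intersections, the characteristic polynomial sums over all of $\mathcal{L}$. The right-hand side is exactly $Z(\mathcal{L})$, and Theorem~\ref{theorem_Zaslavsky} delivers the region count.

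For part (ii), the same substitution works, but the definition of $\chi$ omits the empty intersection $\ho$. Adding and subtracting the missing term yields
$$(-1)^{n}\,\chi(\mathcal{H}; -1) \;=\; Z(\mathcal{L}) \;-\; (-1)^{\rho(\ho)}\,\mu(\hz,\ho) \;=\; Z(\mathcal{L}) - Z_{b}(\mathcal{L}).$$
For the bounded region count I would evaluate $\chi$ at $t = 1$. The defining recursion of the M\"obius function gives $\sum_{\hz \leq x \leq \ho} \mu(\hz,x) = 0$, so the truncated sum appearing in $\chi(\mathcal{H};1)$ equals $-\mu(\mathcal{L})$. Multiplying by $(-1)^{n}$ produces $(-1)^{n+1}\mu(\mathcal{L}) = (-1)^{\rho(\mathcal{L})}\mu(\mathcal{L}) = Z_{b}(\mathcal{L})$.

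The main obstacle is really just the bookkeeping: one must verify carefully that $\rho(x) = n - \dim(x)$ on $\mathcal{L} \setminus \{\emptyset\}$ and that $\rho(\ho) = n+1$ in the non-central case (which uses essentiality so that codimension-$k$ intersections exist for each $k = 0, 1, \ldots, n$). Once these identifications are in hand, each of the three assertions is a one-line algebraic rearrangement of Zaslavsky's formula.
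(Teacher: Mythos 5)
The paper states Theorem~\ref{theorem_Zaslavsky_poset} without proof, treating it as an immediate reformulation of Theorem~\ref{theorem_Zaslavsky}; your argument supplies exactly the verification the authors had in mind, and it is correct. The substitutions $t=\pm 1$ together with the identification $\rho(x)=n-\dim(x)$ on the nonempty flats, and $\rho(\ho)=n+1$ in the non-central case, are precisely the bookkeeping required, and your use of essentiality to guarantee the existence of $0$-dimensional flats (so that $\emptyset$ sits at rank $n+1$) is the right point to flag.
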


Given a central hyperplane arrangement $\mathcal{H}$
there are two associated lattices, namely,
the 
intersection lattice $\mathcal{L}$
and the 
lattice $T$ of faces of the arrangement.
The minimal element of $T$ is the empty set $\emptyset$
and the maximal element is the whole space~$\mathbb{R}^{n}$.
The lattice of faces can be seen as the face poset
of the cell complex obtained by intersecting the arrangement
$\mathcal{H}$ with a small sphere 
centered at the origin.
Each hyperplane corresponds to a great circle on the sphere.
An alternative way to view the lattice of faces $T$
is that the dual lattice $T^{*}$ is the 
face lattice of the zonotope corresponding to $\mathcal{H}$.

Let $\Lzero$ denote the intersection lattice
with a new minimal element $\hz$ adjoined.
Define an order- and rank-preserving map $z$
from the dual lattice $T^{*}$ to 
the augmented lattice $\Lzero$
by sending a face of the arrangement, that is, a cone in
$\mathbb{R}^{n}$, to its affine hull.
Note that under the map $z$ the minimal
element of $T^{*}$ is mapped to the 
minimal element of $\Lzero$.
Observe that $z$ maps chains to chains.
Hence we view~$z$ as a map from the set
of chains of $T^{*}$
to the set of chains of $\Lzero$.
Bayer and Sturmfels~\cite{Bayer_Sturmfels}
proved the following result about the
inverse image of a chain under the map $z$.
\begin{theorem}[Bayer--Sturmfels]
Let $\mathcal{H}$ be a central hyperplane arrangement
with intersection lattice $\mathcal{L}$.
Let $c = \{\hz = x_{0} < x_{1} < \cdots < x_{k} = \ho\}$
be a chain in $\Lzero$. Then the cardinality
of the inverse image of the chain $c$ 
under the map $z : T^{*} \longrightarrow \Lzero$
is given by the product
$$      |z^{-1}(c)|
    =
        \prod_{i=2}^{k}
               Z([x_{i-1},x_{i}])   .   $$
\label{theorem_Bayer_Sturmfels}
\end{theorem}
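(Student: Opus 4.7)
The plan is to count preimages of $c$ by peeling faces off the top of the chain in $T$ and reducing each peeling step to a Zaslavsky count on an interval of $\mathcal{L}$. A preimage in $z^{-1}(c)$ is a chain $F_{0} < F_{1} < \cdots < F_{k}$ in $T^{*}$, equivalently a strictly decreasing tower of arrangement faces $\overline{F_{0}} \supsetneq \overline{F_{1}} \supsetneq \cdots \supsetneq \overline{F_{k}}$ with prescribed affine hulls $\mathrm{aff}(F_{i}) = x_{i}$. Because the arrangement is central, the two ends of the chain are forced: $F_{0} = \mathbb{R}^{n}$ is the unique preimage of $\hz$, and $F_{k} = \{\mathbf{0}\}$ is the unique face whose affine hull equals $\ho$. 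All of the counting freedom therefore lives in the interior indices $F_{1}, \ldots, F_{k-1}$.

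The key ingredient is the following local lemma: for any face $F$ of $\mathcal{H}$ with $\mathrm{aff}(F) = y \in \mathcal{L}$, and for any $x \in \mathcal{L}$ with $x \leq y$ in $\mathcal{L}$ (so $y \subseteq x$ as subspaces), the number of faces $F'$ of $\mathcal{H}$ with $F \subseteq \overline{F'}$ and $\mathrm{aff}(F') = x$ equals $Z([x,y])$, depending only on the interval. I would prove the lemma by picking a point $p$ in the relative interior of $F$ and localizing the restricted arrangement on $x$ at $p$: the hyperplanes passing through $p$ are precisely those of the form $H \cap x$ with $H \in \mathcal{H}$, $H \supseteq y$, and $H \not\supseteq x$, and these form a central arrangement on $x$ whose intersection lattice is exactly the interval $[x,y]$ in $\mathcal{L}$. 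The standard star-of-a-face correspondence puts the chambers of the restricted arrangement whose closure contains $F$ in bijection with the chambers of this localized central arrangement, and Theorem~\ref{theorem_Zaslavsky_poset}(i) identifies the latter count as $Z([x,y])$.

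With the local lemma in hand, the product drops out by iterating from the top of the chain downward. Given $F_{i}$, the lemma applied with $y = x_{i}$ and $x = x_{i-1}$ gives exactly $Z([x_{i-1},x_{i}])$ choices for $F_{i-1}$, independent of the particular $F_{i}$. Multiplying these local counts for $i = k, k-1, \ldots, 2$ and recalling that the forced endpoints $F_{k}$ and $F_{0}$ contribute no extra factor yields the stated product $\prod_{i=2}^{k} Z([x_{i-1},x_{i}])$; the product starts at $i = 2$ precisely because the bottom of the chain ($F_{0} = \mathbb{R}^{n}$, corresponding to the artificial minimum $x_{0} = \hz$) is determined with no local choice. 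The main obstacle I expect is the careful execution of the local lemma: identifying the intersection lattice of the localized arrangement with $[x,y]$ and confirming that the star correspondence is a genuine bijection rather than merely a surjection with multiplicity, both of which require a short but non-trivial geometric verification before the formula becomes immediate.
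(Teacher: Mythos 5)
The paper cites this theorem from Bayer--Sturmfels without reproving it, but its proofs of the toric and unbounded analogues (Theorems~\ref{theorem_Bayer_Sturmfels_toric} and~\ref{theorem_Bayer_Sturmfels_unbounded}) explicitly invoke and follow ``the original argument of Bayer--Sturmfels,'' which is precisely your top-down peeling argument with the local star-of-a-face count. Your proposal is correct and matches that approach, including the forced endpoints and the reduction of each peeling step to a Zaslavsky count $Z([x_{i-1},x_i])$ on the localized central arrangement.
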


%\subsection{The $\cd$-index}
\subsection{The cd-index}

Let $P$ be a graded poset of rank $n+1$
with rank function $\rho$.
For $S = \{s_{1} < \cdots < s_{k-1}\}$
a subset of
$\{1, \ldots, n\}$ define $f_{S}$ to be
the number of chains
$c = \{\hz = x_{0} < x_{1} < \cdots < x_{k} = \ho\}$
that have elements with ranks in the set $S$,
that is, 
$$    f_{S}
   =
      |\{ c \:\: : \:\:
          \rho(x_{1}) = s_{1}, \ldots, 
          \rho(x_{k-1}) = s_{k-1} \}|   .  $$
Observe that $f_{S}$ is the number
of maximal chains in the rank-selected poset $P(S)$.
The flag $h$-vector is obtained by the relation
(here we also present its inverse)
\[       h_{S} 
      =
         \sum_{T \subseteq S}
             (-1)^{|S-T|} \cdot f_{T} 
 \:\:\:\: \mbox{ and } \:\:\:\:
         f_{S} 
      =
         \sum_{T \subseteq S}
             h_{T}               .  \]
%where in both sums $T$ ranges over all subsets of $S$.
Recall that by Philip~Hall's theorem,
the M\"obius function of %the $S$-rank-selected poset
$P(S)$ is 
$\mu(P(S)) = (-1)^{|S|-1} \cdot h_{S}$.

Let $\av$ and $\bv$ be two non-commutative variables
of degree $1$.
For $S$ a subset of $\{1, \ldots, n\}$ let $u_{S}$ be the monomial
$u_{S} = u_{1} \cdots u_{n}$ where
$u_{i} = \bv$ if $i \in S$
and~$u_{i} = \av$ if $i \not\in S$.
Then the {\em $\ab$-index} is the noncommutative polynomial defined by
\[  \Psi(P) = \sum_{S} h_{S} \cdot u_{S} ,  \]
where the sum is over all subsets $S \subseteq \{1, \ldots, n\}$.
The $\ab$-index of a poset $P$ of rank $n+1$
is a homogeneous polynomial of degree $n$.

A poset $P$ is {\em Eulerian} if
every interval $[x,y]$, where $x < y$,
satisfies the Euler-Poincar\'e
relation, that is, there are  the same number of elements
of odd as even rank.
Equivalently, the M\"obius function of $P$ is given by
$\mu(x,y) = (-1)^{\rho(x,y)}$
for all~$x \leq y$ in $P$.
The quintessential result is that
the $\ab$-index 
of an Eulerian poset has the following
form.
\begin{theorem}
The $\ab$-index of an Eulerian poset $P$
can be expressed in terms of
the noncommutative variables
$\cv = \av + \bv$
and $\dv = \av \bv + \bv \av$.
\end{theorem}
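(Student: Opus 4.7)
The plan is to follow the classical approach of Bayer--Klapper. First, introduce a new noncommutative variable $\ev = \av - \bv$, so that $\QQ\langle\av,\bv\rangle = \QQ\langle\cv,\ev\rangle$. A direct calculation gives $\dv = \ab + \ba = \tfrac{1}{2}(\cv^2 - \ev^2)$, so the subalgebra $\QQ\langle\cv,\dv\rangle$ coincides with $\QQ\langle\cv,\ev^2\rangle$. Thus the theorem reduces to the claim that, in the unique expansion of $\Psi(P)$ as a polynomial in $\cv$ and $\ev$, every maximal run of consecutive $\ev$'s has even length.

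To carry this out, I would expand each $\ab$-monomial using $\av = (\cv+\ev)/2$ and $\bv = (\cv-\ev)/2$. A short computation yields
\[
u_S = \frac{1}{2^n}\sum_{R \subseteq \{1,\ldots,n\}} (-1)^{|R \cap S|}\, w_R,
\]
where $w_R$ denotes the monomial having $\ev$ in the positions $i \in R$ and $\cv$ elsewhere. Substituting into $\Psi(P) = \sum_S h_S \cdot u_S$, the coefficient of $w_R$ becomes $2^{-n}\sum_S (-1)^{|R \cap S|} h_S$. It remains to show this coefficient vanishes whenever some maximal consecutive block of $R$ has odd length.

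For this I would derive the Bayer--Billera relations on the flag $h$-vector directly from the Euler-Poincar\'e identity $\mu_P(x,y) = (-1)^{\rho(y)-\rho(x)}$ characterizing Eulerian posets. Applied to any interval $[x,y]$ of rank at least $2$, this identity is equivalent to $\sum_{x < z < y}(-1)^{\rho(z)-\rho(x)} = 0$. Summing such vanishing identities over the completions of a partial chain whose remaining ranks lie in an arbitrary target set, and then passing from $f$-vector to $h$-vector via M\"obius inversion, produces precisely the vanishing of $\sum_S(-1)^{|R \cap S|}h_S$ for every $R$ containing an odd-length maximal consecutive block.

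The main obstacle, as in the Bayer--Klapper paper, will be the sign and indexing bookkeeping in the third step: translating between the flag $f$-counts, the $h$-vector, and the $\cv,\ev$-basis coefficients is routine but error-prone. A useful sanity check is the dimension count: the subspace of degree-$n$ polynomials in $\QQ\langle\cv,\dv\rangle$ has dimension equal to the Fibonacci number $F_{n+1}$ (the number of compositions of $n$ into parts of size $1$ and $2$), which matches the codimension cut out by the Bayer--Billera relations on the flag $h$-vector.
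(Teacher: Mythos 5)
The paper does not prove this theorem: it states it and cites Bayer--Klapper, Stanley, and Ehrenborg(--Readdy) for proofs, so there is no in-paper argument to compare against. Your sketch reconstructs the original Bayer--Klapper route, and its skeleton is right. The change of basis to $\cv$ and $\av-\bv$, the identity $(\av-\bv)^{2}=\cv^{2}-2\dv$ reducing the claim to ``every maximal $(\av-\bv)$-run has even length,'' the formula $c_{R}=2^{-n}\sum_{S}(-1)^{|R\cap S|}h_{S}$ for the coefficient of the monomial $w_{R}$, and the Fibonacci dimension count are all correct.

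Two concrete problems remain. First, the Euler--Poincar\'e identity must include the endpoints: the correct statement for an Eulerian interval $[x,y]$ is $\sum_{x\le z\le y}(-1)^{\rho(z)-\rho(x)}=0$, whereas the interior sum $\sum_{x<z<y}(-1)^{\rho(z)-\rho(x)}$ equals $-1-(-1)^{\rho(y)-\rho(x)}$, which is nonzero for every even-rank interval and already fails for any interval of rank $2$. This is not mere bookkeeping: when you sum over completions of a partial chain, the endpoint contributions are exactly what produce the parity-dependent right-hand side $\bigl(1-(-1)^{k-i-1}\bigr)f_{S}$ of the generalized Dehn--Sommerville relations, so dropping them will leave you with the wrong relations downstream. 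Second, the crux --- that those relations, after M\"obius inversion to the flag $h$-vector, yield precisely $c_{R}=0$ for every $R$ with an odd maximal block --- is asserted rather than shown. That identification is where essentially all of the effort in Bayer--Klapper lives; the reduction to ``odd-block coefficients vanish'' is the easy part. Matching dimensions ($2^{n}-F_{n+1}$ on both sides) shows the two spaces of conditions \emph{could} coincide, not that they do, and establishing the coincidence is the theorem.

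Since the paper already sets up Stanley's recursion~(\ref{equation_Stanley_recursion}) and the Ehrenborg--Readdy coalgebraic framework, it is worth knowing that those yield shorter inductive existence proofs that bypass the flag-$h$ bookkeeping entirely; that is precisely why the paper cites them as ``revealing the underlying algebraic structure.''
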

This theorem was originally conjectured by Fine and
proved by Bayer and Klapper~\cite{Bayer_Klapper}.  Stanley provided
an alternative proof for Eulerian posets~\cite{Stanley_d}.
There are proofs which have both used and revealed  the underlying
algebraic structure.
See for instance~\cite{Ehrenborg_k-Eulerian,Ehrenborg_Readdy_homology}.
When the $\ab$-index $\Psi(P)$ is written in terms of
$\cv$ and $\dv$,
the resulting polynomial is called the {\em $\cd$-index}.
There are linear relations 
among the entries of the flag $f$-vector
of an Eulerian poset, known as the generalized
Dehn-Sommerville relations; see~\cite{Bayer_Billera}.
The importance of the $\cd$-index is that it removes all of these
linear redundancies among the flag $f$-vector entries.

Observe that the variables $\cv$ and $\dv$ have degrees
$1$ and $2$, respectively.
Thus the $\cd$-index of a poset of rank $n+1$ is
a homogeneous polynomial of degree $n$ in the
noncommutative variables $\cv$ and $\dv$.
%For a graded poset $P$ define $P^{*}$ to be the dual
%poset, that is, the poset having
%the same underlying set as $P$ but with the
%order relation reversed:
%$x <_{P^{*}} y$ if and only if $y <_{P} x$.
Define the reverse of an $\ab$-monomial
$u = u_{1} u_{2} \cdots u_{n}$ to be
$u^{*} = u_{n} \cdots u_{2} u_{1}$
and extend by linearity to an involution on $\Zab$.
Since $\cv^{*} = \cv$ and $\dv^{*} = \dv$,
this involution applied to a $\cd$-monomial
simply reverses the $\cd$-monomial.
Finally, the $\ab$-index respects this involution.
For any graded poset $P$
we have  $\Psi(P)^{*} = \Psi(P^{*})$.

A direct approach to describe the $\ab$-index of a
poset $P$ is to give each chain a weight and
then sum over all chains.
For a chain
$c = \{\hz = x_{0} < x_{1} < \cdots < x_{k} = \ho\}$
in the poset $P$, define its {\em weight} to be
\begin{equation}
      \wt(c)
   =
      (\av-\bv)^{\rho(x_{0},x_{1})-1}
        \cdot
      \bv
        \cdot
      (\av-\bv)^{\rho(x_{1},x_{2})-1}
        \cdot
      \bv
        \cdots
      \bv
        \cdot
      (\av-\bv)^{\rho(x_{k-1},x_{k})-1}  ,
\label{equation_weight}
\end{equation}
where $\rho(x,y)$ denotes the rank difference
$\rho(y) - \rho(x)$.
Then the $\ab$-index of $P$ is the polynomial
$$
     \Psi(P) = \sum_{c} \wt(c),
$$
where the sum is over all chains $c$ in the poset $P$.

Finally, a third description of the $\ab$-index is
Stanley's recursion for the $\ab$-index of a graded
poset~\cite[Equation~(7)]{Stanley_d}.  It is:
\begin{equation}
\label{equation_Stanley_recursion}
 \Psi(P) = (\av-\bv)^{\rho(P)-1}
         + \sum_{\hz < x < \ho}
              (\av-\bv)^{\rho(x)-1} \cdot \bv \cdot \Psi([x,\ho]).
\end{equation}
The initial condition for
 this recursion is the unique poset of rank $1$,
$B_{1}$, where $\Psi(B_{1}) = 1$.

\subsection{Coalgebraic techniques}

A coproduct $\Delta$ on a free $\mathbb{Z}$-module $C$
is a linear map $\Delta : C \longrightarrow C \tensor C$.
In order to be explicit, we use the Heyneman--Sweedler 
sigma notation~\cite{Heyneman_Sweedler}
for writing the coproduct. To explain this notation, notice that
$\Delta(w)$ is an element of $C \tensor C$ and thus has the form
$$   \Delta(w) = \sum_{i=1}^{k} w_{1}^{i} \tensor w_{2}^{i}   ,  $$
where $k$ is the number of terms and $w_{1}^{i}$ and $w_{2}^{i}$
belong to $C$. Since all the maps that are applied to $\Delta(w)$
treat each term the same, the sigma notation drops the index $i$
and instead one writes
$$   \Delta(w) = \sum_{w} w_{(1)} \tensor w_{(2)}   .  $$
Informally, this sum should be thought of as all the ways of breaking
the element $w$ in two pieces, where the
first piece is denoted by  $w_{(1)}$
and the second by $w_{(2)}$.
The Sweedler notation for the expression
$(\Delta \tensor \id) \circ \Delta$,
where $\id$ denotes the identity map,
is the following
$$    ((\Delta \tensor \id) \circ \Delta)(w)
   =
      \sum_{w} \sum_{w_{(1)}}
         w_{(1,1)} \tensor w_{(1,2)} \tensor w_{(2)}  .  $$
The right-hand side should be thought of as
first breaking $w$ into the two pieces $w_{(1)}$ and $w_{(2)}$
and then breaking
$w_{(1)}$ into the two pieces $w_{(1,1)}$ and $w_{(1,2)}$.
See Joni and Rota for a more detailed explanation~\cite{Joni_Rota}.

The coproduct $\Delta$ is coassociative
if
$(\Delta \tensor \id) \circ \Delta = 
 (\id \tensor \Delta) \circ \Delta$.
The sigma notation expresses coassociativity as
$$   \sum_{w} \sum_{w_{(1)}}
         w_{(1,1)} \tensor w_{(1,2)} \tensor w_{(2)}
   =
     \sum_{w} \sum_{w_{(2)}}
         w_{(1)} \tensor w_{(2,1)} \tensor w_{(2,2)}  .  $$
Informally coassociativity states that 
all the possible ways to break $w$ into two pieces and
then breaking the first piece
into the two pieces
is equivalent to
all the ways to break $w$ into two pieces and
then break the second piece
into two pieces.
Compare coassociativity with associativity of
a multiplication map $m: A \tensor A \longrightarrow A$
on an algebra $A$.

Assuming coassociativity, the sigma notation
simplifies to
$$   \Delta^{2}(w) = \sum_{w} w_{(1)} \tensor w_{(2)} \tensor w_{(3)} , $$
where $\Delta^{2}$ is defined as
$(\Delta \tensor \id) \circ \Delta = (\id \tensor \Delta) \circ \Delta$,
and the three pieces have been renamed as
$w_{(1)}$, $w_{(2)}$ and $w_{(3)}$.
Coassociativity allows one to define
the $k$-ary coproduct
$\Delta^{k-1} : C \longrightarrow C^{\tensor k}$
by the recursion
$\Delta^{0} = \id$
and
$\Delta^{k} = (\Delta^{k-1} \tensor \id) \circ \Delta$.
The sigma notation for the $k$-ary coproduct is
$$   \Delta^{k-1}(w)
   = 
     \sum_{w}
         w_{(1)} \tensor w_{(2)} \tensordots w_{(k)}  .  $$

Let $\Zab$ denote the polynomial ring in the non-commutative
variables $\av$ and~$\bv$.
We define a coproduct~$\Delta$ on the algebra $\Zab$
by letting $\Delta$ satisfy the following
identities:
$\Delta(1) = 0$, $\Delta(\av) = \Delta(\bv) = 1 \tensor 1$
and the Leibniz condition 
\begin{equation}
      \Delta(u \cdot v)
    =
      \sum_{u} u_{(1)} \tensor u_{(2)} \cdot v
    +
      \sum_{v} u \cdot v_{(1)} \tensor v_{(2)}    .  
\label{equation_Newtonian}
\end{equation}
For an $\ab$-monomial $u = u_{1} u_{2} \cdots u_{n}$
we have that
$$    \Delta(u)
    =
      \sum_{i=1}^{n}
               u_{1} \cdots u_{i-1}
            \tensor
               u_{i+1} \cdots u_{n}    .    $$
The fundamental result for this coproduct is that
the $\ab$-index is a coalgebra homomorphism~\cite{Ehrenborg_Readdy_c}.
We express this result as the following identity.
\begin{theorem}[Ehrenborg--Readdy]
For a graded poset $P$
with $\ab$-index $w = \Psi(P)$ and for any $k$-multilinear map~$M$ 
on $\Zab$, 
the following coproduct identity holds:
$$   \sum_{c}
           M(\Psi([x_{0},x_{1}]),
             \Psi([x_{1},x_{2}]),
                \ldots,
             \Psi([x_{k-1},x_{k}]))
  =
     \sum_{w}
           M(w_{(1)},
             w_{(2)},
                \ldots,
             w_{(k)})    ,  $$
where the first sum is over all
chains $c = \{\hz = x_{0} < x_{1} < \cdots < x_{k} = \ho\}$
of length $k$ 
and the second sum is over the $k$-ary coproduct
of $w$, that is, over~$\Delta^{k-1}$.
\label{theorem_Ehrenborg_Readdy}
\end{theorem}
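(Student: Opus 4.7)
The plan is to reduce the theorem to the single coalgebraic identity
\[
\Delta^{k-1}(\Psi(P)) = \sum_{c} \Psi([x_0, x_1]) \tensor \Psi([x_1, x_2]) \tensordots \Psi([x_{k-1}, x_k]),
\]
the sum being over all length-$k$ chains $c = \{\hz = x_0 < x_1 < \cdots < x_k = \ho\}$ in $P$. Granting this identity, applying the $k$-multilinear map $M$ to both sides gives the theorem: the right-hand side becomes $\sum_w M(w_{(1)}, \ldots, w_{(k)})$ by the Sweedler notation for $\Delta^{k-1}$, and the left-hand side becomes $\sum_c M(\Psi([x_0, x_1]), \ldots, \Psi([x_{k-1}, x_k]))$.

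To establish this identity, I would use the chain-weight description $\Psi(P) = \sum_c \wt(c)$ from equation~\eqref{equation_weight}. The crucial observation is that
\[
\Delta(\av-\bv) = \Delta(\av) - \Delta(\bv) = 1 \tensor 1 - 1 \tensor 1 = 0,
\]
so by the Leibniz rule~\eqref{equation_Newtonian}, $\Delta((\av-\bv)^r) = 0$ for every $r \geq 0$. Hence when iterated Leibniz is applied to the monomial
\[
\wt(c) = (\av-\bv)^{r_0} \cdot \bv \cdot (\av-\bv)^{r_1} \cdot \bv \cdots \bv \cdot (\av-\bv)^{r_{j-1}}
\]
associated to a chain $c$ of length $j$ (with $r_i = \rho(x_i, x_{i+1})-1$), only the $j-1$ interior $\bv$-factors can contribute. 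Since $\Delta(\bv) = 1 \tensor 1$, computing $\Delta^{k-1}(\wt(c))$ reduces to summing over the ways to select $k-1$ of these interior $\bv$'s and break $\wt(c)$ into $k$ consecutive pieces at the chosen positions.

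Each such selection singles out a length-$k$ subchain $\hz = y_0 < y_1 < \cdots < y_k = \ho$ of $c$, and the $\ell$-th resulting tensor factor is exactly the weight of the restriction of $c$ to the subinterval $[y_{\ell-1}, y_\ell]$. Summing over all chains $c$ in $P$ and then reorganizing the double sum by grouping first on the chosen length-$k$ subchain and then on how $c$ refines each subinterval yields the identity, since the inner sum over refinements inside $[y_{\ell-1}, y_\ell]$ is precisely $\Psi([y_{\ell-1}, y_\ell])$.

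The main obstacle is the combinatorial bookkeeping in this bijective reorganization: one must verify that the selected $\bv$ (which Leibniz replaces by $1 \tensor 1$) is cleanly absorbed and does not appear in either adjacent tensor factor, consistent with the convention in equation~\eqref{equation_weight} placing $\bv$ only between the $(\av-\bv)$-blocks of consecutive subintervals, and that the unselected interior $\bv$'s bundle correctly with their flanking $(\av-\bv)^{r_i}$-factors to produce the weight of the induced subchain inside each $[y_{\ell-1}, y_\ell]$. A streamlined alternative is to prove the $k=2$ special case by the same Leibniz argument (now selecting a single $\bv$) and then to induct on $k$ using coassociativity of $\Delta$.
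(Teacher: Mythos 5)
The paper does not prove this theorem; it states it with attribution to Ehrenborg and Readdy and a citation, so there is no in-paper proof to compare against. Your proposal is correct. Reducing to the tensor-level identity
\[
\Delta^{k-1}(\Psi(P)) = \sum_{c} \Psi([x_0,x_1]) \tensor \Psi([x_1,x_2]) \tensordots \Psi([x_{k-1},x_k])
\]
and then applying the $k$-multilinear map $M$ to both sides is exactly the right move. Your argument for the identity is sound: since $\Delta(\av-\bv)=0$ and $\Delta(\bv)=1\tensor 1$, iterated Leibniz applied to $\wt(c)$ survives only through deletions of interior $\bv$-markers; each choice of $k-1$ of the $j-1$ markers in a length-$j$ chain $c$ determines a length-$k$ subchain $\hz = y_0 < \cdots < y_k = \ho$, and the resulting tensor factors are the weights of the restrictions of $c$ to the intervals $[y_{\ell-1},y_\ell]$. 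Re-summing by grouping first on the subchain $d$ and then on the refinements of $d$ within each interval factors the inner sum into $\Psi([y_0,y_1])\tensordots\Psi([y_{k-1},y_k])$, because the refinements in the different intervals are chosen independently. The fallback route you mention --- proving $k=2$ directly by the single-$\bv$-deletion argument and then inducting on $k$ via coassociativity --- is equally valid and arguably cleaner with respect to the bookkeeping you flag as the main obstacle.
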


%\subsection{The $\cd$-index of the face poset of a central arrangement}
\subsection{The cd-index of the face poset of a central arrangement}

We recall the definition of the 
omega map~\cite{Billera_Ehrenborg_Readdy_om}.

\begin{definition}
The linear map $\omega$ from $\Zab$ to $\Zcd$
is formed by 
first replacing every occurrence of $\av\bv$ 
in a given 
$\ab$-monomial 
by $2\dv$
and then replacing the remaining letters by $\cv$.
\end{definition}
For a central hyperplane arrangement~$\mathcal{H}$
the $\cd$-index of the face poset is computed
as follows~\cite{Billera_Ehrenborg_Readdy_om}.
\begin{theorem}[Billera--Ehrenborg--Readdy]
Let $\mathcal{H}$ be
a central hyperplane arrangement
with intersection lattice $\mathcal{L}$
and face lattice $T$. Then the $\cd$-index
of the
face lattice $T$ is given by
$$  \Psi(T) = \omega(\av \cdot \Psi(\mathcal{L}))^{*}  .  $$
\label{theorem_Billera_Ehrenborg_Readdy}
\end{theorem}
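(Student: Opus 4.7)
The plan is to dualize, using $\Psi(P)^* = \Psi(P^*)$, so that it suffices to show $\Psi(T^*) = \omega(\av \cdot \Psi(\mathcal L))$. I would then push the chain-weight expansion of $\Psi(T^*)$ through the order- and rank-preserving map $z \colon T^* \to \Lzero$ using Bayer--Sturmfels (Theorem~\ref{theorem_Bayer_Sturmfels}), recognize the resulting sum as a coalgebraic operation on $\Psi(\Lzero) = \av \cdot \Psi(\mathcal L)$ via the Ehrenborg--Readdy theorem (Theorem~\ref{theorem_Ehrenborg_Readdy}), and finally match that operation with $\omega$.

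Starting from $\Psi(T^*) = \sum_{c} \wt(c)$ over chains $c$ in $T^*$, the rank-preservation of $z$ gives $\wt(c) = \wt(z(c))$. Partitioning by $z$-image and invoking Bayer--Sturmfels yields
\begin{equation*}
\Psi(T^*) \;=\; \sum_{\bar c = \{\hz < x_1 < \cdots < x_k = \ho\}} \left(\prod_{i=2}^{k} Z([x_{i-1}, x_i])\right) \cdot \wt(\bar c),
\end{equation*}
the outer sum ranging over chains in $\Lzero$; the product index begins at $i = 2$ because the new minimum $\hz$ has a unique $z$-preimage. A short computation with Stanley's recursion~\eqref{equation_Stanley_recursion} confirms $\Psi(\Lzero) = \av \cdot \Psi(\mathcal L)$, which supplies the leading $\av$ in the target polynomial. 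To lift the displayed identity to an identity about the $\ab$-index of $\Lzero$, I would encode the per-interval factor $Z([x_{i-1}, x_i])$ as a linear evaluation of $\Psi([x_{i-1}, x_i])$ and, for each length $k$, build a $k$-multilinear map $M_k$ on $\Zab$ that reproduces the integrand of the sum above. Applying Theorem~\ref{theorem_Ehrenborg_Readdy} to each $M_k$ and summing over $k$ then delivers $\Psi(T^*) = \Phi(\Psi(\Lzero))$ for a resulting linear map $\Phi \colon \Zab \to \Zab$.

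The main obstacle is identifying $\Phi$ with $\omega$. I would verify this on arbitrary $\ab$-monomials by showing that the coalgebraic sum collapses to the deterministic left-to-right $\av\bv \mapsto 2\dv$ substitution: an $\av\bv$-adjacency supports a single nontrivial $Z$-factor producing the coefficient $2$ in $2\dv$, while an isolated letter contributes a $\cv$-factor from a rank-$1$ $Z$-evaluation. Once $\Phi = \omega$ is established, applying the involution $*$ to both sides recovers $\Psi(T) = \omega(\av \cdot \Psi(\mathcal L))^*$. This identification is the non-obvious combinatorial heart of the result; in particular, it explains why $\omega(\av \cdot \Psi(\mathcal L))$ lies in $\Zcd$ even though $\mathcal L$ is typically not Eulerian.
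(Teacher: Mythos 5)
Your plan mirrors the paper's own proof sketch. Steps 1 through 4 --- dualize via $\Psi(P)^* = \Psi(P^*)$, push the chain-weight expansion of $\Psi(T^*)$ through $z$ using Bayer--Sturmfels, note $\Psi(\Lzero)=\av\cdot\Psi(\mathcal L)$, and encode the result coalgebraically via Ehrenborg--Readdy --- reconstruct exactly Proposition~\ref{proposition_varphi}, which states $\Psi(T) = \varphi(\Psi(\Lzero))^*$ for the operator $\varphi(v) = \sum_{k\geq 1}\sum_v \kappa(v_{(1)})\cdot\bv\cdot\eta(v_{(2)})\cdot\bv\cdots\bv\cdot\eta(v_{(k)})$. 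Here $\kappa$ encodes the leading chain-weight factor $(\av-\bv)^{\rho(x_0,x_1)-1}$ via equation~(\ref{equation_kappa}), and $\eta$ encodes each later factor $Z([x_{i-1},x_i])\cdot(\av-\bv)^{\rho(x_{i-1},x_i)-1}$ via equation~(\ref{equation_eta}). This part of your argument is sound.

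The gap is in your final step, identifying your operator $\Phi$ (the paper's $\varphi$) with $\omega$. The intuition you offer --- that each $\av\bv$-adjacency supports a single nontrivial $Z$-factor of $2$, while each remaining letter supplies a $\cv$-factor from a rank-$1$ $Z$-evaluation --- does not describe what actually happens in the coalgebraic sum. Already for $v=\av\bv$ one computes $\varphi_1(\av\bv)=0$, $\varphi_2(\av\bv)=2\av\bv+2\bv\av-4\bv\bv$, and $\varphi_3(\av\bv)=4\bv\bv$, and these produce $2\dv$ only after cancellation of the $\bv\bv$ terms across chains of different lengths; the coefficients do not assemble adjacency-by-adjacency from individual $Z$-evaluations. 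The paper closes this gap differently: $\varphi$ satisfies the functional equation $\varphi(v) = \kappa(v) + \sum_v\varphi(v_{(1)})\cdot\bv\cdot\eta(v_{(2)})$, from which one derives the recursions $\varphi(v\cdot\av)=\varphi(v)\cdot\cv$, $\varphi(v\cdot\bv\bv)=\varphi(v\cdot\bv)\cdot\cv$, and $\varphi(v\cdot\av\bv)=\varphi(v)\cdot 2\dv$ (equations~(\ref{equation_varphi_a})--(\ref{equation_varphi_a_b})). These are precisely $\omega$'s rewriting rule read from the right; together with the initial conditions $\varphi(1)=1$ and $\varphi(\bv)=2\bv$, induction yields Proposition~\ref{proposition_culminate} ($\varphi=\omega$ on monomials beginning with $\av$), and the theorem follows. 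To complete your proof you would need to reproduce a telescoping argument of this recursive type; the per-letter accounting you sketch does not close the gap as stated.
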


We review the basic ideas behind the proof of this theorem.
We will refer back to them when we
prove similar results for toric and affine arrangements
in Sections~\ref{section_toric}
and~\ref{section_affine}.

Define three linear operators $\kappa$, $\beta$ and $\eta$ on
$\Zab$ by
\[
\kappa(v) = \begin{cases}
(\av-\bv)^{m} & \text{if $v=\av^{m}$ for some $m \geq 0$,} \\
0 & \text{otherwise,}
\end{cases}
\]
\[
\beta(v) = \begin{cases}
(\av-\bv)^{m} & \text{if $v=\bv^{m}$ for some $m \geq 0$,} \\
0 & \text{otherwise,}
\end{cases}
\]
and 
\[
\eta(v) = \begin{cases}
2 \cdot (\av-\bv)^{m+k} & \text{if $v = \bv^{m} \av^{k}$ for some $m, k \geq 0$,} \\
0 & \text{otherwise.}
\end{cases}
\]
Observe that $\kappa$ and $\beta$ are both algebra maps.
The following relations hold for a poset $P$.
See~\cite[Section 5]{Billera_Ehrenborg_Readdy_om}.
\begin{eqnarray}
\kappa(\Psi(P)) & = & (\av-\bv)^{\rho(P)-1} ,                  
\label{equation_kappa} \\
\beta(\Psi(P))  & = & Z_{b}(P) \cdot (\av-\bv)^{\rho(P)-1} ,   
\label{equation_beta} \\
\eta(\Psi(P))   & = & Z(P) \cdot (\av-\bv)^{\rho(P)-1} .
\label{equation_eta}
\end{eqnarray}
For $k \geq 1$ the operator $\varphi_{k}$
is defined by the coalgebra expression
$$   \varphi_{k}(v)
   =
     \sum_{v}
         \kappa(v_{(1)}) \cdot \bv \cdot
         \eta(v_{(2)}) \cdot \bv \cdots \bv \cdot
         \eta(v_{(k)})   ,  $$
where the coproduct splits $v$ into $k$ parts.
Finally $\varphi$ is defined as the sum
$$       \varphi(v) = \sum_{k \geq 1} \varphi_{k}(v)  .   $$
Note that 
in this expression
only a finite number of terms are nontrivial.
The connection with hyperplane arrangements is given by
the following proposition.
\begin{proposition}
The $\ab$-index of the lattice of faces of a central hyperplane
arrangement is given by
$$   \Psi(T) = \varphi( \Psi( \Lzero ) )^{*}   .   $$
\label{proposition_varphi}
\end{proposition}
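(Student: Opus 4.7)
The plan is to compute $\Psi(T^*)$ directly using Stanley's chain-weight formula, group chains by their images under the map $z$, apply Bayer--Sturmfels to evaluate the fiber sizes, and then invoke the Ehrenborg--Readdy coalgebra homomorphism theorem to reassemble everything as the coalgebraic expression defining $\varphi$. Since $\Psi(T) = \Psi(T^*)^*$, it suffices to prove $\Psi(T^*) = \varphi(\Psi(\Lzero))$.

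First I would expand $\Psi(T^*)$ as a sum of chain weights. Because the map $z : T^* \longrightarrow \Lzero$ is rank-preserving, the weight $\wt(c')$ of a chain $c'$ in $T^*$ depends only on the ranks of its elements, hence only on $z(c')$. Grouping chains $c'$ in $T^*$ according to their image $c = \{\hz = x_0 < x_1 < \cdots < x_k = \ho\}$ in $\Lzero$ and applying Theorem~\ref{theorem_Bayer_Sturmfels} yields
$$
\Psi(T^*) = \sum_{c} \left( \prod_{i=2}^{k} Z([x_{i-1},x_i]) \right) \cdot \wt(c),
$$
where the sum is over all chains $c$ in $\Lzero$. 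The key bookkeeping observation is that the product in Bayer--Sturmfels starts at $i=2$: the bottom interval $[\hz, x_1]$ contributes no $Z$-factor, matching the fact that $\kappa$ carries no such coefficient while $\eta$ does.

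Next I would rewrite each factor of the summand in coalgebraic form. Using identities~(\ref{equation_kappa}) and~(\ref{equation_eta}), the weight $\wt(c)$ together with the $Z$-product decomposes as
$$
\kappa(\Psi([x_0,x_1])) \cdot \bv \cdot \eta(\Psi([x_1,x_2])) \cdot \bv \cdots \bv \cdot \eta(\Psi([x_{k-1},x_k])).
$$
Thus $\Psi(T^*)$ becomes a sum over chains in $\Lzero$ of multilinear expressions in the $\ab$-indices of the subintervals. By Theorem~\ref{theorem_Ehrenborg_Readdy}, summing such an expression over all chains of length $k$ in a poset with $\ab$-index $w = \Psi(\Lzero)$ is equivalent to applying the corresponding multilinear map to $\Delta^{k-1}(w)$. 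Summing over $k \geq 1$ then yields precisely $\sum_{k \geq 1} \varphi_k(\Psi(\Lzero)) = \varphi(\Psi(\Lzero))$. Dualizing gives $\Psi(T) = \varphi(\Psi(\Lzero))^*$.

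The main obstacle I anticipate is the careful boundary handling at the adjoined $\hz$: one must verify that grouping chains of $T^*$ by their $z$-image actually produces chains in $\Lzero$ starting at $\hz$ (not at some other rank-zero element), and that the empty face of $T^*$ contributes the correct rank-zero element so that the weight computation respects the rank-preserving property of $z$. Once this is settled, the substitutions with $\kappa$ and $\eta$ align exactly with the split in the Bayer--Sturmfels product between $i=1$ and $i \geq 2$, and the remainder is a direct application of the coalgebra homomorphism identity.
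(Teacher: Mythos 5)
Your proof is correct and matches exactly the approach the paper uses to prove the analogous toric result (Theorem~\ref{theorem_poset_varphi_toric}): expand $\Psi(T^*)$ over chains, group fibers of $z$ and apply Bayer--Sturmfels, rewrite the weight and $Z$-factors via the identities~(\ref{equation_kappa}) and~(\ref{equation_eta}), and invoke Theorem~\ref{theorem_Ehrenborg_Readdy} to reassemble the result as $\varphi_k$ and sum over $k$. The paper does not reprove Proposition~\ref{proposition_varphi} itself, citing Billera--Ehrenborg--Readdy, but your argument is the one used there and reused (with $\eta$ swapped for $\lambda_t$ or $\lambda_{ub}$ in the last slot) in the toric and unbounded cases.
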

The function $\varphi$ satisfies the functional equation
$$     \varphi(v)
     = 
       \kappa(v)
     + 
       \sum_{v} \varphi(v_{(1)}) \cdot \bv \cdot \eta(v_{(2)}) .  $$
 From this functional equation it follows that
the function $\varphi$ satisfies the initial conditions
$\varphi(1) = 1$ and $\varphi(\bv) = 2 \cdot \bv$
and the recurrence relations:
\begin{eqnarray}
\varphi(v \cdot \av) & = & \varphi(v) \cdot \cv , 
\label{equation_varphi_a} \\
\varphi(v \cdot \bv \bv) & = & \varphi(v \cdot \bv) \cdot \cv , 
\label{equation_varphi_b_b} \\
\varphi(v \cdot \av \bv) & = & \varphi(v) \cdot 2\dv ,
\label{equation_varphi_a_b}
\end{eqnarray}
for an $\ab$-monomial $v$;
see~\cite[Section~5]{Billera_Ehrenborg_Readdy_om}.
These recursions culminate in the following result.
\begin{proposition}
\label{proposition_culminate}
The maps $\varphi$ and $\omega$ agree on $\ab$-monomials
that begin with $\av$, that is, if $w = \av\cdot v$, then
$\varphi(w) = \omega(w)$.
\end{proposition}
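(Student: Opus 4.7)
The plan is to proceed by induction on the length $n$ of the monomial $w = \av\cdot v$, using the three recurrences~\eqref{equation_varphi_a}, \eqref{equation_varphi_b_b}, and~\eqref{equation_varphi_a_b} for $\varphi$ together with the initial condition $\varphi(1) = 1$. The base case $n = 1$ is immediate: $w = \av$, and equation~\eqref{equation_varphi_a} gives $\varphi(\av) = \varphi(1) \cdot \cv = \cv$, which equals $\omega(\av)$ by the definition of $\omega$.

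For the inductive step with $n \geq 2$, I would split into three cases according to the final one or two letters of $w$. If $w = w' \cdot \av$, then $w'$ has length $n - 1$ and begins with $\av$, so the induction hypothesis together with~\eqref{equation_varphi_a} yields $\varphi(w) = \omega(w') \cdot \cv$; this matches $\omega(w)$ because the terminal $\av$ is not the first letter of any $\av\bv$ pattern and so is replaced by $\cv$. If $w = w' \cdot \av\bv$, then either $w' = 1$ or $w'$ begins with $\av$, so $\varphi(w') = \omega(w')$ either trivially or by induction; equation~\eqref{equation_varphi_a_b} then gives $\varphi(w) = \omega(w') \cdot 2\dv = \omega(w)$, matching the terminal $\av\bv \mapsto 2\dv$ replacement. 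Finally, if $w = w' \cdot \bv\bv$, then $w' \cdot \bv$ has length $n - 1$ and begins with $\av$, so the induction hypothesis and~\eqref{equation_varphi_b_b} give $\varphi(w) = \omega(w' \cdot \bv) \cdot \cv$; this equals $\omega(w)$ because the terminal $\bv$ of $w$ is preceded by $\bv$ rather than $\av$ and so is replaced by $\cv$ as a standalone letter.

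These three cases are exhaustive for any $w$ of length at least $2$ beginning with $\av$, since the last letter is either $\av$ or $\bv$ and, in the $\bv$ case, a penultimate letter exists and is either $\av$ or $\bv$. There is no serious obstacle; the main point to check is the local compatibility statement in each case, namely that appending a suffix to $w'$ (or to $w' \cdot \bv$) changes $\omega$ by exactly the right-multiplicative factor prescribed by the matching recurrence for $\varphi$. This is immediate from the definition of $\omega$ because the $\av\bv$-patterns in an $\ab$-word are non-overlapping, so the replacement rule is fully determined by what happens at the end of the word.
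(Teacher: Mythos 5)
Your proof is correct and follows exactly the argument the paper leaves implicit: the paper states the three recurrences~(\ref{equation_varphi_a}), (\ref{equation_varphi_b_b}), (\ref{equation_varphi_a_b}) and the initial condition $\varphi(1)=1$, and then says ``these recursions culminate in'' the proposition, with the length induction and case split on the last one or two letters left to the reader. Your write-up simply carries out that induction, including the needed checks that a prefix of a monomial starting with $\av$ still starts with $\av$ and that disjointness of $\av\bv$-occurrences makes the suffix replacements compatible with $\omega$.
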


Theorem~\ref{theorem_Billera_Ehrenborg_Readdy}
follows from the fact that
$\Psi(\Lzero) = \av \cdot \Psi(\mathcal{L})$
by applying
Proposition~\ref{proposition_culminate}.

\subsection{Regular subdivisions of manifolds}

The face poset $P(\Omega)$ of a cell complex $\Omega$ is
the set of all cells in $\Omega$ together
with a minimal element~$\hz$ and a maximal element $\ho$.
One partially orders two
cells $\tau$ and $\sigma$
by requiring that $\tau < \sigma$ if the cell $\tau$ is contained 
in $\overline{\sigma}$,
the closure of $\sigma$.
In order to define a regular cell complex,
consider the cell complex $\Omega$ embedded in 
Euclidean space $\mathbb{R}^{n}$.
This condition is compatible with toric cell complexes
since the~$n$-dimensional torus can be embedded 
in $2n$-dimensional Euclidean space.
Let~$B^{n}$ denote the ball
$\{x \in \mathbb{R}^{n} \: : \: x_{1}^{2} + \cdots + x_{n}^{2} \leq 1\}$
and
let $S^{n-1}$ denote the sphere
$\{x \in \mathbb{R}^{n} \: : \: x_{1}^{2} + \cdots + x_{n}^{2} = 1\}$.
A cell complex $\Omega$ 
is {\em regular} if 
(i) $\Omega$ consists of a finite number of cells,
(ii) for every cell $\sigma$ of $\Omega$
the pair $(\overline{\sigma}, \overline{\sigma} - \sigma)$
is homeomorphic
to a pair $(B^{k},S^{k-1})$ for some integer $k$,
and
(iii)
the boundary
$\overline{\sigma} - \sigma$
is the disjoint union of smaller cells in $\Omega$.
See Section~3.8 in~\cite{Stanley_EC_I} for more details.
For a discussion of regular cell complexes not embedded
in $\mathbb{R}^n$,
see~\cite{Bjorner_topological_methods}.

The face poset of a regular subdivision of the sphere
is an Eulerian face poset
and hence has a $\cd$-index.
For regular subdivisions of compact manifolds,
a similar result holds.
This was independently observed by
Swartz~\cite{Swartz}.
\begin{theorem}
Let $\Omega$ be a regular cell complex whose geometric realization
is a compact $n$-dimensional manifold $\mathcal{M}$. 
Let $\chi(\mathcal{M})$ denote the Euler characteristic of~$\mathcal{M}$.
Then the $\ab$-index
of the face poset $P$ of $\Omega$ has the following form.
\begin{enumerate}
\item[(i)]
If $n$ is odd then
$P$ is an Eulerian poset and hence
$\Psi(P)$ can written in terms of $\cv$ and $\dv$.

\item[(ii)]
If $n$ is even then $\Psi(P)$ has the form
$$      \Psi(P) 
    = 
        \left( 1-\frac{\chi(\mathcal{M})}{2}
        \right) \cdot (\av-\bv)^{n+1}
      + 
        \frac{\chi(\mathcal{M})}{2} \cdot \cv^{n+1}
      + 
        \Phi   ,   $$
where $\Phi$ is a homogeneous $\cd$-polynomial of degree $n+1$
and $\Phi$ does not contain the term $\cv^{n+1}$.
\end{enumerate}
\label{theorem_manifold}
\end{theorem}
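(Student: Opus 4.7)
The plan is to reduce the theorem to a structural analysis of the proper intervals of $P$. First I would establish that every interval $[x,y] \neq [\hz,\ho]$ of $P$ is Eulerian. When $y < \ho$, the interval $[x,y]$ is the face poset of a regular cell decomposition of a sphere (a suitable portion of the boundary of the cell corresponding to $y$); when $x > \hz$ and $y = \ho$, the interval $[x,\ho]$ is the augmented face poset of the link of the cell $x$ in $\mathcal{M}$, which is a sphere since $\mathcal{M}$ is a manifold. Hence $\Psi([x,y]) \in \Zcd$ for every such interval by Bayer--Klapper. Part (i) then follows immediately: for $n$ odd, Poincar\'e duality gives $\chi(\mathcal{M}) = 0$, so $\mu_{P}(\hz,\ho) = \chi(\mathcal{M}) - 1 = -1 = (-1)^{n+2}$, the Eulerian value; combined with the first step every interval of $P$ is Eulerian and Bayer--Klapper yields $\Psi(P) \in \Zcd$.

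For part (ii), with $n$ even, every proper interval of $P$ remains Eulerian but $[\hz,\ho]$ itself may fail to be. Specializing Theorem~\ref{theorem_Ehrenborg_Readdy} with $k = 2$ and $M(u,v) = u \tensor v$ produces
\[
\Delta \Psi(P) \;=\; \sum_{\hz < x < \ho} \Psi([\hz,x]) \tensor \Psi([x,\ho]) \;\in\; \Zcd \tensor \Zcd,
\]
where $\Delta$ is the Newtonian coproduct on $\Zab$, since each tensor factor is now a $\cd$-polynomial. The key structural input I would invoke is that for $n+1$ odd, $\ker \Delta$ on the degree-$(n+1)$ part of $\Zab$ equals $\mathbb{Q}(\av-\bv)^{n+1}$ (a short computation using $\Delta(\av-\bv) = 0$ and the Leibniz rule), while the preimage $\Delta^{-1}(\Zcd \tensor \Zcd)$ equals $\Zcd \oplus \mathbb{Q}(\av-\bv)^{n+1}$. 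The second claim is the main technical obstacle; it can be extracted from the Newtonian coalgebra machinery of~\cite{Ehrenborg_Readdy_homology}. Granting it, one obtains a decomposition $\Psi(P) = \Phi' + \lambda(\av-\bv)^{n+1}$ with $\Phi' \in \Zcd$ and $\lambda \in \mathbb{Q}$.

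To pin down $\lambda$ and the $\cv^{n+1}$-coefficient of $\Phi'$, I would compare the coefficients of $\av^{n+1}$ and $\bv^{n+1}$ on both sides. Among $\cd$-monomials of degree $n+1$, only $\cv^{n+1}$ has nonzero coefficient of $\av^{n+1}$ or of $\bv^{n+1}$, because any factor $\dv = \av\bv + \bv\av$ mixes an $\av$ with a $\bv$; set $d := [\cv^{n+1}]\Phi'$. The identities $[\av^{n+1}]\Psi(P) = h_{\emptyset} = 1 = d + \lambda$ and $[\bv^{n+1}]\Psi(P) = h_{\{1,\ldots,n+1\}} = (-1)^{n}\mu_{P}(\hz,\ho) = \chi(\mathcal{M}) - 1 = d - \lambda$ (using that $n$ is even, so $[\bv^{n+1}](\av-\bv)^{n+1} = -1$) then solve to give $\lambda = 1 - \chi(\mathcal{M})/2$ and $d = \chi(\mathcal{M})/2$. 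Writing $\Phi' = d\,\cv^{n+1} + \Phi$ with $\Phi$ containing no $\cv^{n+1}$ term yields the claimed form.
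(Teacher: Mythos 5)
Your proof is correct and reaches the same final coefficient comparison as the paper, but via a somewhat different supporting framework, so a comparison is worthwhile. The paper handles both parts by citing two facts: first, Stanley's Theorem~3.8.9 (that every proper interval of $P$ is Eulerian and that $\mu(P) = \chi(\mathcal{M}) - 1$), and second, for part~(i) a purely combinatorial lemma (Stanley's Exercise~69c, that a poset of odd rank whose proper intervals are all Eulerian is itself Eulerian) and for part~(ii) Ehrenborg's Theorem~4.2 from the $k$-Eulerian paper, which says directly that $\Psi(P) \in \mathbb{R}\langle\cv,\dv,(\av-\bv)^{n+1}\rangle$. Your treatment of part~(i) replaces Exercise~69c with the topological observation $\chi(\mathcal{M}) = 0$ for a closed odd-dimensional manifold; this is a clean shortcut, though you should be explicit that Poincar\'e duality in the orientable case must be supplemented by the double-cover argument for non-orientable $\mathcal{M}$. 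Your treatment of part~(ii) recasts Ehrenborg's $k$-Eulerian theorem as a coalgebraic statement about the preimage $\Delta^{-1}(\Zcd\tensor\Zcd)$, which is the right idea, and your computation of $\ker\Delta$ in each degree is correct and genuinely elementary. The one place where you owe the reader more is the claim $\Delta^{-1}(\Zcd\tensor\Zcd) = \Zcd \oplus \mathbb{Q}(\av-\bv)^{n+1}$ in degree $n+1$; you flag it as the main technical obstacle and attribute it to the Newtonian coalgebra homology paper, but the reference that actually delivers it in the form you need is Ehrenborg's $k$-Eulerian paper (the one the paper itself cites) — the homology paper studies a related but different question, and extracting your statement from it would take some work. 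Once that is sourced, your coefficient bookkeeping using $[\av^{n+1}]$ and $[\bv^{n+1}]$ matches the paper's verbatim (the paper's $c_1$ and $c_2$ are your $\lambda$ and $d$), and the observation that only $\cv^{n+1}$ among $\cd$-monomials contributes to those coefficients is exactly why the decomposition pins down both constants.
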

\begin{proof}
Observe that the poset $P$ has rank $n+2$.
By~\cite[Theorem~3.8.9]{Stanley_EC_I} we know that
every interval~$[x,y]$ strictly contained in $P$
is Eulerian. When the rank of $P$ is odd this implies
that $P$ is also Eulerian; see~\cite[Exercise 69c]{Stanley_EC_I}.
Hence in this case the $\ab$-index of $P$
can be expressed as a $\cd$-index.
When $n$ is even, we use~\cite[Theorem~4.2]{Ehrenborg_k-Eulerian}
to conclude that
the $\ab$-index of $P$ belongs to
$\mathbb{R}\langle\cv,\dv,(\av-\bv)^{n+1}\rangle$.
Since
$\Psi(P)$ has degree $n+1$,
the $\ab$-index $\Psi(P)$ can be written in the form
\[      \Psi(P) 
    = 
        c_{1} \cdot (\av-\bv)^{n+1}
      + 
        c_{2} \cdot \cv^{n+1}
      + 
        \Phi   ,   \]
where $\Phi$ is a homogeneous $\cd$-polynomial of degree $n+1$
that does not contain any~$\cv^{n+1}$ terms. 
By looking at the coefficients
of $\av^{n+1}$ and $\bv^{n+1}$, we have
$c_{1} + c_{2} = 1$ and
$c_{2} - c_{1} = \mu(P) = \chi(\mathcal{M}) - 1$,
where the last identity is again~\cite[Theorem~3.8.9]{Stanley_EC_I}.
Solving for $c_{1}$ and $c_{2}$ proves the result.
\end{proof}

For the $n$-dimensional torus Theorem~\ref{theorem_manifold}
can be expressed as follows.
\begin{corollary}
Let $\Omega$ be a regular cell complex whose geometric realization
is the $n$-dimensional torus~$T^{n}$. Then the $\ab$-index
of the face poset $P$ of $\Omega$ has the following form:
$$      \Psi(P) 
    = 
        (\av-\bv)^{n+1}
      + 
        \Phi   ,   $$
where $\Phi$ is a homogeneous $\cd$-polynomial of degree $n+1$
and $\Phi$ does not contain the term $\cv^{n+1}$.
\end{corollary}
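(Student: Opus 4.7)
\noindent The plan is to specialize Theorem~\ref{theorem_manifold} to $\mathcal{M} = T^{n}$. The first step is to record that $\chi(T^{n}) = 0$ for every $n \geq 1$, which follows from the K\"unneth formula applied to $T^{n} \cong (S^{1})^{n}$ together with $\chi(S^{1}) = 0$.

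When $n$ is even, the result is immediate: substituting $\chi(\mathcal{M}) = 0$ into part~(ii) of Theorem~\ref{theorem_manifold} makes the coefficient of $\cv^{n+1}$ vanish and makes the coefficient of $(\av-\bv)^{n+1}$ equal to $1$, which is exactly the claimed form.

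When $n$ is odd, part~(i) of the theorem only tells us that $\Psi(P)$ is a $\cd$-polynomial, so the decomposition $(\av-\bv)^{n+1} + \Phi$ must still be produced by hand; this is the main obstacle. The key observation is that because $n+1$ is even, $(\av-\bv)^{n+1}$ itself lies in the $\cd$-subalgebra: from the identity $(\av-\bv)^{2} = \cv^{2} - 2\dv$ one obtains $(\av-\bv)^{n+1} = (\cv^{2}-2\dv)^{(n+1)/2}$. Setting $\Phi := \Psi(P) - (\av-\bv)^{n+1}$ therefore automatically yields a homogeneous $\cd$-polynomial of degree $n+1$, and only the $\cv^{n+1}$-coefficient of $\Phi$ needs to be checked. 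For this I would use the elementary fact that in any homogeneous $\cd$-polynomial of degree $n+1$ the $\cv^{n+1}$-coefficient equals the $\av^{n+1}$-coefficient of its $\ab$-expansion, because every $\cd$-monomial of degree $n+1$ other than $\cv^{n+1}$ contains a $\dv$ and hence expands into $\ab$-monomials each carrying at least one $\bv$. Stanley's recursion~\eqref{equation_Stanley_recursion} forces the $\av^{n+1}$-coefficient of $\Psi(P)$ to be $1$, since only the leading summand $(\av-\bv)^{\rho(P)-1}$ can contribute (every other summand carries an explicit factor of $\bv$), and $(\av-\bv)^{n+1}$ also has $\av^{n+1}$-coefficient $1$, so the two cancel in $\Phi$.
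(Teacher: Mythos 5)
Your argument is correct and takes essentially the same route as the paper: specialize Theorem~\ref{theorem_manifold} to $\mathcal{M}=T^{n}$ using $\chi(T^{n})=0$, and invoke $(\av-\bv)^{n+1}=(\cv^{2}-2\dv)^{(n+1)/2}$ in the odd case. The only difference is that you explicitly verify the ``no $\cv^{n+1}$'' condition for $n$ odd by comparing $\av^{n+1}$-coefficients via Stanley's recursion, a step the paper's one-line proof of the corollary leaves implicit (it is the same computation that produces $c_{1}+c_{2}=1$ inside the proof of Theorem~\ref{theorem_manifold}), so your version is a more carefully spelled out form of the same argument.
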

\begin{proof}
When $n$ is even this is Theorem~\ref{theorem_manifold}.
When $n$ is odd this is Theorem~\ref{theorem_manifold}
together with the two facts that
$\chi(T^{n}) = 0$ and
$(\av-\bv)^{n+1} = (\cv^{2} - 2 \dv)^{(n+1)/2}$.
\end{proof}

\section{Toric arrangements}
\label{section_toric}

\subsection{Toric subspaces and arrangements}

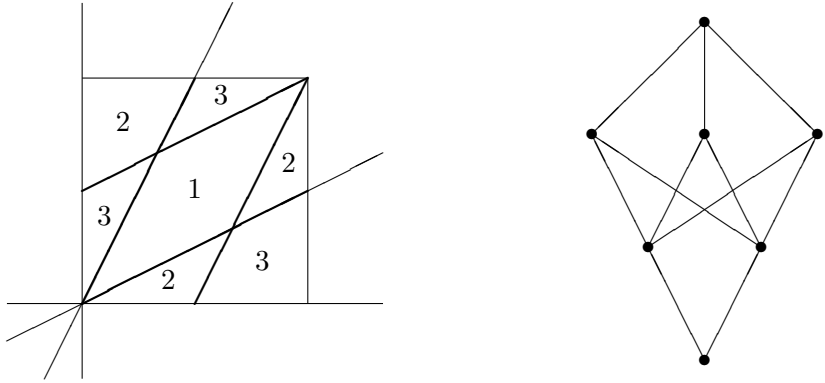
\begin{figure}
\setlength{\unitlength}{0.5mm}
\begin{center}
\begin{picture}(100,100)(-20,-20)

%% Coordinate axis
\put(0,-20){\line(0,1){100}}
\put(-20,0){\line(1,0){100}}

%%%% Arrow heads
%%\put(0,80){\line(-1,-2){4}}
%%\put(0,80){\line(1,-2){4}}

%% Complete the boundary of the box
\put(0,60){\line(1,0){60}}
\put(60,0){\line(0,1){60}}

%% The lines
\put(-20,-10){\line(2,1){100}}
\put(-10,-20){\line(1,2){50}}

\thicklines

%% Inside the box
\put(0,0){\line(2,1){60}}
\put(0,0){\line(1,2){30}}
\put(0,30){\line(2,1){60}}
\put(30,0){\line(1,2){30}}

%% Number the regions
\put(28,28){{\small 1}}

\put(53,35){{\small 2}}
\put(9,46){{\small 2}}
\put(21,4){{\small 2}}

\put(4,21){{\small 3}}
\put(46,9){{\small 3}}
\put(35,53){{\small 3}}

\end{picture}
\hspace*{15 mm}
\begin{picture}(100,100)(-5,-5)

\put(45,0){\circle*{3}}
\multiput(30,30)(30,0){2}{\put(0,0){\circle*{3}}}
\multiput(15,60)(30,0){3}{\put(0,0){\circle*{3}}}
\put(45,90){\circle*{3}}

\put(45,0){\line(-1,2){15}}
\put(45,0){\line(1,2){15}}

\put(30,30){\line(-1,2){15}}
\put(30,30){\line(1,2){15}}
\put(30,30){\line(3,2){45}}

\put(60,30){\line(-3,2){45}}
\put(60,30){\line(-1,2){15}}
\put(60,30){\line(1,2){15}}

\put(45,90){\line(-1,-1){30}}
\put(45,90){\line(0,-1){30}}
\put(45,90){\line(1,-1){30}}
\end{picture}
\end{center}
\caption{A toric line arrangement
which subdivides
the torus $T^{2}$ into a non-regular cell complex
and its intersection poset.}
\label{figure_toric_one}
\end{figure}

The $n$-dimensional torus $T^{n}$ is defined as the quotient
$\mathbb{R}^{n}/\mathbb{Z}^{n}$.
Recall that the torus~$T^{n}$ is an abelian group.
When identifying the torus $T^{n}$ with the set $[0,1)^{n}$,
the group structure is componentwise addition modulo $1$.
\begin{lemma}
Let $V$ be a $k$-dimensional affine subspace
in $\mathbb{R}^{n}$ with rational coefficients. That is, $V$ has the form
$$     V 
   =
       \{ \vec{v} \in \mathbb{R}^{n} \:\: : \:\: A \vec{v} = \vec{b}\} , $$
where the matrix $A$ has rational entries and the vector $\vec{b}$
is allowed to have real entries.
Then the image of $V$ under the
quotient map $\mathbb{R}^{n} \to \mathbb{R}^{n}/\mathbb{Z}^{n}$,
denoted by $\overline{V}$,
is a $k$-dimensional torus.
\end{lemma}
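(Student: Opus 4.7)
The plan is to reduce to the linear case by translation and then identify $\overline{V}$ with the quotient of $W := \ker A$ by a full-rank integer lattice. First, I would pick any $\vec{v}_{0} \in V$ and write $V = \vec{v}_{0} + W$, where $W = \{\vec{v} \in \mathbb{R}^{n} : A\vec{v} = 0\}$ is a rational linear subspace of dimension $k$. Because $T^{n}$ is a topological group, translation by the image of $\vec{v}_{0}$ is a self-homeomorphism of $T^{n}$ sending $\overline{W}$ to $\overline{V}$, so it suffices to prove the statement with $W$ in place of $V$.

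The core step is to show that the subgroup $L := W \cap \mathbb{Z}^{n}$ is a lattice of full rank in $W$. Since $A$ has rational entries, Gaussian elimination over $\mathbb{Q}$ yields a basis of $W$ consisting of rational vectors; clearing denominators then produces $k$ linearly independent integer vectors in $L$, so the rank of $L$ is at least $k$. As $L$ is a discrete subgroup of the $k$-dimensional real vector space $W$, its rank is at most $k$, hence exactly $k$.

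Finally, $L$ is precisely the kernel of the restriction of the quotient map $\mathbb{R}^{n} \to T^{n}$ to $W$, so the induced homomorphism $W/L \to \overline{W}$ is a continuous bijection of abelian topological groups, and a choice of $\mathbb{Z}$-basis for $L$ identifies $W/L$ with $\mathbb{R}^{k}/\mathbb{Z}^{k} = T^{k}$. The main technical point, which I expect to be the principal obstacle, is upgrading this bijection to a homeomorphism; this will follow from observing that both $W \to \overline{W}$ (as a restriction of the covering $\mathbb{R}^{n} \to T^{n}$) and $W \to W/L$ are covering projections, so $\overline{W}$ inherits the quotient topology from $W$ and therefore agrees with $W/L$ as topological spaces.
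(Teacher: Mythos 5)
Your proof takes essentially the same route as the paper's: translate to reduce to a linear subspace $W = \ker A$, show $L = W \cap \mathbb{Z}^n$ is a full-rank lattice using the rationality of $A$, and identify $\overline{W}$ with $W/L \cong \mathbb{R}^k/\mathbb{Z}^k$. You supply more detail than the paper does, and the Gaussian-elimination-plus-discreteness argument for the rank is exactly what the paper leaves implicit.

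The one step that is not quite airtight as stated is the claim that $W \to \overline{W}$ is a covering projection simply because it is the restriction of the covering $\mathbb{R}^n \to T^n$. Restricting a covering map to an arbitrary subspace of the total space does not in general give a covering onto the image (the issue is that points of $p(W)$ near a given $\bar{w}$ may have their preimages in the chosen evenly-covered sheet lie outside $W$). The conclusion is nevertheless true here, and there are two quick ways to close the gap. The shortest is compactness: $W/L$ is a compact torus, $T^n$ is Hausdorff, and the continuous bijection $W/L \to \overline{W}$ from a compact space to a Hausdorff space is automatically a homeomorphism. Alternatively, since $L = W \cap \mathbb{Z}^n$ is a saturated (hence primitive) sublattice of $\mathbb{Z}^n$, a basis of $L$ extends to a basis of $\mathbb{Z}^n$, which produces a splitting $T^n \cong T^k \times T^{n-k}$ under which $\overline{W}$ is literally $T^k \times \{0\}$ and the covering claim becomes transparent.
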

\begin{proof}
By translating $V$, we may assume that
the vector $\vec{b}$ is the zero vector,
and therefore
$V$ is a subspace.
In this case, the intersection of $V$ with the integer lattice~$\mathbb{Z}^{n}$
is a subgroup of the free abelian group $\mathbb{Z}^{n}$.
Since the matrix $A$ has all rational entries,
the rank of this subgroup is $k$,
that is, the subgroup is isomorphic to $\mathbb{Z}^{k}$.
Hence the image $\overline{V}$
is the quotient $V/(V \cap \mathbb{Z}^{n})$,
which is isomorphic to
the quotient~$\mathbb{R}^{k}/\mathbb{Z}^{k}$, that is,
a $k$-dimensional torus.
\end{proof}

We call the image $\overline{V}$ 
a {\em toric subspace} of the torus $T^{n}$
because it is homeomorphic to some $k$-dimensional torus.
When we remove the condition that the matrix $A$ is rational, 
the image is not necessarily homeomorphic to a torus.

The intersection of two toric subspaces is in general not a toric subspace,
but instead is the disjoint union of a finite number of
toric subspaces.
For two affine subspaces $V$ and $W$ with rational coefficients,
we have that $\overline{V \cap W} \subseteq \overline{V} \cap \overline{W}$.
In general, this containment is strict.

Define the translate of a toric subspace $U$ by a point $x$
on the torus to be the toric subspace $U + x = \{ u+x  : u \in U \}$.
Alternatively, one may lift the toric subspace to an affine subspace
in Euclidean space, translate it and then map back to the torus.
Then for two toric subspaces $V$ and $W$, their intersection
has the form
$$     V \cap W = \bigcup_{p=1}^{r} (U + x_{p})  , $$
where $U$ is a toric subspace, $r$ is a non-negative integer
and $x_{1}, \ldots, x_{r}$ are points on the torus $T^{n}$.

\begin{figure}
\setlength{\unitlength}{0.5mm}
\begin{center}
\begin{picture}(100,100)(-20,-20)

%% Coordinate axis
\put(0,-20){\line(0,1){100}}
\put(-20,0){\line(1,0){100}}

%% Complete the boundary of the box
\put(0,60){\line(1,0){60}}
\put(60,0){\line(0,1){60}}

%% The lines
\put(-20,-10){\line(2,1){100}}
\put(-6,-18){\line(1,3){32}}
\put(-20,12){\line(1,0){100}}

\thicklines

%% Inside the box
\put(0,0){\line(2,1){60}}
\put(0,30){\line(2,1){60}}
\put(0,0){\line(1,3){20}}
\put(20,0){\line(1,3){20}}
\put(40,0){\line(1,3){20}}
\put(0,12){\line(1,0){60}}
\end{picture}
\hspace*{15 mm}
\begin{picture}(100,100)(-5,-5)

\put(45,0){\circle*{3}}
\multiput(15,30)(30,0){3}{\put(0,0){\circle*{3}}}
\multiput(0,60)(15,0){7}{\put(0,0){\circle*{3}}}
\put(45,90){\circle*{3}}

\put(45,0){\line(-1,1){30}}
\put(45,0){\line(0,1){30}}
\put(45,0){\line(1,1){30}}

\put(15,30){\line(-1,2){15}}
\put(15,30){\line(0,1){30}}
\put(15,30){\line(1,2){15}}
\put(15,30){\line(1,1){30}}
\put(15,30){\line(3,2){45}}

\put(45,30){\line(-3,2){45}}
\put(45,30){\line(-1,1){30}}
\put(45,30){\line(-1,2){15}}
\put(45,30){\line(0,1){30}}
\put(45,30){\line(1,2){15}}
\put(45,30){\line(1,1){30}}
\put(45,30){\line(3,2){45}}

\put(75,30){\line(-1,2){15}}
\put(75,30){\line(0,1){30}}
\put(75,30){\line(1,2){15}}

\put(45,90){\line(-3,-2){45}}
\put(45,90){\line(-1,-1){30}}
\put(45,90){\line(-1,-2){15}}
\put(45,90){\line(0,-1){30}}
\put(45,90){\line(1,-2){15}}
\put(45,90){\line(1,-1){30}}
\put(45,90){\line(3,-2){45}}
\end{picture}
\end{center}
\caption{A toric line arrangement
and its intersection poset.}
\label{figure_toric_two}
\end{figure}
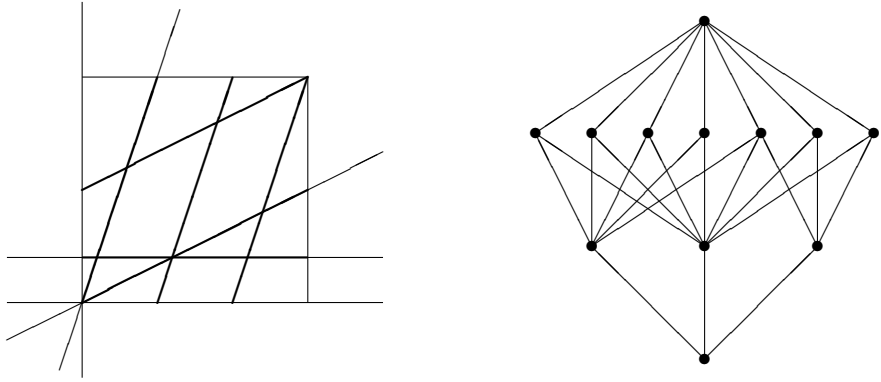

A {\em toric hyperplane arrangement} $\mathcal{H} = \{H_{1}, \ldots, H_{m}\}$
is a finite collection of toric hyperplanes.
Define the {\em intersection poset} $\mathcal{P}$
of a toric arrangement
to be the set of all connected components
arising from
all possible intersections of the toric hyperplanes,
that is, 
all connected components of
$\bigcap_{i \in S} H_{i}$
where $S \subseteq \{1, \ldots, m\}$,
together with the empty set.
Order the elements of
the intersection poset $\mathcal{P}$ by reverse inclusion,
that is, the torus $T^{n}$ is the minimal element of $\mathcal{P}$
corresponding to the empty intersection,
and the empty set is the maximal element.
A toric subspace $V$ is contained in the intersection
poset $\mathcal{P}$ if there are
toric hyperplanes $H_{i_{1}}, \ldots, H_{i_{k}}$
in the arrangement such that
$V \subseteq H_{i_{1}} \cap \cdots \cap H_{i_{k}}$
and there is no toric subspace $W$
satisfying
$V \subset W \subseteq H_{i_{1}} \cap \cdots \cap H_{i_{k}}$.
In other words,
$V$ has to be a maximal toric subspace in some intersection
of toric hyperplanes from the arrangement.

The notion of using the intersection poset can be found
in
work of Zaslavsky, where he considers
topological dissections~\cite{Zaslavsky_paper}.
In this setting there is not an intersection
lattice, but rather an intersection poset.

To every toric hyperplane arrangement $\mathcal{H} = \{H_{1}, \ldots, H_{m}\}$
there is an associated periodic hyperplane arrangement $\widetilde{\mathcal{H}}$
in the Euclidean space $\mathbb{R}^{n}$. Namely, the inverse
image of the toric hyperplane~$H_{i}$ under the quotient map
$\mathbb{R}^{n} \to \mathbb{R}^{n}/\mathbb{Z}^{n}$ is the union of parallel integer
translates of a real hyperplane.
Let~$\widetilde{\mathcal{H}}$
be the collection of all these integer translates.
Observe that every face of the toric arrangement~$\mathcal{H}$
can be lifted to a parallel class of faces in
the periodic real arrangement $\widetilde{\mathcal{H}}$.

As in the case of real arrangements, a toric arrangment subdivides
the torus into a number of regions.  Let $T_t$ denote the poset of
regions in the induced subdivision of the torus.

For a toric hyperplane arrangement $\mathcal{H}$
define the {\em toric characteristic polynomial}
to be
$$   \chi(\mathcal{H}; t)
   =
     \sum_{\onethingatopanother{x \in \mathcal{P}}{x \neq \emptyset}}
             \mu(\hz,x) \cdot t^{\dim(x)}  .   $$

\begin{example}
%\textrm{
Consider the line arrangement consisting
of the two lines $y = 2 \cdot x$
and $x = 2 \cdot y$ in the plane $\mathbb{R}^{2}$.
In $\mathbb{R}^{2}$ they intersect in one point,
namely
the origin,
whereas on the torus $T^{2}$ they intersect
in three points,
namely
$(0,0)$, $(2/3,1/3)$, and~$(1/3,2/3)$.
The characteristic polynomial
is given by $\chi(\mathcal{H};t) = t^{2} - 2 \cdot t + 3$.
However, this arrangement is not
regular, since the induced subdivision of $T^{2}$
is not regular. The boundary of each region
is a wedge of two circles.
See Figure~\ref{figure_toric_one}.
%}
\label{example_toric_one}
\end{example}

\begin{example}
%\textrm{
Consider the line arrangement consisting
of the three lines $y = 3 \cdot x$,
$x = 2 \cdot y$, and
$y = 1/5$.
It subdivides the torus into a regular cell complex.
The subdivision and the associated
intersection poset are shown in 
Figure~\ref{figure_toric_two}.
The characteristic polynomial
is given by $\chi(\mathcal{H};t) = t^{2} - 3 \cdot t + 8$.
Furthermore, the $\ab$-index of the
subdivision of the torus is given by
$\Psi(T_{t}) = (\av-\bv)^{3} + 7 \cdot \mdc + 8 \cdot \mcd$,
as the following calculation shows.
\newcommand{\ts}{\:\:\:\:\:\:}
$$
  \begin{array}{c r r c r c c}
 S & f_{S} & h_{S} & u_{S} & (\av-\bv)^{3} 
   & 7 \cdot \mdc & 8 \cdot \mcd \\ \hline
\emptyset &  1 &  1 & \av\av\av &  1 \ts & 0 & 0 \\
\{1\}     &  7 &  6 & \bv\av\av & -1 \ts & 7 & 0 \\
\{2\}     & 15 & 14 & \av\bv\av & -1 \ts & 7 & 8 \\
\{3\}     &  8 &  7 & \av\av\bv & -1 \ts & 0 & 8 \\
\{1,2\}   & 30 &  9 & \bv\bv\av &  1 \ts & 0 & 8 \\
\{1,3\}   & 30 & 16 & \bv\av\bv &  1 \ts & 7 & 8 \\
\{2,3\}   & 30 &  8 & \av\bv\bv &  1 \ts & 7 & 0 \\
\{1,2,3\} & 60 & -1 & \bv\bv\bv & -1 \ts & 0 & 0 \\
  \end{array} $$
Recall that
$\mdc = \maba + \nabb + \nbaa + \mbab$
and
$\mcd = \naab + \maba + \mbab + \nbba$.
Here in the last three columns we indicate
the contribution of a given term to each $\ab$-monomial.
Observe that the sum of the last three columns gives
the flag $h$-vector entries.
%}
\label{example_toric_two}
\end{example}

We now give a natural interpretation of the toric characteristic polynomial.
Recall that the intersection of toric subspaces
is the disjoint union of toric subspaces that are translates of each other.
Let~$G$ be the collection of finite intersections of toric
subspaces of the $n$-dimensional torus $T^{n}$,
that is, $G$ consists of sets of the form
$V = W_{1} \capdots W_{q}$, where $W_{1}, \ldots, W_{q}$ are
toric subspaces.
Such a set~$V$ can be written as a union
%more precisely,
$V = \bigcup_{p=1}^{r} (U + x_{p})$,
where $U$ is a toric subspace, $r$ a non-negative integer,
and $x_{1}, \ldots, x_{r}$ are points on the torus.
Observe that the empty set $\emptyset$
and the torus $T^{n}$ belong to $G$.
Furthermore, $G$ is closed under finite intersections.
Let $L$ be the distributive lattice consisting of all
subsets of the torus $T^{n}$ that are obtained from
the collection~$G$ by finite intersections, finite unions
and complements. The set~$G$ is the generating set for the lattice~$L$.
A {\em valuation} $v$ on the lattice $L$ is a function on $L$
to an abelian group
satisfying
$v(\emptyset) = 0$ and $v(A) + v(B) = v(A \cap B) + v(A \cup B)$
for all sets~$A, B \in L$.

The next theorem is analogous to 
Theorem~2.1 in~\cite{Ehrenborg_Readdy_valuation_1}.
%Analogously to Theorem~2.1
%in~\cite{Ehrenborg_Readdy_valuation_1}, we have
%the next theorem.
The proof here is more involved
due to the fact that the collection of
toric subspaces is not closed under intersections.
\begin{theorem}
There is a valuation $v$ on the distributive lattice~$L$
to integer polynomials in the variable $t$
such that 
for a $k$-dimensional toric subspace $V$
its valuation is $v(V) = t^{k}$.
\end{theorem}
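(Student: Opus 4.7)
The plan is to mirror the indicator-function proof of Theorem~2.1 in \cite{Ehrenborg_Readdy_valuation_1}, adapted to handle the fact that intersections of toric subspaces decompose as disjoint unions of translates of a subtorus rather than as a single subspace. Let $M$ be the $\mathbb{Z}$-module of $\mathbb{Z}$-valued functions on $T^{n}$ spanned by the indicator functions $\mathbf{1}_{U+x}$, where $U$ ranges over toric subspaces of $T^{n}$ and $x$ over points of $T^{n}$. First I would define a $\mathbb{Z}$-linear map $v \colon M \to \mathbb{Z}[t]$ by the rule $v(\mathbf{1}_{U+x}) = t^{\dim U}$. The substantive preliminary step is to verify that $v$ is well defined on $M$: whenever a formal combination $\sum_{i} c_{i} \mathbf{1}_{U_{i}+x_{i}}$ vanishes pointwise, the polynomial $\sum_{i} c_{i} t^{\dim U_{i}}$ must vanish in $\mathbb{Z}[t]$ as well.

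The proof of well-definedness proceeds by descending induction on $d = \max_{i} \dim U_{i}$. After merging duplicates so that the translates $U_{i}+x_{i}$ with $\dim U_{i} = d$ are pairwise distinct as sets, fix one such top-dimensional translate $U_{j}+x_{j}$. Every other translate appearing in the sum meets $U_{j}+x_{j}$ in a set of dimension strictly less than $d$: a distinct $d$-dimensional toric translate either coincides with $U_{j}+x_{j}$, contradicting distinctness, or intersects it in a disjoint union of translates of a proper subtorus; and translates of dimension less than $d$ meet $U_{j}+x_{j}$ in pieces of dimension at most $d-1$. A finite union of proper subtori inside the $d$-torus $U_{j}+x_{j}$ has vanishing $d$-dimensional Haar measure, so a point $p \in U_{j}+x_{j}$ avoiding every other translate in the sum can be chosen. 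Evaluating the pointwise-zero sum at $p$ forces $c_{j}=0$; stripping off all top-dimensional terms and applying the inductive hypothesis finishes the argument.

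Once well-definedness is in hand, the extension of $v$ to $L$ is formal. The module $M$ is closed under pointwise products of indicator functions, since for toric subspaces $V, W$ we have $V \cap W = \bigsqcup_{p=1}^{r}(U+x_{p})$, and hence $\mathbf{1}_{V} \cdot \mathbf{1}_{W} \in M$. Combined with the identities $\mathbf{1}_{A \cup B} = \mathbf{1}_{A} + \mathbf{1}_{B} - \mathbf{1}_{A}\mathbf{1}_{B}$ and $\mathbf{1}_{T^{n} \setminus A} = \mathbf{1}_{T^{n}} - \mathbf{1}_{A}$, this shows $\mathbf{1}_{A} \in M$ for every $A \in L$, so I may set $v(A) := v(\mathbf{1}_{A})$. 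The valuation identity $v(A \cup B) + v(A \cap B) = v(A) + v(B)$ then follows from the set-theoretic identity $\mathbf{1}_{A \cup B} + \mathbf{1}_{A \cap B} = \mathbf{1}_{A} + \mathbf{1}_{B}$ together with the $\mathbb{Z}$-linearity of $v$, while $v(\emptyset) = 0$ and $v(V) = t^{\dim V}$ for a toric subspace $V$ are built in by construction.

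The main obstacle I anticipate is the well-definedness step. In the affine setting of \cite{Ehrenborg_Readdy_valuation_1}, distinct affine subspaces of equal dimension cannot be nested and their intersection is again an affine subspace, so the existence of a suitable probing point is essentially automatic. On the torus, translates of different $d$-dimensional subtori can interact through multi-component intersections that are themselves unions of translates; the Haar-measure argument is what lets one locate a point in $U_{j}+x_{j}$ lying outside every other translate appearing in the sum. Once that step is pinned down cleanly, the remainder of the proof is bookkeeping.
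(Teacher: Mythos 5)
Your proof is correct, but it takes a genuinely different route from the paper's. The paper invokes Groemer's integral theorem (\cite{Groemer}, \cite[Theorem~2.2.1]{Klain_Rota}): it defines $v$ directly on the generating set $G$ of finite intersections of toric subspaces by $v(\bigcup_{p=1}^{r}(U+x_{p})) = r\cdot t^{\dim U}$, and then verifies the inclusion-exclusion identity whenever a union $V_{1}\cup\dots\cup V_{m}$ of generators happens to land back in $G$. The paper's verification splits into the case where the union is itself a toric subspace (then it must equal some $V_{i}$ and inclusion-exclusion is trivial) and the general case, which is reduced to the previous one by restricting to each connected component $U+x_{p}$. You instead follow the module-of-indicator-functions route from \cite{Ehrenborg_Readdy_valuation_1}: define $v$ as a $\mathbb{Z}$-linear map on $M = \operatorname{span}\{\mathbf{1}_{U+x}\}$, prove well-definedness by descending induction on dimension with a generic probing point (the Haar-measure observation that a finite union of lower-dimensional toric translates cannot cover a $d$-torus), and then note $M$ is closed under pointwise products so that every $\mathbf{1}_{A}$ for $A\in L$ lies in $M$. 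Both arguments have to cope with the same core difficulty --- that intersections of toric subspaces decompose into several translates rather than a single subspace --- but where the paper packages this into the hypotheses of Groemer's theorem, you handle it by hand inside the well-definedness lemma. Your approach is more self-contained (no appeal to Groemer), at the cost of the somewhat delicate probing-point argument; the paper's is shorter modulo citing Groemer, and it keeps the combinatorics of the generating set more visible.

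One small presentational point: the paper's notion of ``toric subspace'' already includes translates (it is defined as the image of an affine subspace with rational coefficient matrix), so the distinction you draw between toric subspaces $U$ and translates $U+x$ is not necessary in the paper's vocabulary, though it does no harm in yours.
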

\begin{proof}
Define the function $v$ on the generating set $G$ by
\[     v\left(\bigcup_{p=1}^{r} (U + x_{p})\right)
    =
       r \cdot t^{k}   ,  \]
where we assume that $U$ is a $k$-dimensional toric subspace
and the $r$ translates
$U + x_{1}, \ldots, U + x_{r}$ are pairwise disjoint.
Observe that the function $v$ is additive with respect to
disjoint unions, that is, for elements
$V_{1}, \ldots, V_{m}$ in $G$
which are pairwise disjoint
and 
$V_{1} \cupdots V_{m} \in G$.
In this case, each $V_{i}$ is a disjoint union of translates of
the same affine subspace $U$ and both sides of the identity
$v(V_{1}) + \cdots + v(V_{m})
    =
 v(V_{1} \cupdots V_{m})$
count the number of translates of $U$ 
times $t^{\dim(U)}$.

Groemer's integral theorem~\cite{Groemer}
(see also~\cite[Theorem~2.2.1]{Klain_Rota})
states that a function $v$
defined on a generating set $G$ extends to a valuation
on the distributive lattice generated by $G$ if
for all $V_{1}, \ldots, V_{m}$ in $G$ such that
$V_{1} \cupdots V_{m} \in G$, the inclusion-exclusion
formula holds:
\begin{equation}
v(V_{1} \cupdots V_{m})
  =
\sum_{i} v(V_{i})
  -
\sum_{i<j} v(V_{i} \cap V_{j})
  +
\cdots .
\label{equation_inclusion_exclusion}
\end{equation}
To verify this relation for our generating set $G$,
first consider the case when 
the union $V_{1} \cupdots V_{m}$
is a toric subspace.
This case implies that
$V_{1} \cupdots V_{m} = V_{i}$ for some index $i$.
It then follows that the inclusion-exclusion
formula~(\ref{equation_inclusion_exclusion}) holds
trivially.

Before considering the general case,
we introduce some notation.
For $S$ a non-empty subset of the index set $\{1, \ldots, m\}$,
let $V_{S} = \bigcap_{i \in S} V_{i}$.
Equation~(\ref{equation_inclusion_exclusion})
can then be written as
$$
v(V_{1} \cupdots V_{m})
  =
\sum_{S} (-1)^{|S|-1} \cdot v(V_{S})  ,
$$
where the sum ranges over non-empty subsets $S$ of $\{1, \ldots, m\}$.
Now assume that 
$V_{1} \cupdots V_{m}$ is the disjoint union
$(U + x_{1}) \cupdots (U + x_{r})$.
Let $V_{S,p}$ denote the intersection
$V_{S} \cap (U + x_{p})$.
Observe that
$U + x_{p} = \bigcup_{i=1}^{m} V_{\{i\},p}$
and since $U + x_{p}$ is itself a toric subspace,
we have already proved that
the inclusion-exclusion formula~(\ref{equation_inclusion_exclusion})
holds for this union.
Hence we have
\begin{eqnarray*}
v(V_{1} \cupdots V_{m})
  & = &
\sum_{p=1}^{r} v(U + x_{p}) \\
  & = &
\sum_{p=1}^{r} \sum_{S} (-1)^{|S|-1} \cdot v(V_{S,p}) \\
  & = &
\sum_{S} (-1)^{|S|-1} \cdot \sum_{p=1}^{r} v(V_{S,p}) \\
  & = &
\sum_{S} (-1)^{|S|-1} \cdot v(V_{S}) ,
\end{eqnarray*}
where $S$ ranges over all non-empty subsets of $\{1, \ldots, m\}$.
The last step follows since the terms in the 
union $V_{S} = \bigcup_{p=1}^{r} V_{S,p}$
are pairwise disjoint.
\end{proof}

By M\"obius inversion we directly have the following theorem.
The proof is standard.  See the 
references~\cite{Athanasiadis,Chen,Ehrenborg_Readdy_valuation_1,Jozefiak_Sagan}.
\begin{theorem}
The characteristic polynomial of a toric arrangement is given by
$$  \chi(\mathcal{H}) = v\left( T^{n} - \bigcup_{i=1}^{m} H_{i} \right) .  $$
\label{theorem_characteristic}
\end{theorem}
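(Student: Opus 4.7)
The plan is to compute $v\bigl(T^{n} - \bigcup_{i} H_{i}\bigr)$ by stratifying the torus into open strata indexed by elements of the intersection poset $\mathcal{P}$ and then inverting via M\"obius inversion on~$\mathcal{P}$. This is the standard strategy from the references cited; the only issue specific to the toric setting is that intersections of hyperplanes can be disconnected, and one must track connected components rather than raw intersections.

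For each $x \in \mathcal{P}$ with $x \neq \emptyset$ I would define the open stratum
$$ x^{\circ} \:=\: x \:-\: \bigcup_{\substack{y \in \mathcal{P} \\ y > x}} y, $$
where the union ranges over elements strictly above $x$ in $\mathcal{P}$ (equivalently, proper toric subsets of $x$ that appear in the intersection poset). Each such $y$ is a finite disjoint union of translates of a common toric subspace, so $x^{\circ}$ lies in the distributive lattice~$L$ and has a well-defined valuation. The first step is the disjoint decomposition
$$ x \:=\: \bigsqcup_{\substack{y \in \mathcal{P} \\ y \geq x,\, y \neq \emptyset}} y^{\circ}. $$
To see this, for $p \in x$ let $S_{p} = \{i : p \in H_{i}\}$ and take $y(p)$ to be the connected component of $\bigcap_{i \in S_{p}} H_{i}$ containing $p$ (with $y(p) = x$ when $S_{p}$ forces no proper refinement of~$x$). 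Then $y(p) \in \mathcal{P}$, $y(p) \subseteq x$, and an elementary connected-component argument shows that no $z > y(p)$ in $\mathcal{P}$ contains $p$; hence $p$ lies in exactly one stratum~$y(p)^{\circ}$.

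Applying additivity of $v$ on disjoint unions in $L$ to the decomposition above gives the incidence relation
$$ v(x) \:=\: \sum_{\substack{y \in \mathcal{P} \\ y \geq x,\, y \neq \emptyset}} v(y^{\circ}). $$
M\"obius inversion in the poset $\mathcal{P} \setminus \{\emptyset\}$ then yields
$$ v(x^{\circ}) \:=\: \sum_{\substack{y \in \mathcal{P} \\ y \geq x,\, y \neq \emptyset}} \mu_{\mathcal{P}}(x, y) \cdot v(y). $$
Specializing to $x = \hz$, the left side is $v\bigl(T^{n} - \bigcup_{i} H_{i}\bigr)$, since every $y > \hz$ in~$\mathcal{P}$ lies inside some hyperplane~$H_{i}$. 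By the preceding theorem $v(y) = t^{\dim(y)}$, and $\mu_{\mathcal{P}}(\hz, y)$ is precisely the M\"obius value appearing in the definition of $\chi(\mathcal{H}; t)$. Summing produces $\chi(\mathcal{H}; t)$, as required.

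The main technical obstacle is verifying the open stratification in the toric setting, where an intersection of hyperplanes splits into multiple cosets of a common toric subspace. The valuation constructed in the preceding theorem was designed precisely so that such finite disjoint unions of translates are assigned the correct polynomial value; once the stratification is phrased in terms of connected components (the actual elements of $\mathcal{P}$) rather than in terms of the intersection sets themselves, the standard M\"obius-inversion argument from the real-arrangement literature carries over with essentially no change.
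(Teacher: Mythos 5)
Your proof is correct and is precisely the standard M\"obius-inversion argument that the paper invokes without spelling out (it simply writes ``By M\"obius inversion we directly have the following theorem. The proof is standard'' and cites the references). Your stratification $x = \bigsqcup_{y \geq x} y^{\circ}$ by connected components of intersections, followed by additivity of $v$ and inversion over $\mathcal{P}\setminus\{\emptyset\}$, is exactly what those cited references carry out, correctly adapted to the toric setting by working with the intersection poset of connected components rather than an intersection lattice.
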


When each region is an open ball we can now determine the number of
regions in a toric arrangement.  The proof is analogous to the proofs
in~\cite{Ehrenborg_Readdy_valuation_1,Ehrenborg_Readdy_valuation_2}.
Recall that the Euler characteristic can be viewed as a valuation.  Here we use
the notation $\varepsilon$ to indicate that we are viewing the Euler
valuation as a valuation.

\begin{theorem}
Let $\mathcal{H}$ be a toric hyperplane arrangement on the $n$-dimensional
torus $T^{n}$ that subdivides the torus into 
regions that are open $n$-dimensional balls.
Then the complement of the arrangement has
$(-1)^{n} \cdot \chi(\mathcal{H}; 0)$
regions.
\label{theorem_toric_Zaslavsky_version_1}
\end{theorem}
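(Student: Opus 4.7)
The plan is to combine Theorem~\ref{theorem_characteristic}, which expresses the characteristic polynomial as a valuation applied to the complement of the arrangement, with a specialization that converts the formal polynomial identity into a statement about combinatorial Euler characteristics.

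First, recall that by Theorem~\ref{theorem_characteristic} we have
$$\chi(\mathcal{H}; t) = v\left(T^{n} - \bigcup_{i=1}^{m} H_{i}\right),$$
where $v$ is the valuation on the distributive lattice $L$ determined by $v(V) = t^{\dim(V)}$ on a toric subspace $V$ and extended additively over disjoint unions of translates. Specializing at $t=0$ yields a new valuation $v_{0}$ on $L$ that sends a $k$-dimensional toric subspace to $1$ if $k=0$ and to $0$ if $k \geq 1$; by additivity, $v_{0}$ sends a finite disjoint union of points to its cardinality, and it annihilates any positive-dimensional toric piece.

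Next, I would observe that the combinatorial Euler characteristic $\varepsilon$ is itself a valuation on $L$, and that it agrees with $v_{0}$ on the generating collection $G$. Indeed, for a $k$-dimensional toric subspace $U$ one has $\varepsilon(U) = \varepsilon(T^{k})$, which equals $1$ when $k=0$ and $0$ when $k \geq 1$, matching $v_{0}$; since $\varepsilon$ is additive on disjoint unions, the same identity extends to any element of $G$. Because both $v_{0}$ and $\varepsilon$ satisfy the inclusion-exclusion condition from Groemer's integral theorem used in the proof of Theorem~\ref{theorem_characteristic}, they agree on all of $L$, giving
$$\chi(\mathcal{H}; 0) = \varepsilon\left(T^{n} - \bigcup_{i=1}^{m} H_{i}\right).$$

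Finally, I would invoke the hypothesis that every region is an open $n$-dimensional ball. The complement decomposes as a disjoint union of its $r$ regions; by additivity of $\varepsilon$, and since each open $n$-ball contributes $(-1)^{n}$ to the combinatorial Euler characteristic, we obtain
$$\varepsilon\left(T^{n} - \bigcup_{i=1}^{m} H_{i}\right) = (-1)^{n} \cdot r.$$
Combining the two displays yields $r = (-1)^{n} \cdot \chi(\mathcal{H}; 0)$, as required.

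The main obstacle, though a mild one, lies in cleanly identifying the specialization $v_{0}$ with the combinatorial Euler characteristic $\varepsilon$ on all of $L$; once this identification is in hand, the rest of the argument is assembly and parallels the valuation arguments of~\cite{Ehrenborg_Readdy_valuation_1, Ehrenborg_Readdy_valuation_2}. The ball hypothesis is essential precisely because it guarantees that each region has the standard $(-1)^{n}$ Euler characteristic; without it (as in Example~\ref{example_toric_one}) the right-hand side of the final display would no longer equal $(-1)^{n}$ times the region count.
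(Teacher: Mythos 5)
Your argument is essentially identical to the paper's: both specialize the valuation of Theorem~\ref{theorem_characteristic} at $t=0$, identify it with the combinatorial Euler characteristic $\varepsilon$ by checking agreement on the generating collection of toric subspaces (via $\varepsilon(T^{k})=\delta_{k,0}$) and extending to the distributive lattice $L$, and then use the open-ball hypothesis to read off $(-1)^{n}\cdot r$ on the left side. Your extra remark about why the ball hypothesis is essential, pointing to Example~\ref{example_toric_one}, is a nice touch but the underlying proof is the same.
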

\begin{proof}
Observe that the Euler valuation~$\varepsilon$ of a $k$-dimensional torus
is given by the Kronecker delta~$\delta_{k,0}$.
Hence for a toric subspace $V$ of the $n$-dimensional torus, 
the Euler valuation of $V$ is obtained by setting $t = 0$
in the valuation, that is, 
%Hence the Euler valuation of a toric subspace $V$
%of the $n$-dimensional torus $T^{n}$ is given
%by setting $t=0$ in the valuation $v(V)$, that
%is, 
$\varepsilon(V) = v(V)|_{t = 0}$.
Since the two valuations $\varepsilon$ and $v|_{t=0}$
are additive with respect to disjoint unions,
they agree for any member of the generating set $G$.
Hence they also agree for any member in the distributive lattice $L$.
In particular, 
\begin{equation}
    \varepsilon\left( T^{n} - \bigcup_{i=1}^{m} H_{i} \right) 
  =
    \left. v\left( T^{n} - \bigcup_{i=1}^{m} H_{i} \right)\right|_{t = 0} .
\label{equation_Euler_and_v}
\end{equation}
Since the Euler valuation of an open ball
is $(-1)^{n}$ and
$T^{n} - \bigcup_{i=1}^{m} H_{i}$
is a disjoint union of open balls,
the left-hand side 
of~(\ref{equation_Euler_and_v})
is $(-1)^{n}$ times the number of regions.
The right-hand side is $\chi(\mathcal{H}; t=0)$ by 
Theorem~\ref{theorem_characteristic}.
\end{proof}

\addtocounter{theorem}{-5}
\begin{continuation}
\textrm{
Setting $t=0$ in the characteristic polynomial
in Example~\ref{example_toric_one}
we obtain $3$, which is indeed
the number of regions of this arrangement.
}
\end{continuation}
\addtocounter{theorem}{4}

We call a toric hyperplane arrangement $\mathcal{H} = \{H_{1}, \ldots, H_{m}\}$
{\em rational}
if each hyperplane $H_{i}$
is of the form
$\vec{a}_{i} \cdot \vec{x} = b_{i}$
where the vector $\vec{a}_{i}$ has integer entries and
$b_{i}$ is an integer for $1 \leq i \leq m$.
This is equivalent to assuming every constant $b_{i}$ is rational
since every vector $\vec{a}_{i}$ was already assumed to be rational.
In what follows it will be convenient to
assume every coefficient is integral in a given rational
arrangement.

Define
$\MLCM(\mathcal{H})$ to be
the least common multiple of all the
$n \times n$ minors of the~$n \times m$ matrix
$(\vec{a}_{1}, \ldots, \vec{a}_{m})$.
We can now give a different interpretation
of the toric chromatic polynomial
by counting lattice points.
\begin{theorem}
For a rational hyperplane arrangement $\mathcal{H}$
there exists a constant~$k$ such that
for every $q > k$ where $q$ is a multiple of $\MLCM(\mathcal{H})$,
the toric characteristic polynomial
evaluated at $q$ is given by
the number of lattice points in
$\left( \frac{1}{q} \mathbb{Z} \right)^{n}/\mathbb{Z}^{n}$
that do not lie on any of the toric hyperplanes $H_{i}$, that is,
\[
    \chi(\mathcal{H}; q)
  =
    \left|
      \left( \frac{1}{q} \mathbb{Z} \right)^{n}/\mathbb{Z}^{n}
        -
      \bigcup_{i=1}^{m} H_{i}
    \right|    . 
\]
\label{theorem_lattice_points}
\end{theorem}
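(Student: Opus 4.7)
The plan is to introduce a second valuation on the distributive lattice $L$, namely the lattice-point counting function $v_{q}$ defined by $v_{q}(A) = |A \cap \Lambda_{q}|$, where $\Lambda_{q} = \left( \tfrac{1}{q}\mathbb{Z} \right)^{n}/\mathbb{Z}^{n}$. That $v_{q}$ is a valuation is immediate: for all $A, B \subseteq T^{n}$,
\[
|A \cap \Lambda_{q}| + |B \cap \Lambda_{q}| = |(A \cap B) \cap \Lambda_{q}| + |(A \cup B) \cap \Lambda_{q}|,
\]
and $v_{q}(\emptyset) = 0$. The strategy is to show that $v_{q}$ and the specialization $v(\cdot)|_{t=q}$ of the polynomial valuation $v$ of Theorem~\ref{theorem_characteristic} agree on the generating set $G$; the uniqueness clause of Groemer's integral theorem then forces agreement on all of $L$. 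Applying this equality to the complement $T^{n} - \bigcup_{i=1}^{m} H_{i} \in L$ and invoking Theorem~\ref{theorem_characteristic} yields the claim.

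The central step is to verify, for every generator $V = \bigsqcup_{p=1}^{r}(U + x_{p}) \in G$ with $U$ a $k$-dimensional toric subspace, the lattice-point count $|V \cap \Lambda_{q}| = r \cdot q^{k}$ for $q$ a sufficiently large multiple of $\MLCM(\mathcal{H})$. I would lift $U$ to the kernel of an integer matrix $A$ assembled from the normals $\vec{a}_{i}$, put $A$ in Smith normal form $A = PDQ$ with invariant factors $d_{1} \mid \cdots \mid d_{n-k}$, and compute directly: the substitution $\vec{x} = \vec{y}/q$ reduces $|\Lambda_{q} \cap \{A\vec{x} \equiv \vec{0} \pmod{\mathbb{Z}^{n}}\}|$ to the number of $\vec{y} \in (\mathbb{Z}/q)^{n}$ satisfying $A\vec{y} \equiv \vec{0} \pmod{q}$, which the diagonal form evaluates as $q^{k}\cdot \prod_{i=1}^{n-k}\gcd(d_{i},q)$. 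As soon as every $d_{i}$ divides $q$ this collapses to $q^{k}\cdot \prod_{i} d_{i}$; since $\{A\vec{x} \equiv \vec{0}\}$ decomposes into exactly $\prod_{i} d_{i}$ translates of $U$, each translate carries precisely $q^{k}$ lattice points, and so does $U + x_{p}$ whenever $x_{p} \in \Lambda_{q}$.

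The main obstacle is to ensure, under the single hypothesis that $q$ is a multiple of $\MLCM(\mathcal{H})$, that for every intersection appearing in $\mathcal{P}$ one has $d_{n-k} \mid q$ and $x_{p} \in \Lambda_{q}$, uniformly over all generators of $G$. The subtlety is that $\MLCM(\mathcal{H})$ is defined using only $n\times n$ minors of the full structure matrix $(\vec{a}_{1},\ldots,\vec{a}_{m})$, whereas the invariant factors $d_{i}$ and the coordinates of the translates $x_{p}$ arise from sub-matrices of arbitrary shape. The linking device is to use essentiality of $\mathcal{H}$ to complete every such sub-matrix to an invertible $n \times n$ integer sub-matrix of the structure matrix, whose determinant is then, up to sign, one of the $n\times n$ minors appearing in $\MLCM(\mathcal{H})$. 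A Cramer's-rule computation on this completion expresses each coordinate of $x_{p}$ with denominator dividing this determinant, and a short invariant-factor comparison shows that $d_{n-k}$ divides it as well. The lower bound on $q$ provided by the statement then absorbs any finitely-many residual small-$q$ anomalies, such as an accidental coincidence of distinct components inside $\Lambda_{q}$.
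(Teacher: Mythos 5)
Your route is genuinely different from the paper's. You reuse the valuation machinery (the lattice $L$, the generating set, Groemer's theorem) that the paper deploys for Theorem~\ref{theorem_toric_Zaslavsky_version_1}, introducing the counting valuation $v_q$ and matching it against the polynomial valuation $v(\cdot)|_{t=q}$. The paper instead proves Theorem~\ref{theorem_lattice_points} by a one-line M\"obius inversion over $\mathcal{P}$: once one knows that each connected component $x$ of $\mathcal{P}$ meets $\Lambda_q = \left(\tfrac{1}{q}\mathbb{Z}\right)^n/\mathbb{Z}^n$ in exactly $q^{\dim x}$ points, the identity $\chi(\mathcal{H};q) = \sum_{x}\mu(\hz,x)\,q^{\dim x}$ is read off as the number of points in no $H_i$ (the Athanasiadis finite-field-method pattern). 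The two arguments are equivalent in content --- both turn on the same per-component count --- and your Smith-normal-form computation is a welcome explicit verification of that count, which the paper simply asserts.

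There is one genuine gap you should repair. You say you will verify $v_q = v(\cdot)|_{t=q}$ ``for every generator $V\in G$.'' That cannot work: as defined in the paper, $G$ consists of \emph{all} finite intersections of toric subspaces of $T^n$, including those whose affine lifts have irrational translate vectors $\vec{b}$ (the paper explicitly allows real $\vec{b}$). Such a generator can be disjoint from $\Lambda_q$ entirely, so $v_q$ vanishes while $v|_{t=q}$ does not, and the two valuations do \emph{not} agree on all of $G$. The fix is to restrict to the distributive sublattice $L'\subseteq L$ generated by $T^n$ together with the finite intersections $H_{i_1}\cap\cdots\cap H_{i_s}$ of arrangement hyperplanes. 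This family is closed under intersection, the set $T^n - \bigcup_{i=1}^m H_i$ lies in $L'$, and your Smith-form argument is aimed precisely at these generators. Once the agreement is established on that generating family, the inclusion--exclusion rigidity in Groemer's theorem does force $v_q = v|_{t=q}$ on all of $L'$, and the theorem follows from Theorem~\ref{theorem_characteristic} exactly as you say.

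The bridge from $\MLCM(\mathcal{H})$ to the invariant factors $d_i$ and the denominators of the translate points $x_p$ is, as you correctly flag, the technical heart, and your plan --- complete a rank-$(n-k)$ submatrix of the structure matrix to an invertible $n\times n$ integer block via essentiality, then compare minors and apply Cramer's rule for the translates --- is the right shape. It is worth noting that the largest invariant factor $d_{n-k}$ divides $D_{n-k}$, the $\gcd$ of all maximal minors of the submatrix, not any single such minor; keeping track of this distinction matters when you try to push the divisibility through the $n\times n$ completion. The paper itself glosses over exactly this point, so filling it in would strengthen, not merely reproduce, the argument.
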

The condition that $q$ is a multiple of $\MLCM(\mathcal{H})$
implies that every subspace $x$
in the intersection poset~$\mathcal{P}$
intersects the toric lattice
$\left(\frac{1}{q} \mathbb{Z} \right)^{n}/\mathbb{Z}^{n}$
in exactly $q^{\dim(x)}$ points.
Theorem~\ref{theorem_lattice_points}
now follows by M\"obius inversion.
This theorem
is the toric analogue of
the
finite field method of
Athanasiadis.
See~\cite[Theorem~2.1]{Athanasiadis_II} in particular.

In the case when $\MLCM(\mathcal{H}) = 1$, the toric arrangement 
$\mathcal{H}$ is
called {\em unimodular}. 
Novik, Postnikov, and Sturmfels~\cite{Novik_Postnikov_Sturmfels}
state Theorem~\ref{theorem_toric_Zaslavsky_version_1}
in the special case of unimodular arrangements.
Their first proof is based upon Zaslavsky's result on the number
of bounded regions in an affine arrangement.
The second proof, due to %Vic 
Reiner, is equivalent to our
proof for arbitrary toric arrangements.
See also the paper~\cite{Zaslavsky_paper}
by Zaslavsky, where more general arrangements
are considered.

\subsection{Graphical arrangements}

We digress in this subsection to discuss an application
to graphical arrangements, which are hyperplane arrangements 
arising from graphs.
For a graph $G$ on the vertex set $\{1, \ldots, n\}$
define the \define{graphical arrangement} $\mathcal{H}_{G}$ to
be the collection of hyperplanes of the form $x_{i} = x_{j}$
for each edge $ij$ in the graph $G$.
\begin{corollary}
For a connected graph $G$ on $n$ vertices the regions
in the complement
of the graphical arrangement $\mathcal{H}_{G}$ on the torus $T^{n}$
are each homotopy equivalent to the $1$-dimensional torus $T^{1}$.
Furthermore, the number of regions is given by
$(-1)^{n-1}$ times the linear coefficient of the chromatic
polynomial of $G$.
\end{corollary}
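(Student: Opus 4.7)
The plan is to reduce to the $(n-1)$-dimensional torus via the diagonal subtorus. Let $D \subset T^n$ denote the image of the diagonal line $\{(t,\ldots,t) \in \RR^n\}$, a one-dimensional subtorus. Each hyperplane $H_{ij} = \{x_i = x_j\}$ is $D$-invariant, so $\mathcal{H}_G$ descends under the quotient $\pi : T^n \to T^n/D \cong T^{n-1}$ to a toric arrangement $\bar{\mathcal{H}}_G$ with $H_{ij} = \pi^{-1}(\bar H_{ij})$. Since the $D$-action is free on the complement of $\mathcal{H}_G$, the restriction of $\pi$ to complements is a principal $T^1$-bundle, and each region $R$ of $\mathcal{H}_G$ equals $\pi^{-1}(\bar R)$ for a unique region $\bar R$ of $\bar{\mathcal{H}}_G$.

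Next I would show every region $\bar R$ on $T^{n-1}$ is an open $(n-1)$-ball. Lift to the universal cover $V = \{x \in \RR^n : x_1 + \cdots + x_n = 0\}$, where $\bar{\mathcal{H}}_G$ pulls back to the periodic affine arrangement $\{x_i - x_j = k : ij \in E(G),\ k \in \ZZ\}$. For any nonzero $w \in V$, the connectedness of $G$ forces some edge $ij$ with $w_i \neq w_j$ (otherwise $w$ is constant on $V(G)$, and since $w_1 + \cdots + w_n = 0$ this gives $w = 0$). Hence any line in direction $w$ meets infinitely many hyperplanes, so each region of the lifted arrangement is a bounded convex open subset of $V$, i.e.\ an open $(n-1)$-ball. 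Bounded regions have trivial stabilizer under the deck group, so they descend bijectively to open $(n-1)$-balls $\bar R$ in $T^{n-1}$, and then $R = \pi^{-1}(\bar R)$ is a trivial $T^1$-bundle over a ball and thus homotopy equivalent to $T^1$.

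For the count, Theorem~\ref{theorem_toric_Zaslavsky_version_1} applied to $\bar{\mathcal{H}}_G$ gives $(-1)^{n-1} \chi(\bar{\mathcal{H}}_G; 0)$ regions on $T^{n-1}$. Combining Theorem~\ref{theorem_characteristic} with inclusion-exclusion over $F \subseteq E(G)$ yields $\chi(\mathcal{H}_G; t) = \sum_F (-1)^{|F|} t^{c(F)} = \chi_G(t)$, the chromatic polynomial of $G$: the intersection $\bigcap_{ij \in F} H_{ij}$ is a single connected $c(F)$-dimensional subtorus, since its defining equations have integer coefficients and the vertex partition induced by $F$ has $c(F)$ blocks. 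Because $\pi$ is a $T^1$-bundle, each $k$-dimensional toric subspace downstairs pulls back to a $(k+1)$-dimensional toric subspace upstairs, so $\chi(\mathcal{H}_G; t) = t \cdot \chi(\bar{\mathcal{H}}_G; t)$, and hence $\chi(\bar{\mathcal{H}}_G; 0)$ equals the linear coefficient of $\chi_G(t)$. The bijection $R \leftrightarrow \bar R$ then transfers the count to $T^n$.

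The principal obstacle is proving the lifted regions on $V$ are bounded; without the connectedness hypothesis on $G$ some lifted region would contain a line and fail to be an open ball, so the reduction to Theorem~\ref{theorem_toric_Zaslavsky_version_1} would break down. All remaining steps are essentially formal once boundedness is established.
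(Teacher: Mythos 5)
Your proposal is correct, and it takes a genuinely different route from the paper even though both exploit the same diagonal $T^1$-symmetry. Where you quotient by the diagonal subtorus $D$ to pass to an arrangement $\bar{\mathcal{H}}_G$ with ball regions on $T^{n-1}$, the paper stays in $T^n$ and instead \emph{adds} the hyperplane $x_1=0$ (transverse to $D$) to cut each $T^1$-bundle region into a single ball; these are dual moves. Combinatorially, your identity $\chi(\bar{\mathcal{H}}_G;t)=\chi(\mathcal{H}_G;t)/t$ plays the role of the paper's $\chi(\mathcal{H}';t)=(t-1)\chi(\mathcal{H}_G;t)/t$, and the extra factor $(t-1)$ is absorbed by evaluating on $T^n$ with $(-1)^n$ rather than on $T^{n-1}$ with $(-1)^{n-1}$, yielding the same answer $(-1)^{n-1}$ times the linear coefficient of $\chi_G$. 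Your version is more topologically explicit: you actually prove boundedness of the lifted regions on the universal cover $V$ from connectedness and then invoke the principal $T^1$-bundle structure, whereas the paper asserts the homotopy equivalence to $T^1$ and the ball property of the cut regions from the observation that $(1,\ldots,1)$ is the unique invariant direction. What the paper's route buys is that, once regions of $\mathcal{H}'$ are accepted as balls, everything reduces to the clean observation that the intersection lattice of $\mathcal{H}'$ is a product of the two-element chain with that of $\mathcal{H}_G$, avoiding any discussion of covers or bundles; what yours buys is that the topological assertions get spelled out, which is exactly the part the paper glosses over.
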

\begin{proof}
The chromatic polynomial of the graph $G$ is equal to
the characteristic polynomial of the graphical arrangement
$\mathcal{H}_{G}$.
Furthermore, the intersection lattice
of the real arrangement $\mathcal{H}_{G}$ is the same
as the intersection poset of the toric arrangement~$\mathcal{H}_{G}$.
Translating the graphic arrangement in the
direction $(1, \ldots, 1)$ leaves the
arrangement on the torus invariant.
Since $G$ is connected this is the only direction
that leaves the arrangement invariant.
Hence each region is
homotopy equivalent to $T^{1}$.
By adding the hyperplane $x_{1} = 0$ to the arrangement
we obtain a new arrangement $\mathcal{H}^{\prime}$
with the same number of regions, 
but with each region homeomorphic to a ball.
Since the intersection lattice of $\mathcal{H}^{\prime}$
is just the Cartesian product of the two-element poset
with the  intersection lattice of $\mathcal{H}_{G}$, we have
$$   \chi(\mathcal{H}^{\prime}, t)
   = 
     (t-1) \cdot \chi(\mathcal{H}_{G}, t)/t .  $$
The number of regions is obtained by setting $t=0$ in this equality.
\end{proof}

A similar statement holds for graphs that are disconnected.
The result follows from the fact that
the complement of the graphical arrangement
is the product of the complements
of each connected component.
\begin{corollary}
For a graph $G$ on $n$ vertices 
consisting of $k$ components,
the regions
in the complement
of the graphical arrangement $\mathcal{H}_{G}$ on the torus $T^{n}$
are each homotopy equivalent to the $k$-dimensional torus $T^{k}$.
The number of regions is given by
$(-1)^{n-k}$ times the coefficient of $t^{k}$ in the chromatic
polynomial of $G$.
\end{corollary}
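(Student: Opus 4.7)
The plan is to reduce to the connected case already handled by the previous corollary, exploiting the product structure induced by the connected components of $G$. Write $G$ as the disjoint union of its connected components $G_{1}, \ldots, G_{k}$, with component $G_{j}$ having vertex set of size $n_{j}$, so that $n_{1} + \cdots + n_{k} = n$. After relabeling coordinates, the graphical arrangement $\mathcal{H}_{G}$ involves only hyperplanes of the form $x_{i} = x_{j}$ where $i$ and $j$ lie in the same component, so the arrangement is a ``product'' in the sense that
\[
  T^{n} - \bigcup_{H \in \mathcal{H}_{G}} H
   \cong
  \prod_{j=1}^{k}
   \left(
   T^{n_{j}} - \bigcup_{H \in \mathcal{H}_{G_{j}}} H
   \right),
\]
under the canonical identification $T^{n} \cong T^{n_{1}} \times \cdots \times T^{n_{k}}$.

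Next I would transport the homotopy statement across this product. By the preceding (connected) corollary, each region of the complement of $\mathcal{H}_{G_{j}}$ in $T^{n_{j}}$ is homotopy equivalent to $T^{1}$. A region of $\mathcal{H}_{G}$ in $T^{n}$ is exactly a product of one region from each factor, so it is homotopy equivalent to $T^{1} \times \cdots \times T^{1} = T^{k}$, as claimed.

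For the count, the number of regions of $\mathcal{H}_{G}$ is the product over $j$ of the number of regions of $\mathcal{H}_{G_{j}}$. By the connected case, the $j$-th factor equals $(-1)^{n_{j}-1}$ times the linear coefficient of the chromatic polynomial $\chi_{G_{j}}(t)$. Multiplying these together and using $\sum_{j}(n_{j}-1) = n-k$ gives $(-1)^{n-k}$ times the product of the linear coefficients of the $\chi_{G_{j}}(t)$. Finally, the chromatic polynomial factors as $\chi_{G}(t) = \prod_{j=1}^{k} \chi_{G_{j}}(t)$, and since each $\chi_{G_{j}}(t)$ is divisible by $t$ but not by $t^{2}$ (its lowest nonzero term is its linear term, because $G_{j}$ is connected), the coefficient of $t^{k}$ in $\chi_{G}(t)$ is precisely the product of the linear coefficients of the $\chi_{G_{j}}(t)$. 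Combining these observations yields the desired formula.

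There is no serious obstacle; the only point that requires a moment of care is extracting the coefficient of $t^{k}$ from the factorization of the chromatic polynomial, and this is immediate from the fact that the chromatic polynomial of a connected graph vanishes to order exactly one at $t=0$. Everything else is bookkeeping once the product decomposition of the complement is in place.
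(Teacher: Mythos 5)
Your proposal is correct and follows exactly the approach the paper indicates — the paper gives only the one-sentence remark that "the result follows from the fact that the complement of the graphical arrangement is the product of the complements of each connected component," and you have simply fleshed out that product argument (the homotopy type of a product of regions, the multiplicativity of the region count, the factorization $\chi_G = \prod_j \chi_{G_j}$, and the observation that each connected factor vanishes to order exactly one at $t=0$) into a complete proof. No gaps.
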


Stanley~\cite{Stanley_acyclic} proved the celebrated result
that the chromatic polynomial of a graph
evaluated at $t=-1$
is $(-1)^{n}$ times the number of acyclic orientations of the
graph.
A similar interpretation for the linear coefficient
of the chromatic polynomial is due to
Greene and Zaslavsky~\cite{Greene_Zaslavsky}:
\begin{theorem}[Greene--Zaslavsky]
Let $G$ be a connected graph and $v$ a given vertex of the graph.
The linear coefficient of the chromatic polynomial
is $(-1)^{n-1}$ times the number of acyclic orientations
of the graph such that the only sink is the vertex $v$.
\end{theorem}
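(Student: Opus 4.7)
The plan is to prove this by induction on the number of edges of $G$, using the deletion-contraction recursion for the chromatic polynomial. Let $a(G,v)$ denote the number of acyclic orientations of $G$ having $v$ as their unique sink, and let $c_{1}(G)$ denote the coefficient of $t$ in $\chi(G,t)$; the goal is the identity $a(G,v) = (-1)^{n-1} c_{1}(G)$ for every connected $G$ on $n$ vertices.

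The base case is $|E(G)| = 0$, which by connectivity forces $n = 1$: here $\chi(G,t) = t$, so $c_{1}(G) = 1$, and the single empty orientation has $v$ as its unique (trivial) sink.

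For the inductive step, I would pick an edge $e = uw$ with $u, w \neq v$. (If every edge meets $v$ then $G$ is the star $K_{1,n-1}$, for which $\chi(G,t) = t(t-1)^{n-1}$ gives $c_{1}(G) = (-1)^{n-1}$, and a direct case analysis shows that whether $v$ is the center or a leaf, exactly one acyclic orientation has $v$ as unique sink.) Applying $\chi(G,t) = \chi(G-e, t) - \chi(G/e, t)$ and extracting the coefficient of $t$ gives
\[
  (-1)^{n-1} c_{1}(G) = (-1)^{n-1} c_{1}(G-e) + (-1)^{n-2} c_{1}(G/e).
\]
If $G - e$ is disconnected then $c_{1}(G-e) = 0$, because the chromatic polynomial of a disconnected graph factors as a product each of whose factors vanishes at $0$; and in this case $a(G-e,v) = 0$ too, since any orientation of a disconnected graph has a sink in every component. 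Otherwise the induction hypothesis, applied to $G-e$ and to $G/e$, gives $(-1)^{n-1} c_{1}(G-e) = a(G-e, v)$ and $(-1)^{n-2} c_{1}(G/e) = a(G/e, \bar{v})$, where $\bar{v}$ is the image of $v$ under contraction.

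It then remains to prove the purely combinatorial identity
\[
  a(G,v) = a(G-e, v) + a(G/e, \bar v),
\]
which I would establish by a bijection. For $\beta \in a(G,v)$, both $\beta|_{G-e}$ and $\beta/e$ are acyclic, but $\beta|_{G-e}$ lies in $a(G-e,v)$ exactly when removing $e$ creates no new sink at either $u$ or $w$, while $\beta/e$ is an element of $a(G/e, \bar v)$ exactly when $\beta$ contains no directed path of length $\geq 2$ between $u$ and $w$. After fixing a reference direction on $e$, these two conditions partition $a(G,v)$ into complementary classes that are shipped respectively to $a(G-e,v)$ and $a(G/e,\bar v)$.

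The main obstacle is verifying that this dispatch is a bijection. The naive maps $\beta \mapsto \beta|_{G-e}$ and $\beta \mapsto \beta/e$ are individually not injective---two $\beta$'s differing only on $e$ can restrict to the same $\alpha \in a(G-e,v)$, and several $\beta$'s can share the same contraction---so sorting each orientation into the correct target set using the chosen reference direction and the acyclicity-of-$\beta/e$ condition, and then checking that every element of $a(G-e,v)$ and of $a(G/e,\bar v)$ is hit exactly once, is the delicate combinatorial heart of the argument.
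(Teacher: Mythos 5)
Your route is genuinely different from the paper's. The text gives a geometric proof: it builds a bijection between acyclic orientations with unique sink $v$ and the regions of the graphical arrangement $\mathcal{H}_{G}$ on the torus $T^{n}$, and then reads off the linear coefficient of the chromatic polynomial from the toric Zaslavsky count. The bijection is constructed by lifting a toric region to the periodic arrangement $\widetilde{\mathcal{H}^{\prime}}$ in $\mathbb{R}^{n}$, minimizing the linear functional $x_{1}+\cdots+x_{n}$ over the closure of the lifted face, and reading the orientation off the fractional parts of a point near the minimizer; uniqueness of the sink is then established by perturbing the minimizer. Your deletion--contraction induction sidesteps all of this geometry, which is a legitimate and well-known alternative path to Greene--Zaslavsky, and it has the advantage of not invoking toric machinery at all.

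That said, the identity $a(G,v)=a(G-e,v)+a(G/e,\bar v)$ --- the heart of the inductive step --- is asserted rather than proved, and the dispatch you sketch does not quite work as stated. First, $\beta/e$ is \emph{not} always acyclic: if $\beta$ orients $e$ as $u\to w$ and also contains a directed path of length $\geq 2$ from $u$ to $w$, contracting $e$ produces a directed cycle. Second, your two conditions are not complementary. Take $G=K_{3}$ on $\{1,2,3\}$ with $v=1$ and $e=23$, and let $\beta$ orient $2\to 1$, $3\to 1$, $2\to 3$. Deleting $e$ still leaves $1$ as the unique sink, and $\beta$ has no directed path of length $\geq 2$ between $2$ and $3$, so \emph{both} of your conditions hold simultaneously; one can check that only one of the two conditions fails at a time, never both, but their overlap is nonempty. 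So the conditions do not partition $a(G,v)$, and the tie-breaking by a reference direction on $e$ --- which you mention but never deploy --- is not decorative but essential. Even once that is fixed, one still has to verify that the resulting map is well-defined into the disjoint union (in particular that contracting lands in $a(G/e,\bar v)$, which requires showing the lift of a unique-sink orientation of $G/e$ can have an extra sink only at one of $u,w$) and that it is onto. You flag this yourself as ``the delicate combinatorial heart,'' and indeed it is where the work lies; as written, the proposal stops short of a proof.
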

\begin{proof}
It is enough to give a bijection between
regions in the complement of the graphical arrangement
on the torus $T^{n}$
and acyclic orientations with the vertex $v$ as the unique sink.
For a region $R$ of the arrangement intersect it with
the hyperplane~$x_{v} = 0$ to obtain the face $S$.
Let $\mathcal{H}^{\prime}$ be the arrangement~$\mathcal{H}_{G}$
together with the hyperplane $x_{v} = 0$.
Lift $S$ to a face $\widetilde{S}$
in the periodic arrangement~$\widetilde{\mathcal{H}^{\prime}}$
in~$\mathbb{R}^{n}$.
Observe that $\widetilde{S}$ is the interior of a polytope.
When minimizing the linear functional
$L(x) = x_{1} + \cdots + x_{n}$
on the closure of the face $\widetilde{S}$, the optimum
is a lattice point $k = (k_{1}, \ldots, k_{n})$.
Pick a point $x = (x_{1}, \ldots, x_{n})$ 
in $\widetilde{S}$ close to the optimum,
that is, such that each coordinate $x_{i}$ lies in the interval
$[k_{i},k_{i}+\epsilon)$ for some small $\epsilon > 0$.

Let $y = (y_{1}, \ldots, y_{n})$ be the image of the point
$x$ on the torus $T^{n}$, that is,
$y_{i} = x_{i} \bmod 1$. Note that
each entry $y_{i}$ lies in the half open interval $[0,1)$
and that $y_{v} = 0$.
Construct an orientation of the graph $G$
by letting the edge $ij$ be oriented $i \rightarrow j$ if~$y_{i} > y_{j}$.
Note that this orientation is acyclic and has the vertex $v$
as a sink. 

To show that the vertex $v$ is the unique sink,
assume that the vertex $i$ is also a sink, where $i \neq v$.
In other words, for all neighbors $j$ of the vertex $i$ we have 
that~$y_{i} < y_{j}$. We can continuously move the point
$x$ in $\widetilde{S}$
by decreasing the value of the $i$th coordinate $x_{i}$.
Observe that there is no hyperplane in the periodic
arrangement blocking the coordinate $x_{i}$ from passing through
the integer value $k_{i}$ and continuing down to $k_{i}-1+\epsilon$.
This contradicts the fact
that we chose the original point~$x$ close to the optimum
of the linear functional $L$. Hence the vertex $i$ cannot be a sink.

It is straightforward to verify that
this map from regions to the set of acyclic orientations
with the unique sink at $v$ is a bijection.
\end{proof}

The technique of assigning a point to every region
of a toric arrangement using a linear functional
was used by
Novik, Postnikov and Sturmfels
in their paper~\cite{Novik_Postnikov_Sturmfels}.
See their first proof of the number of regions
of a toric arrangement.

\subsection{The toric Bayer--Sturmfels result}

Define the {\em toric Zaslavsky invariant} of a graded poset $P$
having $\hz$ and $\ho$ by
$$    \Zt(P) 
   = 
      \sum_{x \mbox{ \textrm{ \tiny coatom of} } P}
      (-1)^{\rho(\hz,x)} \cdot \mu(\hz,x)    
   = 
      (-1)^{\rho(P) - 1} \cdot
      \sum_{x \mbox{ \textrm{ \tiny coatom of} } P}
      \mu(\hz,x)    .  $$

We reformulate 
Theorem~\ref{theorem_toric_Zaslavsky_version_1}
as follows.
\begin{theorem}
For a toric hyperplane arrangement $\mathcal{H}$ 
on the torus $T^n$
that subdivides the torus into 
open $n$-dimensional balls,
the number of
regions is given by $\Zt(\mathcal{P})$, where $\mathcal{P}$
is the intersection poset of the arrangement~$\mathcal{H}$.
\label{theorem_toric_Zaslavsky_version_2}
\end{theorem}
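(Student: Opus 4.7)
The plan is to deduce this reformulation directly from Theorem~\ref{theorem_toric_Zaslavsky_version_1} by a careful reading of the two sides. Specifically, I want to show that
\[
(-1)^{n} \cdot \chi(\mathcal{H}; 0) \;=\; \Zt(\mathcal{P}),
\]
after which the conclusion is immediate.

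First I would examine the effect of substituting $t=0$ in the toric characteristic polynomial. By the definition
\[
\chi(\mathcal{H}; t) = \sum_{\onethingatopanother{x \in \mathcal{P}}{x \neq \emptyset}} \mu(\hz, x) \cdot t^{\dim(x)},
\]
every term with $\dim(x) > 0$ vanishes at $t=0$, leaving only the contribution from the zero-dimensional toric subspaces in $\mathcal{P}$.

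Next I would identify those zero-dimensional elements with the coatoms of $\mathcal{P}$. The intersection poset is graded with rank function $\rho(x) = n - \dim(x)$ for $x \neq \emptyset$ and $\rho(\emptyset) = n+1$, so $\mathcal{P}$ has rank $n+1$ and its coatoms are precisely the elements of dimension $0$, i.e., the isolated intersection points of the toric hyperplanes. The key sanity check here is that under our hypothesis that the arrangement cuts $T^{n}$ into open $n$-balls, each intersection point is in fact a zero-dimensional element of $\mathcal{P}$ (there are no lower-dimensional subtori lurking beneath), so the coatoms are exactly the zero-dimensional pieces and the poset really has rank $n+1$. This is the step I expect to require the most care, but it is a direct consequence of the ball-subdivision hypothesis and the definition of $\mathcal{P}$ as the poset of connected components of intersections.

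Combining the two previous observations gives
\[
(-1)^{n} \cdot \chi(\mathcal{H}; 0) = (-1)^{n} \sum_{x \text{ coatom of } \mathcal{P}} \mu(\hz, x) = (-1)^{\rho(\mathcal{P})-1} \sum_{x \text{ coatom of } \mathcal{P}} \mu(\hz, x) = \Zt(\mathcal{P}),
\]
where the last equality is simply the definition of the toric Zaslavsky invariant. Applying Theorem~\ref{theorem_toric_Zaslavsky_version_1} then identifies this with the number of regions, completing the proof.
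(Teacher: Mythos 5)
Your translation is correct and is exactly the implicit argument: the paper states Theorem~\ref{theorem_toric_Zaslavsky_version_2} as a reformulation of Theorem~\ref{theorem_toric_Zaslavsky_version_1} without giving a separate proof, and your substitution of $t=0$ into the toric characteristic polynomial, combined with the identification of the zero-dimensional elements of $\mathcal{P}$ with its coatoms and the rank computation $\rho(\mathcal{P}) = n+1$, supplies precisely the bookkeeping the authors leave to the reader. The ``sanity check'' you flag — that the coatoms are the dimension-zero pieces and the rank function is codimension — is indeed where the blanket gradedness assumption (and the ball-subdivision hypothesis, which forces vertices to exist) is being used.
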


As a corollary of Theorem~\ref{theorem_toric_Zaslavsky_version_2},
we can describe the $f$-vector of the subdivision $T_{t}$ of the torus.
For similar results for more general manifolds
see~\cite[Section~3]{Zaslavsky_paper}.
\begin{corollary}
The number of $i$-dimensional regions in the subdivision $T_{t}$
of the $n$-dimensional torus is given by the sum
$$    f_{i+1}(T_{t})
    =
      (-1)^{i}
    \cdot
      \sum_{\onethingatopanother{x \leq y}
            {\onethingatopanother{\dim(x) = i}{\dim(y) = 0}}}
              \mu(x,y)  ,
$$
where $\mu(x,y)$ denotes the M\"obius function
of the interval $[x,y]$
in the intersection poset $\mathcal{P}$.
\label{corollary_f_vector_I}
\end{corollary}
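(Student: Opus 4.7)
The plan is to stratify the $i$-dimensional faces of $T_{t}$ by which element of the intersection poset contains them. Each $i$-dimensional face of $T_{t}$ lies in a unique $i$-dimensional toric subspace $x \in \mathcal{P}$, and the faces contained in $x$ are exactly the $i$-dimensional regions of the toric hyperplane arrangement $\mathcal{H}|_{x}$ obtained by intersecting every hyperplane of $\mathcal{H}$ with $x$. Hence, writing $N(x)$ for the number of $i$-dimensional regions of $\mathcal{H}|_{x}$ on the $i$-dimensional torus $x$, the plan is to establish
\[
   f_{i+1}(T_{t}) = \sum_{\dim(x)=i} N(x)
\]
and then compute each $N(x)$ via Theorem~\ref{theorem_toric_Zaslavsky_version_2}.

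First I would identify the intersection poset $\mathcal{P}_{x}$ of $\mathcal{H}|_{x}$ with the upper interval $[x,\ho]$ in $\mathcal{P}$. Every connected component of an intersection $x \cap H_{i_{1}} \cap \cdots \cap H_{i_{k}}$ is a maximal toric subspace contained in $x$, hence an element $y$ of $\mathcal{P}$ with $x \leq y$; conversely, every such $y$ arises in this way. Under this identification the minimal element of $\mathcal{P}_{x}$ is $x$ and the maximal element is $\ho = \emptyset$. Because $T_{t}$ subdivides $T^{n}$ into open balls, the induced subdivision of $x$ by $\mathcal{H}|_{x}$ also consists of open balls (the cells of $T_{t}$ contained in $x$ form a regular subdivision of $x$), so the hypothesis of Theorem~\ref{theorem_toric_Zaslavsky_version_2} holds and
\[
   N(x) \;=\; \Zt([x,\ho]) \;=\; (-1)^{\rho([x,\ho])-1} \sum_{y\, \text{\tiny coatom of}\, [x,\ho]} \mu(x,y) .
\]

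Next I would identify the coatoms of $[x,\ho]$. An element $y \in [x,\ho]$ is a coatom precisely when it is covered by $\ho = \emptyset$, i.e.\ when $\rho_{\mathcal{P}}(y) = \rho_{\mathcal{P}}(\ho)-1$; under the rank convention that ranks in $\mathcal{P}$ decrease by one as dimension increases by one, these are exactly the $0$-dimensional elements of $\mathcal{P}$ lying weakly above $x$. Since $\dim(x) = i$, the rank of the interval $[x,\ho]$ is $i+1$, so $(-1)^{\rho([x,\ho])-1} = (-1)^{i}$. Moreover the M\"obius function of the interval $[x,\ho]$ agrees with $\mu(x,y)$ in $\mathcal{P}$ for any $y \neq \ho$. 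Substituting,
\[
   N(x) \;=\; (-1)^{i} \sum_{\substack{x \leq y\\ \dim(y)=0}} \mu(x,y).
\]
Summing over all $x$ with $\dim(x) = i$ then yields the stated formula. The main obstacle is verifying the two structural claims above, namely the identification $\mathcal{P}_{x} \cong [x,\ho]$ and the fact that the regions of $\mathcal{H}|_{x}$ within $x$ are themselves open $i$-balls so that Theorem~\ref{theorem_toric_Zaslavsky_version_2} applies; once these are in hand, the remainder is bookkeeping with the definition of $\Zt$.
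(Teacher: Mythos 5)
Your proposal is correct and follows essentially the same route as the paper: stratify the $i$-cells by the unique $i$-dimensional toric subspace containing them, restrict the arrangement to each such subspace, and count regions via the toric Zaslavsky theorem. The only cosmetic difference is that you invoke Theorem~\ref{theorem_toric_Zaslavsky_version_2} (the $\Zt$ reformulation) and unwind the coatom sum, whereas the paper invokes Theorem~\ref{theorem_toric_Zaslavsky_version_1} directly; since the former is explicitly stated as a reformulation of the latter, this is the same argument, and your extra care with the identification $\mathcal{P}_x \cong [x,\ho]$ simply makes explicit an observation the paper records a few lines later.
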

\begin{proof}
Each $i$-dimensional region is contained in a unique 
$i$-dimensional subspace $x$. By restricting the arrangement
to the subspace $x$ and applying
Theorem~\ref{theorem_toric_Zaslavsky_version_1}, we have that
the number of $i$-dimensional regions in $x$ is given by
\[
(-1)^{i} \cdot
 \sum_{x \leq y, \dim(y) = 0}   \mu(x,y).\]
%%%% \sum_{\onethingatopanother{x \leq y}{\dim(y) = 0}}   \mu(x,y)$.
Summing over all $x$, the result follows.
\end{proof}

For the remainder of this section we will assume that
the induced subdivision of the torus is a regular cell complex.
Let $T_{t}$ be the face poset of 
the subdivision of the torus induced by the
toric arrangement.
Define the map
$\zt : T_{t}^{*} \longrightarrow \Pzero$
by sending each face to the
smallest toric subspace in the intersection poset that contains
the face
and
sending
the minimal element
in $T_{t}^{*}$ to $\hz$.
Observe
that the map $\zt$ is order- and rank-preserving,
as well as being surjective.
As in the central hyperplane arrangement case,
we view the map $\zt$ as a map from the set of chains
of $T_{t}^{*}$ to the set of chains of $\Pzero$.

Let $x$ be an element in the intersection poset $\mathcal{P}$
of a toric hyperplane arrangement~$\mathcal{H}$.
Then the interval $[x,\ho]$ is the intersection poset
of a toric arrangement in the toric subspace $x$.
The atoms of the interval $[x,\ho]$ are the toric hyperplanes
in this smaller toric arrangement.

More interesting is the geometric interpretation
of the interval $[\hz,x]$. It is the intersection lattice
of a central hyperplane arrangement
in $\mathbb{R}^{n - \dim(x)}$.
Without loss of generality we may assume that
$x$ contains the zero point $(0, \ldots, 0)$,
that is, when we lift the toric subspace $x$ to an affine subspace $V$
in~$\mathbb{R}^{n}$
we may assume that $V$ is a subspace of $\mathbb{R}^{n}$.
Any toric subspace $y$ in the interval $[\hz,x]$,
that is, a toric subspace containing $x$,
can be lifted to a subspace $W$ containing the subspace $V$.
In particular, the toric hyperplanes in $[\hz,x]$
lift to hyperplanes in $\mathbb{R}^{n}$ containing $V$.
This lifting is a poset isomorphism and
we obtain an essential central arrangement
of dimension $n - \dim(x)$
by quotienting out by the subspace~$V$.
We conclude by noticing that an interval $[x,y]$ in
$\mathcal{P}$, where $y < \ho$, is the intersection lattice
of a central hyperplane arrangement.

The toric analogue of Theorem~\ref{theorem_Bayer_Sturmfels}
is as follows.
\begin{theorem}
Let $\mathcal{P}$ be the intersection poset of a toric
hyperplane arrangement
whose induced subdivision is regular.
Let $c = \{\hz = x_{0} < x_{1} < \cdots < x_{k} = \ho\}$
be a chain in $\Pzero$
with $k \geq 2$.
Then the cardinality
of the inverse image of the chain~$c$ is given by the product
\[      |\zt^{-1}(c)|
    =
        \prod_{i=2}^{k-1}
               Z([x_{i-1},x_{i}])  
    \cdot
               \Zt([x_{k-1},x_{k}])     .  \]
\label{theorem_Bayer_Sturmfels_toric}
\end{theorem}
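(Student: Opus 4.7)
The plan is to decompose each chain in $\zt^{-1}(c)$ according to its bottom cell and combine a toric Zaslavsky count for that cell with the classical Bayer--Sturmfels count for the rest. Any chain $\{F_0 < F_1 < \cdots < F_k\}$ in $T_t^{*}$ mapping to $c$ must have $F_0 = \min(T_t^{*})$, $F_k = \max(T_t^{*})$, and honest cells $F_1, \ldots, F_{k-1}$ with $\dim F_i = \dim x_i$ and nested closures $\overline{F_1} \supseteq \overline{F_2} \supseteq \cdots \supseteq F_{k-1}$. I would first count the admissible choices of $F_{k-1}$, then, for each fixed $F_{k-1}$, count the extensions $F_1 < \cdots < F_{k-2}$ lying above it.

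The condition $\zt(F_{k-1}) = x_{k-1}$ says exactly that $x_{k-1}$ is the smallest toric subspace containing $F_{k-1}$; equivalently, $F_{k-1}$ is a top-dimensional cell in the toric subarrangement of $\mathcal{H}$ induced on $x_{k-1}$. This subarrangement has intersection poset the upper interval $[x_{k-1}, \ho]$ in $\Pzero$, and its subdivision of $x_{k-1}$ inherits regularity from the ambient subdivision. Theorem~\ref{theorem_toric_Zaslavsky_version_2} then gives exactly $\Zt([x_{k-1}, x_k])$ choices for $F_{k-1}$.

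For a fixed $F_{k-1}$, the lifting argument immediately preceding the theorem identifies the interval $[\hz, x_{k-1}]$ in $\Pzero$ with the (augmented) intersection lattice of a central essential hyperplane arrangement $\mathcal{H}'$ in $\mathbb{R}^{n-d}$, where $d = \dim x_{k-1}$. I would verify that a sufficiently small transverse disk to $F_{k-1}$ at an interior point realizes $\mathcal{H}'$ geometrically and yields a rank-preserving poset isomorphism between the star of $F_{k-1}$ in $T_t$ (together with its adjoined top and bottom) and the face poset $T'$ of $\mathcal{H}'$, under which the restriction of $\zt$ becomes the classical affine-hull map $z \colon (T')^{*} \longrightarrow [\hz, x_{k-1}]$. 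Applying Theorem~\ref{theorem_Bayer_Sturmfels} to the chain $\{\hz = x_0 < x_1 < \cdots < x_{k-1}\}$ then produces $\prod_{i=2}^{k-1} Z([x_{i-1}, x_i])$ extensions; multiplying the two counts gives the claimed product.

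The main obstacle is making the face-poset identification in the second step fully rigorous. The excerpt supplies the analogous statement only at the level of intersection posets, so one must upgrade this to a poset isomorphism between the star of $F_{k-1}$ and $T'$: every cell of $T_t$ whose closure meets $F_{k-1}$ should correspond, via the transverse disk, to a unique face of $\mathcal{H}'$ of the matching dimension, with the correspondence respecting incidences and intertwining $\zt$ with $z$. Regularity of the subdivision is what ensures the transverse disk sees a genuine central arrangement local model; once this is in place, the two-step counting argument is clean.
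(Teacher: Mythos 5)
Your proposal is correct and closely parallels the paper's argument: both decompositions begin by counting the choices of the innermost cell $y_{k-1}$ (a region of the arrangement restricted to the toric subspace $x_{k-1}$), obtaining $\Zt([x_{k-1},x_k])$ from Theorem~\ref{theorem_toric_Zaslavsky_version_2}, and then count extensions upward. The difference is in packaging. The paper peels off the remaining elements $y_{k-2}, y_{k-3}, \ldots, y_1$ one at a time, observing at each step that the number of choices of $y_{i}$ containing $y_{i+1}$ in its closure equals the number of regions of a central arrangement with intersection lattice $[x_i, x_{i+1}]$, i.e.\ $Z([x_i,x_{i+1}])$, and simply refers to ``the original argument of Bayer--Sturmfels'' for the geometric step at each stage. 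You instead invoke Theorem~\ref{theorem_Bayer_Sturmfels} in one shot by claiming a rank-preserving poset isomorphism between the star of $F_{k-1}$ and the face poset of a central arrangement realized in a transverse disk, intertwining $\zt$ with $z$. This requires slightly more than what the paper explicitly uses --- a full local face-poset isomorphism rather than just a per-step region count --- and you rightly flag this as the point needing care. In exchange, your route treats the classical Bayer--Sturmfels theorem as a genuine black box, which makes the logical structure of the reduction cleaner. Both are correct; the paper's version is marginally more elementary in that it only requires identifying the top-dimensional cells at each step, while yours is more modular.
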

\begin{proof}
We need to count the number of ways we can select a chain
$d = \{\hz = y_0 < y_1 < \dots < y_k = \ho\}$ in $T_{t}^{*}$
such that $\zt(y_{i}) = x_{i}$.
The number of ways to select
the element $y_{k-1}$ in $T_{t}^{*}$
is the number of regions
in the arrangement restricted to the toric subspace $x_{k-1}$.
By Theorem~\ref{theorem_toric_Zaslavsky_version_2}
this can be done in $\Zt([x_{k-1},x_{k}])$ ways.
Observe now that
all other elements in the chain $d$
contain the face~$y_{k-1}$.

To count the number of ways
to select the element $y_{k-2}$, we follow the original argument
of Bayer--Sturmfels. 
We would like to pick a face $y_{k-2}$
such that it contains the face $y_{k-1}$ and 
it is a region in the toric subspace $x_{k-2}$. 
This is equal to the number of regions in
the central arrangement having the intersection lattice $[x_{k-2},x_{k-1}]$,
which is given by $Z([x_{k-2},x_{k-1}])$.
By iterating this procedure
until we reach the element $y_{1}$, the result follows.
\end{proof}

\begin{corollary}
The flag $f$-vector entry
$f_{S}(T_{t})$
of the face poset
$T_{t}$ of a toric arrangement
whose induced subdivision is regular subdivision of $T^{n}$
is divisible by~$2^{|S|-1}$
for $S \subseteq \{1, \ldots, n+1\}$
with
$S \neq \emptyset$.
\label{corollary_evenness}
\end{corollary}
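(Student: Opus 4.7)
The plan is to reduce the claim to Theorem~\ref{theorem_Bayer_Sturmfels_toric}. Since the Bayer--Sturmfels-type map $\zt$ is defined on the dual poset $T_{t}^{*}$, I would first pass to $T_{t}^{*}$ via the identification $f_{S}(T_{t}) = f_{S^{*}}(T_{t}^{*})$ with $S^{*} = \{n+2-s : s \in S\}$; this has the same cardinality as $S$, so the divisibility statement is equivalent.

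Next, since $\zt : T_{t}^{*} \to \Pzero$ is order- and rank-preserving, every chain in $T_{t}^{*}$ with ranks in $S^{*}$ maps to a chain of $\Pzero$ with the same ranks. Partitioning chains by image gives
$$
f_{S^{*}}(T_{t}^{*}) \:=\: \sum_{c} |\zt^{-1}(c)|,
$$
where $c$ ranges over chains of $\Pzero$ with ranks in $S^{*}$. Writing each such $c$ as $\hz = x_{0} < x_{1} < \cdots < x_{k} = \ho$ with $k = |S|+1$, Theorem~\ref{theorem_Bayer_Sturmfels_toric} expresses each summand as
$$
|\zt^{-1}(c)| \:=\: \prod_{i=2}^{k-1} Z([x_{i-1}, x_{i}]) \cdot \Zt([x_{k-1}, \ho]),
$$
which contains precisely $k - 2 = |S| - 1$ factors of $Z$.

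It therefore suffices to show each $Z([x_{i-1}, x_{i}])$ (for $2 \le i \le k-1$) is even. By the discussion preceding Theorem~\ref{theorem_Bayer_Sturmfels_toric}, the interval $[x_{i-1}, x_{i}]$ with $x_{i} < \ho$ is the intersection lattice of a central hyperplane arrangement $\mathcal{H}'$ in some Euclidean space $\mathbb{R}^{m}$; since $x_{i-1} < x_{i}$ strictly and $\dim(x_{i}) < \dim(x_{i-1})$, we have $m \geq 1$ and $\mathcal{H}'$ contains at least one hyperplane. By Theorem~\ref{theorem_Zaslavsky_poset}(i), $Z([x_{i-1}, x_{i}])$ counts the regions of $\mathcal{H}'$. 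The antipodal map $x \mapsto -x$ preserves each hyperplane of $\mathcal{H}'$ (all pass through the origin), hence permutes regions; a convex region fixed by this map would contain the midpoint $\mathbf{0}$, contradicting $\mathbf{0} \in \bigcap H$. So regions come in antipodal pairs, and $Z([x_{i-1}, x_{i}])$ is even.

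Combining, $|\zt^{-1}(c)|$ is divisible by $2^{|S|-1}$ for each chain $c$, so the sum $f_{S}(T_{t})$ is too. The argument is largely a bookkeeping exercise; the only point requiring care is verifying that each intermediate interval genuinely corresponds to a nontrivial central arrangement, which is automatic from $x_{i-1} < x_{i}$ in $\mathcal{P}$.
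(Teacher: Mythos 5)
Your proposal is correct and follows the same route as the paper: express $f_{S}(T_{t})$ as a sum over chains via the fibres of $\zt$, apply Theorem~\ref{theorem_Bayer_Sturmfels_toric} to write each fibre size as a product with exactly $|S|-1$ factors of $Z$, and observe that each such $Z$ is even. The paper's proof is terser (it simply asserts that $Z$ is even); you supply the additional step of justifying evenness via the antipodal involution on regions of a nonempty essential central arrangement, which is a standard argument and correctly handled.
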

\begin{proof}
The proof follows from the fact
that the Zaslavsky invariant $Z$ is an even integer
and that a given flag $f$-vector entry is the appropriate
sum of products appearing in
Theorem~\ref{theorem_Bayer_Sturmfels_toric}.
\end{proof}

\subsection{The connection between posets and coalgebras}

For an $\ab$-monomial $v$ 
define the linear map $\lambda_{t}$ 
by letting
$$
    \lambda_{t}(v)
  =
     \begin{cases}
        (\av-\bv)^{m}   & \text{if $v=\bv^{m}$    for some $m \geq 0$,} \\
        (\av-\bv)^{m+1} & \text{if $v=\bv^{m}\av$ for some $m \geq 0$,} \\
        0               & \text{otherwise.}
\end{cases}
$$

Define the linear operator $H^{\prime}$ on $\Zab$
to be the one which removes 
the last letter in each $\ab$-monomial, that is,
$H^{\prime}(w \cdot \av) = H^{\prime}(w \cdot \bv) = w$
and $H^{\prime}(1) = 0$.
We use the prime in the notation to distinguish it from the~$H$ map
defined in~\cite[Section~8]{Billera_Ehrenborg_Readdy_om}
which instead removes the first letter in each $\ab$-monomial.
From~\cite{Billera_Ehrenborg_Readdy_om} we have the
following lemma.
\begin{lemma}
For a graded poset $P$ with $\ho$ of rank greater than or equal to $2$,
the following identity holds:
$$
H^{\prime}(\Psi(P))
      =
\sum_{x \mbox{ \textrm{ \tiny coatom of} } P} \Psi([\hz,x]) .
$$
\end{lemma}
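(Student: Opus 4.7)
The plan is to compute $H^{\prime}(\Psi(P))$ directly from the chain-sum formula $\Psi(P) = \sum_{c} \wt(c)$ of equation~(\ref{equation_weight}), classifying each chain $c = \{\hz = x_{0} < x_{1} < \cdots < x_{k} = \ho\}$ by its final rank jump $\rho(x_{k-1}, x_{k})$. Chains whose penultimate element is not a coatom will vanish under $H^{\prime}$, while those with $x_{k-1}$ a coatom will yield exactly the weight of the truncated chain in $[\hz, x_{k-1}]$.

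The main technical ingredient is the following cancellation identity: for any $u \in \Zab$ and any integer $m \geq 1$,
$$H^{\prime}(u \cdot (\av-\bv)^{m}) = 0.$$
This is immediate from the factorization $(\av-\bv)^{m} = (\av-\bv)^{m-1} \cdot \av - (\av-\bv)^{m-1} \cdot \bv$, since $H^{\prime}$ strips the trailing letter of each $\ab$-monomial and the two resulting copies of $u \cdot (\av-\bv)^{m-1}$ cancel.

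Now apply this identity chain-by-chain. If $\rho(x_{k-1}, x_{k}) \geq 2$, then $\wt(c)$ ends in a factor $(\av-\bv)^{\rho(x_{k-1}, x_{k}) - 1}$ of positive exponent, so $H^{\prime}(\wt(c)) = 0$; in particular, the chain $\{\hz < \ho\}$ is killed because $\rho(P) \geq 2$. If instead $x_{k-1}$ is a coatom, then $\rho(x_{k-1}, x_{k}) = 1$ and $(\av-\bv)^{0} = 1$, so the weight factors as $\wt(c) = \wt(c^{\prime}) \cdot \bv$, where $c^{\prime} = \{\hz = x_{0} < \cdots < x_{k-1}\}$ is a chain in $[\hz, x_{k-1}]$; hence $H^{\prime}(\wt(c)) = \wt(c^{\prime})$.

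Summing the surviving contributions and grouping the chains by the coatom $x = x_{k-1}$ yields
$$H^{\prime}(\Psi(P)) = \sum_{x \text{ coatom of } P} \,\, \sum_{c^{\prime} \text{ chain in } [\hz, x]} \wt(c^{\prime}) = \sum_{x \text{ coatom of } P} \Psi([\hz, x]),$$
as claimed. The only delicate step is the cancellation identity; once it is in hand, the rest of the proof is a routine partition of the chains of $P$ according to their penultimate element.
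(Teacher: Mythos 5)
Your argument is correct and self-contained. The paper does not actually prove this lemma; it simply cites it from Billera--Ehrenborg--Readdy~\cite{Billera_Ehrenborg_Readdy_om}, so there is no in-paper proof to compare against. Your key cancellation identity $H^{\prime}(u\cdot(\av-\bv)^m)=0$ for $m\geq 1$ is a clean observation, and the subsequent classification of chains by whether the penultimate element is a coatom, together with the rank hypothesis $\rho(P)\geq 2$ killing the trivial chain $\{\hz<\ho\}$, is exactly what is needed; the bijection between maximal chains of $P$ through a fixed coatom $x$ and maximal chains of $[\hz,x]$, with $\wt(c)=\wt(c^{\prime})\cdot\bv$, finishes it. One could equivalently apply the same cancellation identity to the dualized Stanley recursion $\Psi(P)=(\av-\bv)^{\rho(P)-1}+\sum_{\hz<x<\ho}\Psi([\hz,x])\cdot\bv\cdot(\av-\bv)^{\rho(x,\ho)-1}$, which groups the chain sum by penultimate element in a single step; this is a mild repackaging rather than a genuinely different route, and your version spells out the bijection that the recursion compresses.
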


The next lemma gives the relation between
the toric Zaslavsky invariant $\Zt$ and
the map~$\lambda_{t}$.
\begin{lemma}
For a graded poset $P$ with $\ho$ of rank greater than or equal to $1$,
the following identity holds:
$$
     \lambda_{t}(\Psi(P)) = \Zt(P) \cdot (\av-\bv)^{\rho(P)-1}.
$$
\end{lemma}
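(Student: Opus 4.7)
The plan is to reduce this lemma to the already-established identities $\beta(\Psi(P)) = Z_{b}(P) \cdot (\av-\bv)^{\rho(P)-1}$ and $H^{\prime}(\Psi(P)) = \sum_{x \text{ coatom}} \Psi([\hz, x])$ by rewriting $\lambda_{t}$ in terms of the operators $\beta$ and $H^{\prime}$.

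The key observation I would prove first is that for every $\ab$-monomial $v$ of length at least $1$, one has the identity $\lambda_{t}(v) = \beta(H^{\prime}(v)) \cdot (\av-\bv)$. This is a short case analysis on the last letter of $v$. If $v = u \cdot \av$, then $H^{\prime}(v) = u$; both $\lambda_{t}(v)$ and $\beta(u) \cdot (\av-\bv)$ vanish unless $u = \bv^{m}$, in which case each equals $(\av-\bv)^{m+1}$. If $v = u \cdot \bv$, then $H^{\prime}(v) = u$ and, since $\beta$ is an algebra map with $\beta(\bv) = \av-\bv$, we have $\beta(u)(\av-\bv) = \beta(u\bv)$; both $\lambda_{t}(v)$ and $\beta(u\bv)$ vanish unless $u\bv = \bv^{m}$, in which case each equals $(\av-\bv)^{m}$. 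The only monomial on which this identity fails is the constant $1$, where $\lambda_{t}(1) = 1$ but $H^{\prime}(1) = 0$.

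Next I would dispose of the base case $\rho(P) = 1$ directly: here $\Psi(P) = 1$, so $\lambda_{t}(\Psi(P)) = 1$, and the only coatom is $\hz$ itself, giving $Z_{t}(P) = 1$ and $(\av-\bv)^{\rho(P)-1} = 1$. For $\rho(P) \geq 2$ the polynomial $\Psi(P)$ is homogeneous of positive degree, so it has no constant term, and the monomial identity extends by linearity to
\[
\lambda_{t}(\Psi(P)) = \beta(H^{\prime}(\Psi(P))) \cdot (\av-\bv).
\]
Applying the preceding lemma on $H^{\prime}(\Psi(P))$ followed by equation~(\ref{equation_beta}) on each $\Psi([\hz,x])$ gives
\[
\lambda_{t}(\Psi(P)) = \sum_{x \text{ coatom of } P} Z_{b}([\hz,x]) \cdot (\av-\bv)^{\rho(x)-1} \cdot (\av-\bv),
\]
and unpacking $Z_{b}([\hz,x]) = (-1)^{\rho(x)} \mu(\hz,x)$ together with the fact that every coatom $x$ satisfies $\rho(x) = \rho(P)-1$ collects the sum into $Z_{t}(P) \cdot (\av-\bv)^{\rho(P)-1}$, as desired.

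The main obstacle is isolating the right rewriting rule in the first step; once the identity $\lambda_{t} = \beta \circ H^{\prime} \cdot (\av-\bv)$ is spotted on monomials, the remainder of the argument is a mechanical substitution using tools already developed in the preceding subsection, and parallels the proofs of the $\kappa$, $\beta$, and $\eta$ identities cited from Billera--Ehrenborg--Readdy.
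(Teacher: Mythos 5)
Your proof is correct and follows essentially the same route as the paper's: the key identity $\lambda_{t}(v) = \beta(H^{\prime}(v)) \cdot (\av-\bv)$ for nonconstant monomials, the separate rank-$1$ base case, and the reduction to the $H^{\prime}$ and $\beta$ lemmas. The only difference is that you supply the short case analysis justifying the key identity, which the paper asserts without proof.
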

\begin{proof}
When $P$ has rank $1$, both sides are equal to $1$.
For an $\ab$-monomial $v$ different from $1$, we have that
$\lambda_{t}(v) = \beta(H^{\prime}(v)) \cdot (\av-\bv)$.
Hence
\begin{eqnarray*}
\lambda_{t}(\Psi(P))
  & = &
\beta(H^{\prime}(\Psi(P))) \cdot (\av-\bv) \\
  & = &
\sum_{x \mbox{ \textrm{ \tiny coatom of} } P}
  \beta(\Psi([\hz,x])) \cdot (\av-\bv) \\
  & = &
(-1)^{\rho(P)} \cdot
\sum_{x \mbox{ \textrm{ \tiny coatom of} } P}
  \mu(\hz,x) \cdot (\av-\bv)^{\rho(P)-1} ,
\end{eqnarray*}
which concludes the proof.
\end{proof}

Define a sequence of functions
$\varphi_{t,k}\colon\Zab\to\Zab$ by $\varphi_{t,1}=\kappa$,
and for $k \geq 2$,
$$
\varphi_{t,k}(v)
  =
\sum_{v}
     \kappa(v_{(1)}) \cdot
     \bv \cdot \eta(v_{(2)}) \cdot
     \bv \cdot \eta(v_{(3)}) \cdot \bv \cdots
     \bv \cdot \eta(v_{(k-1)}) \cdot
     \bv \cdot \lambda_{t}(v_{(k)}).                      
$$
Finally, let $\varphi_{t}(v)$ be the
sum $\varphi_{t}(v) = \sum_{k\geq 1}\varphi_{t,k}(v)$.

\begin{theorem}
The $\ab$-index of the face poset $T_{t}$ 
of a toric arrangement is given by
$$
     \Psi(T_{t})^{*} = \varphi_{t}(\Psi(\Pzero)).
$$
\label{theorem_poset_varphi_toric}
\end{theorem}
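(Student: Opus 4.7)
The plan follows the same scheme as Proposition~\ref{proposition_varphi} for central arrangements, with the toric Bayer--Sturmfels formula (Theorem~\ref{theorem_Bayer_Sturmfels_toric}) and the map~$\lambda_{t}$ playing the roles previously served by the central Bayer--Sturmfels formula and the map~$\eta$ at the final interval of each chain. Using $\Psi(T_{t})^{*} = \Psi(T_{t}^{*})$, I first expand
$$\Psi(T_{t}^{*}) = \sum_{d} \wt(d)$$
over chains $d = \{\hz = y_{0} < y_{1} < \cdots < y_{k} = \ho\}$ in $T_{t}^{*}$. Because $\zt$ is order- and rank-preserving, each such chain maps to a chain $c = \zt(d)$ of the same length~$k$ in $\Pzero$ with identical rank-difference profile, so $\wt(d) = \wt(c)$. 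Grouping by image yields
$$\Psi(T_{t}^{*}) = \sum_{c} |\zt^{-1}(c)| \cdot \wt(c),$$
summed over all chains $c$ in $\Pzero$ from $\hz$ to $\ho$.

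Next I substitute the fiber counts from Theorem~\ref{theorem_Bayer_Sturmfels_toric} and absorb each scalar factor into the adjacent rank-difference factor of $\wt(c)$. The unique chain of length $k=1$ contributes $(\av-\bv)^{\rho(\Pzero)-1} = \kappa(\Psi(\Pzero)) = \varphi_{t,1}(\Psi(\Pzero))$. For $k \geq 2$, using equations~(\ref{equation_kappa}), (\ref{equation_eta}), and the preceding lemma giving $\lambda_{t}(\Psi(P)) = \Zt(P) \cdot (\av-\bv)^{\rho(P)-1}$, the product $|\zt^{-1}(c)| \cdot \wt(c)$ rewrites as
$$\kappa(\Psi([x_{0},x_{1}])) \cdot \bv \cdot \eta(\Psi([x_{1},x_{2}])) \cdot \bv \cdots \bv \cdot \eta(\Psi([x_{k-2},x_{k-1}])) \cdot \bv \cdot \lambda_{t}(\Psi([x_{k-1},x_{k}])).$$
Here the initial interval $[\hz,x_{1}]$ receives no Zaslavsky factor and is absorbed into $\kappa$, the intermediate intervals pick up factors of $Z$ absorbed into $\eta$, and the terminal interval picks up the toric Zaslavsky factor $\Zt$ absorbed into $\lambda_{t}$.

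Finally, I apply the Ehrenborg--Readdy coproduct identity (Theorem~\ref{theorem_Ehrenborg_Readdy}) to the $k$-multilinear map $M(u_{1},\ldots,u_{k}) = \kappa(u_{1}) \cdot \bv \cdot \eta(u_{2}) \cdot \bv \cdots \bv \cdot \lambda_{t}(u_{k})$, converting the chain-indexed sum of length~$k$ into the coalgebra expression $\sum_{v} \kappa(v_{(1)}) \cdot \bv \cdot \eta(v_{(2)}) \cdot \bv \cdots \bv \cdot \lambda_{t}(v_{(k)}) = \varphi_{t,k}(v)$ with $v = \Psi(\Pzero)$. Summing over $k \geq 1$ yields $\Psi(T_{t}^{*}) = \sum_{k \geq 1} \varphi_{t,k}(\Psi(\Pzero)) = \varphi_{t}(\Psi(\Pzero))$, completing the proof. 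The main obstacle is careful bookkeeping: the asymmetric treatment of the initial, intermediate, and terminal intervals in Theorem~\ref{theorem_Bayer_Sturmfels_toric} forces distinct operators $\kappa$, $\eta$, and $\lambda_{t}$ at different positions along the chain, and the boundary cases $k=1$ and $k=2$ must each be matched separately against the definitions of $\varphi_{t,1}$ and $\varphi_{t,2}$.
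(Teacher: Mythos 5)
Your proof is correct and follows essentially the same route as the paper's: expand the $\ab$-index as a chain sum, group by image under $\zt$, substitute the toric Bayer--Sturmfels fiber counts, rewrite each factor as $\kappa$, $\eta$, or $\lambda_t$ of the $\ab$-index of the corresponding interval, and convert the chain sum to a coproduct sum via the Ehrenborg--Readdy identity. You are in fact a bit more careful than the paper's write-up, which writes $\Psi(T_t)$ where $\Psi(T_t^*)$ is meant (the chain count is over $T_t^*$, and one needs $\Psi(T_t)^* = \Psi(T_t^*)$ to match the theorem statement), and you make explicit the appeal to Theorem~\ref{theorem_Ehrenborg_Readdy} that the paper leaves implicit.
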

\begin{proof}
The $\ab$-index of the poset $T_{t}$
is given by the sum 
$\Psi(T_{t}) = \sum_{c} |\zt^{-1}(c)| \cdot \wt(c)$.
Fix $k \geq 2$ and sum over all chains
$c = \{\hz = x_{0} < x_{1} < \cdots < x_{k} = \ho\}$
of length $k$.  We then have
\begin{eqnarray*}
  &   &
\sum_{c} |\zt^{-1}(c)| \cdot \wt(c) \\
  & = &
\sum_{c} \prod_{i=2}^{k-1} Z([x_{i-1}, x_{i}]) 
         \cdot \Zt([x_{k-1}, x_{k}])
  \cdot
      (\av-\bv)^{\rho(x_{0},x_{1})-1}
        \cdot
      \bv
        \cdots
      \bv
        \cdot
      (\av-\bv)^{\rho(x_{k-1},x_{k})-1} \\
  & = &
\sum_{c} 
   \kappa(\Psi([x_{0},x_{1}]))
       \cdot 
   \prod_{i=2}^{k-1} \left(\bv \cdot \eta(\Psi([x_{i-1}, x_{i}]))\right)
       \cdot 
   \bv
       \cdot 
   \lambda_{t}(\Psi([x_{k-1},x_{k}])) \\
  & = &
\sum_{w}
   \kappa(w_{(1)})
       \cdot 
   \prod_{i=2}^{k-1} \left(\bv \cdot \eta(w_{(i)})\right)
       \cdot 
   \bv
       \cdot 
   \lambda_{t}(w_{(k)}) \\
  & = &
\varphi_{t,k}(w)  ,  
\end{eqnarray*}
where we let $w$ denote the $\ab$-index of 
the augmented intersection poset $\Pzero$.
For $k=1$ we have that
$(\av-\bv)^{\rho(T_{t})-1} = \varphi_{t,1}(\Psi(\Pzero))$.
Summing over all $k \geq 1$, we obtain the result.
\end{proof}

\subsection{Evaluating the function $\varphi_{t}$}

\begin{proposition}
For an $\ab$-monomial $v$,
the following identity holds:
$$ 
        \varphi_{t}(v) 
    =
        \kappa(v)
      +
        \sum_{v} \varphi(v_{(1)}) \cdot \bv \cdot \lambda_{t}(v_{(2)}) .
$$
\label{proposition_varphi_t}
\end{proposition}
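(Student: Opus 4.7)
The plan is to unfold both sides using the definitions of $\varphi_t$ and $\varphi$, and then reduce to a single application of coassociativity of $\Delta$.

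Separating the $k=1$ term, I would first write
\[
\varphi_t(v) \;=\; \varphi_{t,1}(v) + \sum_{k \geq 2} \varphi_{t,k}(v) \;=\; \kappa(v) + \sum_{k \geq 2} \varphi_{t,k}(v),
\]
so the proposition reduces to establishing
\[
\sum_{k \geq 2} \varphi_{t,k}(v) \;=\; \sum_v \varphi(v_{(1)}) \cdot \bv \cdot \lambda_t(v_{(2)}).
\]

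Next I would expand the right-hand side using $\varphi = \sum_{j \geq 1} \varphi_j$ and the definition of $\varphi_j$ in terms of $\kappa$, $\eta$, and the $j$-ary coproduct. This gives
\[
\sum_v \varphi(v_{(1)}) \cdot \bv \cdot \lambda_t(v_{(2)}) = \sum_{j \geq 1} \sum_v \sum_{v_{(1)}} \kappa(v_{(1,1)}) \cdot \bv \cdot \eta(v_{(1,2)}) \cdot \bv \cdots \bv \cdot \eta(v_{(1,j)}) \cdot \bv \cdot \lambda_t(v_{(2)}).
\]
Here the inner coproduct splits $v_{(1)}$ into $j$ pieces while the outer coproduct first splits $v$ into two pieces.

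The key step is to invoke coassociativity of $\Delta$, which lets me collapse the nested splitting into a single $(j+1)$-ary coproduct. Renaming indices $(1,1),\ldots,(1,j),(2)$ as $(1),\ldots,(j),(j+1)$, the sum becomes
\[
\sum_{j \geq 1} \sum_v \kappa(v_{(1)}) \cdot \bv \cdot \eta(v_{(2)}) \cdot \bv \cdots \bv \cdot \eta(v_{(j)}) \cdot \bv \cdot \lambda_t(v_{(j+1)}) \;=\; \sum_{j \geq 1} \varphi_{t,j+1}(v).
\]
Re-indexing $k = j+1$ recovers $\sum_{k \geq 2} \varphi_{t,k}(v)$, completing the identity.

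The main obstacle, such as it is, is purely notational: keeping careful track of which subscripts in the Sweedler notation are indexed by the outer versus inner coproduct so that coassociativity can be applied cleanly. No new structural content is required beyond the definitions of $\varphi_{t,k}$, $\varphi_k$, $\kappa$, $\eta$, $\lambda_t$, and the coassociativity of $\Delta$ on $\Zab$.
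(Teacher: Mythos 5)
Your proposal is correct and is essentially the paper's proof run in the reverse direction: the paper starts from $\varphi_{t,k}(v)$ and applies coassociativity to peel off the last tensor factor, obtaining $\sum_v \varphi_{k-1}(v_{(1)})\cdot\bv\cdot\lambda_t(v_{(2)})$, then sums over $k$, while you start from the right-hand side, expand $\varphi=\sum_j\varphi_j$, and use the same coassociativity identity to merge the nested coproducts back into $\varphi_{t,j+1}$. The key step and the bookkeeping are identical.
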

\begin{proof}
Using the coassociative identity
$\Delta^{k-1} = (\Delta^{k-2} \tensor \id) \circ \Delta$,
for $k \geq 2$ we have that
\begin{eqnarray*}
\varphi_{t,k}(v)
  & = &
\sum_{v}              \kappa(v_{(1)}) \cdot
                      \bv \cdot \eta(v_{(2)}) \cdot \bv \cdots
                      \bv \cdot \eta(v_{(k-1)}) \cdot
                      \bv \cdot \lambda_{t}(v_{(k)}) \\             
  & = &
\sum_{v}
\sum_{v_{(1)}}        \kappa(v_{(1,1)}) \cdot
                      \bv \cdot \eta(v_{(1,2)}) \cdot \bv \cdots
                      \bv \cdot \eta(v_{(1,k-1)}) \cdot
                      \bv \cdot \lambda_{t}(v_{(2)}) \\             
  & = &
\sum_{v}
                      \varphi_{k-1}(v_{(1)}) \cdot
                      \bv \cdot \lambda_{t}(v_{(2)}) .
\end{eqnarray*}
By  summing over all $k \geq 1$, the result follows.
\end{proof}

\begin{lemma}
Let $v$ be an $\ab$-monomial that begins with $\av$
and let $x$ be either $\av$ or $\bv$. Then
$$     \varphi_{t}(v \cdot \av \cdot x)
   = 
       \kappa(v \cdot \av \cdot x)
     + 
       {1}/{2} \cdot \omega(v \cdot \av\bv)  .  $$
\label{lemma_toric_varphi_I}
\end{lemma}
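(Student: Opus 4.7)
My plan is to apply the recursive formula for $\varphi_t$ from Proposition~\ref{proposition_varphi_t} to $w = v \cdot \av \cdot x$, identify the (very few) surviving terms in the coproduct, and simplify them using the recurrences already established for $\varphi$.

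I will start by writing
$$
\varphi_t(v \cdot \av \cdot x) = \kappa(v \cdot \av \cdot x) + \sum_{w} \varphi(w_{(1)}) \cdot \bv \cdot \lambda_t(w_{(2)})
$$
via Proposition~\ref{proposition_varphi_t}. The decisive observation is that $\lambda_t$ vanishes outside the monomials $\bv^m$ and $\bv^m \av$. Since each $w_{(2)}$ is a suffix of $w$ obtained by deleting one internal letter, and since the letter immediately preceding $x$ in $w$ is the explicit $\av$, any suffix of length at least $2$ contains that $\av$ in a position where $\lambda_t$ would demand a $\bv$. This forces $w_{(2)}$ to be either the empty monomial $1$ or the single letter $x$.

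Having narrowed the sum to two terms, I will compute them explicitly. Deleting the final letter contributes $\varphi(v \cdot \av) \cdot \bv = \varphi(v) \cdot \cv \cdot \bv$ by equation~(\ref{equation_varphi_a}), while deleting the penultimate $\av$ contributes $\varphi(v) \cdot \bv \cdot (\av - \bv)$; here we use the pleasant coincidence that $\lambda_t(\av) = \lambda_t(\bv) = \av - \bv$, so the two possibilities for $x$ give the same formula. Expanding $\cv = \av + \bv$ and collecting, the two contributions collapse into $\varphi(v) \cdot (\av\bv + \bv\av) = \varphi(v) \cdot \dv$.

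Finally, to identify this with the right-hand side of the lemma, I will use Proposition~\ref{proposition_culminate}, applicable because $v \cdot \av\bv$ begins with $\av$, together with equation~(\ref{equation_varphi_a_b}), to obtain $\omega(v \cdot \av\bv) = \varphi(v \cdot \av\bv) = \varphi(v) \cdot 2\dv$, whence $\varphi(v) \cdot \dv = \frac{1}{2} \cdot \omega(v \cdot \av\bv)$. The main obstacle is the case analysis pinning down which coproduct terms survive $\lambda_t$; once that is in hand, everything else is formal manipulation with the $\varphi$-recurrences.
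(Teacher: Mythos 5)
Your proposal is correct and follows essentially the same route as the paper: apply Proposition~\ref{proposition_varphi_t} to $v\cdot\av\cdot x$, observe that the only coproduct terms surviving $\lambda_t$ are $w_{(2)}=1$ and $w_{(2)}=x$ (since every longer suffix contains the explicit $\av$ in a non-terminal slot, killing $\lambda_t$), then collect the two contributions $\varphi(v)\cdot\cv\cdot\bv + \varphi(v)\cdot\bv\cdot(\av-\bv) = \varphi(v)\cdot\dv$ and identify this with $\tfrac12\,\omega(v\cdot\av\bv)$ via equation~(\ref{equation_varphi_a_b}) and Proposition~\ref{proposition_culminate}. The paper's displayed computation is the same, and in fact your write-up is slightly cleaner in how it names the two surviving coproduct slots.
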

\begin{proof}
Using Proposition~\ref{proposition_varphi_t}
we have
\begin{eqnarray*}
       \varphi_{t}(v \cdot \av \cdot x)
  & = &
       \kappa(v \cdot \av \cdot x)
     +
       \varphi(v \cdot \av) \cdot \bv \cdot \lambda_{t}(1)
     +
       \varphi(v) \cdot \bv \cdot \lambda_{t}(x)  \\
  &   &
     + \sum_{v}
       \varphi(v_{(1)}) \cdot \bv \cdot
                 \lambda_{t}(v_{(2)} \cdot \bv \cdot x)  \\
  & = &
       \kappa(v \cdot \av \cdot x)
     +
       \varphi(v) \cdot \cv \cdot \bv
     +
       \varphi(v) \cdot \bv \cdot (\av-\bv) \\
  & = &
       \kappa(v \cdot \av \cdot x)
     +
       \omega(v) \cdot \dv \\
  & = &
       \kappa(v \cdot \av \cdot x)
     +
       1/2 \cdot \omega(v \cdot \av\bv) ,
\end{eqnarray*}
since $\lambda_{t}(v_{(2)} \cdot \bv \cdot x) = 0$.
\end{proof}

\begin{lemma}
Let $v$ be an $\ab$-monomial that begins with $\av$,
let $k$ be a positive integer,
and let $x$ be either $\av$ or $\bv$.
Then
\[     \varphi_{t}(v \cdot \av \bv^{k} \cdot x)
   = 
       \kappa(v \cdot \av \bv^{k} \cdot x)
     + 
       {1}/{2} \cdot \omega(v \cdot \av\bv^{k+1})  .  \]
\label{lemma_toric_varphi_II}
\end{lemma}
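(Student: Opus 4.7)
The plan is to apply Proposition~\ref{proposition_varphi_t} to $w = v \cdot \av\bv^k \cdot x$ and simplify the resulting sum. Since $k \geq 1$, the word $w$ contains a $\bv$, so $\kappa(w) = 0$; the $\kappa$-term in the conclusion vanishes and is kept only for formal uniformity with Lemma~\ref{lemma_toric_varphi_I}. I would next determine which splits of the coproduct of $w$ contribute nontrivially: because $\lambda_{t}$ vanishes outside monomials of the form $\bv^m$ or $\bv^m\av$, and because any suffix of $w$ reaching back into $v$ necessarily contains the $\av\bv$ pattern lodged inside $\av\bv^k$, the contributing splits occur only within the trailing block $\av\bv^k x$. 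A direct case check (for $x=\av$ and $x=\bv$ separately) shows that both give the same sum
\[
\sum_{s=0}^{k+1}\varphi(v\av\bv^{s-1})\cdot\bv\cdot(\av-\bv)^{k+1-s},
\]
under the convention $\varphi(v\av\bv^{-1}):=\varphi(v)$.

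I would then simplify using the recurrences~(\ref{equation_varphi_a}), (\ref{equation_varphi_b_b}), and~(\ref{equation_varphi_a_b}), which give $\varphi(v\av)=\varphi(v)\cv$ and $\varphi(v\av\bv^{s})=2\varphi(v)\dv\cv^{s-1}$ for $s\geq1$. Since $v$ begins with $\av$, Proposition~\ref{proposition_culminate} yields $\varphi(v)=\omega(v)$. One also checks that $\omega(v\av\bv^{k+1})=2\omega(v)\dv\cv^k$: the block $\av\bv^{k+1}$ begins with $\av$, so joining $v$ and $\av\bv^{k+1}$ creates no new $\av\bv$ pair across the boundary, while $\omega(\av\bv^{k+1})=2\dv\cv^k$. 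Factoring out $\omega(v)$, the desired equality reduces to the purely combinatorial identity
\[
\bv(\av-\bv)^{k+1}+\cv\bv(\av-\bv)^k+\sum_{s=1}^{k}2\dv\cv^{s-1}\bv(\av-\bv)^{k-s}=\dv\cv^k.
\]

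To finish, I would combine the first two terms using $\bv(\av-\bv)+\cv\bv=\bv\av+\av\bv=\dv$, giving $\dv(\av-\bv)^k$, and then handle the sum via the decisive substitution $2\bv=\cv-(\av-\bv)$, which recasts each summand as a telescoping difference:
\[
2\cv^{s-1}\bv(\av-\bv)^{k-s}=\cv^{s}(\av-\bv)^{k-s}-\cv^{s-1}(\av-\bv)^{k-s+1}.
\]
Summing from $s=1$ to $k$ collapses to $\cv^k-(\av-\bv)^k$; combined with the earlier $\dv(\av-\bv)^k$ this yields $\dv(\av-\bv)^k+\dv[\cv^k-(\av-\bv)^k]=\dv\cv^k$, as required. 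The main obstacle is spotting the telescoping via $2\bv=\cv-(\av-\bv)$; once this is in hand, everything else is routine coproduct bookkeeping together with the recurrences for $\varphi$ recalled earlier in the chapter.
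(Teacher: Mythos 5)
Your proof is correct and follows the paper's overall strategy: apply Proposition~\ref{proposition_varphi_t}, enumerate the nontrivial coproduct splits (correctly observing they all lie inside the trailing block $\av\bv^k x$, since any suffix reaching into $v$ contains the $\av\bv$ pattern and is killed by $\lambda_t$), convert to $\cd$-form via the recursions $\varphi(v\av)=\varphi(v)\cv$ and $\varphi(v\av\bv^s)=2\varphi(v)\dv\cv^{s-1}$, then reduce to a pure $\cd$-identity. Where you diverge is in the final step: the paper simplifies the resulting sum by invoking the identity
\[
\cv^{k-1} = (\av-\bv)^{k-1} + 2\sum_{i+j=k-2}\cv^{i}\bv(\av-\bv)^{j},
\]
which it obtains by applying Stanley's recursion~(\ref{equation_Stanley_recursion}) to the butterfly poset of rank~$k$. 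You instead combine the two low-order terms via $\bv(\av-\bv)+\cv\bv=\dv$ and collapse the rest by the telescoping substitution $2\bv=\cv-(\av-\bv)$, yielding $\cv^k-(\av-\bv)^k$. The two identities are of course the same fact; your derivation is slightly more elementary and self-contained, avoiding any appeal to a specific Eulerian poset, while the paper's choice foreshadows the butterfly poset's role elsewhere in the $\cd$-index literature. Either way, the step you rightly flag as the crux — spotting the telescope, or equivalently knowing the butterfly identity — is exactly where the paper's proof concentrates its one non-routine observation.
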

\begin{proof_special}
Using Proposition~\ref{proposition_varphi_t}
we have
\begin{eqnarray}
       (\varphi_{t} - \kappa)(v \cdot \av \bv^{k}\cdot x)
     %-
     %  \kappa(v \cdot \av \bv^{k} \cdot x)
  & = &
       \varphi(v \cdot \av \bv^{k}) \cdot \bv \cdot \lambda_{t}(1)
     +
       \varphi(v \cdot \av) \cdot \bv \cdot \lambda_{t}(\bv^{k-1} \cdot x)
\nonumber \\
  &   &
     {}+
       \varphi(v) \cdot \bv \cdot \lambda_{t}(\bv^{k} \cdot x) \nonumber \\
  & & {}+
       \sum_{i+j=k-2}
           \varphi(v \cdot \av \bv^{i+1}) \cdot \bv \cdot \lambda_{t}(\bv^{j} \cdot x)
\nonumber \\
  & = &
       \varphi(v)
     \cdot
       \left(
       \vphantom{\sum_{i+j=k-2}}
%% The \vphantom command is to make the left parenthesis the right size.
       2 \dv \cv^{k-1} \cdot \bv
     +
       \cv \cdot \bv \cdot (\av-\bv)^{k} 
     +
       \bv \cdot (\av-\bv)^{k+1} 
       \right.
\nonumber \\
  &   &
       \left.
     +
       \sum_{i+j=k-2}
           2 \dv \cv^{i} \cdot \bv \cdot (\av-\bv)^{j+1}
       \right) .
\label{equation_messy}
\end{eqnarray}
In order to simplify this expression, consider the butterfly poset
of rank $k$. 
This is the poset consisting
of two rank $i$ elements, for $i = 1, \ldots, k-1$,
adjoined with a minimal and maximal element.
Each of the rank $i$ elements covers
the rank $i-1$ element(s) for $i = 1, \ldots, k-1$.
The butterfly poset
is the unique poset having the $\cd$-index $\cv^{k-1}$.
It is also Eulerian.
Applying~(\ref{equation_Stanley_recursion})
to the butterfly poset, we have
$$      \cv^{k-1}
    =
        (\av-\bv)^{k-1}
      +
        2 \cdot \sum_{i+j=k-2} \cv^{i} \cdot \bv \cdot (\av-\bv)^{j}
.  $$
Using this relation to simplify
equation~(\ref{equation_messy}), we obtain
\begin{eqnarray*}
\hspace*{35 mm}
       \varphi_{t}(v \cdot \av \bv^{k}\cdot x)
     -
       \kappa(v \cdot \av \bv^{k} \cdot x)
  & = &
       \varphi(v)
     \cdot
       \dv \cdot \cv^{k} \\
  & = &
       1/2
     \cdot
       \omega(v \cdot \av \bv^{k+1}) .
\hspace*{35 mm} 
\end{eqnarray*}
This completes the proof. \qed
\end{proof_special}

By combining Lemmas~\ref{lemma_toric_varphi_I}
and~\ref{lemma_toric_varphi_II}, we have
the following proposition.
\begin{proposition}
For an $\ab$-monomial $v$ that begins with the letter $\av$,
%the following holds:
$$     \varphi_{t}(v)
    =
       \kappa(v)
    +
       1/2
    \cdot
       \omega(H^{\prime}(v) \cdot \bv)  .  $$
\label{proposition_toric_varphi}
\end{proposition}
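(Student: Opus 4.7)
The plan is a short case analysis that recognizes every monomial beginning with $\av$ as an instance of one of the two preceding lemmas, followed by the observation that each lemma's right-hand side coincides with the expression claimed by the proposition.

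Given such a $v$ of degree at least two, I would single out its final letter $x \in \{\av,\bv\}$ and examine the truncation $H'(v)$. Since $v$ begins with $\av$, so does $H'(v)$, and the rightmost occurrence of $\av$ in $H'(v)$ is well defined. Let $k \geq 0$ be the number of $\bv$'s following this rightmost $\av$ inside $H'(v)$.

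In the case $k = 0$, the monomial factors uniquely as $v = u \cdot \av \cdot x$, where $u$ is either empty or begins with $\av$. Lemma~\ref{lemma_toric_varphi_I} then gives $\varphi_t(v) = \kappa(v) + 1/2 \cdot \omega(u \cdot \av\bv)$, and since $H'(v) = u \cdot \av$, the factor $u \cdot \av\bv$ is exactly $H'(v) \cdot \bv$. In the case $k \geq 1$, the analogous factorization $v = u \cdot \av\bv^k \cdot x$ (again with $u$ empty or beginning with $\av$) is unique, and Lemma~\ref{lemma_toric_varphi_II} gives $\varphi_t(v) = \kappa(v) + 1/2 \cdot \omega(u \cdot \av\bv^{k+1})$; here $H'(v) = u \cdot \av\bv^k$, so $u \cdot \av\bv^{k+1}$ is again $H'(v) \cdot \bv$.

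These two cases exhaust the monomials of degree at least two beginning with $\av$, so the proposition will follow directly from the two lemmas. The main subtlety, and it is not really an obstacle, is to verify that both lemmas remain valid when the auxiliary prefix $u$ is the empty word; this is readily checked from the definitions, since $\varphi(1) = 1$, $\lambda_t(1) = 1$, and the corresponding coproduct sums collapse to their trivial terms. With that in hand, no additional difficulty arises, because Lemmas~\ref{lemma_toric_varphi_I} and~\ref{lemma_toric_varphi_II} have already performed all the substantive computation.
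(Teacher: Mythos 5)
Your proposal is correct and follows the paper's own route: the paper's ``proof'' is a one-line remark that the proposition follows by combining Lemmas~\ref{lemma_toric_varphi_I} and~\ref{lemma_toric_varphi_II}, and your case analysis on the number $k$ of $\bv$'s after the rightmost $\av$ of $H'(v)$ is exactly how those two lemmas cover every monomial of degree at least two beginning with $\av$. You were right to flag the empty-prefix issue --- as stated, both lemmas require the prefix to begin with $\av$, so monomials like $\av^2$, $\av\bv$, $\av\bv\av$, $\av\bv^2$ are not literally covered, and one must check that the lemmas' proofs go through with $u=1$ (they do, since $\varphi(1)=\lambda_t(1)=1$ and the coproduct of $1$ vanishes); the paper suppresses this point. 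One thing worth adding explicitly: the statement as printed fails for $v=\av$ (where $\varphi_t(\av)=\av$ but the right side is $(\av-\bv)+\tfrac12\cv$), so the implicit hypothesis that $v$ has degree at least two --- which you assume from the outset --- is genuinely necessary.
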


We now obtain the main result
for computing  the $\ab$-index
of the face poset of a toric arrangement.
\begin{theorem}
Let $\mathcal{H}$ be a toric hyperplane arrangement on
the $n$-dimensional torus $T^{n}$
that subdivides the torus into a regular cell complex.
Then the $\ab$-index of the face poset $T_{t}$
can be computed from the 
$\ab$-index of the intersection poset $\mathcal{P}$
as follows:
$$     \Psi(T_{t})
    =
        (\av-\bv)^{n+1}
    +
       \frac{1}{2}
     \cdot
       \omega(\av \cdot H^{\prime}(\Psi(\mathcal{P})) \cdot \bv)^{*}  .  $$
\label{theorem_toric}
\end{theorem}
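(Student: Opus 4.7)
The plan is to combine the coalgebraic formula of Theorem~\ref{theorem_poset_varphi_toric}, namely $\Psi(T_{t})^{*} = \varphi_{t}(\Psi(\Pzero))$, with the closed form for $\varphi_{t}$ supplied by Proposition~\ref{proposition_toric_varphi}. The standard identity $\Psi(\Pzero) = \av \cdot \Psi(\mathcal{P})$ for adjoining a new minimal element (verified in the discussion preceding Theorem~\ref{theorem_Billera_Ehrenborg_Readdy}) ensures that every $\ab$-monomial appearing in $\Psi(\Pzero)$ begins with the letter $\av$, so Proposition~\ref{proposition_toric_varphi} applies term-by-term. By linearity of $\varphi_{t}$, $\kappa$, $\omega$, and $H^{\prime}$, I obtain
$$\varphi_{t}(\Psi(\Pzero)) = \kappa(\Psi(\Pzero)) + \tfrac{1}{2} \cdot \omega(H^{\prime}(\Psi(\Pzero)) \cdot \bv).$$

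Next I would simplify each summand. By equation~(\ref{equation_kappa}) and $\rho(\Pzero) = n+2$, the first summand collapses to $(\av-\bv)^{n+1}$. For the second summand, since $H^{\prime}$ strips the rightmost letter of an $\ab$-monomial while $\av$ sits on the far left, we have $H^{\prime}(\av \cdot \Psi(\mathcal{P})) = \av \cdot H^{\prime}(\Psi(\mathcal{P}))$ (valid as long as $\Psi(\mathcal{P})$ has positive degree). Assembling these pieces yields
$$\varphi_{t}(\Psi(\Pzero)) = (\av-\bv)^{n+1} + \tfrac{1}{2} \cdot \omega(\av \cdot H^{\prime}(\Psi(\mathcal{P})) \cdot \bv).$$

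Finally, I would apply the involution $*$ to both sides of $\Psi(T_{t})^{*} = \varphi_{t}(\Psi(\Pzero))$. Because reversal is an antiautomorphism of $\Zab$ fixing both $\av$ and $\bv$, the polynomial $(\av-\bv)^{n+1}$ is $*$-invariant, and the stated identity for $\Psi(T_{t})$ drops out.

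The argument is essentially bookkeeping once Proposition~\ref{proposition_toric_varphi} and Theorem~\ref{theorem_poset_varphi_toric} are in hand, so there is no genuinely hard step. The main point of care is the clean extension of Proposition~\ref{proposition_toric_varphi} from $\ab$-monomials to the polynomial $\Psi(\Pzero)$; this relies on the fact that $\Psi(\Pzero) = \av \cdot \Psi(\mathcal{P})$ writes the input as a linear combination of monomials all beginning with $\av$, so that the hypothesis of the proposition is preserved under linearity. Low-rank degenerate cases (where $\mathcal{P}$ or $\Psi(\mathcal{P})$ is trivial) should also receive a brief separate check, but each such case reduces immediately to the asserted form.
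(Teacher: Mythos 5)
Your proof is correct and follows precisely the route the paper intends: Theorem~\ref{theorem_toric} is stated immediately after Proposition~\ref{proposition_toric_varphi} with no argument, and the chain you give---Theorem~\ref{theorem_poset_varphi_toric}, then $\Psi(\Pzero) = \av\cdot\Psi(\mathcal{P})$, then term-by-term application of Proposition~\ref{proposition_toric_varphi}, then the $*$-involution---is exactly the intended bookkeeping. Your flag on the positive-degree requirement for $H^{\prime}(\av\cdot\Psi(\mathcal{P})) = \av\cdot H^{\prime}(\Psi(\mathcal{P}))$ is the right thing to note and is harmless for $n\ge 1$.
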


Observe that in
Lemmas~\ref{lemma_toric_varphi_I} and~\ref{lemma_toric_varphi_II},
Proposition~\ref{proposition_toric_varphi}
and
Theorem~\ref{theorem_toric}
no
rational coefficients were introduced.
Only the $\ab$-monomial $\av^{n}$ is mapped
to a $\cd$-polynomial with an odd coefficient,
hence $1/2 \cdot \omega(v \cdot \bv)$
has all integer coefficients.

\addtocounter{theorem}{-20}
\begin{continuation}
\textrm{
The flag $f$-vector of the intersection poset $\mathcal{P}$ in
Example~\ref{example_toric_two}
is given by
$(f_{\emptyset},f_{1},f_{2},f_{12}) = (1,3,7,15)$,
the flag $h$-vector by
$(h_{\emptyset},h_{1},h_{2},h_{12}) = (1,2,6,6)$,
and so the  $\ab$-index is 
$\Psi(P) = \maa + 2 \cdot \mba + 6 \cdot \mab + 6 \cdot \mbb$.
Thus 
\begin{eqnarray*}
       \Psi(T_{t})
  & = &
        (\av-\bv)^{3}
    +
       1/2
     \cdot
       \omega(\av \cdot 
              H^{\prime}(\maa + 2 \cdot \mba + 6 \cdot \mab + 6 \cdot \mbb)
              \cdot \bv)^{*}  \\
  & = &
        (\av-\bv)^{3}
    +
       1/2
     \cdot
       \omega(\av \cdot (7 \cdot \ma + 8 \cdot \mb)
                \cdot \bv)^{*}  \\
  & = &
        (\av-\bv)^{3}
    +
       1/2
     \cdot
       \omega(7 \cdot \maab + 8 \cdot \mabb)^{*}  \\
  & = &
        (\av-\bv)^{3}
    +
       7 \cdot \mdc + 8 \cdot \mcd ,
\end{eqnarray*}
which agrees with the
calculation in Example~\ref{example_toric_two}.
}
\end{continuation}
\addtocounter{theorem}{19}

Theorem~\ref{theorem_toric}
gives a different approach 
from
Corollary~\ref{corollary_f_vector_I}
for determining 
the $f$-vector of $T_{t}$.
For notational ease, for positive integers $i$ and $j$,
let $[i,j] = \{ i, i+1, \ldots, j\}$
and
$[j] = \{1, \ldots, j\}$.

\begin{corollary}
The number of $i$-dimensional regions in the subdivision $T_{t}$
of the $n$-dimensional torus is given by the following
sum
of  flag $h$-vector entries from
the intersection poset $\mathcal{P}$:
$$
      f_{i+1}(T_{t})
  =
      h_{[n-i,n]}(\mathcal{P})
   +
      h_{[n-i,n-1]}(\mathcal{P}) 
   +
      h_{[n-i+1,n]}(\mathcal{P})
   +
      h_{[n-i+1,n-1]}(\mathcal{P})     ,
$$
for $1 \leq i \leq n-1$.
The number
of vertices is given by
$f_{1}(T_{t}) = 1 + h_{n}(\mathcal{P})$
and the number of
maximal regions
by
$f_{n+1}(T_{t})
 = h_{[n-1]}(\mathcal{P})
 + h_{[n]}(\mathcal{P})$.
\label{corollary_f_vector_II}
\end{corollary}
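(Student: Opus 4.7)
The plan is to extract $f_{i+1}(T_{t})$ directly from the $\ab$-index identity in Theorem~\ref{theorem_toric}, namely
\[
\Psi(T_{t}) = (\av-\bv)^{n+1} + \tfrac{1}{2}\,\omega\bigl(\av \cdot H^{\prime}(\Psi(\mathcal{P})) \cdot \bv\bigr)^{*}.
\]
Using the $f$-to-$h$ relation $f_{\{j\}}(T_{t}) = 1 + h_{\{j\}}(T_{t})$, the task reduces to computing the coefficient of the $\ab$-monomial $\av^{i}\bv\av^{n-i}$ in $\Psi(T_{t})$. The contribution of $(\av-\bv)^{n+1}$ to any monomial containing exactly one $\bv$ is $-1$, which cancels the constant $+1$ coming from $h_{\emptyset}(T_{t})=1$. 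After applying the involution $*$, what remains is to compute the coefficient of $\av^{n-i}\bv\av^{i}$ in the unreversed polynomial $\omega\bigl(\av \cdot H^{\prime}(\Psi(\mathcal{P})) \cdot \bv\bigr)$.

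I would then expand
\[
H^{\prime}(\Psi(\mathcal{P})) = \sum_{T \subseteq [n-1]} \bigl(h_{T}(\mathcal{P}) + h_{T \cup \{n\}}(\mathcal{P})\bigr)\, u_{T},
\]
so that the argument of $\omega$ is a sum of length-$(n+1)$ monomials $\av u_{T} \bv$. The key structural observation is that any two $\av\bv$ substrings of an $\ab$-word are automatically disjoint, since otherwise some position would be forced to be both $\av$ and $\bv$; consequently, the greedy substitution $\omega(\av u_{T} \bv)$ is a $\cd$-monomial whose number of $\dv$'s equals the number of $\av\bv$ substrings in $\av u_{T}\bv$. Since any $\cd$-monomial with two or more $\dv$'s contributes zero to a single-$\bv$ monomial, only those $T$ for which $\av u_{T}\bv$ has a unique $\av\bv$ substring contribute. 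Such a word, starting with $\av$ and ending with $\bv$, must have the shape $\av^{p}\bv^{q}$ with $p,q \geq 1$ and $p+q = n+1$; equivalently $T = [p, n-1]$, and in this case $\omega(\av u_{[p,n-1]}\bv) = 2\cv^{p-1}\dv\cv^{n-p}$.

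A direct calculation then shows that the $\cd$-monomial $\cv^{a}\dv\cv^{b}$ contains $\av^{n-i}\bv\av^{i}$ with coefficient $1$ precisely when $(a, b) = (n-i-1, i)$, with $\dv$ expanding as $\av\bv$, or $(a, b) = (n-i, i-1)$, with $\dv$ expanding as $\bv\av$. These correspond to the two intervals $T = [n-i, n-1]$ and $T = [n-i+1, n-1]$. Pairing each $h_{T}(\mathcal{P})$ with its partner $h_{T \cup \{n\}}(\mathcal{P})$ produces the four terms in the stated formula for $1 \le i \le n-1$. The boundary cases degenerate: for $i=0$ the pair $(n-i, i-1) = (n, -1)$ is invalid, leaving only $T = \emptyset$ and yielding $f_{1}(T_{t}) = 1 + h_{n}(\mathcal{P})$, while for $i=n$ the pair $(n-i-1, i) = (-1, n)$ is invalid, leaving only $T = [n-1]$ and yielding $f_{n+1}(T_{t}) = h_{[n-1]}(\mathcal{P}) + h_{[n]}(\mathcal{P})$. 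The main technical point is the non-overlap observation justifying the greedy $\omega$-analysis; once this is in place, the rest is bookkeeping.
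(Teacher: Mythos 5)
Your proof is correct and follows essentially the same route as the paper: both reduce $f_{i+1}(T_{t})$ to a coefficient-extraction problem for a single $\ab$-monomial in $\omega(\av\cdot H'(\Psi(\mathcal{P}))\cdot\bv)^{*}$ via Theorem~\ref{theorem_toric}, identify that exactly the monomials of shape $\av^{p}\bv^{q}$ survive (which is your classification of words with a unique $\av\bv$ substring, and is the paper's observation that only $\cd$-monomials $\cv^{i-1}\dv\cv^{n-i}$ and $\cv^{i}\dv\cv^{n-i-1}$ can hit a single-$\bv$ word), and then translate back to flag $h$-vector entries of $\mathcal{P}$. The paper phrases the calculation using the pairing $\pair{\cdot}{\cdot}$ and works backward from the target monomial, while you work forward from $H'(\Psi(\mathcal{P}))$, but the bookkeeping, the cancellation of the $(\av-\bv)^{n+1}$ contribution against $h_{\emptyset}(T_{t})=1$, and the treatment of the boundary cases $i=0$ and $i=n$ all match.
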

\begin{proof}
Let $\pair{\cdot}{\cdot}$ denote the inner product on $\Zab$
defined by $\pair{u}{v} = \delta_{u,v}$
for two $\ab$-monomials $u$ and $v$.
For $1 \leq i \leq n-1$ we have
\begin{eqnarray*}
         f_{i+1}(T_{t})
  & = &
         1 + h_{i+1}(T_{t}) \\
  & = &
         1 + \pair{\av^{i} \bv \av^{n-i}}{\Psi(T_{t})} \\
  & = &
       \frac{1}{2}
     \cdot
       \pair{\av^{i} \bv \av^{n-i}}
       {\omega(\av \cdot H^{\prime}(\Psi(\mathcal{P})) \cdot \bv)^{*}}  \\
  & = &
       \frac{1}{2}
     \cdot
       [\cv^{i-1} \dv \cv^{n-i}]
       \omega(\av \cdot H^{\prime}(\Psi(\mathcal{P})) \cdot \bv)^{*}  
    +
       \frac{1}{2}
     \cdot
       [\cv^{i} \dv \cv^{n-i-1}]
       \omega(\av \cdot H^{\prime}(\Psi(\mathcal{P})) \cdot \bv)^{*}  \\
  & = &
       \pair{\av^{n-i} \cdot \av\bv \cdot \bv^{i-1}
              +
             \av^{n-i-1} \cdot \av\bv \cdot \bv^{i}}
            {\av \cdot H^{\prime}(\Psi(\mathcal{P})) \cdot \bv} \\
  & = &
       \pair{\av^{n-i-1} \cdot (\av+\bv) \cdot \bv^{i-1}}
            {H^{\prime}(\Psi(\mathcal{P}))} \\
  & = &
       \pair{\av^{n-i-1} \cdot (\av+\bv) \cdot \bv^{i-1} \cdot (\av+\bv)}
            {\Psi(\mathcal{P})} .
\end{eqnarray*}
Expanding in terms of the flag $h$-vector the result follows.
The expressions for $f_{1}$ and $f_{n+1}$ are
obtained by similar calculations.
\end{proof}

The fact that Corollaries~\ref{corollary_f_vector_I}
and~\ref{corollary_f_vector_II}
are equivalent follows from
the coalgebra techniques
in Theorem~\ref{theorem_Ehrenborg_Readdy}.

\section{The complex of unbounded regions}
\label{section_affine}

\subsection{Zaslavsky and Bayer--Sturmfels}

The {\em unbounded Zaslavsky invariant} is defined by
$$    \Zub(P)
    =
      Z(P) - 2 \cdot Z_{b}(P)  .  $$
As the name suggests, the number of unbounded regions in 
a non-central
arrangement is given by this invariant.
By taking the difference of the
two statements in Theorem~\ref{theorem_Zaslavsky_poset} part (ii),
we immediately obtain the following result.
\begin{lemma}
For a non-central hyperplane arrangement $\mathcal{H}$ the number of
unbounded regions is given by $\Zub(\mathcal{L})$,
where $\mathcal{L}$
is the intersection lattice of the arrangement $\mathcal{H}$.
\label{lemma_Zaslavsky_unbounded}
\end{lemma}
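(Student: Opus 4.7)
The plan is to observe that the number of unbounded regions is simply the total number of regions minus the number of bounded regions, and then to plug in the two formulas from Theorem~\ref{theorem_Zaslavsky_poset}(ii) and collect terms. Since the definition $\Zub(P) = Z(P) - 2 \cdot Z_{b}(P)$ is designed precisely so that this subtraction produces the desired invariant, no additional machinery is needed.

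More explicitly, I would first note that the regions of $\mathcal{H}$ partition into bounded and unbounded regions, so
\[
  \#(\text{unbounded regions})
  \;=\;
  \#(\text{all regions}) - \#(\text{bounded regions}).
\]
Then I would cite Theorem~\ref{theorem_Zaslavsky_poset}(ii): for a non-central arrangement the total number of regions equals $Z(\mathcal{L}) - Z_{b}(\mathcal{L})$ and the number of bounded regions equals $Z_{b}(\mathcal{L})$. Substituting,
\[
  \#(\text{unbounded regions})
  \;=\;
  \bigl(Z(\mathcal{L}) - Z_{b}(\mathcal{L})\bigr) - Z_{b}(\mathcal{L})
  \;=\;
  Z(\mathcal{L}) - 2 \cdot Z_{b}(\mathcal{L})
  \;=\;
  \Zub(\mathcal{L}),
\]
which is the claimed identity.

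There is essentially no obstacle in this argument; the work was already done in proving Theorem~\ref{theorem_Zaslavsky_poset}(ii) (itself a reformulation of Zaslavsky's theorem via M\"obius-function bookkeeping). The only content here is to recognize that $\Zub$ is exactly the right linear combination of $Z$ and $Z_{b}$ to extract the unbounded count, and the proof is a one-line subtraction. Consequently I would present this as a remark-style proof of only two or three lines.
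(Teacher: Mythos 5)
Your proposal is correct and is essentially identical to the paper's argument: the paper states that the lemma is obtained "by taking the difference of the two statements in Theorem~\ref{theorem_Zaslavsky_poset} part (ii)," which is exactly the subtraction you carry out. The only minor presentational difference is that you spell out the arithmetic $(Z - Z_b) - Z_b = Z - 2Z_b = \Zub$, while the paper leaves it implicit.
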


\begin{figure}
\begin{center}
\scalebox{.7}{\epsfig{file=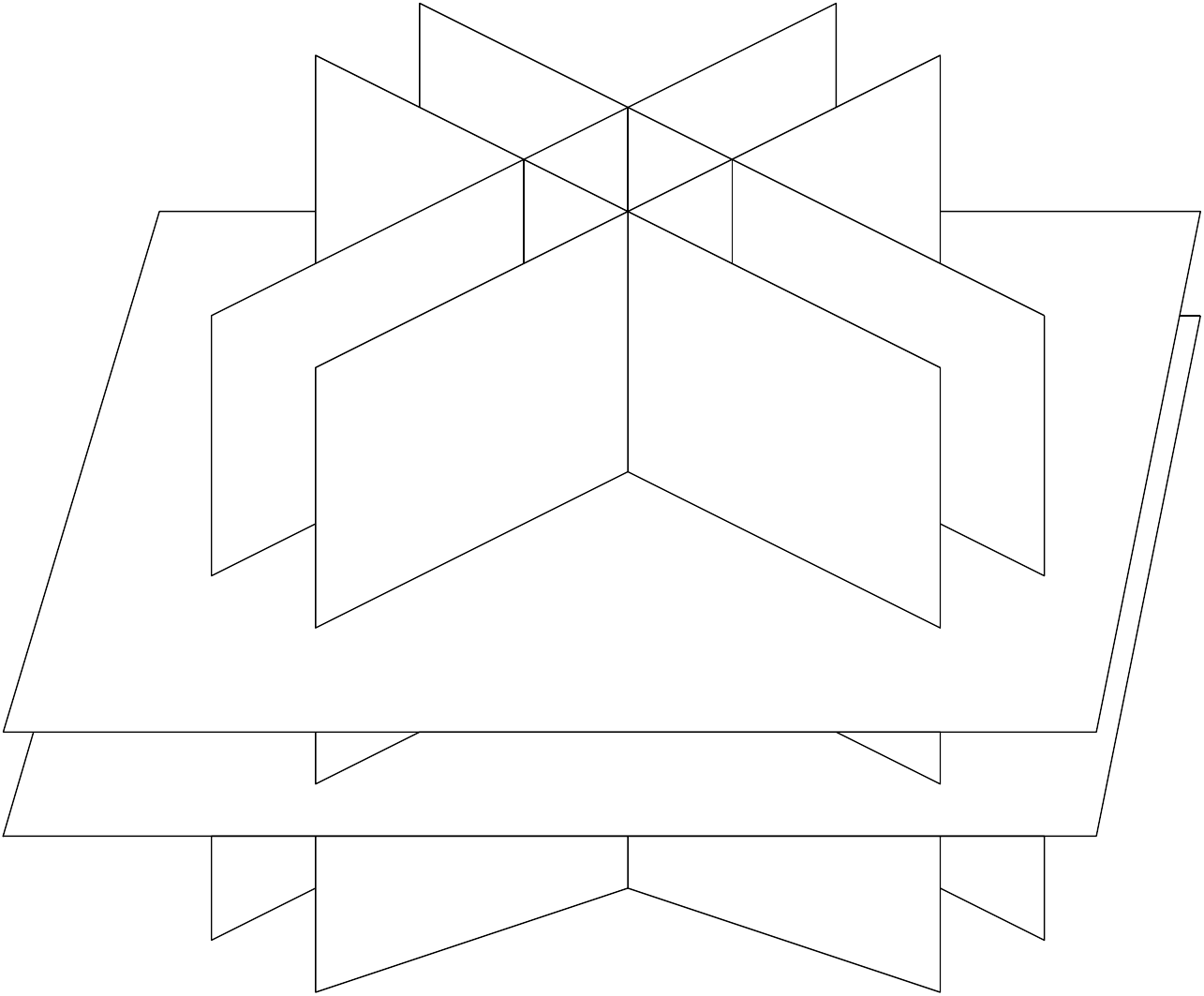}}
\end{center}
\caption{The non-central arrangement $x,y,z=0,1$.}
\label{figure_affine}
\end{figure}

Let $\mathcal{H}$ be a non-central hyperplane arrangement 
in $\mathbb{R}^n$
with intersection
lattice~$\mathcal{L}$ having the empty set $\emptyset$
as the maximal element.
Let $\mathcal{L}_{ub}$ denote 
the {\em unbounded intersection lattice},
that is,
the subposet of the intersection lattice
consisting of all affine subspaces 
with the points (dimension zero affine subspaces) omitted
but with the empty set $\emptyset$ continuing to be the maximal element.
Equivalently, the poset
$\mathcal{L}_{ub}$ is the rank-selected poset
$\mathcal{L}([1, n-1])$,
that is,
the poset
$\mathcal{L}$ with the coatoms removed.

Let $T$ be the face lattice of the arrangement
$\mathcal{H}$ with the minimal element $\hz$ denoting the empty face
and the maximal element denoted by $\ho$.
Similarly, let $T_{ub}$ denote the set of all faces
in the face lattice~$T$ which are not bounded.
Observe that~$T_{ub}$ includes the minimal and maximal elements of $T$
and that $T_{ub}$ is the face poset of
an $(n-1)$-dimensional sphere. 
Pick~$R$ large enough
so that all of the bounded faces
are strictly inside a ball of radius~$R$.
Intersect
the arrangement~$\mathcal{H}$ with a sphere of radius $R$.
The resulting cell complex has face poset $T_{ub}$.
Our goal is to compute the $\cd$-index of $T_{ub}$ in terms
of the $\ab$-index of $\mathcal{L}_{ub}$.

The collection of unbounded
faces of the arrangement $\mathcal{H}$
forms a lower order ideal in the poset $T^{*}$.
Let~$Q$ be the subposet of $T^{*}$
consisting of this ideal with a  maximal element $\ho$
adjoined.
We define the rank of an element in $Q$ to be its rank
in the original poset $T^{*}$, that is, for $x \in Q$
let $\rho_{Q}(x) = \rho_{T^{*}}(x)$.
This rank convention will simplify the later arguments.
As posets, $T_{ub}^{*}$ and $Q$ are isomorphic.
However, since their rank functions differ, their $\ab$-indexes 
satisfy
$\Psi(T_{ub})^{*} \cdot (\av-\bv) = \Psi(Q)$.

Restrict the zero map
$z : T^{*} \longrightarrow \Lzero$
to 
form the map $\zub : Q \longrightarrow \Lzero$.
The map $\zub$ is order- and rank-preserving.
However, it is not necessarily surjective.
As before we view the map $\zub$ as a map from the set of chains
of $Q$ to the set of chains of $\Lzero$.
The following theorem is a toric deformation of 
Theorem~\ref{theorem_Bayer_Sturmfels}.
%Analogously to the Bayer--Sturmfels result
%(Theorem~\ref{theorem_Bayer_Sturmfels}) we have the following theorem.
\newpage
\begin{theorem}
Let $\mathcal{H}$ be a non-central hyperplane arrangement
with intersection lattice $\mathcal{L}$.
Let $c=\{\hz = x_0 < x_1 < \dots < x_k = \ho\}$ be a chain in
$\Lzero$ with $k \geq 2$.
Then the cardinality of the inverse image of the chain $c$
under $\zub$ is given by
$$      |\zub^{-1}(c)|
     =
        \prod_{i=2}^{k-1}
             Z([x_{i-1},x_{i}])
       \cdot
             \Zub([x_{k-1},x_{k}]) . $$
\label{theorem_Bayer_Sturmfels_unbounded}
\end{theorem}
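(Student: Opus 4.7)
The plan is to adapt the proof of Theorem~\ref{theorem_Bayer_Sturmfels_toric} to the non-central, unbounded setting. Fixing the chain $c$, I would count chains $d = \{\hz = y_0 < y_1 < \dots < y_k = \ho\}$ in $Q$ satisfying $\zub(y_i) = x_i$ for each $i$. Since $\zub$ is rank- and order-preserving, each intermediate element $y_i$ must be a face of $\mathcal{H}$ whose affine hull is $x_i$; equivalently, it is a top-dimensional cell of the arrangement restricted to the affine subspace $x_i$.

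First I would select $y_{k-1}$. Membership in the ideal $Q$ forces $y_{k-1}$ to be unbounded, so the number of admissible choices is exactly the number of unbounded top-dimensional cells of the non-central arrangement restricted to $x_{k-1}$, whose intersection lattice is the interval $[x_{k-1}, x_k]$. By Lemma~\ref{lemma_Zaslavsky_unbounded} this count is $\Zub([x_{k-1}, x_k])$, which accounts for the final factor in the stated product.

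Next I would choose $y_{k-2}, y_{k-3}, \dots, y_1$ in that order. The crucial observation here is that any $y_i$ with $i < k-1$ satisfies $y_{k-1} \subseteq \overline{y_i}$ by iterating the chain relation inherited from $T^*$. Since $y_{k-1}$ is unbounded, so is $y_i$; thus the ideal condition defining $Q$ imposes no additional restriction at these steps, and the counting reduces to the argument of the classical Theorem~\ref{theorem_Bayer_Sturmfels}. Specifically, each $y_{i-1}$ given $y_i$ corresponds to a top-dimensional region of the central arrangement whose intersection lattice is $[x_{i-1}, x_i]$, yielding $Z([x_{i-1}, x_i])$ choices for $i = 2, \dots, k-1$ by Theorem~\ref{theorem_Zaslavsky_poset}(i). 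Multiplying the factors then produces the desired product.

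The only step requiring genuinely new reasoning—and hence the place I expect to need the most care in writing up cleanly—is the propagation of unboundedness down the chain. Once that observation is in hand, the remainder is a routine adaptation of the proofs of Theorems~\ref{theorem_Bayer_Sturmfels} and~\ref{theorem_Bayer_Sturmfels_toric}, and I anticipate no other serious obstacle.
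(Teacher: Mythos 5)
Your proposal matches the paper's proof essentially step for step: choose the unbounded region $y_{k-1}$ using $\Zub([x_{k-1},x_k])$ via Lemma~\ref{lemma_Zaslavsky_unbounded}, observe that all lower chain elements contain $y_{k-1}$ and hence are automatically unbounded (so membership in $Q$ imposes no further constraint), and then invoke the Bayer--Sturmfels iteration from Theorem~\ref{theorem_Bayer_Sturmfels_toric} to pick up the factors $Z([x_{i-1},x_i])$. The ``propagation of unboundedness'' observation you flag as the new ingredient is exactly the one the paper singles out as well, so the argument is the same.
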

\begin{proof}
We need to count the number of ways we can select a chain
$d = \{\hz = y_0 < y_1 < \dots < y_k = \ho\}$ in 
the poset of unbounded regions $Q$
such that $\zub(y_{i}) = x_{i}$.
The number of ways to select
the element~$y_{k-1}$ in $Q$
is the number of unbounded regions
in the arrangement restricted to the subspace $x_{k-1}$.
By Lemma~\ref{lemma_Zaslavsky_unbounded}
this can be done in $\Zub([x_{k-1},x_{k}])$ ways.
Since $y_{k-1}$ is an unbounded face of the arrangement
and all other elements in the chain $d$
contain the face $y_{k-1}$, the other elements must be unbounded.

The remainder of the proof is the same as
that of Theorem~\ref{theorem_Bayer_Sturmfels_toric}.
\end{proof}

\begin{corollary}
The flag $f$-vector entry
$f_{S}(T_{ub})$
is divisible by $2^{|S|}$
for any index set $S \subseteq \{1, \ldots, n\}$.
\end{corollary}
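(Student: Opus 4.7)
The approach directly parallels the proof of Corollary~\ref{corollary_evenness} for toric arrangements, but reaps an additional factor of two from the unbounded Zaslavsky invariant. I would start by fixing a chain $c = \{\hz = x_{0} < x_{1} < \cdots < x_{k} = \ho\}$ in $\Lzero$ whose interior rank set is $S$, so that $|S| = k-1$. Theorem~\ref{theorem_Bayer_Sturmfels_unbounded} expresses
\[
       |\zub^{-1}(c)|
     =
        \prod_{i=2}^{k-1}
             Z([x_{i-1},x_{i}])
       \cdot
             \Zub([x_{k-1},x_{k}])
\]
as a product of precisely $k-1 = |S|$ Zaslavsky-type factors, namely $k-2$ copies of $Z$ and one copy of $\Zub$. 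Since $f_{S}(T_{ub}) = \sum_{c} |\zub^{-1}(c)|$ where the sum runs over chains whose interior ranks form $S$, it suffices to show that each such factor is even.

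For $2 \leq i \leq k-1$, the interval $[x_{i-1}, x_{i}]$ is the intersection lattice of a central hyperplane arrangement, so by Theorem~\ref{theorem_Zaslavsky_poset}(i) the invariant $Z([x_{i-1}, x_{i}])$ counts the regions of that arrangement; antipodal symmetry on the sphere about the apex pairs these regions, so $Z$ is even, exactly as in the proof of Corollary~\ref{corollary_evenness}. For the remaining factor, Lemma~\ref{lemma_Zaslavsky_unbounded} identifies $\Zub([x_{k-1}, \ho])$ with the number of unbounded regions of the restriction of $\mathcal{H}$ to the subspace $x_{k-1}$; these correspond bijectively to the regions of the associated central recession arrangement at infinity inside $x_{k-1}$, and antipodal symmetry on the sphere at infinity again forces $\Zub$ to be even.

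Putting the pieces together, each $|\zub^{-1}(c)|$ is a product of $|S|$ even integers, hence divisible by $2^{|S|}$, and this divisibility passes to the sum $f_{S}(T_{ub})$. The one point requiring care is that the at-infinity argument for $\Zub$ handles every degenerate case cleanly: when $x_{k-1}$ is a point the restricted arrangement is trivial and one checks directly that $\Zub([x_{k-1}, \ho]) = 0$, which is still even, while when $x_{k-1}$ has positive dimension the essentialness of $\mathcal{H}$ forces the restriction to cut $x_{k-1}$ nontrivially, so the recession cones genuinely pair up under the antipodal action on the sphere at infinity inside $x_{k-1}$.
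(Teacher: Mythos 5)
Your overall strategy matches the paper's: express $f_S(T_{ub})$ as a sum of inverse-image cardinalities $|\zub^{-1}(c)|$ over chains $c$ with interior rank set $S$, invoke Theorem~\ref{theorem_Bayer_Sturmfels_unbounded} to factor each $|\zub^{-1}(c)|$ into $|S|-1$ copies of $Z$ and one copy of $\Zub$, and then argue each factor is even. Your justification of the evenness of the $Z$ factors is fine; each interval $[x_{i-1},x_i]$ with $x_i < \ho$ is the intersection lattice of a central arrangement, which has an even number of regions by the antipodal map.

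However, the geometric argument you give for the evenness of $\Zub$ does not work. Unbounded regions of an essential non-central arrangement are \emph{not} in bijection with chambers of the recession arrangement on the sphere at infinity. For instance, take $\mathcal{H} = \{x=0, x=1, y=0, y=1\}$ in $\mathbb{R}^2$: there are $8$ unbounded regions, while the recession arrangement $\{x=0,y=0\}$ cuts $S^1$ into only $4$ chambers. Several unbounded regions can map to the same chamber at infinity, and some unbounded regions have recession cones of lower dimension, so the ``antipodal pairing at infinity'' does not transfer to the unbounded regions themselves. The correct (and shorter) observation is purely algebraic: $\Zub = Z - 2Z_b \equiv Z \pmod 2$, and $Z$ is always even (either by your valid central-arrangement argument applied to each interval, or directly from equation~(\ref{equation_eta}), since $\eta$ carries a built-in factor of $2$). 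With that substitution, the rest of your proof is sound and agrees with the paper's, which simply asserts the evenness of $\Zub$ without elaboration.
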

\begin{proof}
The proof is the same as 
Corollary~\ref{corollary_evenness}
with the extra observation that
the Zaslavsky invariant $\Zub$ is even.
\end{proof}

\subsection{The connection between posets and coalgebras}

Define $\lambda_{ub}$ by
$\lambda_{ub} = \eta - 2 \cdot \beta$.
By equations~(\ref{equation_beta}) 
and~(\ref{equation_eta}),
for a graded poset $P$ we have
$$  \lambda_{ub}(\Psi(P)) = \Zub(P) \cdot (\av-\bv)^{\rho(P)-1}  .  $$
Define a sequence of functions
$\varphi_{ub,k}\colon\Zab\to\Zab$ by $\varphi_{ub,1}=\kappa$ and for
$k>1$,
$$
\varphi_{ub,k}(v)
     =
\sum_{v}
       \kappa(v_{(1)}) \cdot
       \bv \cdot \eta(v_{(2)}) \cdot
       \bv \cdot \eta(v_{(3)}) \cdot \bv \cdots
       \bv \cdot \eta(v_{(k-1)}) \cdot
       \bv \cdot \lambda_{ub}(v_{(k)}).                      
$$
Finally, let $\varphi_{ub}(v)$ be the
sum $\varphi_{ub}(v) = \sum_{k\geq 1}\varphi_{ub,k}(v)$.

Similar to Theorem~\ref{theorem_poset_varphi_toric} we
have the next result.
The proof only differs in replacing
the map
$\zt : T_{t}^{*} \longrightarrow \Pzero$
with 
$\zub : Q \longrightarrow \Lzero$
and the invariant $\Zt$ by $\Zub$.
\begin{theorem}
The $\ab$-index of the poset $Q$ 
of unbounded regions of a
non-central 
arrangement is given by
$$
   \Psi(Q) = \varphi_{ub}(\Psi(\Lzero)).
$$
\label{theorem_poset_varphi_unbounded}
\end{theorem}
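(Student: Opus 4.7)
The plan is to follow the proof of Theorem~\ref{theorem_poset_varphi_toric} verbatim, substituting $\zub$ for $\zt$ and $\lambda_{ub}$ for $\lambda_{t}$ (equivalently, $\Zub$ for $\Zt$). The starting observation is that since $\zub : Q \longrightarrow \Lzero$ is rank-preserving, summing weighted chains in $Q$ (which is how $\Psi(Q)$ is defined via equation~(\ref{equation_weight})) can be reorganized as
$$\Psi(Q) = \sum_{c} |\zub^{-1}(c)| \cdot \wt(c),$$
where the sum is over chains $c$ in $\Lzero$. The ranks in $Q$ agree with those in $T^{*}$ by the convention $\rho_{Q}(x) = \rho_{T^{*}}(x)$, and these in turn are preserved by $\zub$, so the weight of any preimage chain matches $\wt(c)$.

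Next, I would fix $k \geq 2$ and restrict to chains $c = \{\hz = x_{0} < x_{1} < \cdots < x_{k} = \ho\}$ in $\Lzero$ of length $k$. Theorem~\ref{theorem_Bayer_Sturmfels_unbounded} gives
$$|\zub^{-1}(c)| = \prod_{i=2}^{k-1} Z([x_{i-1}, x_{i}]) \cdot \Zub([x_{k-1}, x_{k}]).$$
Multiplying by $\wt(c)$ and invoking equations~(\ref{equation_kappa}), (\ref{equation_eta}), and the identity $\lambda_{ub}(\Psi(P)) = \Zub(P) \cdot (\av-\bv)^{\rho(P)-1}$, each rank-difference factor in $\wt(c)$ combines with the corresponding Zaslavsky invariant to yield $\kappa(\Psi([x_{0}, x_{1}]))$ on the first interval, $\eta(\Psi([x_{i-1}, x_{i}]))$ on intermediate intervals, and $\lambda_{ub}(\Psi([x_{k-1}, x_{k}]))$ on the last interval. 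Applying Theorem~\ref{theorem_Ehrenborg_Readdy} with the $k$-multilinear map
$$M(u_{1}, \ldots, u_{k}) = \kappa(u_{1}) \cdot \bv \cdot \eta(u_{2}) \cdot \bv \cdots \bv \cdot \eta(u_{k-1}) \cdot \bv \cdot \lambda_{ub}(u_{k})$$
converts the sum over chains of length $k$ into the $k$-ary coproduct expression $\varphi_{ub,k}(\Psi(\Lzero))$.

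For $k=1$ the unique chain $\hz < \ho$ contributes $(\av-\bv)^{\rho(Q)-1} = \kappa(\Psi(\Lzero)) = \varphi_{ub,1}(\Psi(\Lzero))$. Summing over all $k \geq 1$ then yields $\Psi(Q) = \varphi_{ub}(\Psi(\Lzero))$. There is no substantive obstacle: the proof is a mechanical translation of the toric case, and the only point requiring verification is that the terminal interval in Theorem~\ref{theorem_Bayer_Sturmfels_unbounded} is weighted by $\Zub$ rather than $Z$, which is precisely why $\lambda_{ub}$ replaces $\lambda_{t}$ in the definition of $\varphi_{ub,k}$.
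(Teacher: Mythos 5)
Your proof is correct and takes the same route as the paper, which simply remarks that the argument for Theorem~\ref{theorem_poset_varphi_toric} carries over verbatim after replacing $\zt$ by $\zub$ and $\Zt$ by $\Zub$. You have spelled out the mechanical translation that the paper leaves implicit (including the explicit appeal to Theorem~\ref{theorem_Ehrenborg_Readdy} with the multilinear map $M$), and the bookkeeping of ranks and the $k=1$ base case are handled correctly.
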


\subsection{Evaluating the function $\varphi_{ub}$}

In this subsection we
analyze the behavior of $\varphi_{ub}$.
\begin{lemma}
For any $\ab$-monomial $v$,
$$       \varphi_{ub}(v)
     =
         \varphi(v)
       -
         2 \cdot 
         \sum_v \varphi(v_{(1)}) \cdot \bv \cdot \beta(v_{(2)}) . $$
\label{lemma_phi_ub_recursion}
\end{lemma}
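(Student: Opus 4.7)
The plan is to expand the definition of $\lambda_{ub}$, separate the two contributions in each $\varphi_{ub,k}$, and then use coassociativity to collapse the ``$\eta$-tail'' of length $k-1$ back into $\varphi_{k-1}$.

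First, for each $k \geq 2$ I would substitute $\lambda_{ub} = \eta - 2\beta$ into the last slot of $\varphi_{ub,k}$, which immediately gives
\[
\varphi_{ub,k}(v)
= \varphi_{k}(v)
 - 2 \sum_{v} \kappa(v_{(1)}) \cdot \bv \cdot \eta(v_{(2)}) \cdot \bv \cdots \bv \cdot \eta(v_{(k-1)}) \cdot \bv \cdot \beta(v_{(k)}).
\]
Next I would regroup the $k$-ary coproduct $\Delta^{k-1}$ appearing in the error term as $(\Delta^{k-2} \otimes \id) \circ \Delta$; by coassociativity this lets me rewrite the first $k-1$ pieces $v_{(1)}, \ldots, v_{(k-1)}$ as the Sweedler expansion of $v_{(1)}$ under $\Delta^{k-2}$, and then recognize the resulting expression as $\varphi_{k-1}(v_{(1)})$. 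Thus
\[
\varphi_{ub,k}(v) = \varphi_{k}(v) - 2 \sum_{v} \varphi_{k-1}(v_{(1)}) \cdot \bv \cdot \beta(v_{(2)}).
\]
For the base case $k=1$ one has $\varphi_{ub,1} = \kappa = \varphi_1$ by definition, so the identity $\varphi_{ub,1}(v) = \varphi_1(v)$ holds trivially (no $\beta$-term).

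Finally I would sum over $k \geq 1$: the first piece assembles into $\sum_{k \geq 1}\varphi_k(v) = \varphi(v)$, while a re-indexing $j = k-1$ in the correction sum turns $\sum_{k \geq 2} \varphi_{k-1}(v_{(1)})$ into $\sum_{j \geq 1} \varphi_j(v_{(1)}) = \varphi(v_{(1)})$, yielding
\[
\varphi_{ub}(v) = \varphi(v) - 2 \sum_{v} \varphi(v_{(1)}) \cdot \bv \cdot \beta(v_{(2)}),
\]
as desired. All sums are finite because $\Delta^{k-1}(v) = 0$ once $k$ exceeds the degree of $v$.

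The only substantive obstacle is the coassociativity manipulation, where one must be careful that the sigma-notation splitting $\sum_v \sum_{v_{(1)}} v_{(1,1)} \otimes \cdots \otimes v_{(1,k-1)} \otimes v_{(2)}$ correctly matches the shape of $\varphi_{k-1}$ applied to $v_{(1)}$; this is routine given Theorem~\ref{theorem_Ehrenborg_Readdy} and the conventions laid out in the preliminaries, but it is the one place where a small bookkeeping slip could invalidate the proof.
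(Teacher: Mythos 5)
Your proposal is correct and matches the paper's proof essentially step for step: substitute $\lambda_{ub} = \eta - 2\beta$ in the last tensor slot, use coassociativity in the form $\Delta^{k-1} = (\Delta^{k-2}\tensor\id)\circ\Delta$ to recognize the tail as $\varphi_{k-1}(v_{(1)})$, and then sum over $k$. The bookkeeping you flag as the one delicate point is exactly the step the paper displays explicitly, and it goes through as you describe.
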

\begin{proof}
Using the coassociative identity
$\Delta^{k-1} = (\Delta^{k-2} \tensor \id) \circ \Delta$,
we have for $k \geq 2$
\begin{eqnarray*}
\varphi_{ub,k}(v)
  & = &
\varphi_{k}(v)
  -
2 \cdot 
\sum_{v}            \kappa(v_{(1)}) \cdot
                    \bv \cdot \eta(v_{(2)}) \cdot
                    \bv \cdots
                    \bv \cdot \eta(v_{(k-1)}) \cdot
                    \bv \cdot \beta(v_{(k)}) \\
  & = &
\varphi_{k}(v)
  -
2 \cdot 
\sum_{v}
\sum_{v_{(1)}}      \kappa(v_{(1,1)}) \cdot
                    \bv \cdot \eta(v_{(1,2)}) \cdot
                    \bv \cdots
                    \bv \cdot \eta(v_{(1,k-1)}) \cdot
                    \bv \cdot \beta(v_{(2)}) \\
  & = &
\varphi_{k}(v)
  -
2 \cdot 
\sum_{v}
\varphi_{k-1}(v_{(1)}) \cdot \bv \cdot \beta(v_{(2)}) .
\end{eqnarray*}
The result then follows by
summing over all $k \geq 2$ and adding
$\varphi_{ub,1}(v) = \kappa(v) = \varphi_{1}(v)$.
\end{proof}

\begin{lemma}
Let $v$ be an $\ab$-monomial.  Then 
$$   \varphi_{ub}(v \cdot \av)
   =
      \varphi(v) \cdot (\av - \bv) .   $$
\label{lemma_varphi_ub_I}
\end{lemma}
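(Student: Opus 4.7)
The plan is to invoke Lemma~\ref{lemma_phi_ub_recursion} with $w = v\cdot\av$ and then analyze the two resulting terms separately. The first term, $\varphi(v\cdot\av)$, simplifies immediately by the recursion~(\ref{equation_varphi_a}) to $\varphi(v)\cdot \cv$. So the bulk of the work is to show that the correction term
\[
  \sum_{w} \varphi(w_{(1)}) \cdot \bv \cdot \beta(w_{(2)})
\]
collapses to $\varphi(v)\cdot\bv$, after which the identity $\cv - 2\bv = \av - \bv$ finishes the proof.

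To compute that sum, I would first expand $\Delta(v\cdot\av)$ via the Leibniz rule~(\ref{equation_Newtonian}) together with $\Delta(\av) = 1\tensor 1$, giving
\[
  \Delta(v \cdot \av) \;=\; \sum_{v} v_{(1)}\tensor v_{(2)}\cdot\av \;+\; v\tensor 1.
\]
Applying $\beta$ to the right tensor factor of each summand is the key observation: $\beta$ annihilates any monomial that is not a pure power of $\bv$, and every monomial of the form $v_{(2)}\cdot\av$ ends in the letter $\av$, hence is killed. Only the stray term $v\tensor 1$ survives, contributing $\varphi(v)\cdot\bv\cdot\beta(1) = \varphi(v)\cdot\bv$.

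Putting the pieces together yields
\[
  \varphi_{ub}(v\cdot\av) \;=\; \varphi(v)\cdot\cv \;-\; 2\cdot\varphi(v)\cdot\bv \;=\; \varphi(v)\cdot(\av - \bv),
\]
as desired.

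There is really no main obstacle here: the proof is essentially a one-line coproduct computation once Lemma~\ref{lemma_phi_ub_recursion} is in hand. The only subtlety to watch for is bookkeeping with the Sweedler notation — specifically, making sure the ``boundary'' term $v\tensor 1$ of $\Delta(v\cdot\av)$ is not overlooked, since that is precisely the term that survives and produces the $-2\bv$ contribution.
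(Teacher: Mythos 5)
Your proof is correct and follows essentially the same path as the paper's: both apply Lemma~\ref{lemma_phi_ub_recursion} to $v\cdot\av$, expand the coproduct via the Leibniz rule, use $\varphi(v\cdot\av)=\varphi(v)\cdot\cv$, and observe that $\beta$ annihilates every $v_{(2)}\cdot\av$ so only the $\varphi(v)\cdot\bv\cdot\beta(1)$ term survives. Your care in flagging the ``boundary'' term $v\tensor 1$ as the one that survives is exactly the right bookkeeping point.
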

\begin{proof}
By Lemma~\ref{lemma_phi_ub_recursion}
and the Leibniz relation~(\ref{equation_Newtonian}) we have
$$   \varphi_{ub}(v \cdot \av)
 = 
     \varphi(v \cdot \av) 
   -
     2 \cdot
     \varphi(v) \cdot \bv \cdot \beta(1)
   -
     2 \cdot
     \sum_v \varphi(v_{(1)}) \cdot \bv \cdot \beta(v_{(2)} \cdot \av) .  $$
By equation~(\ref{equation_varphi_a})
$\varphi(v \cdot \av) = \varphi(v) \cdot \cv$.
The summation above is
zero because $\beta(v_{(2)} \cdot \av)$ is always zero.  Hence
$\varphi_{ub}(v \cdot \av) 
   =
  \varphi(v) \cdot (\cv - 2\bv)
   =
  \varphi(v) \cdot (\av - \bv)$.
\end{proof}

\begin{lemma}
Let $v$ be an $\ab$-monomial.  Then
\[
\varphi_{ub}(v \cdot \bv\bv) = \varphi_{ub}(v \cdot \bv) \cdot (\av - \bv).
\]
\label{lemma_varphi_ub_II}
\end{lemma}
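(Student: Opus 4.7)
The plan is to reduce the desired identity, via Lemma~\ref{lemma_phi_ub_recursion}, to a statement about the auxiliary map
\[
  \tilde\varphi(v) \;=\; \sum_{v} \varphi(v_{(1)}) \cdot \bv \cdot \beta(v_{(2)}),
\]
so that $\varphi_{ub}(v) = \varphi(v) - 2\tilde\varphi(v)$. The recursion~(\ref{equation_varphi_b_b}) gives $\varphi(v \cdot \bv\bv) = \varphi(v \cdot \bv) \cdot \cv$, and since $\cv - (\av-\bv) = 2\bv$, the identity to be proved reduces after cancellation to
\[
  \tilde\varphi(v \cdot \bv\bv) - \tilde\varphi(v \cdot \bv) \cdot (\av - \bv) \;=\; \varphi(v \cdot \bv) \cdot \bv.
\]

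To verify this, I would compute both $\tilde\varphi(v \cdot \bv\bv)$ and $\tilde\varphi(v \cdot \bv)$ by applying the Leibniz rule~(\ref{equation_Newtonian}). From $\Delta(\bv\bv) = 1 \otimes \bv + \bv \otimes 1$ and $\Delta(\bv) = 1 \otimes 1$, the coproduct of $v \cdot \bv\bv$ splits into a part coming from $\Delta(v)$ (with $\bv\bv$ appended to the right factor) plus the two boundary terms $v \otimes \bv$ and $v\bv \otimes 1$; similarly for $v \cdot \bv$. Applying $\tilde\varphi$ and multiplying the $v\cdot\bv$ case by $(\av-\bv)$ on the right gives explicit sums in terms of $\varphi(v_{(1)})$, $\bv$, and $\beta$-evaluations.

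The main step is a small algebraic observation about $\beta$: since $\beta(\bv^k) = (\av-\bv)^k$ and $\beta$ vanishes on any monomial that is not a pure $\bv$-power, one has
\[
  \beta(v_{(2)} \cdot \bv) \cdot (\av - \bv) \;=\; \beta(v_{(2)} \cdot \bv\bv)
\]
for every $\ab$-monomial $v_{(2)}$ (both sides are zero unless $v_{(2)}$ is a $\bv$-power, in which case both equal $(\av-\bv)^{m+2}$). This kills the sums indexed by $\Delta(v)$ in the difference, leaving only the boundary contributions $\varphi(v) \cdot \bv \cdot \beta(\bv) + \varphi(v\bv) \cdot \bv - \varphi(v) \cdot \bv \cdot (\av-\bv)$. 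Using $\beta(\bv) = \av - \bv$, the two $\varphi(v) \cdot \bv \cdot (\av-\bv)$ terms cancel, and what remains is exactly $\varphi(v\bv) \cdot \bv$, as required.

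The proof is essentially bookkeeping; the only nontrivial insight is the $\beta$-shift identity above, which lets the ``internal'' sum in $\tilde\varphi(v \cdot \bv\bv)$ telescope against the corresponding piece of $\tilde\varphi(v\cdot\bv)\cdot(\av-\bv)$. I would organize the writeup as: apply Lemma~\ref{lemma_phi_ub_recursion}, reduce to the displayed identity via~(\ref{equation_varphi_b_b}), expand both sides with Leibniz, apply the $\beta$-shift, and simplify.
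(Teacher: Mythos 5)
Your proof is correct and in substance the same as the paper's: both rely on Lemma~\ref{lemma_phi_ub_recursion}, the recursion $\varphi(w\cdot\bv) = \varphi(w)\cdot\cv$ from~(\ref{equation_varphi_b_b}), and the shift identity $\beta(w\cdot\bv) = \beta(w)\cdot(\av-\bv)$, which is exactly your ``$\beta$-shift.'' The paper streamlines the bookkeeping by setting $u = v\cdot\bv$ and expanding $\varphi_{ub}(u\cdot\bv)$ directly via Lemma~\ref{lemma_phi_ub_recursion} and Leibniz, then pulling the common factor $(\av-\bv)$ out on the right in one pass to recognize $\varphi_{ub}(u)\cdot(\av-\bv)$; this avoids the separate reduction to the auxiliary identity $\tilde\varphi(v\cdot\bv\bv) - \tilde\varphi(v\cdot\bv)\cdot(\av-\bv) = \varphi(v\cdot\bv)\cdot\bv$ and the matching of two Leibniz expansions, but the underlying cancellations are identical to yours.
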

\begin{proof}
Let $u = v \cdot \bv$. 
Applying
Lemma~\ref{lemma_phi_ub_recursion}
and the Leibniz relation~(\ref{equation_Newtonian})
to $u$ gives
\begin{eqnarray*}
\varphi_{ub}(u \cdot \bv) 
  & = &
\varphi(u \cdot \bv)
  - 
    2 \cdot
    \varphi(u) \cdot \bv \cdot \beta(1)
  - 
    2 \cdot
    \sum_{u} \varphi(u_{(1)}) \cdot \bv \cdot \beta(u_{(2)} \cdot \bv) \\
  & = &
\varphi(u) \cdot (\cv - 2\bv)
  - 
    2 \cdot
    \sum_{u} \varphi(u_{(1)}) \cdot \bv \cdot \beta(u_{(2)}) \cdot (\av - \bv) \\
  & = &
\left(\varphi(u) 
      - 
      2 \cdot
      \sum_{u} \varphi(u_{(1)}) \cdot \bv \cdot 
                 \beta(u_{(2)})
\right) \cdot (\av - \bv) \\
  & = &
\varphi_{ub}(u) \cdot (\av - \bv).
\end{eqnarray*}
Here we have used the facts that
$\varphi(u \cdot \bv) = \varphi(u) \cdot \cv$
and 
$\beta(u_{(2)} \cdot \bv) = \beta(u_{(2)}) \cdot (\av-\bv)$.
\end{proof}

\begin{lemma}
Let $v$ be an $\ab$-monomial.
Then $\varphi_{ub}(v \cdot \ab) = 0$.
\label{lemma_varphi_ub_III}
\end{lemma}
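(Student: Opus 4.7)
The plan is to apply Lemma~\ref{lemma_phi_ub_recursion} to $v \cdot \av\bv$ and expand the coproduct using the Leibniz relation~(\ref{equation_Newtonian}). Specifically, using $\Delta(\av\bv) = 1 \tensor \bv + \av \tensor 1$, the coproduct of $v \cdot \av\bv$ has three types of terms: those of the form $v_{(1)} \tensor v_{(2)} \cdot \av\bv$ coming from the coproduct of $v$, one term $v \tensor \bv$, and one term $v \cdot \av \tensor 1$. Applying Lemma~\ref{lemma_phi_ub_recursion} gives
\[
\varphi_{ub}(v \cdot \av\bv)
  = \varphi(v \cdot \av\bv)
  - 2 \sum_v \varphi(v_{(1)}) \cdot \bv \cdot \beta(v_{(2)} \cdot \av\bv)
  - 2 \varphi(v) \cdot \bv \cdot \beta(\bv)
  - 2 \varphi(v \cdot \av) \cdot \bv \cdot \beta(1).
\]

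Next, I would eliminate the summation. Since $\beta$ annihilates any $\ab$-monomial that is not a pure power of $\bv$, and every term $v_{(2)} \cdot \av\bv$ contains the letter $\av$ followed by $\bv$, we have $\beta(v_{(2)} \cdot \av\bv) = 0$ for each term in the coproduct of $v$. This kills the summation entirely and leaves only three contributions.

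Now I would evaluate each remaining piece using the known recursions. By~(\ref{equation_varphi_a_b}), $\varphi(v \cdot \av\bv) = 2\varphi(v) \cdot \dv$; by~(\ref{equation_varphi_a}), $\varphi(v \cdot \av) = \varphi(v) \cdot \cv$; and $\beta(\bv) = \av - \bv$, $\beta(1) = 1$. Substituting yields
\[
\varphi_{ub}(v \cdot \av\bv)
 = 2\varphi(v) \cdot \bigl(\dv - \bv(\av-\bv) - \cv\bv\bigr)
 = 2\varphi(v) \cdot \bigl(\dv - \bv\av - \av\bv\bigr),
\]
which vanishes because $\dv = \av\bv + \bv\av$.

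There is no serious obstacle here; the only bit of care required is to notice that every term arising from $\Delta(v)$ in the recursion contributes $\beta(v_{(2)} \cdot \av\bv) = 0$, so that only the two ``boundary'' pieces of the Leibniz expansion of $\Delta(v \cdot \av\bv)$ contribute nonzero terms. The rest is a direct calculation using the three recursions~(\ref{equation_varphi_a})--(\ref{equation_varphi_a_b}) and the definition of $\dv$.
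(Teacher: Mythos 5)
Your proof is correct and matches the paper's own argument: both apply Lemma~\ref{lemma_phi_ub_recursion} to $v\cdot\av\bv$, expand the coproduct via the Leibniz rule into the same three boundary terms plus the inner summation, kill the summation because $\beta(v_{(2)}\cdot\av\bv)=0$, substitute $\varphi(v\cdot\av\bv)=2\varphi(v)\cdot\dv$ and $\varphi(v\cdot\av)=\varphi(v)\cdot\cv$, and observe that $\dv-\bv(\av-\bv)-\cv\bv=0$. No meaningful differences.
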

\begin{proof}
Directly we have
\begin{align*}
\varphi_{ub}(v \cdot \ab)
  & = 
  \varphi(v \cdot \ab)
   - 
     2 \cdot
     \varphi(v) \cdot \bv \cdot \beta(\bv) \\
   &\vphantom{=}
   - 
     2 \cdot
     \varphi(v \cdot \av) \cdot \bv \cdot \beta(1)  \\
   &\vphantom{=}
   - 
     2 \cdot
     \sum_v \varphi(v_{(1)}) \cdot \bv \cdot \beta(v_{(2)} \cdot \av \bv) \\
  & = 
  \varphi(v) \cdot 2\dv
- 
  2 \cdot \varphi(v) \cdot \bv \cdot (\av - \bv)
- 
  2 \cdot \varphi(v) \cdot \cv \bv \\
  & = 
  2 \cdot \varphi(v) \cdot (\dv - \bv(\av - \bv) - \cv\bv) \\
  & = 
0 ,
\end{align*}
where we have used the facts that
$\varphi(v \cdot \av\bv) = \varphi(v) \cdot 2\dv$
and 
$\beta(v_{(2)} \cdot \av\bv) = 0$.
\end{proof}

The previous three lemmas enable us to determine $\varphi_{ub}$.
In order to obtain more compact notation, 
define a map $r\colon\Zab\to\Zab$ by $r(1)=0$,
$r(v \cdot \av)=v$,
and
$r(v \cdot \bv)=0$.
By using the chain definition of the $\ab$-index,
it is straightforward to see that
$\Psi(\mathcal{L}_{ub}) = r(\Psi(\mathcal{L}))$.

\begin{proposition}
Let $w$ be an $\ab$-polynomial homogeneous of degree greater than zero.
Then
$$    \varphi_{ub}(\av \cdot w)
    =
      \omega(\av \cdot r(w)) \cdot (\av - \bv)   .  $$
\end{proposition}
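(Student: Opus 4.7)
The plan is to reduce to monomials by linearity and then dispatch each monomial by a case split on its last letter, applying one of Lemmas~\ref{lemma_varphi_ub_I}, \ref{lemma_varphi_ub_II}, and~\ref{lemma_varphi_ub_III} in each case. Write $w = \sum c_i w_i$ as a linear combination of $\ab$-monomials of positive degree; since both sides of the claimed identity are linear in $w$, it suffices to verify the identity when $w$ is a single $\ab$-monomial of degree $\geq 1$.

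First I would handle the easier case: suppose $w$ ends in $\av$, i.e., $w = w' \cdot \av$ for some (possibly empty) $\ab$-monomial $w'$. Then $\av \cdot w = (\av \cdot w') \cdot \av$, and Lemma~\ref{lemma_varphi_ub_I} applied with $v = \av \cdot w'$ gives
\[
\varphi_{ub}(\av \cdot w) = \varphi(\av \cdot w') \cdot (\av - \bv).
\]
Since $\av \cdot w'$ begins with $\av$, Proposition~\ref{proposition_culminate} yields $\varphi(\av \cdot w') = \omega(\av \cdot w')$. Because $r(w) = r(w' \cdot \av) = w'$ by definition of $r$, this matches the right-hand side.

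Next I would handle the case that $w$ ends in $\bv$. Here $r(w) = 0$, so the claim reduces to $\varphi_{ub}(\av \cdot w) = 0$. Write $w = u \cdot \bv^k$ where $k \geq 1$ and $u$ is either the empty word or ends in $\av$. Iterating Lemma~\ref{lemma_varphi_ub_II} $k-1$ times (applied with $v$ equal to $\av \cdot u \cdot \bv^{j-1}$ for $j = 1, \dots, k-1$) gives
\[
\varphi_{ub}(\av \cdot u \cdot \bv^{k}) = \varphi_{ub}(\av \cdot u \cdot \bv) \cdot (\av - \bv)^{k-1}.
\]
Now $\av \cdot u \cdot \bv$ ends in the pattern $\ab$: if $u = 1$, then $\av \cdot u \cdot \bv = 1 \cdot \ab$, and Lemma~\ref{lemma_varphi_ub_III} with $v = 1$ gives $\varphi_{ub}(\ab) = 0$; otherwise $u = u' \cdot \av$, and $\av \cdot u \cdot \bv = (\av \cdot u') \cdot \ab$, so Lemma~\ref{lemma_varphi_ub_III} with $v = \av \cdot u'$ gives $\varphi_{ub}(\av \cdot u \cdot \bv) = 0$. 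In either sub-case the whole expression vanishes, matching the right-hand side.

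There is no serious obstacle once the lemmas are in hand; the only subtle point is to correctly parse the prefix for each lemma (making sure the final argument to $\varphi_{ub}$ factors as $v \cdot \av$, $v \cdot \bv\bv$, or $v \cdot \ab$) and to recognize that $r$ is exactly the operator that records $w'$ when $w = w' \cdot \av$ and kills $w$ otherwise, which is precisely the dichotomy forced by the case split above.
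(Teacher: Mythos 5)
Your proof is correct and follows essentially the same route as the paper's: split on the last letter of $w$; when $w = w'\cdot\av$, apply Lemma~\ref{lemma_varphi_ub_I} with $v = \av\cdot w'$ and invoke Proposition~\ref{proposition_culminate} to convert $\varphi$ to $\omega$; when $w$ ends in $\bv$, peel off the trailing $\bv$-powers with Lemma~\ref{lemma_varphi_ub_II} to reduce to a word ending in $\av\bv$, which Lemma~\ref{lemma_varphi_ub_III} annihilates. The paper's version states this more tersely (writing $\av\cdot w = u\cdot\av\bv\cdot\bv^{k}$ directly), but you have simply made explicit the same steps, including the appeal to Proposition~\ref{proposition_culminate} that the paper leaves implicit.
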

\begin{proof}
The case
$w = v \cdot \av$ 
follows from Lemma~\ref{lemma_varphi_ub_I}.
The remaining  case is $w = v \cdot \bv$.
Note that $\av \cdot v \cdot \bv$ can be factored as
$u \cdot \av\bv \cdot \bv^{k}$ for a monomial $u$.
Hence
$\varphi_{ub}(u \cdot \av\bv \cdot \bv^{k})
  =
 \varphi_{ub}(u \cdot \av\bv) \cdot (\av-\bv)^{k}
  =
 0$
by Lemmas~\ref{lemma_varphi_ub_II}
and~\ref{lemma_varphi_ub_III}.
\end{proof}

We combine all of these results to 
conclude that the $\cd$-index of
the poset of unbounded regions~$T_{ub}$ can be computed
in terms of the $\ab$-index of the unbounded intersection lattice
$\mathcal{L}_{ub}$.

\begin{theorem}
Let $\mathcal{H}$ be a non-central hyperplane arrangement with
the unbounded intersection
lattice $\mathcal{L}_{ub}$ and poset of unbounded regions $T_{ub}$.
Then the $\ab$-index of $T_{ub}$ is given by
$$    \Psi(T_{ub})
    =
      \omega(\av \cdot \Psi(\mathcal{L}_{ub}))^{*} . $$
\label{theorem_unbounded}
\end{theorem}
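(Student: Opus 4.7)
The plan is to chain together the results developed earlier in this subsection. Starting from Theorem~\ref{theorem_poset_varphi_unbounded}, I have $\Psi(Q) = \varphi_{ub}(\Psi(\Lzero))$. As in the central-arrangement derivation of Theorem~\ref{theorem_Billera_Ehrenborg_Readdy}, the identity $\Psi(\Lzero) = \av \cdot \Psi(\mathcal{L})$ (immediate from the chain weight formula, since adjoining a new $\hz$ below $\mathcal{L}$ simply prepends an $\av$ factor to each chain weight) allows me to rewrite this as $\Psi(Q) = \varphi_{ub}(\av \cdot \Psi(\mathcal{L}))$.

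Next I would invoke the preceding proposition with $w = \Psi(\mathcal{L})$. Together with the identity $\Psi(\mathcal{L}_{ub}) = r(\Psi(\mathcal{L}))$ noted just before the proposition, this yields
$$\Psi(Q) = \omega(\av \cdot \Psi(\mathcal{L}_{ub})) \cdot (\av - \bv).$$
On the other hand, from the definition of $Q$ as the lower order ideal of unbounded faces of $T^*$ with a new $\ho$ adjoined, the $\ab$-indices satisfy $\Psi(T_{ub})^{*} \cdot (\av - \bv) = \Psi(Q)$, as noted when $Q$ was introduced. Combining the two expressions gives
$$\Psi(T_{ub})^{*} \cdot (\av-\bv) = \omega(\av \cdot \Psi(\mathcal{L}_{ub})) \cdot (\av - \bv).$$
Since $\Zab$ is a free noncommutative polynomial ring, right-multiplication by the nonzero element $\av-\bv$ is injective, so I can cancel the trailing factor and then apply the involution~$*$ to both sides to obtain the desired formula $\Psi(T_{ub}) = \omega(\av \cdot \Psi(\mathcal{L}_{ub}))^{*}$.

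The main conceptual obstacle has already been dispensed with in the preceding proposition and Lemmas~\ref{lemma_varphi_ub_I}--\ref{lemma_varphi_ub_III}, which evaluate $\varphi_{ub}$ on any monomial beginning with $\av$ by exploiting the recursion of Lemma~\ref{lemma_phi_ub_recursion} and piggybacking on the known behavior of $\varphi$ in the central case (Proposition~\ref{proposition_culminate}). Given that machinery, the final step here is essentially bookkeeping, with the trailing $(\av-\bv)$ factor supplied by the proposition exactly matching the rank-shift factor between $Q$ and $T_{ub}^{*}$.
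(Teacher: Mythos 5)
Your proposal is correct and follows essentially the same chain of identities as the paper's own proof: start from $\Psi(Q) = \varphi_{ub}(\Psi(\Lzero))$, substitute $\Psi(\Lzero) = \av \cdot \Psi(\mathcal{L})$, apply the preceding proposition together with $\Psi(\mathcal{L}_{ub}) = r(\Psi(\mathcal{L}))$, equate with $\Psi(T_{ub})^{*}\cdot(\av-\bv)$, and cancel the $(\av-\bv)$ factor. The paper presents the same argument more compactly as a four-line display without elaborating on the injectivity of right-multiplication by $\av-\bv$, but the substance is identical.
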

\begin{proof}
We have that
\begin{eqnarray*}
\Psi(T_{ub})^{*} \cdot (\av-\bv)
  & = &
\Psi(Q) \\
  & = &
\varphi_{ub}(\av \cdot \Psi(\mathcal{L})) \\
  & = &
\omega(\av \cdot r(\Psi(\mathcal{L}))) \cdot (\av-\bv) \\
  & = &
\omega(\av \cdot \Psi(\mathcal{L}_{ub})) \cdot (\av-\bv) .
\end{eqnarray*}
The result follows by cancelling $\av - \bv$ from both sides of the identity.
\end{proof}

\begin{figure}
\begin{center}
\scalebox{.3}{\epsfig{file=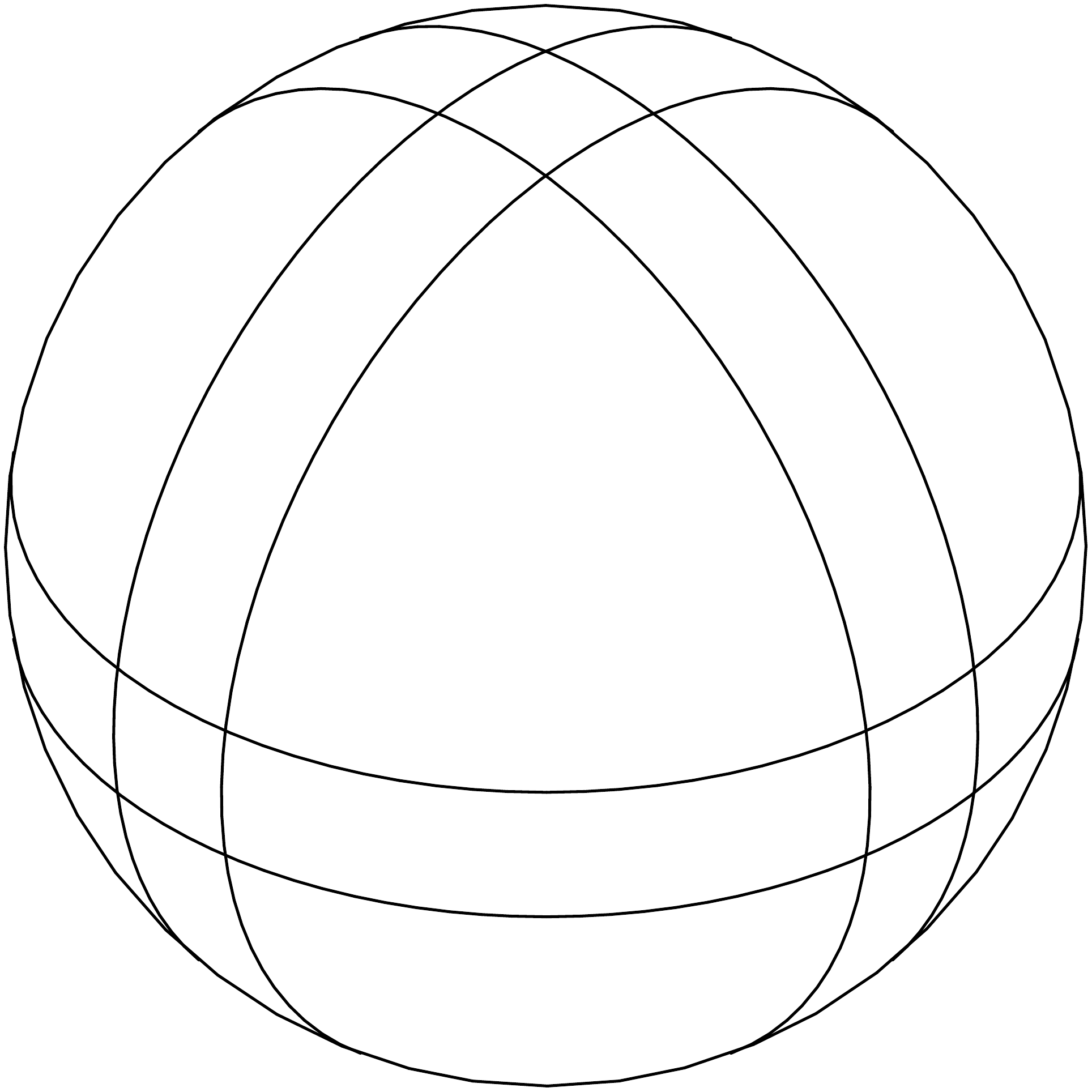}}
\end{center}
\caption{The spherical subdivision
obtained from the non-central arrangement $x,y,z=0,1$.}
\label{figure_spherical}
\end{figure}

\begin{example}
%\textrm{
Consider the non-central hyperplane arrangement
consisting of the six hyperplanes
$x = 0,1$, 
$y = 0,1$ and
$z = 0,1$.
See Figure~\ref{figure_affine}.
After intersecting this arrangement
with a sphere of large enough radius
we obtain the cell complex in
Figure~\ref{figure_spherical}.
The polytopal realization of this complex
is known as the rhombicuboctahedron.
The dual of the face lattice of this spherical complex is
not realized by a zonotope. However, one can view
the dual lattice as the face lattice
of a $2 \times 2 \times 2$ pile of cubes.

The intersection lattice $\mathcal{L}$ is the
face lattice of the three-dimensional crosspolytope,
in other words,
the octahedron.
Hence the lattice of unbounded intersections
$\mathcal{L}_{ub}$
has the flag $f$-vector
$(f_{\emptyset},f_{1},f_{2},f_{12}) = (1,6,12,24)$
and the flag $h$-vector 
$(h_{\emptyset},h_{1},h_{2},h_{12}) = (1,5,11,7)$.
The $\ab$-index is given by
$\Psi(\mathcal{L}_{ub}) = \maa + 5 \cdot \mba + 11 \cdot \mab + 7 \cdot \mbb$.
Hence the $\cd$-index of $T_{ub}$ is
\begin{eqnarray*}
\Psi(T_{ub})
  & = &
      \omega(\maaa + 5 \cdot \maba + 11 \cdot \maab + 7 \cdot \mabb)^{*} \\
  & = &
      \mccc + 22 \cdot \mdc + 24 \cdot \mcd  .
\end{eqnarray*}
%}
\end{example}

\section{Concluding remarks}

For regular subdivisions of manifolds questions abound.
%a plethora of questions to ask.
\begin{itemize}
\item[(i)] What is the right analogue of a regular subdivision
in order that it  be polytopal? 
Can flag $f$-vectors be classified for
polytopal subdivisions?
\item[(ii)] Is there a Kalai convolution for manifolds
that will generate more inequalities for flag $f$-vectors?
\cite{Kalai}
\item[(iii)] Is there a lifting technique that will yield
             more inequalities for higher dimensional manifolds?
\cite{Ehrenborg_lifting}
\item[(iv)] Are there minimization inequalities for the
$\cd$-coefficients in the polynomial~$\Psi$?
As a first step, can one prove the non-negativity of $\Psi$?
\cite{Billera_Ehrenborg,Ehrenborg_Karu}
\item[(v)] Is there an extension of the toric $g$-inequalities
to manifolds?
\cite{Bayer_Ehrenborg,Kalai_g,Karu,Stanley_h}
\item[(vi)] Can the coefficients for $\Psi$ be minimized for regular toric
arrangements as was done in the case of central hyperplane arrangements?
\cite{Billera_Ehrenborg_Readdy_om}
\end{itemize}

The most straightforward manifold to study is
$n$-dimensional projective space~$P^{n}$. We offer the following
result in obtaining the $\ab$-index of subdivisions of $P^{n}$.
\begin{theorem}
Let $\Omega$ be a centrally symmetric regular subdivision of
the $n$-dimensional sphere $S^{n}$. Assume that when 
antipodal points of the sphere are identified,
a regular subdivision
$\Omega^{\prime}$ of the projective space $P^{n}$
is obtained.
Then the $\ab$-index of $\Omega^{\prime}$ is given by
$$    \Psi(\Omega^{\prime})
   =
      \frac{\cv^{n+1} + (\av-\bv)^{n+1}}{2}
   +
      \frac{\Phi}{2}   ,   $$
where the $\cd$-index of $\Omega$ is
$\Psi(\Omega) = \cv^{n+1} + \Phi$.
\end{theorem}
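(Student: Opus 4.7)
The plan is to relate $\Psi(\Omega')$ to $\Psi(\Omega)$ by counting chains in their face posets via the antipodal double cover $\pi \colon \Omega \to \Omega'$. Since $\Omega$ is centrally symmetric and the quotient $\Omega'$ is a regular cell complex, the antipodal involution acts freely on the set of cells of $\Omega$, and $\pi$ restricts to a homeomorphism on each closed cell. The main combinatorial claim that drives everything is the identity
$$f_S(\Omega) = 2 \cdot f_S(\Omega') \quad \text{for every nonempty } S \subseteq \{1, \ldots, n+1\}.$$
To establish this, given any chain $c' = \hz < \tau_1 < \cdots < \tau_k < \ho$ in the face poset of $\Omega'$ with $k \geq 1$, I would show that each of the two antipodal lifts of the top cell $\tau_k$ extends uniquely to a lift of $c'$. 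Indeed, for $i < k$ the lift of $\tau_i$ is forced to be the unique face of $\tilde{\tau}_k$ projecting onto $\tau_i$ under $\pi$, the uniqueness coming from injectivity of $\pi$ on $\overline{\tilde{\tau}_k}$. Hence $|\pi^{-1}(c')| = 2$.

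Next I would invert the $f$-to-$h$ transformation. Isolating the $T = \emptyset$ contribution in $h_S(\Omega) = \sum_{T \subseteq S} (-1)^{|S-T|} f_T(\Omega)$ and using $f_\emptyset \equiv 1$ together with the preceding identity yields
$$h_S(\Omega) = 2 \cdot h_S(\Omega') - (-1)^{|S|},$$
for every $S \subseteq \{1, \ldots, n+1\}$, with the case $S = \emptyset$ checked directly. Multiplying by the $\ab$-monomial $u_S$, summing over all $S$, and recognizing that $\sum_S (-1)^{|S|} u_S$ factors as $\prod_{i=1}^{n+1} (\av - \bv) = (\av - \bv)^{n+1}$, I obtain
\begin{align*}
\Psi(\Omega') &= \frac{1}{2} \Psi(\Omega) + \frac{1}{2} (\av - \bv)^{n+1} \\
              &= \frac{\cv^{n+1} + (\av - \bv)^{n+1}}{2} + \frac{\Phi}{2},
\end{align*}
which is the theorem.

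The hard part will be the chain-lifting step: I must confirm that freeness of the antipodal action together with regularity of both $\Omega$ and $\Omega'$ force the $2$-to-$1$ chain correspondence described above. Regularity of $\Omega'$ is what rules out pathological gluings where antipodal faces could meet along their boundaries in ways that would destroy injectivity of $\pi$ on a closed cell. Once this cellular $2$-to-$1$ correspondence is in hand the rest is purely formal flag-vector manipulation, and the emergence of the polynomial $(\av - \bv)^{n+1}$ is the combinatorial fingerprint of the parity correction $(-1)^{|S|}$ forced by the fact that $\pi$ is a double cover off of $\hz$ and $\ho$.
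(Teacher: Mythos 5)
Your proof is correct and takes essentially the same route as the paper's: the crucial step in both is the observation that every nontrivial chain of $\Omega'$ has exactly two preimages under the quotient, while the trivial chain $\hz<\ho$ has one, yielding $\Psi(\Omega)=2\Psi(\Omega')-(\av-\bv)^{n+1}$. The paper reaches this in one line via the chain-weight sum $\Psi(P)=\sum_c\wt(c)$, whereas you pass through the flag $f$- and $h$-vectors, but the two formulations are the same computation.
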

\begin{proof}
Each chain $c = \{\hz = x_{0} < x_{1} < \cdots < x_{k} = \ho\}$ with
$k \geq 2$ in $\Omega^{\prime}$ corresponds to two chains in~$\Omega$
with the same weight $\wt(c)$. The chain
$c = \{\hz = x_{0} < x_{1} = \ho\}$ corresponds to exactly
one chain in~$\Omega$ and has weight $(\av-\bv)^{n+1}$.
Hence
$\Psi(\Omega) = 2 \cdot \Psi(\Omega^{\prime}) - (\av-\bv)^{n+1}$,
proving the result.
\end{proof}

The results in this chapter have been stated
for hyperplane arrangements. In true generality
one could
work with the underlying oriented matroid,
especially  since there are nonrealizable ones 
such as the non-Pappus oriented matroid.
All of these can be  represented as pseudo-hyperplane
arrangements. 
We chose to work with hyperplane arrangements to preserve
the geometric intuition.

Poset transformations related to the $\omega$ map have
been considered 
in~\cite{Ehrenborg_r_Birkhoff,Ehrenborg_Readdy_Tchebyshev,Hsiao}.
Are there toric or affine analogues of these poset transforms?

Another way to encode the flag $f$-vector data of a poset
is to use the quasisymmetric function of a
poset~\cite{Ehrenborg_Hopf}.
In this language the $\omega$ map is translated to
Stembridge's~$\vartheta$ map;
see~\cite{Billera_Hsiao_van_Willigenburg,Stembridge}.
Would the results of
Theorems~\ref{theorem_toric}
and~\ref{theorem_unbounded}
be appealing in the quasisymmetric function viewpoint?

Richard Stanley has asked if the coefficients of the toric
characteristic polynomial are alternating. If so, is there any
combinatorial interpretation of the absolute values of the
coefficients.

A far reaching generalization of Zaslavsky's results for hyperplane
arrangements is by Goresky and MacPherson~\cite{Goresky_MacPherson}.
Their results determine the cohomology groups of the
complement of a complex hyperplane arrangement. 
For a toric analogue of the Goresky--MacPherson results,
see work of De~Concini and Procesi~\cite{De_Concini_Procesi}.
For algebraic considerations of toric arrangements,
see~\cite{Douglass,Macmeikan_I, Macmeikan_II, Macmeikan_III}.

In Section~\ref{section_toric} we restricted ourselves
to studying arrangements that cut the torus into
regular cell complexes. In a future paper~\cite{Ehrenborg_Slone},
two of the authors are developing the notion
of a $\cd$-index for non-regular cell complexes.

%\section*{Acknowledgments}
%
%This chapter is joint work with R.~Ehrenborg and M.~Readdy, and
%has been submitted to the journal \textit{Discrete and Computational Geometry}.
%Some of this research was conducted at MIT.
%%The authors thank
%%the MIT Mathematics Department where some of
%%this research was conducted.
%We thank the
%referee for careful comments
%and for bringing the~1977 paper of Zaslavsky
%to our attention,
%and thank Tricia Hersh for suggesting that we look at
%graphical arrangements on the torus.
%The first and third authors were partially
%supported by
%National Security Agency grant H98230-06-1-0072.

\vfill
\begin{center}
Copyright \copyright\ Michael Slone 2008
\end{center}

\vanish{

\newcommand{\journal}[6]{{\sc #1,} #2, {\it #3} {\bf #4} (#5), #6.}
\newcommand{\books}[6]{{\sc #1,} ``#2,'' #3, #4, #5, #6.}
\newcommand{\collection}[6]{{\sc #1,}  #2, #3, in {\it #4}, #5, #6.}
\newcommand{\thesis}[4]{{\sc #1,} ``#2,'' Doctoral dissertation, #3, #4.}
\newcommand{\springer}[4]{{\sc #1,} ``#2,'' Lecture Notes in Math.,
                          Vol.\ #3, Springer-Verlag, Berlin, #4.}
\newcommand{\preprint}[3]{{\sc #1,} #2, preprint #3.}
\newcommand{\preparation}[2]{{\sc #1,} #2, in preparation.}
\newcommand{\appear}[3]{{\sc #1,} #2, to appear in {\it #3}}
\newcommand{\submitted}[3]{{\sc #1,} #2, submitted to {\it #3}}
\newcommand{\JCTA}{J.\ Combin.\ Theory Ser.\ A}
\newcommand{\AdvancesinMathematics}{Adv.\ Math.}
\newcommand{\JournalofAlgebraicCombinatorics}{J.\ Algebraic Combin.}

\newcommand{\communication}[1]{{\sc #1,} personal communication.}

% \newpage

{\small

}

\bigskip

{\em R.\ Ehrenborg, M.\ Readdy and M.\ Slone,
Department of Mathematics,
University of Kentucky,
Lexington, KY 40506-0027,}
\{{\tt jrge},{\tt readdy},{\tt mslone}\}{\tt @ms.uky.edu}

\end{document}

}
%
%% 2
\setcounter{chapter}{1}
\chapter{Mixing operators}
\label{chap:mixing}

\section{Introduction}

Kalai~\cite{Kalai} showed that a basis for flag $f$-vectors of polytopes is
given by the flag $f$-vectors of polytopes constructed from simplices by
repeatedly taking joins or products.  
Ehrenborg and Readdy~\cite{Ehrenborg_Readdy_c} studied how 
the $\cd$-index changes
under these operations. They discovered bilinear operators on the
Newtonian coalgebra $\mathbb{Z}\fcd$ which they called the mixing
operator (for joins of polytopes) and the diamond operator (for
products of polytopes).  Later, Ehrenborg and Fox~\cite{Ehrenborg_Fox}
analyzed these operators further, obtaining recursive coalgebraic
formulas for the $\cd$-indices of joins and products of polytopes.
Using these formulas, they obtained a $\cd$-index inequality relating
the product of a join with the join of a product, 
providing evidence for Stanley's Gorenstein${}^*$
conjecture, which was only settled later~\cite{Ehrenborg_Karu}.

It is difficult to use
the join and product operations 
to study non-spherical manifolds, such as tori, since both
preserve Eulerianness
and take spheres to spheres.
To remedy this difficulty, we introduce the manifold product.
This is defined on manifolds as the Cartesian product of the
underlying cell complexes, and yields a bilinear operator
on $\ab$-indices.
A manifold product of Eulerian manifolds is not globally Eulerian, 
but it is locally Eulerian.  
%By adding a term to the manifold product
%to compensate for the change in Euler
%characteristic we can coerce a manifold product to have a $\cd$-index.
We extend inequalities proved by Ehrenborg and Fox to the case
of manifold products.

%\subsection{Lattice path interpretations of $\cd$-index coefficients}

The mixing and diamond operators are
nonnegative operators on $\cd$-indices.  Therefore, it makes sense to
ask if there is something the coefficients count.  We prove
that the coefficients of the $\cd$-index of the
diamond product of two butterfly posets, which have pure $\cv$-power
$\cd$-indices, can be interpreted as a weighted sum of restricted
lattice paths.  This
also extends to a lattice-path interpretation for the
coefficients of the mixing operator applied to pure $\cv$-power terms.
We also extend this interpretation to the manifold
operator in the situation where the manifold operator yields a near
$\cd$-index which is nonnegative.

\section{Preliminaries}

For any cell complex $X$, let $\lat{X}$ denote its face poset.  The
empty face $\hz$ and the total complex $\ho$ are faces in $\lat{X}$.
If $X$ is a polytope, then $\lat{X}$ is a lattice.   

A \define{graded poset} is a poset $P$ with 
distinct minimum and maximum elements $\hz$ and $\ho$
which is equipped with a
\define{rank function} $\rho\colon P\to\mathbb{N}$.  The rank
function must preserve covers and send the minimum element of $P$
to $0$.  In other words, $\rho(\hz)=0$, and if $x<:y$ in $P$, 
then $\rho(x)+1 = \rho(y)$.
The face poset of a finite regular cell complex, such as a polytope,
is graded by dimension.

Fix once and for all a collection $\G$ which has exactly
one representative of each isomorphism class of finite graded
posets.  From now on, we identify each 
graded poset with its isomorphic representative in $\G$.

Fix a ground ring $k$.  All modules, algebras, and coalgebras we discuss
will be over this ground ring.

An \define{algebra} is a module $A$ together
with linear structure maps
$\nabla\colon A\tensor A\to A$, called the product, and
$\eta\colon k\to A$, called the unit, 
such that the
diagrams
\[\xymatrix{
A\tensor A\tensor A\ar[r]^{~~\nabla\tensor\id}\ar[d]_{\id\tensor\nabla} &
A\tensor A\ar[d]^{\nabla}  \\
A\tensor A\ar[r]_{\nabla} & A 
} \text{\quad and\quad}
\xymatrix{
k\tensor A\ar[r]^{\eta\tensor\id}\ar[dr]_{\cong} & 
A\tensor A\ar[d]^{\nabla} &
A\tensor k\ar[l]_{\id\tensor\eta}\ar[dl]^{\cong} \\
& A &
}\]
are commutative.  
If $A$ and $B$ are algebras, then an algebra morphism
from $A$ to $B$ is a linear map $\varphi\colon A\to B$ which respects
the product and unit.  That is, 
$\nabla_B\circ(\varphi\tensor\varphi) = \varphi\circ \nabla_A$ and $\varphi\circ\eta_A=\eta_B$.

Dually, a \define{coalgebra} is a module $C$ together 
with linear structure maps
$\Delta\colon C\to C\tensor C$, called the coproduct, and 
$\varepsilon\colon C\to k$, called the counit, 
such that
the diagrams
\[\xymatrix{
C\tensor C\tensor C & C\tensor C\ar[l]_{~~~~\Delta\tensor\id} \\
C\tensor C\ar[u]^{\id\tensor\Delta} & C\ar[l]^{\Delta}\ar[u]_{\Delta}
} \text{\quad and\quad}
\xymatrix{
k\tensor C & 
C\tensor C\ar[l]_{\varepsilon\tensor\id}\ar[r]^{\id\tensor\varepsilon} &
C\tensor k \\
& C\ar[ul]^{\cong}\ar[u]^{\Delta}\ar[ur]_{\cong} &
}\]
are commutative.
Coalgebras will generally \emph{not} be assumed to have a counit.
If~$C$ and $D$ are coalgebras, then a coalgebra morphism from $C$ to $D$
is a linear map~$\varphi\colon C\to D$ which respects the coproduct and
counit,
that is, the equations 
$(\varphi\tensor\varphi)\circ\Delta_C=\Delta_D\circ\varphi$ and
$\varepsilon_D\circ\varphi = \varepsilon_C$ hold.

Just as taking a product can be thought of assembling something out
of smaller pieces, taking a coproduct can be thought of as disassembling
something into its constituent pieces.  Following this analogy, we
define a \define{piece} of $c$ to be any term $c_{(1)}$ or $c_{(2)}$
which appears in the expansion $\Delta(c) = \sweedle{}{c_{(1)}\tensor c_{(2)}}$.

We will generally suppress the notation $\nabla$ for product, 
writing $ab$ or $a\cdot b$ instead of $\nabla(a\tensor b)$.  
The sigma notation for coproducts was introduced by Heyneman and
Sweedler~\cite{Heyneman_Sweedler} and is now widely used.  We adopt
a variant of sigma notation, writing the coproduct of $c$ as
\[
  \Delta(c) = \sweedle{\Delta}{c_{(1)}\tensor c_{(2)}}.
\]
If the coproduct is understood, we will generally suppress $\Delta$, 
writing
\[
  \Delta(c) = \sweedle{}{c_{(1)}\tensor c_{(2)}}.
\]
Using sigma notation, the coassociativity condition can be written
as the equation
\[
  \sweedle{}{(c_{(1,1)}\tensor c_{(1,2)})\tensor c_{(2)}} =
  \sweedle{}{c_{(1)}\tensor (c_{(2,1)}\tensor c_{(2,2)})} =
  \sweedle{}{c_{(1)}\tensor c_{(2)}\tensor c_{(3)}}.
\]
while the counital condition can be written as the equation
\[
  c = \sweedle{}{\varepsilon(c_{(1)})c_{(2)}} = \sweedle{}{c_{(1)}\varepsilon(c_{(2)})}.
\]

A \define{bialgebra} is a module with compatible algebra and
coalgebra structure maps.
In other words, the algebra structure maps
are coalgebra morphisms, while the coalgebra structure maps
are algebra morphisms.
If $B$ is a bialgebra with product $\nabla$ and coproduct $\Delta$,
then $\Hom_k(B, B)$ is an algebra with the convolution
product, defined by $f*g = \nabla\circ(f\tensor g)\circ\Delta$.  Using sigma 
notation, the convolution of linear maps $f$ and $g$ is written
\[
  (f*g)(b) = \sweedle{\Delta} f(b_{(1)})g(b_{(2)}).
\]
Observe that the composition $\eta\circ\varepsilon\colon B\to B$
of the unit and counit (if there is one) is the identity under convolution.

A \define{Hopf algebra} is a bialgebra $H$ for which
the identity map $\id\colon H\to H$ has a convolution inverse $S\colon H\to H$,
that is, such that
\[
  (\eta\circ\varepsilon)(h) 
  = \sweedle{\Delta}{S(h_{(1)})h_{(2)}} 
  = \sweedle{\Delta}{h_{(1)}S(h_{(2)})} 
\]
for all $h$ in $H$.  The map $S$, which is always an antihomomorphism,
is called the \define{antipode} of $H$.

A \define{Newtonian coalgebra} is a module $N$ 
with both algebra and coalgebra structure maps such that 
the Leibniz condition 
\[
  \Delta(u\cdot v) = \Delta(u)\cdot v + u\cdot\Delta(v)
\]
holds for all $u$ and $v$.
In other words, the coproduct is 
a derivation over the product.  Newtonian coalgebras were
introduced by Joni and Rota~\cite{Joni_Rota}, who called them infinitesimal 
coalgebras.  A Newtonian coalgebra can have a unit or a
counit, but not both.

Now we indicate the algebras of interest and briefly describe each.

\begin{itemize}
\item
$\NG$, the Newtonian coalgebra of graded posets;

\item
$\NA$, the Newtonian coalgebra of $\ab$-polynomials;

\item
$\HG$, the Hopf algebra of graded posets; and

\item
$\HA$, the nonassociative bialgebra of $\ab$-polynomials.
\end{itemize}

%\[\xymatrix{
%k\G                        & & k\fab                           \\
%\text{Newtonian coalgebra} & & \text{Newtonian coalgebra}      \\
%\text{of graded posets}    & & \text{of $\ab$-polynomials}     \\
%                           & &                                 \\
%k\overline{G}              & & k\fab\oplus k\xi                \\
%\text{Hopf algebra}        & & \text{nonassociative bialgebra} \\
%\text{of graded posets}    & & \text{of $\ab$-polynomials}
%}\]

\subsection{The Newtonian coalgebra of graded posets}
Let $\NG=k\G$ be the free module generated by $\G$.
The \define{star product} of two posets $P$, $Q$
in $\G$, denoted by $P\star Q$, is the poset with ground set
$(P\setminus\{\hz_P\})\cup(Q\setminus\{\ho_Q\})$ and order
relation
\[
x \le_{P\star Q} y
\text{\ if and only if\ }
\begin{cases}
x \le_P y \\
x \le_Q y \\
x \in P \text{\ and\ } y\in Q.
\end{cases}
\]
The star product $\star$ makes $\NG$ into an algebra
with the Boolean algebra on a one-element set as the unit.
% make pretty barbell picture
Ehrenborg and Hetyei showed in unpublished work that $\NG$
is a Newtonian coalgebra.  The coproduct of a poset $P$ is
defined by the formula
\[
  \Delta(P) = \sum_{\hz < x < \ho} [\hz, x]\tensor[x,\ho].
\]
It is straightforward to verify the Leibniz condition:
\[
  \Delta(P \star Q) = \Delta(P) \star Q + P \star \Delta(Q).
\]
Since $\Delta$ is a derivation over the unital product $\star$,
there is no counit.

%\subsection{The Newtonian coalgebra of $\ab$-polynomials}
\subsection{The Newtonian coalgebra of ab-polynomials}
The noncommutative polynomial algebra $\NA=k\fab$ also has
the structure of a Newtonian coalgebra.  The coproduct is 
defined on a monomial $u_1 \cdots u_n$ by the formula
\[
  \Delta(u_1 \cdots u_n) = \sum_{i=1}^n u_1\cdots u_{i-1} \tensor u_{i+1}\cdots u_n.
\]
The $\ab$-index $\Psi(P)$ of a graded poset $P$ is an invariant
of the poset.  Ehrenborg and Readdy~\cite{Ehrenborg_Readdy_c} showed 
that $\Psi$ can be viewed as a morphism $\Psi\colon \NG\to\NA$ of
Newtonian coalgebras.  Moreover, $\Psi$ is surjective.  

Stanley~\cite{Stanley_d}  developed a recursive formula for the $\ab$-index of 
a poset which is amenable to computation and best expressed
using coalgebraic notation.  
Define an algebra endomorphism $\kappa$ on $\NA$ by setting
$\kappa(\av) = \av - \bv$ and $\kappa(\bv) = 0$.
Stanley proved that the $\ab$-index satisfies the recursive formula
\begin{align*}
  \Psi(P) 
  &= \kappa(\Psi(P)) + \sweedle{}{\kappa(\Psi(P_{(1)}))\cdot\bv\cdot\Psi(P_{(2)})} \\
  &= \kappa(\Psi(P)) + \sweedle{}{\Psi(P_{(1)})\cdot\bv\cdot\kappa(\Psi(P_{(2)}))}.
\end{align*}
Applying the surjectivity of $\Psi$, the same
recursive formula holds for every $\ab$-polynomial:
\[
  u 
  = \kappa(u) + \sweedle{}{\kappa(u_{(1)})\cdot\bv\cdot u_{(2)}}
  = \kappa(u) + \sweedle{}{u_{(1)}\cdot\bv\cdot\kappa(u_{(2)})}
\]
This can also be proved inductively for $\ab$-polynomials, or be
viewed as a consequence of the Ehrenborg--Readdy theorem that $\Psi$
is a morphism of Newtonian coalgebras.  In any case, the $\kappa$
morphism is fundamental for the study of the $\ab$-index.

The map $\kappa$ preserves $\av$ and kills $\bv$.  In a similar way
we can define a map $\lambda$ which preserves $\bv$ and kills $\av$.
Let $\swapm{\,\cdot\,}\colon k\fab\to k\fab$ denote the map which swaps
$\av$ and~$\bv$.  Define an algebra endomorphism $\lambda$ on $k\fab$
by $\lambda(u) = \swapm{\kappa(\swapm{u})}$.  Then for any 
$\ab$-polynomial $u$, 
\[
  u 
  = \lambda(u) + \sweedle{}{\lambda(u_{(1)})\cdot\av\cdot u_{(2)}}
  = \lambda(u) + \sweedle{}{u_{(1)}\cdot\av\cdot\lambda(u_{(2)})}.
\]
Note that
the maps $\kappa$ and $\lambda$ act as near-counits in $\NA$.

%To describe his recursion, first
%define a linear operator $\kappa$ on $k\fab$ by 
%\[
%  \kappa(u) = \begin{cases}
%    (\av - \bv)^m, & u = \av^m \\
%    0,             & \text{otherwise.}
%  \end{cases}
%\]
%Using this operator, the formula for the $\ab$-index is
%\[
%  \Psi(P) = \kappa(\Psi(P)) + \sum_P \kappa(\Psi(P_{(1)})) \cdot \bv \cdot \Psi(P_{(2)}).
%\]
%Since $\Psi$ is a morphism of Newtonian coalgebras, this has a corresponding
%identity on $\ab$-polynomials, namely
%\[
%  u = \kappa(u) + \sum_u \kappa(u_{(1)}) \cdot \bv \cdot u_{(2)}.
%\]
%This recursion, which could be viewed as a definition of the $\ab$-index,
%will simplify the task of analyzing the behavior of the $\ab$-index
%under various binary operations.

The Newtonian coalgebra $\NA$ has an important Newtonian subcoalgebra $\NC=k\fcd$,
which is generated by the monomials $\cv = \av + \bv$ and $\dv = \ab + \ba$.
If a poset is Eulerian, its $\ab$-index lives in the subcoalgebra $\NC$.
In general, if $\Psi(P)$ is in $k\fcd$, then we say that $P$ \define{has a $\cd$-index}.
The existence of the $\cd$-index was conjectured by Fine.
Bayer and Klapper~\cite{Bayer_Klapper} showed that a poset has a $\cd$-index
if and only if it satisfies the generalized Dehn-Sommerville relations, while
Stanley~\cite{Stanley_d} provided
an alternative proof for Eulerian posets and established that the $\cd$-index
of a polytope has nonnegative coefficients.
Several proofs of the 
existence of 
the $\cd$-index
have been given~\cite{Bayer_Klapper},\cite{Ehrenborg_k-Eulerian},\cite{Ehrenborg_Readdy_homology},\cite{Stanley_d}.

\subsection{The Hopf algebra of graded posets}
We will also need to make use of the Hopf algebra structure on graded
posets.  
%A \define{Hopf algebra} is a bialgebra $(H, \cdot, \Delta, \varepsilon)$
%equipped with a linear map $S\colon H\to H$, called the \define{antipode}.
%The antipode is the convolution inverse of $\id\colon H\to H$, that is,
%\[
%  \sum_x S(x_{(1)})\cdot x_{(2)} = \sum_x x_{(1)}\cdot S(x_{(2)}) = \varepsilon(x)\cdot 1.
%\]
%The antipode is always an antihomomorphism.
Let $\overline{\G} = \G\cup\{\bullet\}$, where $\bullet$ is the one-point poset.
Then the module $\HG = k\overline{\G}$ has the structure of a Hopf algebra, with
product coming from the Cartesian product and coproduct defined by
\[
  \Delta^*(P) = \sweedle{\Delta^*}P_{(1)}\tensor P_{(2)}
              = \sum_{x\in P} [\hz, x]\tensor [x,\ho].
\]
Schmitt~\cite{Schmitt} derived an explicit formula for the antipode.
Ehrenborg showed~\cite{Ehrenborg_Hopf} that the antipode plays the role of the 
M\"obius function,
since if we define $\varphi\colon \HG\to k$ by $\varphi(P) = 1$ for each poset $P$,
then $\mu(P) = \varphi(S(P))$.

%\subsection{The nonassociative bialgebra of $\ab$-polynomials}
\subsection{The nonassociative bialgebra of ab-polynomials}
In a similar way, we can extend the Newtonian coalgebra $\NA$ to a
nonassociative bialgebra $\HA=k\fab\oplus k\xi$ via the
formulas
\[
  \av\xi = \bv\xi = \xi\av = \xi\bv = 1 \text{\quad and\quad} \xi^2 = 0.
\]
Note that $\xi$ does not usually associate, so one must exercise care with
its use.  If~$\xi$ is flanked by two copies of $\av$ or $\bv$, then it does
associate, yielding the identities $\av\xi\av = \av$ and $\bv\xi\bv=\bv$.
However, $(\dv\xi)\dv = \cv\dv$ while $\dv(\xi\dv) = \dv\cv$.
This bialgebra was introduced by Ehrenborg and Fox~\cite{Ehrenborg_Fox}. 
%We changed the symbol
%of the adjoined element to $\xi$ to avoid confusion with the counit.

The Stanley recursion for the $\ab$-index may be expressed
more briefly in this bialgebra:
\begin{align*}
u
&= \sweedle{\Delta^*}{\kappa(u_{(1)})\cdot\bv\cdot u_{(2)}}
 = \sweedle{\Delta^*}{u_{(1)}\cdot\bv\cdot\kappa(u_{(2)})} \\
&= \sweedle{\Delta^*}{\lambda(u_{(1)})\cdot\av\cdot u_{(2)}}
 = \sweedle{\Delta^*}{u_{(1)}\cdot\av\cdot\lambda(u_{(2)})} \\
\end{align*}
Observe that $\kappa$ and $\lambda$ act as near-counits in $\HA$.
%Thus $\HA$ has four one-sided near-counits.

Just as $\NA$ has a subcoalgebra $\NC$ of $\cd$-polynomials, 
$\HA$ has a sub-bialgebra $\HC$ of $\cd$-polynomials with $\xi$.

\section{Binary operations on posets}

Kalai~\cite{Kalai} constructed a
basis of polytopes obtained from simplices by repeatedly taking
joins and direct sums.  He showed that the face lattice of a join
of polytopes is the Cartesian product of the respective face lattices,
and the face lattice of a direct sum is the diamond product of the 
face lattices.
That is,
\begin{align*}
\lat{X\freejoin Y}  &= \lat{X} \times   \lat{Y} \\
\lat{X\times Y} &= \lat{X} \diamond \lat{Y},
\end{align*}
where $\freejoin$ denotes the join operation and $\diamond$
denotes the diamond product.
Recall that the \define{diamond product} (or lower truncated
product) of posets $P$ and $Q$ is defined by 
\[
  P \diamond Q = (P\setminus\{\hz\}) \times (Q\setminus\{\hz\}) \cup \{\hz\}.
\]
There is also a \define{dual diamond product} (or upper truncated product),
which we denote by $\diamond^*$:
\[
  P \diamond^* Q = (P\setminus\{\ho\}) \times (Q\setminus\{\ho\}) \cup \{\ho\}.
\]
The diamond product and dual diamond product are related by
the identity
\[
  P \diamond^* Q = (P^* \diamond Q^*)^*,
\]
where $P^*$ denotes the dual of the poset $P$.

The geometric operations of pyramid and prism arise from $\times$ and 
$\diamond$ on the poset level, since
\[
  \lat{\Pyr(P)} = \lat{P}\times B_1 \text{\ and\ } \lat{\Pri(P)} = \lat{P}\diamond B_2,
\]
where $B_i$ denotes the Boolean algebra on $i$ elements.
Since the $\ab$-index encodes the flag $f$-vector, it is of interest
to study the effects of~$\times$ and $\diamond$ on the $\ab$-index.
Ehrenborg~\cite{Ehrenborg_Hopf} used quasisymmetric functions to show that $\Psi(P\times Q)$ is a 
function of $\Psi(P)$ and $\Psi(Q)$.  
Ehrenborg and Readdy~\cite{Ehrenborg_Readdy_c} derived
recursive formulas for $\Psi(P\times Q)$ which were improved by 
Ehrenborg and Fox~\cite{Ehrenborg_Fox}.

In preparation for the study of the manifold product, we present a
completely coalgebraic derivation of the recursive formulas for 
$\Psi(P\times Q)$ and $\Psi(P\diamond Q)$.
We need
two basic facts.  
First, we need the Stanley recursion
%\[
%  u = \sweedle{\Delta^*}{\kappa(u_{(1)})\cdot\bv\cdot u_{(2)}} = \kappa(u) + \sweedle{}{\kappa(u_{(1)})\cdot\bv\cdot u_{(2)}}
%\]
discussed above.  Second, we need to know the coproduct of a Cartesian
product of posets.  

Since the Cartesian product is the product in the Hopf algebra of graded
posets, 
%\begin{align*}
\[
  \Delta^*(P\times Q) 
  = \Delta^*(P)\times\Delta^*(Q) 
  =\sweedle{\Delta^*}{ (P_{(1)}\times Q_{(1)})\tensor (P_{(2)}\times Q_{(2)}) }.
\]
%\end{align*}
Hence the coproduct of a Cartesian product is
\[
  \Delta^*(u \times v)
= \sweedle{\Delta^*}{(u_{(1)}\times v_{(1)}) \tensor (u_{(2)}\times v_{(2)})}.
\]
Using Stanley's recursion for the $\ab$-index, 
we obtain the following recursive formula for the mixing operator $\times$
applied to the $\ab$-polynomials $u$ and $v$:
\begin{align*}
u \times v
&= \sweedle{\Delta^*}{\kappa(u_{(1)}\times v_{(1)})\cdot\bv\cdot (u_{(2)}\times v_{(2)})} \\
&= \kappa(u\times v) + \kappa(u)\cdot\bv + \kappa(v)\cdot\bv\cdot u \\
&\phantom{= } + \sweedle{}{\kappa(u_{(1)})\cdot\bv\cdot(u_{(2)}\times v)} +
\sweedle{}{\kappa(v_{(1)})\cdot\bv\cdot(u\times v_{(2)})} \\
&\phantom{= } + \sweedle{}{\kappa(u\times v_{(1)})\cdot \bv\cdot v_{(2)}} +
\sweedle{}{\kappa(u_{(1)}\times u_{(2)}) \cdot\bv\cdot (u_{(2)}\times v_{(2)})}.
\end{align*}

%\subsection{Computing the $\cd$-index of a Cartesian product}
\subsection{Computing the cd-index of a Cartesian product}

For any graded poset $P$, the coefficient of the pure $\av$ term
is always $1$.  Hence $\kappa(P)$ depends only on the rank of $P$,
that is, $\kappa(P) = (\av - \bv)^{\rho(P) - 1}$.
If $P$ and $Q$ are graded posets, their Cartesian product has rank 
$\rho(P) + \rho(Q) + 1$.  So
\begin{align*}
\kappa(\Psi(P\times Q)) &= \kappa(\Psi(P)\cdot\av\cdot\Psi(Q)) \\
\end{align*}
Hence for
any $\ab$-polynomials $u$ and $v$,
\begin{align*}
\kappa(u\times v)   &= \kappa(u)\cdot\kappa(v)\cdot(\av-\bv) \\
\end{align*}
Analogously,
\begin{align*}
\lambda(u\times v)   &= \lambda(u)\cdot\lambda(v)\cdot(\bv-\av) \\
\end{align*}
We use these facts to prove the following lemma.  

\begin{lemma}[Ehrenborg--Readdy~{\cite[Proposition 4.2]{Ehrenborg_Readdy_c}}]\label{lemma:pyramid}
For any $\ab$-polynomial~$u$,
\begin{align}
  u\times 1
&= \sweedle{\Delta^*}{u_{(1)}\cdot\ba\cdot u_{(2)}}
 = \av\cdot u + u\cdot\bv + \sweedle{}{u_{(1)}\cdot\ba\cdot u_{(2)}}\label{eqn:up1ba} \\
&= \sweedle{\Delta^*}{u_{(1)}\cdot\ab\cdot u_{(2)}}
 = \bv\cdot u + u\cdot\av + \sweedle{}{u_{(1)}\cdot\ab\cdot u_{(2)}}.\label{eqn:up1ab}
\end{align}
Since the formula for $u\times 1$ is invariant under the action of the 
involution $\swapm{\,\cdot\,}$ which swaps $\av$ and $\bv$, if $u$ is 
a $\cd$-polynomial,
then so is $u\times 1$.
\end{lemma}

\begin{proof}
Since $1$ is the $\ab$-index of the Boolean algebra $B_1$, the expression
$1\times 1$ is the $\ab$-index of the product $B_1\times B_1 = B_2$,
that is, $1\times 1 = \Psi(B_2) = \cv$.
Equations~(\ref{eqn:up1ba}) and~(\ref{eqn:up1ab}) both hold when $u = 1$.

To complete the proof, assume for induction
that Equation~(\ref{eqn:up1ba}) holds for all pieces of $u$,
that is, for any polynomial $u_{(1)}$ or $u_{(2)}$ appearing in the
coproduct of $u$.
Since $\Delta^*(1) = 1\tensor\xi + \xi\tensor 1$, 
\begin{align*}
u\times 1
&=
   \sweedle{\Delta^*}{\kappa(u_{(1)}\times 1)\cdot\bv\cdot u_{(2)}}
 + \sweedle{\Delta^*}{\kappa(u_{(1)})\cdot\bv\cdot (u_{(2)}\times 1)} \\
&= 
   \bv\cdot u + \kappa(u\times 1) + \kappa(u)\cdot\bv \\
&\phantom{=}
 + \sweedle{\Delta}{(\av-\bv)\cdot\kappa(u_{(1)})\cdot\bv\cdot u_{(2)}}
 + \sweedle{\Delta}{\kappa(u_{(1)})\cdot\bv\cdot u_{(2)}\cdot\bv} \\
&\phantom{=}
 + \sweedle{\Delta}{\kappa(u_{(1)})\cdot\ba\cdot u_{(2)}}
 + \sweedle{\Delta}{\kappa(u_{(1)})\cdot\bv\cdot u_{(2)}\cdot\ba\cdot u_{(3)}} \\
\end{align*}
Use the identity
$\kappa(u\times 1) = \kappa(u)\cdot (\av - \bv)$ to combine two of
the isolated terms, and apply the induction hypothesis to expand
the second summation.  The Stanley recursion permits the terms above to
be expressed in a much simpler way.
\begin{align*}
u\times 1
&= \bv\cdot u + \kappa(u)\cdot\av \\
&\phantom{=}
 + (\av - \bv)\cdot(u - \kappa(u)) + (u - \kappa(u))\cdot\bv \\
&\phantom{=}
 + \sweedle{\Delta}{u_{(1)}\cdot\ba\cdot u_{(2)}} \\
&= \av\cdot u + u\cdot\bv + \sweedle{\Delta}{u_{(1)}\cdot\ba\cdot u_{(2)}}.
\end{align*}

Equation~(\ref{eqn:up1ab}) could be proved by imitating the one just given,
replacing $\kappa$ with $\lambda$ and making other appropriate changes.
However, it is more direct to apply the fact that the star involution 
is a Newtonian coalgebra anti-isomorphism.  Hence
\begin{align*}
u\times 1
&= (u^* \times 1^*)^* \\
&= 
 \left[ \av\cdot u^* + u^*\cdot\bv 
 + \sweedle{\Delta}{u_{(2)}^*\cdot\ba\cdot u_{(1)}^*} \right]^* \\
&= 
 \bv\cdot u + u\cdot\av + \sweedle{\Delta}{u_{(1)}\cdot\ab\cdot u_{(2)}},
\end{align*}
which completes the proof.
\end{proof}

%[PROVE THAT $\swapm{u\times v} = \swapm{u}\times \swapm{v}$.]

\begin{lemma}[Ehrenborg--Fox~{\cite[Proposition 5.8]{Ehrenborg_Fox}}]\label{lemma:timesabrecursion}
For any $\ab$-polynomials $u$ and $v$, the identities
\begin{align}
  u\times (v\cdot\av) 
&= \sweedle{\Delta^*}{(u_{(1)}\times v)\cdot\ab\cdot u_{(2)}} \notag \\
&= v\cdot\ab\cdot u + (u\times v)\cdot\av + \sweedle{}{(u_{(1)}\times v)\cdot\ab\cdot u_{(2)}} \label{eqn:upva} \\
  u\times (v\cdot\bv) 
&= \sweedle{\Delta^*}{(u_{(1)}\times v)\cdot\ba\cdot u_{(2)}} \notag \\
&= v\cdot\ba\cdot u + (u\times v)\cdot\bv + \sweedle{}{(u_{(1)}\times v)\cdot\ba\cdot u_{(2)}} \label{eqn:upvb}
\end{align}
hold.
\end{lemma}

\begin{proof}
The proof is a double induction on the lengths of $u$ and $v$.
By explicitly constructing appropriate posets, one can 
compute that
\[
  1\times\av = \cv^2 - \bv^2 
  \text{\quad and\quad}
  1\times\bv = (1\times\cv) -(1\times\av)
  = (\cv^2+\dv) - (\cv^2 - \bv^2) = \cv^2 - \av^2.
\]
Thus Equations~(\ref{eqn:upva}) and~(\ref{eqn:upvb}) are both
satisfied if $u = v = 1$.

Now assume for induction that Equation~(\ref{eqn:upvb})
holds for $v = 1$ and any piece of $u$.
Expand $u\times\bv$ via the general recursion for products,
keeping in mind that $\kappa(w\times\bv) = 0$ for any $w$.
\begin{align*}
u\times\bv
&= 
   \sweedle{\Delta^*}{\kappa(u_{(1)}\times 1)\cdot\bv\cdot(u_{(2)}\times 1)}
 + \sweedle{\Delta^*}{\kappa(u_{(1)})\cdot\bv\cdot(u_{(2)}\times\bv)}.
\end{align*}
Apply Lemma~\ref{lemma:pyramid} to the first summation and the induction
hypothesis to the second summation.
\begin{align*}
u\times\bv
&= 
   \sweedle{\Delta^*}{\kappa(u_{(1)}\times 1)\cdot\bv\cdot(u_{(2)}\times \xi)\cdot\ba\cdot u_{(3)}} \\
&\phantom{=}
 + \sweedle{\Delta^*}{\kappa(u_{(1)}\times\xi)\cdot\bv\cdot(u_{(2)}\times 1)\cdot\ba\cdot u_{(3)}}.
\end{align*}
The part of the above expression preceding $\ba$ is recognizable as
an expansion of the product $u_{(1)}\times 1$.
\[
u\times\bv
= 
   \sweedle{\Delta^*}{(u_{(1)}\times 1)\cdot\ba\cdot u_{(2)}},
\]
which is what needed to be shown.

To complete the double induction, assume that Equation~(\ref{eqn:upvb})
holds for any piece of $u$ or $v$.  Since 
$\Delta^*(v\cdot\bv) = \Delta^*(v)\cdot\bv + v\cdot\bv\tensor\xi$,
\begin{align*}
u\times(v\cdot\bv)
&= 
   \sweedle{\Delta^*}{\kappa(u_{(1)}\times v_{(1)})\cdot\bv\cdot(u_{(2)}\times (v_{(2)}\cdot\bv))} \\
&\phantom{=}
 + \sweedle{\Delta^*}{\kappa(u_{(1)}\times (v\cdot\bv))\cdot\bv\cdot(u_{(2)}\times\xi)}.
\end{align*}
The second summation vanishes because $\kappa$ kills $\bv$.  Apply the
induction hypothesis to expand the first summation.  As in the case $v=1$,
this results in a recognizable expansion of a product.
No parentheses are needed below because $u_{(2)}\times v_{(2)}$, 
the only expression which could be $\xi$, is flanked by copies of~$\bv$.
\begin{align*}
u\times (v\cdot\bv)
&=
  \sweedle{\Delta^*}{\kappa(u_{(1)}\times v_{(1)})\cdot\bv\cdot(u_{(2)}\times v_{(2)})\cdot\ba\cdot u_{(3)}} \\
&= 
  \sweedle{\Delta^*}{(u_{(1)}\times v)\cdot\ba\cdot u_{(2)}}.
\end{align*}
This completes the proof of Equation~(\ref{eqn:upvb}).

Equation~(\ref{eqn:upva})
can be proved in a similar way, replacing
$\kappa$ with $\lambda$ and making other appropriate changes.
\end{proof}

In the previous lemmas identities appeared in pairs differing only by
the action of the involution $\swapm{\,\cdot\,}$.  This suggests that $\times$
respects the action of $\swapm{\,\cdot\,}$.  This is a consequence of the
identities proved in Lemma~\ref{lemma:timesabrecursion}, but it is more
fundamentally a consequence of the existence of the paired recursive
formulas
\[
  u = \sweedle{\Delta^*}{\kappa(u_{(1)})\cdot\bv\cdot u_{(2)}}
    = \sweedle{\Delta^*}{\lambda(u_{(1)})\cdot\av\cdot u_{(2)}}.
\]
Ehrenborg and Fox proved that $\times$ respects the involution $\swapm{\,\cdot\,}$.
We offer the following alternative proof.

\begin{proposition}[Ehrenborg--Fox~{\cite[Lemma 5.5]{Ehrenborg_Fox}}]\label{prop:timesrespectsswapm}
For any $\ab$-polynomials $u$ and $v$, the identity
\[
  \swapm{u\times v} = \swapm{u}\times\swapm{v}
\]
holds.
\end{proposition}

\begin{proof}
If $u = v = 1$, there is nothing to prove.  Suppose the claim holds for
pieces of~$u$ and $v$.  By the recursive formula for the product $u\times v$,
\begin{align*}
\swapm{u\times v}
&= \sweedle{\Delta^*}{\swapm{\kappa(u_{(1)}\times v_{(1)})}\cdot\av\cdot\swapm{u_{(2)}\times v_{(2)}}} \\
&= \sweedle{\Delta^*}{\lambda(\swapm{u_{(1)}\times v_{(1)}})\cdot\av\cdot\swapm{u_{(2)}\times v_{(2)}}}.
\end{align*}
Now apply the induction hypothesis and the fact that $\swapm{\,\cdot\,}$ is a
coalgebra morphism.
\begin{align*}
\swapm{u\times v}
&= \sweedle{\Delta^*}{\lambda(\swapm{u_{(1)}}\times\swapm{v_{(1)}})\cdot\av\cdot(\swapm{u_{(2)}}\times\swapm{v_{(2)}})} \\
&= \swapm{u}\times\swapm{v}.
\end{align*}
This completes the proof.
\end{proof}

\begin{corollary}[Ehrenborg--Fox~{\cite[Theorem 5.1]{Ehrenborg_Fox}}]\label{cor:timescdrecursion}
For any $\cd$-polynomials $u$ and~$v$, the identities
\begin{align*}
  u\times (v\cdot\cv) 
&= \sweedle{\Delta^*}{(u_{(1)}\times v)\cdot\dv\cdot u_{(2)}} \\
  u\times (v\cdot\dv) 
&= \sweedle{\Delta^*}{(u_{(1)}\times v)\cdot\dv\cdot \Pyr(u_{(2)})}
\end{align*}
hold.
\end{corollary}

\begin{proof}
Expand $\cv$ and $\dv$, then apply
%Use the expansions $\cv = \av + \ba$ and $\dv = \ab + \ba$, then apply
Lemma~\ref{lemma:timesabrecursion}.  Thus
\begin{align*}
u\times (v\cdot\cv) 
&= u\times (v\cdot\av+\bv) \\
&= \sweedle{\Delta^*}{ (u_{(1)}\times v)\cdot(\ab + \ba)\cdot u_{(2)}} \\
&= \sweedle{\Delta^*}{(u_{(1)}\times v)\cdot\dv\cdot u_{(2)}}.
\end{align*}
To compute $u\times (v\cdot\dv)$, the lemma must be invoked twice.
We have
\begin{align*}
u\times (v\cdot\ab)
&= \sweedle{\Delta^*}{(u_{(1)}\times(v\cdot\av))\cdot\ba\cdot u_{(2)}} \\
&= \sweedle{\Delta^*}{(u_{(1)}\times v)\cdot\ab\cdot u_{(2)}\cdot\ba\cdot u_{(3)}}.
\end{align*}
By Lemma~\ref{lemma:pyramid}, we can collapse $u_{(2)}\cdot\ba\cdot u_{(3)}$ 
into $\Pyr(u_{(2)})$.  Similarly,
\[
  u \times (v\cdot\ba) = \sweedle{\Delta^*}{(u_{(1)}\times v)\cdot\ba\cdot\Pyr(u_{(2)})},
\]
from which the recursive formula for $u\times (v\cdot\dv)$ follows.
\end{proof}

%\subsection{foo}

%\subsection{Computing the $\cd$-index of a diamond product}
\subsection{Computing the cd-index of a diamond product}

Just as with the Cartesian product, the algebra maps $\kappa$ and
$\lambda$ interact nicely with the $\ab$-index of a diamond
product of posets.
If $P$ is a graded poset, then $\kappa(P)$ is given by
$\kappa(\Psi(P)) = (\av - \bv)^{\rho(P)-1}$.
If $P$ and $Q$ are graded posets, 
their diamond product has rank 
$\rho(P) + \rho(Q)$.  Thus
\begin{align*}
\kappa(\Psi(P\diamond Q)) &= \kappa(\Psi(P)\cdot\Psi(Q)) \\
\end{align*}
Hence for
any $\ab$-polynomials $u$ and $v$,
\begin{align*}
\kappa(u\diamond v) &= \kappa(u)\cdot\kappa(v).
\end{align*}
Analogously,
\begin{align*}
\lambda(u\diamond v) &= \lambda(u)\cdot\lambda(v).
\end{align*}
These formulas describe the situation in the algebra $\NA$.  For
simplicity, we require that the above formulas hold in $\HA$, even if $u$ or $v$
is $\xi$, subject to the constraint that~$\kappa(\xi) = 0$.
In particular, $\kappa(u\diamond\xi) = 0$ for any
$u$, which implies that 
$  u \diamond \xi = 0$
for any $u$.  This may conflict with the intuition that
\[
  P \diamond \bullet = (P \setminus\{\hz\})\times \emptyset\cup \{\hz\} = \emptyset\cup\{\hz\} = \bullet,
\]
but has the advantage of maintaining homogeneity of degree in the recursive
formulas that follow.  Since an $\ab$-index of a poset is always homogeneous
in degree, we accept failure of intuition in exchange for correctness of
formulas.

We summarize the basic properties of the diamond product with the following
result from Ehrenborg and Fox.

\begin{proposition}[Ehrenborg--Fox~{\cite[Corollary 6.3]{Ehrenborg_Fox}}]\label{cor:diamondfacts}
The diamond product $\diamond$ makes $\NA$ into an abelian monoid with unit $1$
and makes $\HA$ into a commutative semigroup satisfying the rules
\begin{align*}
u\diamond 1  &= u \text{\ for any $u$ in $\NA$} \\
u\diamond\xi &= 0 \text{\ for any $u$ in $\HA$}.
\end{align*}
\end{proposition}
\noindent
The diamond product obeys the coalgebraic recursive formula
\[
  u \diamond v 
= \sweedle{\Delta^*}{\kappa(u_{(1)}\diamond v_{(1)})\cdot\bv\cdot(u_{(2)}\times v_{(2)})}
\]
as well as the analogous formulas obtained by moving $\kappa$ or 
replacing $\kappa$ and $\bv$ with~$\lambda$ and $\av$.

The following lemma is the diamond version of Lemma~\ref{lemma:timesabrecursion}.

\begin{lemma}\label{lemma:diamondabrecursion}
For any $\ab$-polynomials $u$ and $v$, the identities
\begin{align}
  u\diamond (v\cdot\av) 
&= \sweedle{\Delta^*}{(u_{(1)}\diamond v)\cdot\ab\cdot u_{(2)}} \notag \\
&= (u\diamond v)\cdot\av + \sweedle{}{(u_{(1)}\diamond v)\cdot\ab\cdot u_{(2)}}\label{eqn:udva} \\
  u\diamond (v\cdot\bv) 
&= \sweedle{\Delta^*}{(u_{(1)}\diamond v)\cdot\ba\cdot u_{(2)}} \notag \\
&= (u\diamond v)\cdot\bv + \sweedle{}{(u_{(1)}\diamond v)\cdot\ba\cdot u_{(2)}}\label{eqn:udvb}
\end{align}
hold.
\end{lemma}

\begin{proof}
This lemma is essentially a corollary of Lemma~\ref{lemma:timesabrecursion}.
Here we demonstrate Equation~(\ref{eqn:udvb}).  Since 
$\Delta^*(v\cdot\bv) = \Delta^*(v)\cdot\bv + v\cdot\bv\tensor\xi$,
\begin{align*}
u\diamond (v\cdot\bv)
&=
   \sweedle{\Delta^*}{\kappa(u_{(1)}\diamond v_{(1)})\cdot\bv\cdot(u_{(2)}\times (v_{(2)}\cdot\bv))} \\
&\phantom{=}
 + \sweedle{\Delta^*}{\kappa(u_{(1)}\diamond (v\cdot\bv))\cdot\bv\cdot(u_{(2)}\times \xi)}.
\end{align*}
Expand the first summation using the recursion for $\times$, and notice
that the second summation vanishes.  Finally, recognize the left factor
of the expression as an expansion of the diamond product.
\begin{align*}
u\diamond v
&=
   \sweedle{\Delta^*}{\kappa(u_{(1)}\diamond v_{(1)})\cdot\bv\cdot(u_{(2)}\times v_{(2)})\cdot\ba\cdot u_{(3)}} \\
&= \sweedle{\Delta^*}{(u_{(1)}\diamond v)\cdot\ba\cdot u_{(2)}}.
\end{align*}
The proof of Equation~(\ref{eqn:udva}) is similar.
\end{proof}

The diamond product also respects the involution $\swapm{\,\cdot\,}$.  Combining
the recursive formulas for $u\diamond(v\cdot\av)$ and $u\diamond(v\cdot\bv)$
produces recursive formulas for $u\diamond(v\cdot\cv)$ and~$u\diamond (v\cdot\dv)$.

\begin{corollary}[Ehrenborg--Fox~{\cite[Theorem 7.1]{Ehrenborg_Fox}}]\label{cor:diamondcdrecursion}
For any $\cd$-polynomials $u$ and~$v$, the identities
\begin{align}
  u\diamond (v\cdot\cv) 
&= \sweedle{\Delta^*}{(u_{(1)}\diamond v)\cdot\dv\cdot u_{(2)}} \notag \\
&= (u\diamond v)\cdot\cv + \sweedle{\Delta}{(u_{(1)}\diamond v)\cdot\dv\cdot u_{(2)}} \label{eqn:udvc} \\
  u\diamond(v\cdot\dv) 
&= \sweedle{\Delta^*}{(u_{(1)}\diamond v)\cdot\dv\cdot \Pyr(u_{(2)})} \notag \\
&= (u\diamond v)\cdot\dv + \sweedle{\Delta}{(u_{(1)}\diamond v)\cdot\dv\cdot \Pyr(u_{(2)})} \label{eqn:udvd}
\end{align}
hold.
\end{corollary}

%\subsection{Generalities}
%
%Ehrenborg and Fox proved that the mixing operator $\times$ satisfies the 
%recursive formulas above, as well as similar formulas for the diamond 
%operator $\diamond$.  Once the identity
%\[
%  u\diamond(v\cdot\bv) = \sweedle{\Delta^*}{(u_{(1)}\diamond v)\cdot\ba\cdot u_{(2)}}
%\]
%is granted, the rest follow by straightforward coalgebraic techniques.
%
%In this section, we do this.

%\begin{corollary}\label{cor:fakecdrecursion}
%Let $\fake$ be a binary operation satisfying the assumptions of
%Lemma~\ref{lemma:general}.  Then for any $\cd$-polynomials $u$ and $v$,
%\begin{itemize}
%\item
%$u \fake(v\cdot\cv\cdot w) = \osweedle{u}{(u_{(1)}\fake v)\cdot\dv\cdot(u_{(2)}\diamond w)}$ and
%\item
%$u \fake(v\cdot\dv\cdot w) = \osweedle{u}{(u_{(1)}\fake v)\cdot\dv\cdot(u_{(2)}\times w)}$.
%\end{itemize}
%\end{corollary}

\section{Lattice-path interpretation of mixing operators}

Equation~(\ref{eqn:udvc}) can be used to give an explicit recursive 
formula for $\cv^p\diamond\cv^q$.  In this section we display this
formula and show how to interpret its coefficients as counting 
weighted lattice paths.

First we define the algebra of lattice paths.
Consider the noncommutative polynomial algebra on the generators $\Dv$, 
$\rv$, and $\uv$, where 
$\Dv$ has degree~2 and
$\rv$ and $\uv$ have degree~1.  The generators correspond 
to the steps
\begin{align*}
\text{\textbf{D}iagonal} &= (1,1) \\
\text{\textbf{R}ight}    &= (1,0) \\
\text{\textbf{U}p}       &= (0,1).
\end{align*}
This algebra admits a bigrading into homogeneous parts indexed by $p$ and $q$
and generated
by monomials with $p$ occurrences of $\Dv$ or $\rv$ and $q$ occurrences
of $\Dv$ or $\uv$.
Note that $\Dv$, which represents a diagonal step,
counts toward both $p$ and $q$.  
The~$(p,q)$ summand of this algebra represents
lattice paths in $\mathbb{N}\times\mathbb{N}$ from the origin 
to $(p,q)$ which use only $\Dv$, $\rv$, and $\uv$ steps.

To avoid overcounting in what follows, we need to restrict to a 
submodule.  Let~$\Lambda$ denote the submodule generated by monomials
which do not contain $\ur$ as a contiguous subword.
It inherits a grading $\Lambda = \bigoplus_{p,q} \Lambda_{p,q}$
from the grading of the polynomial algebra.

\begin{example}
%{\rm 
By direct computation, one can verify that
\begin{align*}
\cv^3\diamond \cv^2
&= \cv^5 + 2\cv^3\dv+ 4\cv^2\dv\cv+ 4\cv\dv\cv^2 + 2\dv\cv^3 + 
   4\cv\dv^2 + 4\dv\cv\dv + 4\dv^2\cv.
\end{align*}
Compare this polynomial with Figure~\ref{fig:pathsample},
which displays each $\Dv\rv\uv$-word in $\Lambda_{3,2}$
together with its associated path.
The coefficients of the terms in $\cv^3\diamond\cv^2$ can be obtained
by weighting $\rv$ and $\uv$ steps by $\cv$ and weighting $\Dv$ steps
by $2\dv$.  Note that the pair of terms $\rv\rv\Dv\uv$ and $\rv\uv\Dv\rv$
contribute to the same term of $\cv^3\diamond\cv^2$, as do the
pair of terms $\rv\Dv\rv\uv$ and $\uv\Dv\rv\rv$.  Hence $\cv^3\diamond\cv^2$
has only eight terms, even though there are ten $\Dv\rv\uv$-words in 
$\Lambda_{3,2}$.
%}
\end{example}

\begin{figure}
\begin{center}
\begin{picture}(4,4)(0.5,1.5)
\thicklines
\put(0,0){\line(1,0){1}}
\put(1,0){\line(1,0){1}}
\put(2,0){\line(1,0){1}}
\put(3,0){\line(0,1){1}}
\put(3,1){\line(0,1){1}}
\linethickness{0.0625mm}
\put(0,0){\line(1,0){3}}
\put(3,0){\line(0,1){2}}
\put(3,2){\line(-1,0){3}}
\put(0,2){\line(0,-1){2}}
\linethickness{0.03125mm}
\put(1,0){\line(0,1){2}}
\put(2,0){\line(0,1){2}}
\put(0,1){\line(1,0){3}}
\put(0,-1){$\mathbf{RRRUU}$}
\end{picture}
\begin{picture}(4,4)(0.5,1.5)
\thicklines
\put(0,0){\line(1,0){1}}
\put(1,0){\line(1,0){1}}
\put(2,0){\line(0,1){1}}
\put(2,1){\line(1,1){1}}
\linethickness{0.0625mm}
\put(0,0){\line(1,0){3}}
\put(3,0){\line(0,1){2}}
\put(3,2){\line(-1,0){3}}
\put(0,2){\line(0,-1){2}}
\linethickness{0.03125mm}
\put(1,0){\line(0,1){2}}
\put(2,0){\line(0,1){2}}
\put(0,1){\line(1,0){3}}
\put(0,-1){$\mathbf{RRUD}$}
\end{picture}
\begin{picture}(4,4)(0.5,1.5)
\thicklines
\put(0,0){\line(1,0){1}}
\put(1,0){\line(1,0){1}}
\put(2,0){\line(1,1){1}}
\put(3,1){\line(0,1){1}}
\linethickness{0.0625mm}
\put(0,0){\line(1,0){3}}
\put(3,0){\line(0,1){2}}
\put(3,2){\line(-1,0){3}}
\put(0,2){\line(0,-1){2}}
\linethickness{0.03125mm}
\put(1,0){\line(0,1){2}}
\put(2,0){\line(0,1){2}}
\put(0,1){\line(1,0){3}}
\put(0,-1){$\mathbf{RRDU}$}
\end{picture}
\begin{picture}(4,4)(0.5,1.5)
\thicklines
\put(0,0){\line(1,0){1}}
\put(1,0){\line(0,1){1}}
\put(1,1){\line(1,1){1}}
\put(2,2){\line(1,0){1}}
\linethickness{0.0625mm}
\put(0,0){\line(1,0){3}}
\put(3,0){\line(0,1){2}}
\put(3,2){\line(-1,0){3}}
\put(0,2){\line(0,-1){2}}
\linethickness{0.03125mm}
\put(1,0){\line(0,1){2}}
\put(2,0){\line(0,1){2}}
\put(0,1){\line(1,0){3}}
\put(0,-1){$\mathbf{RUDR}$}
\end{picture}
\begin{picture}(4,4)(0.5,1.5)
\thicklines
\put(0,0){\line(1,0){1}}
\put(1,0){\line(1,1){1}}
\put(2,1){\line(1,0){1}}
\put(3,1){\line(0,1){1}}
\linethickness{0.0625mm}
\put(0,0){\line(1,0){3}}
\put(3,0){\line(0,1){2}}
\put(3,2){\line(-1,0){3}}
\put(0,2){\line(0,-1){2}}
\linethickness{0.03125mm}
\put(1,0){\line(0,1){2}}
\put(2,0){\line(0,1){2}}
\put(0,1){\line(1,0){3}}
\put(0,-1){$\mathbf{RDRU}$}
\end{picture} \newline
\begin{picture}(4,4)(0.5,1.5)
\thicklines
\put(0,0){\line(1,0){1}}
\put(1,0){\line(1,1){1}}
\put(2,1){\line(1,1){1}}
\linethickness{0.0625mm}
\put(0,0){\line(1,0){3}}
\put(3,0){\line(0,1){2}}
\put(3,2){\line(-1,0){3}}
\put(0,2){\line(0,-1){2}}
\linethickness{0.03125mm}
\put(1,0){\line(0,1){2}}
\put(2,0){\line(0,1){2}}
\put(0,1){\line(1,0){3}}
\put(0,-1){$\mathbf{RDD}$}
\end{picture}
\begin{picture}(4,4)(0.5,1.5)
\thicklines
\put(0,0){\line(0,1){1}}
\put(0,1){\line(1,1){1}}
\put(1,2){\line(1,0){1}}
\put(2,2){\line(1,0){1}}
\linethickness{0.0625mm}
\put(0,0){\line(1,0){3}}
\put(3,0){\line(0,1){2}}
\put(3,2){\line(-1,0){3}}
\put(0,2){\line(0,-1){2}}
\linethickness{0.03125mm}
\put(1,0){\line(0,1){2}}
\put(2,0){\line(0,1){2}}
\put(0,1){\line(1,0){3}}
\put(0,-1){$\mathbf{UDRR}$}
\end{picture}
\begin{picture}(4,4)(0.5,1.5)
\thicklines
\put(0,0){\line(1,1){1}}
\put(1,1){\line(1,0){1}}
\put(2,1){\line(1,0){1}}
\put(3,1){\line(0,1){1}}
\linethickness{0.0625mm}
\put(0,0){\line(1,0){3}}
\put(3,0){\line(0,1){2}}
\put(3,2){\line(-1,0){3}}
\put(0,2){\line(0,-1){2}}
\linethickness{0.03125mm}
\put(1,0){\line(0,1){2}}
\put(2,0){\line(0,1){2}}
\put(0,1){\line(1,0){3}}
\put(0,-1){$\mathbf{DRRU}$}
\end{picture}
\begin{picture}(4,4)(0.5,1.5)
\thicklines
\put(0,0){\line(1,1){1}}
\put(1,1){\line(1,0){1}}
\put(2,1){\line(1,1){1}}
\linethickness{0.0625mm}
\put(0,0){\line(1,0){3}}
\put(3,0){\line(0,1){2}}
\put(3,2){\line(-1,0){3}}
\put(0,2){\line(0,-1){2}}
\linethickness{0.03125mm}
\put(1,0){\line(0,1){2}}
\put(2,0){\line(0,1){2}}
\put(0,1){\line(1,0){3}}
\put(0,-1){$\mathbf{DRD}$}
\end{picture}
\begin{picture}(4,4)(0.5,1.5)
\thicklines
\put(0,0){\line(1,1){1}}
\put(1,1){\line(1,1){1}}
\put(2,2){\line(1,0){1}}
\linethickness{0.0625mm}
\put(0,0){\line(1,0){3}}
\put(3,0){\line(0,1){2}}
\put(3,2){\line(-1,0){3}}
\put(0,2){\line(0,-1){2}}
\linethickness{0.03125mm}
\put(1,0){\line(0,1){2}}
\put(2,0){\line(0,1){2}}
\put(0,1){\line(1,0){3}}
\put(0,-1){$\mathbf{DDR}$}
\end{picture}
\newline
\end{center}
\caption{Paths in $\Lambda_{3,2}$ correspond to terms of $\cv^3\diamond\cv^2$.}
\label{fig:pathsample}
\end{figure}
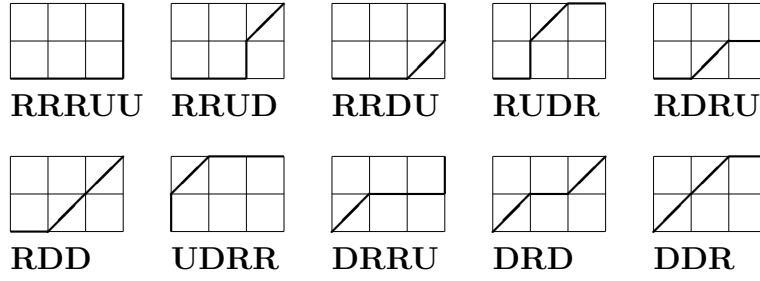

The following proposition shows that this situation is general.

\begin{proposition}
Let $\wt\colon\Lambda\to\NC$ be the linear map determined by
\[
  \wt(\Dv) = 2\dv\text{\quad and\quad} \wt(\rv) = \wt(\uv) = \cv.
\]
Then for any natural numbers $p$ and $q$, the $\cd$-index $\cv^p\diamond\cv^q$
is given by the formula
\[
  \cv^p\diamond\cv^q = \sum_{P\in\Lambda_{p,q}} \wt(P).
\]
\end{proposition}

\begin{proof}
The proof proceeds by induction on $p$ and $q$.  
For $p=q=0$ there is nothing to show.  Suppose the
weighted lattice path interpretation is correct for $(p', q')$ 
strictly smaller than $(p, q)$ in at least one coordinate.
As a consequence of Corollary~\ref{cor:diamondcdrecursion},
\begin{align*}
  \cv^p\diamond(\cv^{q-1}\cdot\cv)
  &=
  (\cv^p\diamond\cv^{q-1})\cdot\cv \\
  &\vphantom{=}
  +
  (\cv^{p-1}\diamond\cv^{q-1})\cdot 2\dv \\
  &\vphantom{=}
  +
  \sum_{k=0}^{p-2} (\cv^k\diamond\cv^{q-1})\cdot 2\dv\cdot \cv^{p-1-k}.
\end{align*}
Applying the induction assumption, the first summand is
\[
  \cv^p\diamond(\cv^{q-1}\cdot\cv) 
  = 
  \sum_{P\in\Lambda_{p,q-1}}\wt(P\cdot\uv),
\]
and the second summand is
\[
  (\cv^{p-1}\diamond\cv^{q-1})\cdot 2\dv 
  =
  \sum_{P\in\Lambda_{p-1,q-1}}\wt(P\cdot\Dv).
\]
The summation corresponds to lattice paths to which an $\rv$
can be appended, that is,
\[
  \sum_{k=0}^{p-2} (\cv^k\diamond\cv^{q-1})\cdot 2\dv\cdot \cv^{p-1-k}
  =
  \sum_{\substack{P\in\Lambda_{p-1,q} \\ \text{$P$ does not end in $\uv$}}} \wt(P\cdot\rv).
\]
But the module $\Lambda_{p,q}$ decomposes as
\begin{align*}
\Lambda_{p, q} 
&= 
   \{ P \cdot \Dv \colon P \in \Lambda_{p-1,q-1} \} \\
&\vphantom{=}
   \oplus
   \{ P \cdot \uv \colon P \in \Lambda_{p,q-1} \} \\
&\vphantom{=}
   \oplus
   \{ P \cdot \rv \colon P \in \Lambda_{p-1,q} \text{\ and $P$ does not end in $\uv$\ }\}.
\end{align*}
This completes the proof.
\end{proof}

As an application, we prove that the $\cd$-polynomial
$\cv^p\diamond\cv^q$ is always symmetric.

\begin{proposition}\label{prop:diamondlatpath}
For any natural numbers $p$ and $q$, 
\[
  (\cv^p\diamond\cv^q)^* = \cv^p\diamond\cv^q.
\]
\end{proposition}

\begin{proof}
We prove the claim by constructing an involution on the lattice 
paths in $\Lambda_{p,q}$.
Suppose $\alpha = \alpha_1\dots\alpha_n$ is a $\ur$-avoiding
path from $(0,0)$ to $(p,q)$.  Following the steps of $\alpha$
in reverse order yields the path $\alpha^* = \alpha_n\dots\alpha_1$.
Now, $\alpha^*$ is a path from $(0,0)$ to $(p,q)$, but it could
contain $\ur$ as a contiguous subword.  Adjust $\alpha^*$ to 
$\varphi(\alpha)$ by replacing each instance of $\uv^k\rv^{\ell}$
with $\rv^{\ell}\uv^k$.  In other words, we push in any ``bumps''
we find in the path.  The map $\alpha\mapsto\varphi(\alpha)$ is
an involution, and since $\uv$ and~$\rv$ have the same weight,
\[
  \wt(\varphi(\alpha)) = \wt(\alpha)^*.
\]
This completes the proof.
\end{proof}

Since we can interpret the coefficients of $\cv^p\diamond\cv^q$ 
as counting lattice paths, it is natural to ask whether we can
interpret the coefficients of $\cv^p\times\cv^q$ in a similar way.
First, recall the recursive formula for $\cv^p\times\cv^q$:
\[
  \cv^p\times\cv^q
    =
  (\cv^p \times \cv^{q-1})\cdot\cv
    +
  \cv^q\cdot\dv\cdot\cv^p 
    +
  \sum_{j+k=p-1}(\cv^j\times\cv^{q-1})\cdot 2\dv\cdot\cv^k.
\]
If the coefficients are to represent lattice paths in a straightforward
way, then it seems natural that the term $(\cv^p\times \cv^{q-1})\cdot\cv$
represents lattice paths which pass through $(p, q-1)$ and end in $\uv$,
so that they pass through $(p,q-1)$, while a term of the
form $(\cv^j\times\cv^{q-1})\cdot 2\dv\cdot\cv^k$ represents
lattice paths which pass through $(i, q-1)$ and end in~$\Dv\rv^k$.
But how are we to interpret the term $\cv^q\cdot\dv\cdot\cv^p$?  It 
seems to require a lattice path of the form $\uv^q\cdot\rv^p$, which
contains the forbidden subpath $\uv\rv$.

We can avoid forbidden subpaths by introducing another step $\sv = (0,0)$.
Thus $\sv$ represents standing still for a moment to avoid $\uv\rv$.  It
can also be thought of as marking a particular point on a lattice path.

Now we develop our argument more formally.
Consider the noncommutative polynomial algebra on the generators $\Dv$, 
$\rv$, $\uv$, and $\sv$, where 
$\Dv$ has degree~2 and the other generators have degree~1.
The generators correspond 
to the steps
\begin{align*}
\text{\textbf{D}iagonal} &= (1,1) \\
\text{\textbf{R}ight}    &= (1,0) \\
\text{\textbf{U}p}       &= (0,1) \\
\text{\textbf{S}tand}    &= (0,0).
\end{align*}
For natural numbers $p$ and $q$, let $\Lambda'_{p,q}$ be the 
module generated by monomials of degree $p+q+1$ with 
$p$ occurrences of $\Dv$ or $\rv$ and $q$ occurrences of $\Dv$ or $\uv$
which do not contain $\ur$ as a contiguous subword.
In this context, we can prove a proposition analogous to
Proposition~\ref{prop:diamondlatpath} for the Cartesian product.

%\begin{proposition}
%Let $\wt\colon\Lambda\to\NC$ be the linear map determined by
%\[
%  \wt(\Dv) = 2\dv\text{\quad and\quad} \wt(\rv) = \wt(\uv) = \cv.
%\]
%Then for any natural numbers $p$ and $q$, the $\cd$-index $\cv^p\times\cv^q$
%is given by the formula
%\[
%  \cv^p\times\cv^q = \sum_{P\in\Lambda'_{p,q}} \wt(P).
%\]
%\end{proposition}

%\section{Inequalities for mixing operators}
%\label{section_mfd_product}

We can prove that the diamond product is unimodal.

\begin{proposition}[Unimodality of diamond product]
The sequence
\[
  1\diamond\cv^{2n}, \cv\diamond\cv^{2n-1}, \dots, \cv^n\diamond\cv^n,
  \cv^{n+1}\diamond\cv^{n-1}, \dots, \cv^{2n}\diamond 1
\]
is unimodal.
\end{proposition}

%\begin{proof}
%It suffices to prove that whenever $p+1<q$, 
%the inequality
%\[
%  \cv^p\diamond\cv^q \le \cv^{p+1}\diamond\cv^{q-1}
%\]
%holds.
%Let $u = \cv^p$ and $v = \cv^{q-p-1}$.  Thus
%$uv = \cv^{q-1}$.  We must show that the inequality
%\[
%  u\diamond(u\cdot v\cdot\cv) \le (u\cdot\cv)\diamond (u\cdot v)
%\]
%holds.  To do this, we apply the recursive formula for the diamond
%product to each side of the inequality.  We obtain
%\[
%  u\diamond(u\cdot v\cdot\cv) 
%    =
%  (u\diamond (u\cdot v))\cdot\cv
%    +
%  \sweedle{\Delta}{(u_{(1)}\diamond (uv))\cdot\dv\cdot u_{(2)}}
%\]
%and
%\[
%  (u\cdot\cv)\diamond(u\cdot v)
%    =
%  (u\diamond (u\cdot v))\cdot\cv
%    +
%  \sweedle{\Delta}{(u\diamond u_{(1)})\cdot\dv\cdot u_{(2)}\cdot v}
%    +
%  \sweedle{\Delta}{(u\diamond (u\cdot v_{(1)}))\cdot\dv\cdot v_{(2)}}.
%\]
%XXX: Then a miracle happens.
%\end{proof}
%
%XXX: Spose
%\[
%  u\diamond(\cv\cdot u\cdot v) \le (\cv\cdot u)\diamond (u\cdot v).
%\]

\section{Concluding remarks}
\label{section_mixing_remarks}

In addition to the mixing operators studied above, there is also
the \define{manifold product} (or doubly-truncated product), 
denoted by $P\fp Q$ and defined by 
\[
  P \fp Q = (P\setminus\{\hz_P, \ho_P\})\times (Q\setminus\{\hz_Q,\ho_Q\})\cup\{\hz,\ho\}.
\]
The name comes from its relation with manifolds.  For example, if $P$ and $Q$ are
face lattices of polytopes, then $P\fp Q$ is the face poset of the torus 
which is the Cartesian product of the boundary complexes of the polytopes.

While the Cartesian product $\times$ increases degree by 1 and 
the diamond product $\diamond$ preserves degree,
the manifold product $\fp$ decreases degree by 1.  For any posets $P$ and $Q$ 
with rank at
least $2$,
\[
  \kappa(\Psi(P)\fp\Psi(Q)) = \kappa(\Psi(P))\cdot\kappa(\Psi(Q))/(\av-\bv).
\]
Hence
\[
  \kappa(u\fp v) = \kappa(u\cdot v)/(\av - \bv)
  + \sweedle{\Delta^*}{ (u_{(1)}\diamond v_{(1)})\cdot\bv\cdot\kappa(u_{(2)}\diamond^* v_{(2)}}
\]
whenever $u$ and $v$ have sufficiently large degree.

While the operations $\times$ and $\diamond$ have the $\cd$-polynomials
$\xi$ and $1$ respectively as units, the unit of $\fp$ is $\av$.  Hence
the manifold product does not preserve the $\cd$-index.  There are still
recursive rules for computing $u\fp v$.  In particular,
\begin{align*}
u\fp(v\cdot(\av - \bv)) &= (u\fp v)\cdot(\av - \bv) \text{\ and} \\
u\fp(v\cdot\dv) &= \sweedle{\Delta}{(u_{(1)}\diamond v)\cdot 2\dv\cdot u_{(2)}}.
\end{align*}

Although the manifold product does not generally preserve $\cd$-polynomial
or nonnegativity, there are some special cases where it does.  In particular,
$\cv^p\fp\cv^q$ is a $\cd$-polynomial if $p+q$ is odd, and 
$\cv^n\fp\cv^{n+1}$ is a nonnegative $\cd$-polynomial for any $n$.  Increasing
the difference in degree between the arguments rapidly introduces negative terms.
Since these expressions denote $\ab$-indices of products of spheres of 
different dimensions, we would like to give conditions which guarantee 
nonnegativity of the coefficients.

%[stuff]
%
%\begin{lemma}[Attachment lemma]\label{lemma:attach}
%Suppose $P$ and $Q$ are posets of rank $n+1$.  Let $R$ be the poset
%formed by attaching $P$ and $Q$ along a maximal chain.  Then
%\[
%\Psi(R)+\av^n = \Psi(P) + \Psi(Q).
%\]
%\end{lemma} 
%[Compare with the ``banana'' product, see last section of {\it Coproducts}.]
%
%This fact follows directly from the chain definition of the
%$\ab$-index, but that removes none of its utility.
%

\vfill
\begin{center}
Copyright \copyright\ Michael Slone 2008
\end{center}
%
%% 3 <- 4
\setcounter{chapter}{2}
\chapter{A geometric approach to acyclic orientations}
\label{chap:acyclic}

%\DeclareMathOperator{\diam}{diam}
%\newcommand{\vanish}[1]{}
%\newcommand{\meet}{\wedge}
%\newcommand{\join}{\vee}

%\newcommand{\journal}[6]{{\sc #1,} #2, {\it #3} {\bf #4} (#5), #6.}
%%%%\newcommand{\journal}[6]{#1, ``#2,'' {\it #3,} vol.\ #4, pp.\ #6, #5.}\n
%\newcommand{\book}[4]{{\sc #1,} ``#2,'' #3, #4.}
%%%%\newcommand{\book}[4]{#1, {\it #2,} #3, #4.}
%\newcommand{\bookf}[5]{{\sc #1,} ``#2,'' #3, #4, #5.}
%\newcommand{\thesis}[4]{{\sc #1,} ``#2,'' Doctoral dissertation, #3, #4.}
%\newcommand{\springer}[4]{{\sc #1,} ``#2,'' Lecture Notes in Math.,
%                          Vol.\ #3, Springer-Verlag, Berlin, #4.}
%\newcommand{\preprint}[3]{{\sc #1,} #2, preprint #3.}
%\newcommand{\preparation}[2]{{\sc #1,} #2, in preparation.}
%\newcommand{\appear}[3]{{\sc #1,} #2, to appear in {\it #3}}
%\newcommand{\submitted}[4]{{\sc #1,} #2, submitted to {\it #3}, #4.}
%\newcommand{\JCTA}{J.\ Combin.\ Theory Ser.\ A}
%\newcommand{\AdvancesinMathematics}{Adv.\ Math.}
%\newcommand{\JournalofAlgebraicCombinatorics}{J.\ Algebraic Combin.}
%\begin{document}
%\setstretch{2}
%\title{A geometric approach to acyclic orientations}
%\author{Richard Ehrenborg and Michael Slone}

%\maketitle

%\begin{abstract}
The set of acyclic orientations of a connected graph with a given sink
has a natural poset structure. We give a geometric proof of a result
of %Jim 
Propp: this poset is the disjoint union of distributive lattices.
%\end{abstract}

Let $G$ be a connected graph on the vertex set
$[\underline{n}] = \{0\}\cup[n]$,
where $[n]$ denotes the set $\{1, \ldots, n\}$.
Let $P$ denote the collection of acyclic orientations of $G$, and 
let $P_0$ denote the collection of acyclic orientations of $G$ with $0$ as a sink.
If $\Omega$ is an orientation in $P$ with the vertex $i$
as a source, we can 
obtain a new orientation $\Omega'$ with $i$ as a sink by \emph{firing} the
vertex $i$,
reorienting all the edges adjacent to $i$ towards $i$.
The orientations $\Omega$ and~$\Omega'$ agree away from $i$.

A \emph{firing sequence} from $\Omega$ to $\Omega'$ in $P$ consists of a sequence
$\Omega=\Omega_1,\dots,\Omega_{m+1}=\Omega'$ of orientations and a
function $F\colon [m]\to [\underline{n}]$ such that for each $i\in [m]$, the 
orientation~$\Omega_{i+1}$ is obtained from $\Omega_i$ by firing the vertex $F(i)$.
We will abuse language by calling $F$ itself a firing sequence.
We make $P$ into a preorder by writing $\Omega\le\Omega'$ if and only if
there is a firing sequence from $\Omega$ to $\Omega'$.  From
the definition it is clear that $P$
is reflexive and transitive.
While $P$ is only a preorder, $P_0$ is a poset.  By finiteness, 
antisymmetry can be verified by showing that firing sequences in $P_0$
cannot be arbitrarily long.  This is a consequence of the fact that
neighbors of the distinguished sink $0$ cannot fire.
The proof depends on the
following lemma.

\begin{lemma}
Let $F\colon [m]\to [n]$ be a firing sequence for the graph $G$.  If $i$ and $j$
are adjacent vertices in $G$, then
\[
  |F^{-1}(i)| \le |F^{-1}(j)| + 1.
\]
\end{lemma}

\begin{proof}
A vertex can fire only if it is a source.  Firing $i$ reverses the orientation
of its edge to~$j$.  Hence $i$ cannot fire again until the orientation is again
reversed, which can only happen by firing $j$.
\end{proof}

As a corollary, firing sequences have bounded length, implying that $P_0$ 
is a poset.

\begin{corollary}
The preorder $P_0$ of acyclic orientations with a distinguished sink is a poset.
\end{corollary}

\begin{proof}
Let $F\colon [m]\to [n]$ be a firing sequence.  By 
iterating the lemma, $|F^{-1}(i)| \le d(0, i) - 1$, so
\[
  m =   \sum_{i\in [n]} |F^{-1}(i)| 
    \le \sum_{i\in [n]} d(0, i) - 1
\]
Hence firing sequences cannot be arbitrarily long, implying that $P_0$ is
antisymmetric.
\end{proof}

For a real number $a$ let
$\lfloor a \rfloor$ denote the largest integer
less than or equal to $a$. 
Similarly,
let
$\lceil a \rceil$ denote the least integer
greater than or equal to $a$. 
Finally, let $\fracp{a}$ denote the fractional part of the real number $a$,
that is, $\fracp{a} = a - \lfloor a \rfloor$.
Observe that the range of the function $x \longmapsto \fracp{x}$
is the half open interval $[0,1)$.
In this chapter we use $\fracp{a}$ only to denote the fractional
part and never to denote a singleton set.

Let $\widetilde{\mathcal{H}} = \widetilde{\mathcal{H}}(G)$ be
the \emph{periodic graphic arrangement} of the graph $G$,
that is, 
$\widetilde{\mathcal{H}}$ is the collection of all hyperplanes
of the form
\[
     x_{i} = x_{j} + k  , 
\]
where $ij$ is an edge in the graph $G$ and $k$ is an integer.
This hyperplane arrangement 
cuts~$\mathbb{R}^{n+1}$ into open regions.
Note that each region is translation-invariant
in the direction $(1, \ldots, 1)$.
Let $C$ denote the complement of $\widetilde{\mathcal{H}}$,
that is,
\[    C
    =
      \mathbb{R}^{n+1} \setminus \bigcup_{H \in \widetilde{\mathcal{H}}} H  . \]
Define a map $\varphi\colon C\to P$ from the complement of the
periodic graphic arrangement to the
preorder of acyclic orientations as follows.
For a point $x = (x_{0}, \ldots, x_{n})$ and an edge $ij$
observe that $\fracp{x_{i}} \neq \fracp{x_{j}}$
since the point does not lie on any hyperplane
of the form $x_{i} = x_{k} + k$.
Hence orient the edge $ij$ towards $i$ if $\fracp{x_{i}} < \fracp{x_{j}}$
and towards~$j$ if the inequality is reversed.
This defines the orientation $\varphi(x)$.  Also note that this
is an acyclic orientation, since no directed cycles can occur.

\vanish{
The poset of orientations consists of several connected components, each of
which is self-dual.  To get a better intuition for the structure of $P$, 
we will work with the \emph{periodic graphic arrangement} 
$\mathcal{A}=\mathcal{A}(G)$ in $\mathbb{R}^{n+1}$.  For each edge
$ij$ in $G$ and each integer $k$, the periodic graphic arrangement includes
the hyperplane $x_i = x_j + k$.  
}

\vanish{
Let $S$ denote the set obtained by intersecting the complement 
$\mathbb{R}^{n+1}\setminus\bigcup\mathcal{A}$ with the hyperplane $x_0 = 0$.
We now define a map $\varphi\colon S\to P$.  Let $x\in S$.  Since $x$ is
not on any hyperplane in $\mathcal{A}$, its coordinates are all distinct
modulo 1.  Hence $x$ corresponds to a permutation of $[\underline{n}]$, 
inducing an orientation $\Omega$ of the graph $G$.  Since $x_0 = 0$
but the other coordinates of $x$ are strictly positive modulo $1$, 
this orientation makes vertex $0$ a sink.  Hence~$\Omega$ is in $P$, 
and we can define the map $\varphi$ by sending $x$ to $\Omega$.
}

Let $H_{0}$ be the coordinate hyperplane
$\{x \in \mathbb{R}^{n+1} \: : \: x_{0} = 0\}$.
The map $\varphi$ sends points of the intersection $C_0 = C \cap H_{0}$
to acyclic orientations in $P_0$.

\vanish{
Especially, if we restrict our attention to
the complement $C$ intersected the hyperplane $H_{0}$,
we obtain that $\varphi$ maps points to acyclic orientations
with a sink at the vertex $0$.
}

The real line $\mathbb{R}$ is a distributive lattice; meet is
minimum and join is maximum.  Since~$\mathbb{R}^{n+1}$ is a 
product of copies of $\mathbb{R}$, it is also a distributive
lattice, with meet and join given by componentwise minimum
and maximum.  That is,
given two points 
in~$\mathbb{R}^n$, say 
$x = (x_{0}, \ldots, x_{n})$
and
$y = (y_{0}, \ldots, y_{n})$,
their meet and join are given by
\[
  x \meet y = (\min(x_{0},y_{0}), \ldots, \min(x_{n},y_{n}))
\]
and
\[
  x \join y = (\max(x_{0},y_{0}), \ldots, \max(x_{n},y_{n}))
\]
respectively.

\vanish{
Recall that $\mathbb{R}^{n+1}$ is a finite product of chains, hence a 
distributive lattice.  In this lattice, the meet of two points is the
coordinatewise minimum and the join of two points is the coordinatewise 
maximum.  The set $S$ inherits the poset structure of $\mathbb{R}^{n+1}$,
and each component is a distributive sublattice.
}

\begin{lemma}
Each region $R$ in the complement $C$ of the periodic graphic arrangement
$\widetilde{\mathcal{H}}$
is a distributive sublattice of $\mathbb{R}^{n+1}$.
Hence the intersection $R \cap H_{0}$, which is a region in $C_{0}$,
is also a distributive sublattice of $\mathbb{R}^{n+1}$.
\end{lemma}
\begin{proof}
Since each region $R$ is the intersection of 
slices of the form
\[
    T
  =
    \{ x \in \mathbb{R} \:\: : \:\: x_i + k < x_j < x_i + k + 1 \},
\]
it is enough to prove that each slice is a sublattice of $\mathbb{R}^{n+1}$.
Let $x$ and $y$ be two points in the slice $T$.
Then
$\min(x_i,y_i) + k   =    \min(x_i + k, y_i + k)
                     <    \min(x_j, y_j)
                     <    \min(x_i + k+1, y_i + k+1)
                     =    \min(x_i,y_i) + k+1$,
implying that $x \meet y$ also lies in the slice $T$.
A dual argument shows that the slice $T$ is closed under the join operation.
Thus the region $R$ is a sublattice.
Since distributivity is preserved under taking sublattices,
it follows that $R$ is a distributive sublattice of $\mathbb{R}^{n+1}$.
\end{proof}

In the remainder of this chapter we let $R$ be a region
in $C_{0}$.

\begin{lemma}
Consider the restriction $\varphi|_{R}$ of the map $\varphi$
to the region~$R$.
The inverse image of an acyclic orientation in $P_{0}$ is
of the form:
\[
  R \cap \biggl(\{0\} \times \prod_{i=1}^{n} [a_{i},a_{i}+1) \biggr), 
\]
where each $a_{i}$ is an integer. That is, the inverse image
of an orientation is the intersection of the region
$R$ with a half-open lattice
cube.
Hence the inverse image is a sublattice of~$\mathbb{R}^{n+1}$.
\label{lemma_inverse_image}
\end{lemma}
\begin{proof}
Assume that $x$ and $y$ lie in the region $R$.
Define the integers $a_{i}$ and $b_{i}$ by
$a_{i} = \lfloor x_{i} \rfloor$
and
$b_{i} = \lfloor y_{i} \rfloor$.
Hence the coordinate $x_{i}$ lies in the half-open
interval $[a_{i},a_{i}+1)$
and 
the coordinate $y_{i}$ lie in the half-open
interval $[b_{i},b_{i}+1)$.
Lastly, assume that $\varphi|_{R}$ maps $x$ and $y$ to the
same acyclic orientation.
The last condition implies that for every edge $ij$
that
$0 \leq x_{i} - a_{i} < x_{j} - a_{j} < 1$ is equivalent to
$0 \leq y_{i} - b_{i} < y_{j} - b_{j} < 1$.
Consider an edge that is directed from $j$ to $i$.
Since $x$ and~$y$ both lie in the region~$R$,
there exists an integer $k$ such that           
$x_{i} + k < x_{j} < x_{i} + k + 1$ 
and         
$y_{i} + k < y_{j} < y_{i} + k + 1$.
Now we have that        
$a_{j} - a_{i} < x_{j} - x_{i} < k+1$.          
Furthermore, observe that           
$x_{j} - a_{j} - 1 < 0 \leq x_{i} - a_{i}$.     
Hence       
$a_{j} - a_{i} > x_{j} - x_{i} - 1 > k-1$.      
Since $a_{j} - a_{i}$ is an integer, the two    
bounds implies that     
$a_{j} - a_{i} = k$.    
By similar reasoning we obtain that
$b_{j} - b_{i} = k$.    

Hence for every edge $ij$ we know that          
$a_{j} - a_{i} = b_{j} - b_{i}$.    
Since $a_{0} = b_{0} = 0$ and the graph $G$ is         
connected we obtain that $a_{i} = b_{i}$ for all
vertices $i$.
\end{proof} 

\begin{lemma}
The restriction $\varphi|_{R} : R \to P_0$ is a poset map.
\end{lemma}
\begin{proof}
Assume that $y$ and $z$ belong to the region $R$ and that $y \leq z$.
Since the region~$R$ is convex, the line segment from $y$ to $z$ is 
contained in~$R$. 
Let a point $x$ move continuously from $y$ to $z$ along this line
segment
and consider what happens with the associated
acyclic orientations $\varphi(x)$. Note that each coordinate
$x_{i}$ is non-decreasing.
When the point $x$ crosses an hyperplane of the form $x_{i} = p$
where $p$ is an integer, observe that the value $\fracp{x_{i}}$
approaches $1$ and then jumps down to $0$. Hence the vertex~$i$ 
switches from being a source to being a sink, that is, the
vertex~$i$ fires.

Observe that two adjacent nodes $i$ and $j$ cannot fire at the same
time, since the intersection of the two hyperplanes
$x_{i} = p$ and $x_{j} = q$ is contained in the hyperplane
$x_{i} = x_{j} + (p-q)$ which is not in the region $R$.

Hence we obtain a firing sequence from the acyclic orientation
$\varphi(y)$ to $\varphi(z)$,
proving that $\varphi(y) \leq \varphi(z)$.
\end{proof}

\vanish{
Along any monotonic path in $C_0$ from $x$ to $y$, the value of $\varphi$
changes only finitely many times, and we may assume it changes exactly once.
For each $i$ in $[n]$, let $\alpha_i$ be the straight-line path in $C_0$
from $(y_1, \ldots, y_{i-1}, x_i, x_{i+1}, \ldots, x_n)$ 
to   $(y_1, \ldots, y_{i-1}, y_i, x_{i+1}, \ldots, x_n)$.
By following these paths in sequence we get a monotonic path from $x$ to $y$.
The value of $\varphi$ changes in precisely one path $\alpha_{i+1}$.
The only 
change that occurs in this path is the monotonic increase
of the $i$ coordinate 
from $x_i$ to $y_i$.  Thus there must be an integer between $x_i$ and $y_i$.  
Since $\alpha_{i+1}$ is a path in $C_0$, the $i$th
coordinate must be maximal in $x$ and minimal in $y$.
In other words, $\varphi(y)$ is obtained from $\varphi(x)$
by firing the vertex $i$.
Hence $\varphi(x) \le \varphi(y)$.
}

\vanish{
\begin{lemma}
Let $R$ be a region of the complement $C$ and
$x$ a point in the region $R$. Let $\Omega'$ be 
an acyclic orientation comparable to the acyclic orientation
$\Omega = \varphi(x)$. Then there exists a point $y$ in
the region $R$ such that $\varphi(y) = \Omega'$.
\end{lemma}
}

\begin{lemma}
Let $x$ be a point in the region $R$.  Let $\Omega'$ be
an acyclic orientation comparable to $\Omega = \varphi(x)$ in the poset $P_0$.
Then there exists a point $z$ in the region of $R$ as $x$
such that $\varphi(z) = \Omega'$.
\label{lemma_lifting}
\end{lemma}
\begin{proof}
It is enough to prove this for cover relations in the poset $P$.
We begin by considering the case when $\Omega'$ covers
$\Omega$ in $P$.  Thus $\Omega'$ is obtained from $\Omega$ by firing
a vertex $i$.

First pick a positive real number $\lambda$ such that
$\fracp{x_{j}} < 1 - \lambda$ for each nonzero vertex~$j$.
Let $y$ be the point $y = x + \lambda \cdot (0,1, \ldots, 1)$.
Observe that $y$ belongs to the same region $R$ and that
$\varphi$ maps $y$ to the same acyclic orientation as the point $x$.

Since $i$ is a source in $\Omega$, the value
$\fracp{y_{i}}$ is larger than any other 
value 
$\fracp{y_{j}}$ for vertexes~$j$ adjacent to the vertex $i$.
Let $z$ be the point with coordinates
$z_{j} = y_{j}$ for $j \neq i$ and
$z_{i} = \lceil y_{i} \rceil + \lambda/2$.
Observe that moving from $y$ to the point $z$
we do not cross any hyperplanes of the form $x_{i} = x_{j} + k$.
Hence the point $z$ also belongs to region $R$.

However, we did cross a hyperplane of the form $x_{i} = p$,
corresponding to firing the vertex~$i$. Hence we have that
$\varphi(z) = \Omega'$.  Now we can iterate this to extend
to the general case when $\Omega < \Omega'$.

The case when $\Omega'$ is covered by $\Omega$ is done
similarly. However this case is easier since one can skip
the middle step of defining the point $y$. Hence this case
is omitted.
\end{proof}

A connected component of a finite poset is a weakly connected
component of its associated comparability graph.
That is, a finite poset is the disjoint union
of its connected components.

\begin{lemma}
Let $Q$ be a connected component
of the poset of acyclic orientations~$P_{0}$.
Then there exists a region $R$ in $C_{0}$ such that
the map $\varphi$ maps $R$ onto the component $Q$.
\label{lemma_lifting_components}
\end{lemma}
\begin{proof}
Let $\Omega$ be an orientation in the component $Q$.
Since $\varphi$ is surjective we can lift~$\Omega$ to a point
$x$ in $C_{0}$. Say that the point $x$ lies in the region $R$.
It is enough to show that every orientation
$\Omega^{\prime}$ in $Q$ can be lifted
to a point in $R$. The two orientations
$\Omega$ and $\Omega^{\prime}$ are related by a sequence in $Q$
of orientations 
$\Omega = \Omega_{1}, \Omega_{2}, \ldots, \Omega_{k} = \Omega^{\prime}$
such that $\Omega_{i}$ and $\Omega_{i+1}$ are comparable.
By iterating 
Lemma~\ref{lemma_lifting}
we obtain points~$x_{i}$ in~$R$
such that $\varphi(x_{i}) = \Omega_{i}$.  In particular, 
$\varphi(x_{k}) = \Omega^{\prime}$.
\end{proof}

\begin{proposition}
Let $Q$ be a connected component of the poset of
acyclic orientations $P_{0}$. Then the component $Q$ as a poset is a lattice.
Moreover, let $R$ be a region of $C_{0}$ that maps onto $Q$ by $\varphi$.
Then the poset map $\varphi|_{R} : R \longrightarrow Q$ is
a lattice homomorphism.
\end{proposition}
\begin{proof}
The previous discussion showed that we can lift the component
$Q$ to a region~$R$. Consider two acyclic orientations
$\Omega$ and $\Omega'$. We can lift them to two points~$x$ and~$y$
in~$R$, that is,
$\varphi(x) = \Omega$
and
$\varphi(y) = \Omega'$.
Since $\varphi|_{R}$ is a poset map we obtain that
$\varphi(x \meet y)$ is a lower bound for
$\Omega$ and $\Omega'$. It remains to show that the lower bound
is unique. 

Assume that $\Omega''$ is a lower bound of $\Omega$ and $\Omega'$.
By Lemma~\ref{lemma_lifting}
we can lift $\Omega''$ to an element $z$ in $R$ such
that $z \leq x$.
Similarly,
we can lift $\Omega''$ to an element $w$ in $R$ such
that $w \leq y$.
That is we have that $\varphi(z) = \varphi(w) = \Omega''$.
Now by Lemma~\ref{lemma_inverse_image}
we have that $\varphi(z \meet w) = \Omega''$.
But since $z \meet w$ is a lower bound of both $x$ and $y$
we have that $z \meet w \leq x \meet y$.
Now applying $\varphi$ we obtain that
$\varphi(x \meet y)$ is the greatest lower bound, proving that
the meet is well-defined.
A dual argument shows that the join is well-defined, hence
$Q$ is a lattice.

Finally, we have to show that $\varphi|_{R}$ is
a lattice homomorphism.
Let $x$ and $y$ be two points in the region $R$.
By Lemma~\ref{lemma_lifting} we can lift
the inequality $\varphi(x) \meet \varphi(y) \leq \varphi(x)$
to obtain a point $z$ in $R$ such that
$z \leq x$ and $\varphi(z) = \varphi(x) \meet \varphi(y)$.
Similarly, we can lift
the inequality $\varphi(x) \meet \varphi(y) \leq \varphi(y)$
to obtain a point $w$ in $R$ such that
$w \leq y$ and $\varphi(w) = \varphi(x) \meet \varphi(y)$.
By Lemma~\ref{lemma_inverse_image} we know that
$\varphi(z \meet w) = \varphi(x) \meet \varphi(y)$.
But~$z \meet w$ is a lower bound of both $x$ and $y$,
so
$\varphi(x) \meet \varphi(y) = \varphi(z \meet w) \leq \varphi(x \meet y)$.
But since 
$\varphi(x \meet y)$ is a lower bound of both
$\varphi(x)$ and $\varphi(y)$ we have
$\varphi(x \meet y) \leq \varphi(x) \meet \varphi(y)$.
Thus the map $\varphi|_{R}$ preserves the meet operation.
The dual argument proves that 
$\varphi|_{R}$ preserves the join operation,
proving that it is a lattice homomorphism.
\end{proof}

Combining these results we can now prove the result of
%Jim 
Propp~\cite{Propp}.

\begin{theorem}
Each connected component of the poset of acyclic orientations $P_{0}$
is a distributive lattice.
\end{theorem}
\begin{proof}
It is enough to recall that $\mathbb{R}^{n+1}$ is a distributive lattice
and each region $R$ is a sublattice. Furthermore, the image
under a lattice morphism of a distributive lattice is also distributive.
\end{proof}

Observe that the minimal element in each connected component $Q$
is an acyclic orientation with the unique sink at the vertex $0$.
Greene and Zaslavsky~\cite{Greene_Zaslavsky}
proved that the number such orientations
is given by
the sign $-1$ to the power one less than the number of vertices
times
the linear coefficient in the chromatic polynomial
of the graph $G$.
Gebhard and Sagan gave several proofs of this result~\cite{Gebhard_Sagan}.
A geometric proof of this result can be found 
in Chapter~\ref{chap:affinetoric} of this dissertation.
%\cite{Ehrenborg_Readdy_Slone}, 
%where the authors
%view the graphical hyperplane arrangement on a torus
%and count the regions on the torus.

That the connected component are confluent,
that is, each pair of elements has a lower and an upper bound,
can also be shown to follow from 
a special case of chip-firing games~\cite{Bjorner_Lovasz_Shor}.
Is there a geometric way to prove
the confluency of chip-firing?
More discussions relating these distributive lattice
with chip-firing can be found in~\cite{Latapy_Magnien,Latapy_Phan}.

\vfill
\begin{center}
Copyright \copyright\ Michael Slone 2008
\end{center}

%Come nice concluding remarks:
%
%(ii) Each component is self dual

% Already included in mps-bib.tex
%
%\vanish{
%
%{\small
%
%\begin{thebibliography}{99}
%
%
%\bibitem{Bjorner_Lovasz_Shor}
%\journal{A.\ Bj\"orner, L.\ Lov\'asz and P.\ Shor}
%        {Chip-firing games on graphs}
%        {European J.\ Combin.}
%        {12}{1991}{283--291}
%
%
%\bibitem{Ehrenborg_Readdy_Slone}
%\preprint{R.\ Ehrenborg, M.\ Readdy and M.\ Slone}
%         {Affine and toric hyperplane arrangements}
%         {2008}
%
%\bibitem{Gebhard_Sagan}
%\journal{D.\ Gebhard and B.\ Sagan}
%        {Sinks in acyclic orientations of graphs}
%        {J.\ Combin.\ Theory Ser.\ B}
%        {80}{2000}{130--146}
%
%\bibitem{Greene_Zaslavsky}
%\journal{C.\ Greene and T.\ Zaslavsky}
%        {On the interpretation of Whitney numbers
%         through arrangements of hyperplanes, zonotopes,
%         non-Radon partitions, and orientations of graphs}
%        {Trans.\ Amer.\ Math.\ Soc.}
%        {280}{1983}{97--126}
%
%\bibitem{Latapy_Magnien}
%\journal{M.\ Latapy and C.\ Magnien}
%        {Coding distributive lattices with edge firing games}
%        {Inform.\ Process.\ Lett.}
%        {83}{2002}{125--128}
%
%\bibitem{Latapy_Phan}
%\journal{M.\ Latapy and H.\ D.\ Phan}
%        {The lattice structure of chip firing games and related models}
%        {Phys.\ D}
%        {155}{2001}{69--82}
%
%\bibitem{Propp}
%\preprint{J.\ Propp}
%         {Lattice structure for orientations of graphs}
%         {1993}
%
%\end{thebibliography}
%
%}
%}

%\end{document}
%
% -=- -+- -=- -+- -=- -+- -=- -+- -=- -+- -=- -+- -=- -+- -=- -+- -=-
%
%% 4 <- 5
\setcounter{chapter}{3}
\chapter{Critical groups of cleft graphs}
\label{chap:critical}

\section{Introduction}

The number of spanning trees of an undirected graph is an important
invariant of the graph.  The matrix tree theorem reduces the
problem of determining the tree number to linear 
algebra. (The problem of listing all spanning trees for a 
specific graph was solved by Feussner~\cite{Feussner,Feussner_two} using 
what is essentially deletion-contraction.)
\begin{theorem}[Kirchhoff's matrix tree theorem~\cite{Kirchhoff}]\label{thm:matrixtree}
Let $X$ be a graph on $n$ vertices with Laplacian $L$.
Suppose $\lambda_1\le\dots\le\lambda_n$ are the eigenvalues of $L$.
Then the tree number of $X$ is
\[
  \tree{X} = \frac{1}{n}\prod_{i=2}^n \lambda_i.
\]
Equivalently, $\tree{X}$ is the value of any cofactor of $L$.
\end{theorem}
\noindent
Kirchhoff developed this theorem with the theory of
electrical networks in mind.  

More than a hundred years later, the physicists
Bak, Tang, and Wiesenfeld~\cite{Bak_Tang_Wiesenfeld} developed
the apparently unrelated abelian sandpile model in an attempt
to explain flicker noise, an effect which appears in widely varying
physical systems.  In the abelian sandpile model, grains of
sand are added one at a time to small piles of sand.  Since
this is inherently unstable, eventually a pile will collapse,
distributing grains to neighboring piles.  They called a configuration
critical if it is stable but becomes unstable if a single grain is
added anywhere.

The problem of characterizing
critical configurations was studied by graph theorists and other
combinatorialists in the 1990s under the guise of chip-firing games.
A chip-firing game, in the sense of Bj\"orner, Lov\'asz, and 
Shor~\cite{Bjorner_Lovasz_Shor}, involves firing vertices in 
a finite graph~$G$ with
a nonnegative number of chips on each vertex.  A vertex fires by
distributing a chip to each of its neighbors, and cannot fire unless
it has sufficiently many chips.  Only one vertex can fire at a 
time, so it might be expected that the decision of which vertex 
to fire at a particular step is of major importance.  
However, Bj\"orner, Lov\'asz, and Shor
showed that a chip-firing game on a graph is a confluent system.
Hence if an initial configuration is not recurrent, its terminal 
stable configuration
of chips does not depend on the order in which vertices are fired.
Biggs~\cite{Biggs_dollar} developed a variant of this called the
dollar game, which includes one vertex which can fire
if and only if 
no other vertex can fire, even if it would have a negative number
of chips after doing so.  Biggs proved that the number of 
critical configurations of a graph is equal to the order of 
the critical group, which is the torsion part of the cokernel
of the Laplacian.  Thus the problem of counting spanning trees
is subsumed by the problem of understanding the critical group
of a graph.

The critical group is only known for a few classes of graphs.  
In this chapter, we study cleft graphs, which are graphs
obtained from a base graph by replacing each vertex with an anticlique,
that is, a collection of nonadjacent vertices. This
construction is the vertex analogue of that of Lorenzini~\cite{Lorenzini},
who studied the effect of replacing all edges in a graph with
paths of uniform length.  We derive an exact sequence relating
the critical group of a uniformly cleft graph with that of its
base graph.  
%When the base graph is a path, we can determine the
%precise structure of the critical group of the cleft graph.
Moreover, we also have results in the non-uniform case.  By 
studying the spectrum of the Laplacian we are able to determine
the tree number of a non-uniformly cleft tree.

\section{Preliminaries}

All graphs we consider are simple, loopless, undirected
graphs with no parallel edges.
Our discussion will be greatly simplified if we imagine a graph
as being endowed with an orientation.  None of our results depend
on which orientation is used.  With this in mind, we define
an \define{oriented graph} to be a structure $X = (EX, VX)$
consisting of a 
set of edges $EX$ and a set of vertices $VX$ which are related 
by a pair of structure
maps from edges to vertices, called $s$ for source and $t$ for target.
If $X = (EX, VX)$ and~$B = (EB, VB)$ are oriented graphs, then 
a morphism $\varphi\colon X\to B$ consists of two functions
$\varphi\colon EX\to EB$ and $\varphi\colon VX\to VB$ such that
an oriented edge with source~$u$ and target $v$ is mapped to an oriented
edge with source $\varphi(u)$ and target $\varphi(v)$.  We 
let~$\delta(v)$ denote the neighborhood of a vertex $v$ in the 
unoriented graph.  
The degree of $v$ is denoted by $\deg(v)$ and is the size of the
neighborhood, that is, $\deg(v) = |\delta(v)|$.
In an oriented graph, the neighborhood of a vertex $v$
decomposes as $\delta(v) = \delta^+(v)\sqcup \delta^-(v)$, 
where~$\delta^+(v)$ is the out-neighborhood of $v$, the set of vertices 
reachable from $v$
in one step, and~$\delta^-(v)$ is the in-neighborhood of $v$, 
the set of vertices from which $v$ can be reached in one step.

An oriented graph $X$ can be viewed as an oriented 1-dimensional cell 
complex.  Hence $X$ comes equipped with a chain complex $\Chain{X}{}$, where
\[
  \Chain{X}{1} = \bigoplus_{e\in EX}\mathbb{Z}e
  \text{\quad and\quad}
  \Chain{X}{0} = \bigoplus_{v\in VX}\mathbb{Z}v.
\]
The boundary map $\bd\colon\Chain{X}{1}\to\Chain{X}{0}$ is defined
on an edge $e$ by $\bd(e) = t(e) - s(e)$.  Hence the boundary
map is the same as the incidence matrix of the graph.
The \define{Laplacian} of $X$
is the map $L = \bd\trans{\bd}$, where $\trans{\bd}$ is the transpose
of $\bd$.  Thus $\trans{\bd}$ represents the coboundary of the graph.  
If $X$ has $n$ vertices, we can view
$X$ as an~$n\times n$ matrix.  For vertices $u$ and $v$, one 
can compute that the $(u, v)$
entry of $L$ is %given by
%\begin{align*}
\[
L(u, v)
%  &=
%     \sum_{e\in EX}\bd(u, e)\trans{\bd}(e, v) \\
%  &=
=
     \begin{cases}
     \deg(u),   & u = v \\
     -\#[u, v], & u \ne v,
     \end{cases}
%\end{align*}
\]
where the notation $[u, v]$ indicates the set of edges with 
endpoints $u$ and $v$ in either orientation.  Some authors use this
as the definition of the Laplacian matrix.
Thus~$L = D - A$, where $D$ is the diagonal matrix whose diagonal
gives the degree sequence of $X$ and $A$ is the incidence matrix of $X$.
The \define{critical group} of $X$ is
the torsion part of the cokernel of $L$.  The cokernel can be
found by 
reducing $L$ to its Smith normal form, which can be done using
row and column operations which are invertible over the integers.

\begin{figure}
\begin{center}
\scalebox{1}{\epsfig{file=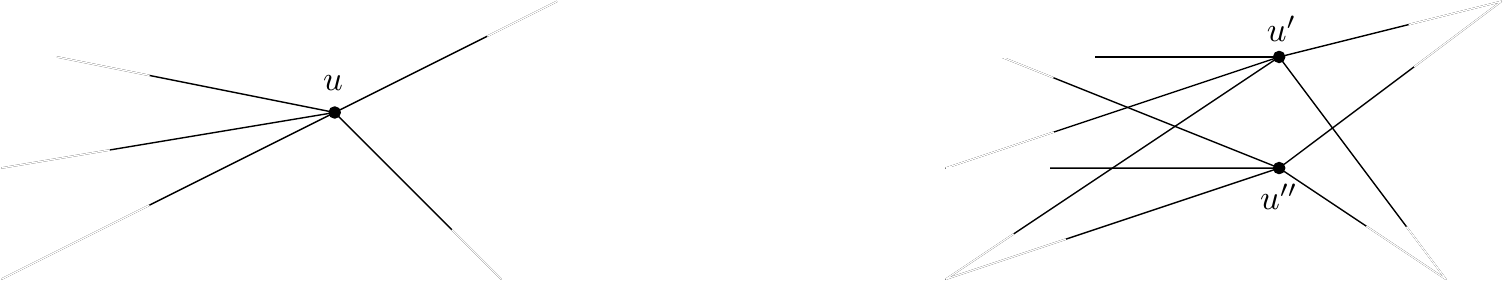}}
\end{center}
\caption{Cleaving the vertex $v$ replaces it with an anticlique.}
\label{fig:cleftexplain}
\end{figure}
Cleft graphs are similar to graph fibrations, but they 
%Cleft graphs are a doubly-pointed analogue of graph fibrations.
%%Cleft graphs, which we are about to define, are similar to graph 
%%fibrations, but are not themselves
%%fibrations because the unique lifting property fails.  
%(Graph fibrations and 
%their variants have been developed in the literature many times since the 
%1970s.  A recent comprehensive summary is the paper of 
%Boldi and Vigna~\cite{Boldi_Vigna}.)
%However, they
obey a weaker unique lifting condition.  Hence we will adopt some of 
the language, including the notions of total graph and base graph.  
Before presenting the technical definition of cleft graph we offer
the following way to visualize cleaving a single vertex in two.
Suspend the graph by the vertex to be cleft, so that the edges which
connect it to the rest of the graph are hanging downwards.  Carefully
drape these edges and the vertex on a chopping block.  Then take
a very sharp (and infinitely thin) cleaver and cut through the vertex
and its incident edges.  Thus the vertex is cleft into two vertices, and each
of the edges incident with the vertex is cleft into two edges, one
for each half of the cleft vertex.  Thus the vertex to be cleft
has been replaced with two nonadjacent vertices, each of which has
the same neighborhood as the cleft vertex.  See Figure~\ref{fig:cleftexplain}.
In a similar way, we can cleave a vertex $m$-fold, replacing the
vertex with an anticlique of $m$ vertices, each with the same 
neighborhood as the cleft vertex.

The structure of a graph after multiple vertices have been cleft does
not depend on the order in which the cleavings were performed.  So given
a graph $B$ and a weight vector on the vertex set of $B$,
there is a unique graph $X$ which is obtained from~$B$ by cleaving 
each vertex of $B$ according to its weight.  Moreover, there is a natural
projection morphism $p\colon X\to B$ which assigns each vertex in~$X$
to the vertex in $B$ from which it was cleft.
Hence we can define a \define{cleft graph} to be an 
oriented graph morphism $p\colon X\to B$ which satisfies the 
following two properties:
\begin{itemize}
\item (weak unique lifting)
For any vertices $\widetilde{u}$, $\widetilde{v}\in VX$, 
if $e\in EB$ is an edge from~$p(\widetilde{u})$ to~$p(\widetilde{v})$, then 
the edge $e$ has a unique lift $\widetilde{e}\in EX$
with source
$\widetilde{u} = s(\widetilde{e})$ and target~$\widetilde{v} = t(\widetilde{e})$.
\item (cleaving)
Each fibre of $p$ is a nonempty anticlique.
\end{itemize}
Observe that since $p$ is a graph morphism, the edge $\widetilde{e}$
mentioned above is a lift of~$e$.
We say that the cleft graph is induced by the weight
vector~$(m_v)_{v\in VB}$, where the weight 
of a vertex is the size of its fibre, that is,~$m_v = |p^{-1}(v)|$.
A cleft graph is~\define{$m$-uniform} if every fibre has the same size $m$.
An example of a non-uniform cleft graph appears in Figure~\ref{fig:cleftpair}.
%The Laplacian of the graph is
%\[
%\setcounter{MaxMatrixCols}{12}
%\left[\begin{matrix}
%3  &    & -1 & -1 & -1 &    &    &    &    &    &    &    \\
%   & 3  & -1 & -1 & -1 &    &    &    &    &    &    &    \\
%-1 & -1 & 4  &    &    & -1 & -1 &    &    &    &    &    \\
%-1 & -1 &    & 4  &    & -1 & -1 &    &    &    &    &    \\
%-1 & -1 &    &    & 4  & -1 & -1 &    &    &    &    &    \\
%   &    & -1 & -1 & -1 & 6  &    & -1 & -1 & -1 &    &    \\
%   &    & -1 & -1 & -1 &    & 6  & -1 & -1 & -1 &    &    \\
%   &    &    &    &    & -1 & -1 & 4  &    &    & -1 & -1  \\
%   &    &    &    &    & -1 & -1 &    & 4  &    & -1 & -1  \\
%   &    &    &    &    & -1 & -1 &    &    & 4  & -1 & -1  \\
%   &    &    &    &    &    &    & -1 & -1 & -1 & 3  &    \\
%   &    &    &    &    &    &    & -1 & -1 & -1 &    & 3 
%\end{matrix}\right],
%\]
%which has Smith normal form $\diag(1^{(6)}, 12^{(5)}, 0)$. 
%Hence the graph is $12^5$ spanning trees.

\begin{figure}
\begin{center}
\scalebox{1}{\epsfig{file=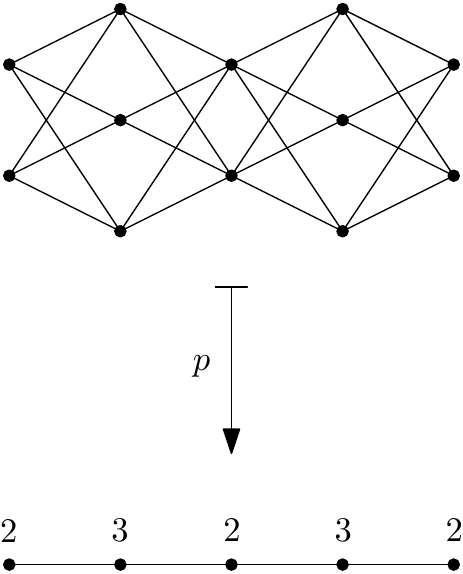}}
\end{center}
\caption{A bipartite graph viewed as a cleft path.}
\label{fig:cleftpair}
\end{figure}

Suppose we know the tree number or critical group of a base graph $B$.  It is 
natural to ask how much we can deduce about the tree number or critical 
group of the total graph of a cleft graph over $B$.  
It turns out that this is not difficult if the cleaving is uniform
or if the base graph is a tree.
In Section~\ref{exact-induced}, we determine the tree number of a
uniformly-cleft graph.  In Section~\ref{cleft-tree}, we determine the
tree number of an non-uniformly-cleft tree.  
%Finally, in 
%Section~\ref{section_spider_groups}, we display a result that can
%be obtained by careful pivoting (but omit the details).

\section{The exact sequence of a uniformly-cleft graph}\label{exact-induced}

Let $p\colon X\to B$ be a cleft graph.  Since the projection $p$
is a graph morphism, it commutes with the boundary map, that
is, $\bd p=p\bd$.  The interaction between the coboundary map $\trans{\bd}$
and the projection is slightly more complex, and is described
by the following lemma.

\begin{lemma}\label{lemma:compressedmakessense}
Let $p\colon X\to B$ be a cleft graph with weight vector $\mathbf{m}$.
Define a linear map $\varphi\colon C_0(B)\to C_1(B)$ by
$\varphi(v) = \sum_e \varphi(e, v)\cdot e$, where 
\[
  \varphi(e, v) = \begin{cases}
   m_{s(e)} & t(e) = v \\
  -m_{t(e)} & s(e) = v \\
   0        & \text{otherwise.}
   \end{cases}
\]
Then the diagram
\[\xymatrix{
C_0(X)\ar[r]^{\trans{\bd}}\ar[d]_{p} &
C_1(X)\ar[d]^{p} \\
C_0(B)\ar[r]_{\varphi} &
C_1(B)
}\]
is commutative.  Moreover, if $X$ is an $m$-uniformly cleft graph,
then $\varphi = m\trans{\bd}$.
\end{lemma}

\begin{proof}
The composite map $\varphi p$ is given by
\[
  \varphi p(e, \widetilde{v})
= 
  \sum_{u \in VB} \varphi(e, u) p(u, \widetilde{v}) 
= 
   \varphi(e, p(\widetilde{v})).
\]
On the other hand, the 
composite map $p\trans{\bd}$ is given by
\[
  p\trans{\bd}(e, \widetilde{v})
    = \sum_{\widetilde{f}\in EX} p(e, \widetilde{f}) \trans{\bd}(\widetilde{f}, \widetilde{v})
    = \sum_{\widetilde{e} \in p^{-1}(e)} \trans{\bd}(\widetilde{e}, \widetilde{v}).
\]
The sum vanishes unless $e$ is incident with $p(\widetilde{v})$.
The number of lifts of $e$ which have a given endpoint is given 
by the weight of the vertex at the other endpoint, and the sign of
the term $\trans{\bd}(\widetilde{e}, \widetilde{v})$ is determined
by whether $p(\widetilde{v})$ is the source or target of $e$. 
Hence $p\trans{\bd} = \varphi\bd$, as claimed.
\end{proof}

Combining Lemma~\ref{lemma:compressedmakessense} with the 
commutativity relation $\bd p=p\bd$, we can define the
\define{compressed Laplacian} of a cleft graph $X$ with
respect to its base graph $B$ as the composite map $C = \bd\varphi$.
If we define a vector $(M_v)_{v\in VB}$ by
\[
  M_v = \sum_{u\in\delta(v)} m_u,
\]
then it follows directly that
\[
  C(u,v) = \begin{cases}
  M_u,             & u =   v \\
  -m_u\cdot\#[u,v] & u \ne v
  \end{cases}
\]
for any vertices $u$, $v\in VB$.

\begin{corollary}\label{cor:uniformlycompressed}
Let $p\colon X\to B$ be an $m$-uniformly cleft graph.
Then the compressed Laplacian of $X$ is $m\cdot L(B)$.
\end{corollary}

For the rest of this section we will specialize to the case 
of an $m$-uniformly cleft graphs $p\colon X\to B$.  Let~$(M_v)$
be the vector defined above.
Thus $M_v$ is the degree in $X$ of any lift~$\widetilde{v}$ of $v$.
Let $S$ denote the transpose of $p$.  The map $S$ sends
a vertex $v$ to the sum of its lifts, that is, $S(v) = \sum_{\widetilde{v}\in p^{-1}(v)} \widetilde{v}$.
Since both $L(X)$ and $C=m\cdot L(B)$ are symmetric matrices, it follows
from Lemma~\ref{lemma:compressedmakessense} that the diagram
\[\xymatrix{
  C_0(B)\ar[r]^{C}\ar[d]_{S} &
  C_0(B)\ar[d]^{S} \\
  C_0(X)\ar[r]_{L} &
  C_0(X)
}\]
is commutative.
Since $SC = LS$, there is an injection $\coker C\to\coker L$.
We can use the fact that~$X$ is a uniformly cleft graph to
determine the factor by which splitting increases the tree number.
But first we need a lemma.

\begin{lemma}\label{lemma:split}
Let $p\colon X\to B$ be an $m$-uniformly cleft graph, and
define a vector~$(M_v)_{v\in VB}$ by $M_v = m\cdot|\delta(v)|$.
If $B$ is connected, then there is an exact sequence
\[
  0 \to 
  \coker C \to 
  K(X)\oplus\mathbb{Z} \to
  \bigoplus_{v\in VB} \mathbb{Z}_{M_v}^{m-1}/M_v \to 
  0
\]
of abelian groups.
\end{lemma}

\begin{proof}
Since $B$ is connected, so is $X$.  Thus $\coker L = K(X)\oplus\mathbb{Z}$.
Applying the snake lemma to the commutative diagram
\[\xymatrix{
0 \ar[r] & 
C_0(B) \ar[r]^{S}\ar[d]_{C} &
C_0(X) \ar[r] \ar[d]_{L} &
\coker S\ar[r] \ar[d]_{\overline{L}} &
0 \\
0 \ar[r] & 
C_0(B) \ar[r]_{S} &
C_0(X) \ar[r]  &
\coker S\ar[r]  &
0 
}\]
with exact rows yields the exact sequence
\[
  \ker{\overline{L}} \to
  \coker C\to 
  K(X)\oplus\mathbb{Z} \to
  \coker{\overline{L}}\to 
  0,
\]
where the map $\overline{L}\colon\coker S\to\coker S$ is
induced by $L$.
For any vertex $v$ in $B$, the sum of the lifts of $v$ in $X$
is a representative of zero in 
$\coker S$, but there are no other relations among the 
vertices of $X$.  The Laplacian sends a lift $\widetilde{v}$
of $v$ to 
\[
  L(\widetilde{v}) = M_v\widetilde{v} - m\sum_{u\in\delta^+(v)}S(u),
\]
which by our observation represents $M_v\widetilde{v}$ in $\coker S$.
Hence we can represent $\overline{L}$ by the block matrix
$\bigoplus_{v\in VB} M_v I_{m - 1}$, which is injective and has the
desired cokernel.
\end{proof}

To make this exact sequence useful for enumeration, we need to 
kill the infinite factors in $\coker C$ and $K(X)\oplus\mathbb{Z}$.
The following observation allows us to do this.

\begin{lemma}\label{lemma:generator}
Let $M$ be an $n\times n$ integer matrix with corank $1$.  Let 
$H$ be the submodule of $\mathbb{Z}^n$ generated by all vectors
whose coordinates sum to $0$.  If $\im M\subseteq H$, then
each standard basis vector $e_i$ represents an infinite 
generator of $\coker M$, possibly with nonzero torsion part.
\end{lemma}

\begin{proof}
First observe that $\mathbb{Z}^n$ is isomorphic to $\mathbb{Z}$
and is generated by any standard basis vector $e_i$.  Lifting
$e_i$ to $\coker M$ yields an element of the form
$n\cdot \gamma + r$, where~$\gamma$ is the infinite generator
of $\coker M$ and $r$ is a torsion element.  But this implies
that~$\gamma$ is mapped to $n^{-1}$ times the generator of
$\mathbb{Z}^n/H$ under the canonical surjection.  Hence~$n$ is
a unit.
\end{proof}

\begin{proposition}\label{prop:uniformcase}
Let $p\colon X\to B$ be an $m$-uniformly cleft graph with 
Laplacian $L$ and compressed Laplacian $C$.
If $B$ has $n$ vertices, then
the tree number of $X$ is given by the formula
\[
  \kappa(X) = \kappa(B)\cdot m^{n-2}\cdot\prod_{v\in VB} (m\cdot\deg(v))^{m-1}.
\]
Moreover, if the Smith normal form of $L(B)$ has the form
$\diag(d_1,\dots, d_{n-1}, 0)$, 
then the critical group of $X$ fits into the exact sequence
\[\xymatrix{
0\ar[r] &
\bigoplus_{i=1}^{n-1}\mathbb{Z}_{m\cdot d_i}\ar[r] &
K(X)\ar[r] &
\biggl(\bigoplus_{u}\mathbb{Z}_{m\cdot\deg(u)}^{m-1}\biggr)/\mathbb{Z}_m\ar[r] &
0. 
}\]
\end{proposition}

\begin{proof}
We may assume $B$ is connected.  
The map $\coker C\to\coker L\cong K(X)\oplus\mathbb{Z}$ sends 
the element $v + \im C$ to $S(v) + \im L$. 
By Lemma~\ref{lemma:generator} this element can be rewritten
as $m\cdot \widetilde{v} + \im L$ plus a torsion element, where
$\widetilde{v}$ is a lift of $v$ in $X$.
Hence the map must send the infinite generator of $\coker C$ to
$m$ times the infinite generator of~$\coker L$.  
This allows us to embed $K(X)\oplus\mathbb{Z}$
in the commutative diagram
\[\xymatrix{
 &
0\ar[d] &
0\ar[d] &
0\ar[d] &
 & \\
0\ar[r] &
\mathbb{Z}\ar[r]^m\ar[d] &
\mathbb{Z}\ar[r]\ar[d] &
\mathbb{Z}_m\ar[r]\ar[d] &
0 \\
0\ar[r] &
\coker C\ar[r]\ar[d] &
K(X)\oplus\mathbb{Z}\ar[r]\ar[d] &
\coker\overline{L}\ar[r]\ar[d] &
0 \\
0\ar[r] &
\coker C/\mathbb{Z}\ar[r]\ar[d] &
K(X)\ar[r]\ar[d] &
\coker \overline{L}/\mathbb{Z}_m\ar[r]\ar[d] &
0 \\
&
0 &
0 &
0 &
&
}\]
with exact rows and columns.  
Since $X$ is $m$-uniformly cleft, its compressed Laplacian is
$C = mL(B)$.  Hence $C$ has 
Smith normal form $\diag(m\cdot d_1,\dots,m\cdot d_{n-1},0)$.
This completes the
proof.
\end{proof}

The above proposition measures the growth in tree number produced
by uniform splitting.  We get the following corollary in the
case where the base graph is a tree.

\begin{corollary}\label{cor:uniformtree}
Let $p\colon X\to T$ be an $m$-uniformly cleft graph whose
base graph $T$ is a tree on $n$ vertices.  Then the tree number
of $X$ is
\[
  \kappa(X) = m^{n-2}\cdot\prod_{v\in VT}(m\cdot\deg(v))^{m-1}.
\]
\end{corollary}

We would like to extend this method to the case of non-uniformly cleft
graphs.
Since the compressed Laplacian need not be symmetric, it is unclear
how to do it.  In the next section, we will extend 
Corollary~\ref{cor:uniformtree} to the case of non-uniformly cleft trees.
However, the proof we give makes necessary use of the fact that the
base graph is a tree, and it is unclear how to generalize it.

\section{Tree numbers of cleft trees}\label{cleft-tree}

In this section we count spanning trees of a cleft tree using
a weighted analogue of the following classical theorem.

\begin{theorem}[Poincar\'e~\cite{Poincare}, Chuard~\cite{Chuard}]\label{Poincare_Chuard}
Let $X$ be a graph on $n$ vertices with incidence matrix $A$, 
and let $A'$ be an $n-1\times n-1$ submatrix of $A$.  The matrix
$A'$ is nonsingular (in fact, $\det(A') = \pm 1$) if and
only if the columns of $A'$ represent the edges of a spanning
tree of $X$.
\end{theorem}

To motivate the main ideas behind our argument, we study a 
recursive function on a special class of trees we
call weighted marked trees.  A \define{weighted marked tree}
is a tree $T$ together with a weight vector $\mathbf{m} = (m_v)_{v\in VT}$
and two special vertices, the root $r$ and a marked vertex $q$, which
could also be the root.  We define a function~$F(T, \mathbf{m}, r, q)$
according to the following recursive procedure.
\begin{enumerate}
\item
If $T$ has no edges, then $F(T, \mathbf{m}, r, q) = 1$.
\item
Otherwise:
\begin{enumerate}
\item
Let $v$ be a leaf of $T$.  Do not select the marked vertex $q$ unless it is 
the only leaf.
\item
Let $w$ be the parent of $T$.
\item
Let $T'$ be the tree obtained from $T$ by collapsing the edge connecting 
$v$ and $w$ to $w$.  Let $\mathbf{m}'$ be the restriction of the weight
vector of $T$ to the vertices of $T'$.
\item
Define a tuple $(q', w')$ by
\[
  (q', w') = \begin{cases}
  (q, w), & v \ne q \\
  (w, q), & v =   q.
  \end{cases}
  %q' = \begin{cases}
  %q, & v \ne q \\
  %w, & v =   q
  %\end{cases}
  %\text{\quad and\quad}
  %w' = (w + q) - q'.
\]
\item
With the above notation, $F(T, \mathbf{m}, r, q) = m_{w'}\cdot F(T', \mathbf{m}', r, q')$.
\end{enumerate}
\end{enumerate}
We illustrate this algorithm by applying it to the tree in 
Figure~\ref{fig:algtree1}.

\begin{figure}[h]
\begin{center}
\scalebox{1}{\epsfig{file=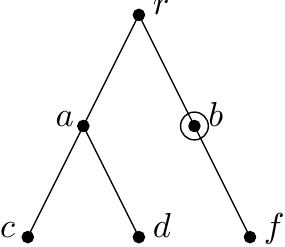}}
\end{center}
\caption{A weighted tree $T$ with root $r$ and marked vertex $b$.}
\label{fig:algtree1}
\end{figure}
\noindent
In order, we select the vertices $c$, $f$, $d$, and $a$, collapsing
the edges $ac$, $bf$, $ad$, and~$ra$, and picking up the weights
$m_a$, $m_b$, $m_a$, and $m_r$.  After these collapses, the tree has been
reduced to the tree $T'$ displayed in Figure~\ref{fig:algtree2}.
\begin{figure}[h]
\begin{center}
\scalebox{1}{\epsfig{file=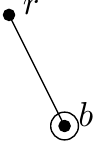}}
\end{center}
\caption{The tree $T'$ obtained from $T$ by collapsing several edges.}
\label{fig:algtree2}
\end{figure}
Now the marked vertex $b$ is the only leaf, so we must select it.
Thus we collapse $rb$ to $r$ and move the marker from~$b$ to $r$.
Since $b$ was marked, we pick up its weight, $m_b$, rather than the
weight of its parent.  The collapsed tree has no more edges, so there
are no more steps to perform.  The value of $F$ on the tree~$T$ is
$m_a^2\cdot m_b^2\cdot m_r$.
Notice that for each non-marked vertex~$v$, the factor $m_v$ appears in~$F$
a total of $\deg(v) - 1$ times.  The factor $m_b$ appears twice.  This 
property holds for any weighted marked tree, as we now show.

\begin{lemma}\label{lemma:weightedmarkedtree}
Let $(T, \mathbf{m}, r, q)$ be a weighted marked tree, and let $F$ be
the function defined above.  Then
\[
  F(T, \mathbf{m}, r, q) = m_q \cdot \prod_{v\in VT} m_v^{\deg(v) - 1}.
\]
\end{lemma}

\begin{proof}
Let $v$ be a vertex of $T$.  There are three cases, depending
on the position of the marked vertex.

\textbf{Case 1.} Neither $v$ nor any of its children is marked.
Each child of $v$ contributes a factor of $m_v$ to the value
of $F$.  Since $v$ is not marked, it contributes the weight
of its parent to the value of $F$ when selected as a leaf.
Hence $v$ contributes a total of~$m_v^{\deg(v) - 1}$ to the
value of $F$.

\textbf{Case 2.} The vertex $v$ is marked.
If $v$ is marked, there is a contribution of $m_v$ for each of its
children as well as a contribution of $m_v$ when it is selected as
a leaf.  Hence $v$ contributes a total of~$m_v^{\deg(v)}$ to
the value of $F$.

\textbf{Case 3.} The vertex $v$ has a marked descendant.
Hereditarily unmarked children of $v$ behave as in Case 1.  
Hence we may assume that $v$ has the marked vertex as its unique child.
When the child of $v$ is selected, it contributes nothing to the
exponent of $m_v$, but then the mark is passed from the child to $v$.
So when $v$ is selected as a leaf, it contributes a weight of $m_v$
to the value of $F$.  Hence $v$ contributes a total of~$m_v^{\deg(v) - 1}$
to the value of $F$.
\end{proof}

The next step is to observe that the function~$F$ is, up to a sign,
the result of computing a determinant by cofactor expansion.  Recall
that the compressed Laplacian $C$ of a cleft graph  factors as 
$C = \bd\varphi$, where $\bd\colon C_1(B)\to C_0(B)$ is the boundary
map and $\varphi\colon C_0(B)\to C_1(B)$ is the map defined in 
Lemma~\ref{lemma:compressedmakessense}.

\begin{lemma}\label{lemma:algtreeisdeterminant}
Let $p\colon X\to T$ a cleft tree with weight vector $\mathbf{m}$.
Select a root $r$ for $T$ and orient all edges away from the root.
Let $M$ be a matrix representing $\varphi$, and let $K$ be a 
matrix representing $\bd$.  Then the determinant of $MK$ is
\[
  \det(MK) = \left(\sum_{q\in VT} m_q\right)\cdot\prod_{v\in VT} m_v^{\deg(v) - 1}.
\]
\end{lemma}

\begin{proof}
By the Binet--Cauchy theorem, the determinant of MK is given by the sum
\[
  \det(MK) = \sum_{q\in VT} \det(M_q)\cdot\det(K_q),
\]
where $M_q$ is obtained from $M$ by striking the column corresponding to $q$,
and $K_q$ is defined similarly.  It follows from Theorem~\ref{Poincare_Chuard}
that $\det(K_q) = \pm 1$.  To evaluate~$\det(M_q)$, select a leaf $v$ of the tree
$T$, let $w$ be the parent of $v$, and let $e$ be the edge from $w$ to $v$.  
If $v\ne q$, then by
cofactor expansion about the $(v, e)$ entry of $M_q$, 
\[
  \det(M_q) = \pm m_w\cdot \det(M'_q),
\]
where $M'_q$ is the submatrix of $M_q$ obtained by striking the column
corresponding to $v$ and the row corresponding to its unique incident edge.
If $v=q$, then by cofactor expansion about the $(w, e)$ entry of $M_q$,
\[
  \det(M_q) = \pm m_v\cdot \det(M'_q).
\]
Up to a sign, this recursive computation of $\det(M_q)$ agrees with
the recursive computation of the function~$F(T, \mathbf{m}, r, q)$.
By computing the determinant of $K_q$ in the same way we see that 
$\det(K_q)$ is equal to the sign of $\det(M_q)$.    Applying
Lemma~\ref{lemma:weightedmarkedtree}, we conclude that
\[
  \det(M_q)\cdot\det(K_q) = m_q \cdot \prod_{v\in VT} m_v^{\deg(v)-1}.
\]
Summing over all $q\in VT$ completes the proof.
\end{proof}

We need the following technical lemma.  

\begin{lemma}[Horn--Johnson~{\cite[Theorem 1.3.20]{Horn_Johnson}}]\label{lemma:eigen}
Suppose $r\le n$.  Let $P$ be an $n\times r$ matrix and $Q$ be an 
$r\times n$ matrix.  Then the eigenvalues of $QP$ are also eigenvalues
of $PQ$, with (at least) the same multiplicity.  All other eigenvalues
of $PQ$ are $0$.
\end{lemma}

Now we use the above results to count the spanning trees of a cleft graph.

\begin{theorem}\label{thm:clefttree}
Let $p\colon X\to T$ be a cleft graph with weight vector $(m_v)_{v\in VT}$,
and define a vector~$(M_v)_{v\in VT}$ by $M_v = \sum_{u\in\delta(v)} m_u$.
If $T$ is a tree, then the tree number of~$X$ is
\[
  \kappa(X) = \prod_{v\in VT} (M_v^{m_v - 1}\cdot m_v^{\deg(v) - 1}).
\]
\end{theorem}

\begin{proof}
The graph $X$ has Laplacian matrix $L$ and compressed 
Laplacian~$C=\bd\varphi$.  Suppose $T$ has $n$ vertices, 
and let $N$ denote the sum
\[
  N = \sum_{v\in VT} m_v,
\]
that is, $N$ is the number of vertices of $X$.
By Theorem~\ref{thm:matrixtree}, 
the tree number of $X$ is
\[
  \kappa(X) = \frac{1}{N}\prod_{i=2}^{N}\lambda_i,
\]
where $\lambda_1\le\dots\le\lambda_{N}$ are the eigenvalues
of~$L$.  The diagonal entries of~$L$ have the form~$M_v$,
each such entry occurring~$m_v$ times.  Hence for 
each~$v\in VT$, the Laplacian of $X$ has eigenvalue~$M_v$
occurring with multiplicity $m_v - 1$.  This leaves $n$
eigenvalues to be determined.  Since the rows and columns of~$L$
sum to zero, one of these eigenvalues is~$\lambda_1 = 0$.

From the fact that $L\trans{p} = \trans{p}\trans{C}$ we conclude
that every eigenvalue of $\trans{C}$ (hence also $C$) is an
eigenvalue of $L$.  
Since $T$ is a tree, it has one more vertex than it has edges, 
so while $C = \bd\varphi$ is an $n\times n$ matrix, its 
companion $\varphi\bd$ is an $n-1\times n-1$ matrix.  Applying
Lemma~\ref{lemma:eigen}, we conclude that the product of the
remaining eigenvalues of~$L$ is $\det(\varphi\bd)$.  But 
it follows from Lemma~\ref{lemma:algtreeisdeterminant} that
\[
  \det(\varphi\bd) = \left(\sum_{q\in VT} m_q\right)\cdot\prod_{v\in VT} m_v^{\deg(v) - 1} = N\cdot\prod_{v\in VT} m_v^{\deg(v) - 1}.
\]
Hence
\[
\kappa(X)
= 
   \prod_{v\in VT} M_v^{m_v - 1}\cdot \prod_{v\in VT} m_v^{\deg(v) - 1},
\]
which is what we wanted to show.
\end{proof}

\section{Concluding remarks}
\label{section_critical_remarks}

The arguments used to study uniformly-cleft graphs and
non-uniformly-cleft trees are different enough that it is
unclear what form a possible common generalization would
take. 
We can compute the critical group explicitly in some 
simple cases, such as a uniformly-cleft path.  However,
the available techniques for working with these structures
do not yet generalize even to the case of uniformly-cleft
trees.  We would like to have a leaf-cutting procedure,
similar to the weighted analogue of the Poincar\'e--Chuard
theorem, which operates on the critical group level.

\vfill
\begin{center}
Copyright \copyright\ Michael Slone 2008
\end{center}

\vanish{
\section{Critical groups of uniformly-cleft spiders}
\label{section_spider_groups}
\begin{proposition}
\label{proposition_even_spider}
Let $T$ be a spider on $n$ vertices with $s$ legs, the $i$th leg of which 
is of
even length $n_i=2k_i$, and let $T_k$ be the $k$-regular splitting
of $T$.  Then
\[
K(T_k)\cong
\mathbb{Z}_k^{s(k-2)}\oplus\mathbb{Z}_{2k}^{(n-1)(k-2)}\oplus
\mathbb{Z}_{2k^2}^{n-1}\oplus\mathbb{Z}_{sk^2}\oplus\mathbb{Z}_{k^2}^{s-2}\oplus\mathbb{Z}_{sk}^{k-2}.
\]
\end{proposition}

\begin{proof}
Pivot, pivot, pivot.
\end{proof}
}

%%% references

%\newcommand{\journal}[6]{{\sc #1,} #2, {\it #3} {\bf #4} (#5), #6.}
%\newcommand{\book}[4]{{\sc #1,} ``#2,'' #3, #4.}
%\newcommand{\bookf}[5]{{\sc #1,} ``#2,'' #3, #4, #5.}
%\newcommand{\thesis}[4]{{\sc #1,} ``#2,'' Doctoral dissertation, #3, #4.}
%\newcommand{\springer}[4]{{\sc #1,} ``#2,'' Lecture Notes in Math.,
%Vol.\ #3, Springer-Verlag, Berlin, #4.}
%\newcommand{\preprint}[3]{{\sc #1,} #2, preprint #3.}
%\newcommand{\appear}[3]{{\sc #1,} #2, to appear in {\it #3}.}
%\newcommand{\submitted}[4]{{\sc #1,} #2, submitted to {\it #3}, #4.}
%\newcommand{\JCTA}{J.\ Combin.\ Theory Ser.\ A}
%\newcommand{\AdvancesinMathematics}{Adv.\ Math.}

%\end{document}
%
% Bibliography
%\addcontentsline{toc}{section}{Bibliography}
%%\include{mps-bib}
%\include{mps-bib}
%\bibliographystyle{ukplain}
%\bibliographystyle{amsplain}
\bibliographystyle{plain}
%\bibliographystyle{acm}
%\bibliographystyle{apalike}
%\bibliography{aguiar,athanasiadis,bak,bayerm,biggs,billera,bjorner,boldi,chenb,deconcini,douglass,ehrenborg,gebhard,goresky,greene,groemer,grunbaum,heyneman,horn,hsiao,joni,jozefiak,kalai,karu,klain,latapy,lorenzini,macmeikan,moon,novik,purtill,schmitt,stanley,stembridge,sundaram,sweedler,varchenko,zaslavsky,ziegler,unsorted,gap}
\bibliography{slone2008-bibliography}
%
% vita
\setcounter{secnumdepth}{-1}
\chapter{Vita}
%\addcontentsline{toc}{section}{\protect\numberline{ }Vita}

\begin{itemize}
\item
Education:
\begin{itemize}
\item
2008: Ph.D. (expected), University of Kentucky

\item
2003: MA, University of Kentucky

\item
2001: BA, Morehead State University
\end{itemize}

\item
Professional positions held:
\begin{itemize}
\item
2001--2008: Teaching assistant, University of Kentucky

\item
2000: Markup editor, Institute for Regional Analysis and Public Policy

\item
1997--2001: Technical editor, Lexmark-MSU Writing Project
\end{itemize}

\item
Scholastic and professional honors:
\begin{itemize}
\item
Presidential Graduate Fellowship

\item
Edgar Enochs Scholarship in Algebra

\item
Daniel Reedy Quality Fellowship
\end{itemize}
\end{itemize}

\end{document}